\newcommand{\Z}{\mathbb Z}
\newcommand{\Q}{\mathbb Q}
\newcommand{\R}{\mathbb R}
\newcommand{\C}{\mathbb C}
\newcommand{\F}{\mathbb F}
\renewcommand{\O}{\mathcal O}
\newcommand{\OO}{\mathscr O}
\newcommand{\field}{\mathfrak F}
\newcommand{\val}{\operatorname{val}}
\newcommand{\eg}{\textit{e.g.}, }
\newcommand{\ie}{\textit{i.e.}, }
\newcommand{\red}{\mathrm{red}}
\renewcommand{\setminus}{\smallsetminus}
\renewcommand{\emptyset}{\varnothing}
\newcommand{\frakS}{\mathfrak{S}} 
\newcommand{\Satake}{\mathcal{S}} 
\newcommand{\apartment}{\mathscr A}
\newcommand{\building}{\mathscr B}
\newcommand{\hyperplanes}{\mathfrak H}
\newcommand{\X}{\mathrm X}
\newcommand{\aff}{\mathrm{aff}}
\newcommand{\pairing}{\langle\,\cdot\,{,}\,\cdot\,\rangle}
\newcommand{\scalar}{(\,\cdot\,{,}\,\cdot\,)}
\newcommand{\Hecke}{\mathcal H}
\newcommand{\triang}[3]{\begin{pmatrix}
#1 & #2\\ 0 & #3\end{pmatrix}}
\newcommand{\alg}[1]{\mathbf{#1}}
\newcommand{\set}[2]{\left\{#1\,\middle|\,#2\right\}}
\newcommand{\wt}[1]{\widetilde{#1}}
\newcommand{\longtwoheadrightarrow}%
{\longrightarrow\hspace{-1.2em}\rightarrow\hspace{.2em}}
\newcommand{\longhookrightarrow}%
{\lhook\joinrel\relbar\joinrel\rightarrow}
\DeclareMathOperator{\Aff}{Aff}
\DeclareMathOperator{\End}{End}
\DeclareMathOperator{\GL}{GL}
\DeclareMathOperator{\Sp}{Sp}
\DeclareMathOperator{\SL}{SL}
\DeclareMathOperator{\SO}{SO}
\DeclareMathOperator{\SU}{SU}
\DeclareMathOperator{\Hom}{Hom}
\DeclareMathOperator{\id}{id}
\DeclareMathOperator{\Ker}{Ker}
\DeclareMathOperator{\pr}{pr}
\newtheorem{prop}{Proposition}[section]
\newtheorem{thm}[prop]{Theorem}
\newtheorem*{thmintro}{Theorem}
\newtheorem{lem}[prop]{Lemma}
\newtheorem{cor}[prop]{Corollary}
\newtheorem{hyp}[prop]{Hypothesis}
\newtheorem{cartan}[prop]{Cartan decomposition}
\newtheorem{iwasawa}[prop]{Iwasawa decomposition}
\theoremstyle{definition}
\newtheorem*{defn}{Definition}
\newtheorem*{claim*}{Claim}
\newtheorem{claim}{Claim}
\newtheorem{notation}[prop]{Notation}
\newtheorem{algo}[prop]{Algorithm}
\theoremstyle{remark}
\newtheorem*{rmk*}{Remark}
\newtheorem*{ex*}{Example}
\newtheorem{ex}[prop]{Example}
\newtheorem{rmk}[prop]{Remark}
\newtheorem*{notation*}{Notation}
\title[Decomposition of Hecke Polynomials]{On the Decomposition of Hecke
Polynomials over Parabolic Hecke Algebras}
\author{Claudius Heyer}
\address{Mathematisches Institut, Universit\"at M\"unster, Einsteinstra\ss{}e 62, D-48149 M\"unster, Germany}
\email{cheyer@uni-muenster.de}
\subjclass[2020]{11C08, 20C08, 20G25}
\begin{document}
\begin{abstract} 
We generalize a classical result of Andrianov on the decomposition of Hecke
polynomials. Let $\mathfrak{F}$ be a non-archimedean local field. For every
connected reductive group $\mathbf{G}$, we give a criterion for when a
polynomial with coefficients in the spherical parahoric
Hecke algebra of $\mathbf{G}(\mathfrak{F})$ decomposes over a parabolic Hecke
algebra associated with a \emph{non-obtuse} parabolic subgroup of $\mathbf{G}$.
We classify the non-obtuse parabolics. This then shows that our decomposition
theorem covers all the classical cases due to Andrianov and Gritsenko. We also
obtain new cases when the relative root system of $\mathbf{G}$ contains factors
of types $E_6$ or $E_7$.
\end{abstract} 
\maketitle
\tableofcontents

\section{Introduction} 
\subsection{Motivation} 
The problem to decompose Hecke polynomials emer\-ged in the theory of Hecke
operators acting on spaces of Siegel modular forms, see, \eg 
\cite{Andrianov.1977,Andrianov.1979,Gritsenko.1984}. One of the principal tasks
is to find and study relations between Fourier coefficients of eigenforms
of Hecke operators and the corresponding eigenvalues. It is instructive to work
through an example to see how decomposing Hecke polynomials helps to find such
relations. Consider the modular group $\Gamma = \SL_2(\Z)$. Recall that a
holomorphic function $f\colon \mathbb{H}\to \C$ on the upper half-plane
$\mathbb{H} = \set{z\in \C}{\Im(z)>0}$ is called a \emph{modular form of weight
$k$} if for all $\gamma = \begin{pmatrix}a&b\\c&d\end{pmatrix} \in \Gamma$ and
all $z\in \mathbb{H}$ it satisfies
\begin{equation}\label{eq:modularform}
f(z) = (f|\gamma)(z) \coloneqq (cz+d)^{-k}
f\left(\frac{az+b}{cz+d}\right),
\end{equation}
and if it admits a Fourier expansion of the form $f(z) = \sum_{j=0}^\infty
\alpha_f(j)\cdot e^{2\pi ijz}$. Denote $\mathfrak{M}_k$ the $\C$-vector space of
modular forms of weight $k$. Let $S$ be the set of $2\times 2$-matrices with
integral entries and positive determinant. Then the algebra of Hecke operators
$\Hecke\coloneqq \Hecke_\C(\Gamma, S)$ naturally acts on $\mathfrak{M}_k$: A
double coset $(\Gamma g\Gamma)\in \Hecke$ acts on $f$ via $f|(\Gamma g\Gamma)
\coloneqq \sum_{\Gamma \gamma \in \Gamma\backslash\Gamma g\Gamma} f|\gamma$. 
If $f$ is a Hecke eigenform, we write $\lambda_f\colon \Hecke\to
\C$ for the corresponding eigenvalue. Then $f$ is a Hecke eigenform if and only
if $\alpha_{f|T}(j) = \lambda_f(T)\cdot \alpha_f(j)$, for all $T\in \Hecke$,
$j\in \Z_{\ge0}$.

Fix a prime number $p$ and consider the Hecke polynomial
\[
Q_p(t) = 1 -T_1t + pT_2t^2,\quad \text{where $T_1 = (\Gamma\triang p01 \Gamma)$,
$T_2 = (pE_2\Gamma) \in \Hecke$,}
\]
where $E_2$ is the $2\times 2$ identity matrix. 
There is a natural embedding of $\Hecke$ into the parabolic Hecke algebra
$\Hecke^0 \coloneqq \Hecke_{\C}(\Gamma_0,S_0)$, where $\Gamma_0$ (resp. $S_0$)
is the subgroup of upper triangular matrices in $\Gamma$ (resp. $S$). For
example, one has $T_1 =
T^+_1 + T^-_1$ in $\Hecke_\C(\Gamma_0,S_0)$, where $T^+_1 = (\Gamma_0 \triang
p01\Gamma_0)$ and $T^-_1 = (\Gamma_0\triang10p\Gamma_0)$. 
Over the ring $\Hecke_{\C}(\Gamma_0,S_0)$ the polynomial $Q_p(t)$ decomposes as
\begin{equation}\label{eq:Qp-dec}
Q_p(t) = (1 -T^+_1t) \cdot (1 - T^-_1t).
\end{equation}
Right multiplication with the inverse power series of $1 - T^-_1t$ yields
\begin{equation}\label{eq:Qp}
Q_p(t)\cdot\sum_{l=0}^\infty (T^-_1)^lt^l =
1-T^+_1t\qquad \text{in $\Hecke^0\llbracket t\rrbracket$.}
\end{equation}
Note that $\Hecke^0$ acts naturally on the space $\mathfrak{M}^0_k$ of
holomorphic functions $f\colon \mathbb{H}\to \C$ satisfying
\eqref{eq:modularform} for all $\gamma\in \Gamma_0$ and admitting a Fourier
expansion as above. Then $\mathfrak{M}_k \subseteq \mathfrak{M}^0_k$ and
$\Hecke^0\llbracket t\rrbracket$ acts naturally on
$\mathfrak{M}^0_k\llbracket t\rrbracket$. Given $f\in \mathfrak{M}^0_k$, one
computes $\alpha_{f|T^+_1}(j) = \alpha_f(j/p)$ and $\alpha_{f|T^-_1}(j) =
p^{1-k}\alpha_f(pj)$, for all $j\ge0$. (Here, we define $\alpha_f(j/p) = 0$ if
$p\nmid j$.)

Let now $f\in \mathfrak{M}_k$ be a Hecke eigenform and consider the complex
polynomial
\[
Q_{p,f}(t) = 1 -
\lambda_f(T_1)\cdot t + p\cdot \lambda_f(T_2)\cdot t^2 \in \C[t].
\]
Letting~\eqref{eq:Qp} act on $f$, we obtain on the level of Fourier coefficients
the relations
\[
Q_{p,f}(t)\cdot \sum_{l=0}^{\infty} p^{l(1-k)}\alpha_f(p^lj)t^l = \alpha_f(j) -
\alpha_f(j/p)t\qquad \text{in $\C\llbracket t\rrbracket$, for each $j\ge0$.}
\]

This method of decomposing a Hecke polynomial over a parabolic Hecke algebra
proved to be very fruitful in the more general context of Siegel modular forms.
Andrianov proved a general decomposition theorem of type~\eqref{eq:Qp-dec} in
the context of Siegel modular forms, cf. \cite{Andrianov.1977}. In this case the
modular group $\SL_2(\Z)$ is replaced by $\Sp_{2n}(\Z)$, for some
$n\in\Z_{\ge1}$, and one considers certain holomorphic functions on the Siegel
upper half-space $\mathbb{H}_n$. The subgroup $\Gamma_0$ of upper triangular
matrices is replaced by the ``Siegel parabolic'' in $\Sp_{2n}(\Z)$, that is, the
subgroup of matrices whose lower left quadrant is zero.

It is then natural to ask whether a decomposition of type~\eqref{eq:Qp-dec} also
holds for more general groups. Since the problem is local in nature, one may
replace $\Z$ with the ring of integers of a non-archimedean local field
$\field$. In this context, Gritsenko proved a decomposition theorem for
$\GL_n(\field)$ (and all parabolics) \cite{Gritsenko.1988,Gritsenko.1992} and
for the classical groups $\Sp_{2n}(\field)$, $\SU_n(\field)$, and
$\SO_n(\field)$ (for the parabolics fixing a line in the standard
representation) \cite{Gritsenko.1990}.

The main result in \cite{Gritsenko.1992} found an application in the theory of
$p$-adic $L$-functions, where it was recently used by Januszewski
\cite{Januszewski.2014} to define a projection map in order to obtain
simultaneous eigenforms for certain Hecke operators. It is therefore reasonable
to hope that a decomposition theorem for more general reductive groups will have
applications in the theory of $p$-adic $L$-functions.

The aim of this paper is to generalize the theory developed by Andrianov
\cite{Andrianov.1977} to the group $G$ of $\field$-rational points of a
connected reductive $\field$-group. 
\subsection{Main results} 
Let $\field$ be a non-archimedean local field of residue characteristic $p>0$.
Let $\alg G$ be a connected reductive group over $\field$, let $\alg B$ be a
minimal parabolic $\field$-subgroup of $\alg G$ with Levi decomposition $\alg B
= \alg Z\alg U$. In this article a parabolic subgroup of $\alg G$ is a standard
parabolic $\field$-subgroup with respect to $\alg B$.
Fix a special parahoric subgroup $K$ of $G\coloneqq \alg G(\field)$
corresponding to a special point in the apartment determined by $\alg Z$. Then
$G = KB$, where $B\coloneqq \alg B(\field)$. For any subgroup $X\subseteq G$ we
put $K_X = K\cap X$. Let $\alg P$ be a parabolic subgroup of $\alg G$
and put $P \coloneqq \alg P(\field)$. Let $R$ be a commutative
$\Z[1/p]$-algebra, considered as a ring of coefficients.

The Hecke ring $\Hecke_R(K_Z,Z)$, where $Z\coloneqq \alg Z(\field)$, identifies
with the group algebra $R[Z/K_Z]$, and there are natural embeddings
$\Hecke_R(K,G)\subseteq \Hecke_R(K_P,P)\subseteq \Hecke_R(K_B,B)$. There is a
natural algebra homomorphism
\[
\Theta^B_Z\colon \Hecke_R(K_B,B) \longrightarrow R[Z/K_Z]
\]
induced by the canonical projection map $B\to Z$. The restriction of
$\Theta^B_Z$ to $\Hecke_R(K_G,G)$ is called the (unnormalized) \emph{Satake
homomorphism}. It is well-known that $\Hecke_R(K,G)$ is commutative. Besides
$\Hecke_R(K,G)$, the parabolic Hecke algebra $\Hecke_R(K_P,P)$ contains another
commutative algebra $C^-_P$, which is constructed as the centralizer of a
certain element of $\Hecke_R(K_P,P)$.

Consider the $R$-submodule $\OO^-_P \coloneqq \Hecke_R(K,G).C^-_P$ of
$\Hecke_R(K_P,P)$. In order to develop a reasonable theory one needs to make
the following assumption:
\begin{hyp}\label{hyp-intro} 
The restriction of $\Theta^B_Z$ to $\OO^-_P$ is injective. 
\end{hyp} 

It should be remarked that there is no example known where
Hypothesis~\ref{hyp-intro} fails. There is one maximal parabolic subgroup in
$\Sp_{6}(\Q_p)$ for which we do not know whether Hypothesis~\ref{hyp-intro} is
satisfied.
One can show (see Proposition~\ref{prop:hyp-cap}) that, if
Hypothesis~\ref{hyp-intro} is satisfied for two
parabolics $\alg P$ and $\alg Q$, then it is also satisfied for $\alg P\cap \alg
Q$. Hence, it would suffice to verify Hypothesis~\ref{hyp-intro} for every
maximal parabolic subgroup of $\alg G$. For practical reasons we work with an
equivalent form of Hypothesis~\ref{hyp-intro}, see Hypothesis~\ref{hyp} on
p.~\pageref{hyp}. We prove:

\begin{thmintro}[Theorem~\ref{thm:decomp}] 
Assume that Hypothesis~\ref{hyp-intro} is satisfied. 
Let $d(t) \in \Hecke_R(K,G)[t]$ be a polynomial such that $d^{\Theta^B_Z}(t)$,
the polynomial obtained by applying $\Theta^B_Z$ to the coefficients of $d(t)$,
decomposes in $R[Z/K_Z][t]$ as
\[
d^{\Theta^B_Z}(t) = \wt f(t)\cdot \wt g(t)
\]
such that $\wt g(t)$ has coefficients in $\Theta^B_Z(C^-_P)$ with constant term
$1$. Then there exist polynomials $f(t), g(t)\in \Hecke_R(K_P,P)[t]$ with the
following properties:
\begin{itemize}
\item $\deg f(t) = \deg\wt f(t)$ and $f^{\Theta^B_Z}(t) = \wt f(t)$;
\item $\deg g(t) = \deg\wt g(t)$ and $g^{\Theta^B_Z}(t) = \wt g(t)$;
\item $d(t) = f(t)\cdot g(t)$ in $\Hecke_R(K_P,P)[t]$.
\end{itemize}
\end{thmintro}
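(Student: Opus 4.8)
The plan is to build $g(t)$ first, by lifting $\wt g(t)$ along $\Theta^B_Z$ into $C^-_P[t]$, then to \emph{define} $f(t)\coloneqq d(t)\cdot g(t)^{-1}$ in a suitable power series ring, and finally to invoke Hypothesis~\ref{hyp-intro} to see that this quotient is in fact a polynomial of the expected degree mapping to $\wt f(t)$. The factorization $d(t)=f(t)g(t)$ will then be automatic, and the only real subtlety is the polynomiality of $f(t)$.

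First I would construct $g(t)$. Each coefficient of $\wt g(t)$ lies in $\Theta^B_Z(C^-_P)$ by hypothesis, and $\Theta^B_Z$ is injective on $\OO^-_P\supseteq C^-_P$ by Hypothesis~\ref{hyp-intro}; so there is a unique $g(t)\in C^-_P[t]$ with $g^{\Theta^B_Z}(t)=\wt g(t)$ and $\deg g(t)\le\deg\wt g(t)$. Since the leading coefficient of $\wt g(t)$ is nonzero, injectivity forces $\deg g(t)=\deg\wt g(t)$, and since the constant term of $\wt g(t)$ is $1=\Theta^B_Z(1)$ it forces the constant term of $g(t)$ to be $1$. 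Because $C^-_P$ is commutative with identity and $g(t)$ has unit constant term, $g(t)$ is invertible in $C^-_P\llbracket t\rrbracket$, hence in $\Hecke_R(K_P,P)\llbracket t\rrbracket$. I then put $f(t)\coloneqq d(t)\cdot g(t)^{-1}$ in $\Hecke_R(K_P,P)\llbracket t\rrbracket$; by associativity $f(t)\cdot g(t)=d(t)$, which is the desired factorization, once $f(t)$ is known to be a polynomial.

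To control $f(t)$, observe that its coefficients are $R$-linear combinations of products of a coefficient of $d(t)$, lying in $\Hecke_R(K,G)$, with a coefficient of $g(t)^{-1}$, lying in $C^-_P$; hence every coefficient of $f(t)$ lies in $\OO^-_P=\Hecke_R(K,G)\cdot C^-_P$. Extending $\Theta^B_Z$ coefficientwise to a ring homomorphism on power series, I compute $f^{\Theta^B_Z}(t)=d^{\Theta^B_Z}(t)\cdot\bigl(g^{\Theta^B_Z}(t)\bigr)^{-1}=\wt f(t)\,\wt g(t)\,\wt g(t)^{-1}=\wt f(t)$, a polynomial of degree $\deg\wt f(t)$; here the inverse makes sense because $\wt g(t)$ has coefficients in the commutative ring $\Theta^B_Z(C^-_P)$ with constant term $1$. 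Consequently, for each $n>\deg\wt f(t)$ the coefficient $f_n$ of $t^n$ in $f(t)$ lies in $\OO^-_P$ and is killed by $\Theta^B_Z$; by the injectivity in Hypothesis~\ref{hyp-intro}, $f_n=0$. Thus $f(t)$ is a polynomial with $\deg f(t)\le\deg\wt f(t)$, and $f^{\Theta^B_Z}(t)=\wt f(t)$ upgrades this to $\deg f(t)=\deg\wt f(t)$. Together with $f(t)g(t)=d(t)$, $g^{\Theta^B_Z}(t)=\wt g(t)$ and $\deg g(t)=\deg\wt g(t)$, this yields all the asserted properties.

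The one step carrying genuine content is the last one: a priori $f(t)=d(t)g(t)^{-1}$ is merely a power series, and nothing in the formal calculus forces it to terminate. Hypothesis~\ref{hyp-intro} is exactly what is needed, since the construction places the coefficients of $f(t)$ inside $\OO^-_P$, where $\Theta^B_Z$ is faithful, and under $\Theta^B_Z$ the tail beyond degree $\deg\wt f(t)$ visibly vanishes. I expect the main obstacle to be essentially one of bookkeeping: one must make sure it is $g(t)^{-1}$, with coefficients in $C^-_P$, that multiplies $d(t)$ — rather than some less structured inverse — so that the product lands in $\OO^-_P$ and not merely in $\Hecke_R(K_P,P)$. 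No commutativity of $\Hecke_R(K_P,P)$ or of $R[Z/K_Z]$ is used; only that $C^-_P$, and hence its image $\Theta^B_Z(C^-_P)$, is commutative, which is what allows $g(t)$ and $\wt g(t)$ to be inverted.
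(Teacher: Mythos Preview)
Your proof is correct and follows essentially the same approach as the paper's: lift the factor with coefficients in the centralizer algebra (here $g(t)\in C^-_P[t]$), invert it as a formal power series, define the other factor as the appropriate one-sided quotient, and use the injectivity of $\Theta^B_Z$ on $\OO^-_P$ to deduce polynomiality. The paper presents the argument in the symmetric case~\ref{thm:decomp-a} (lifting $\wt f(t)$ into $C^+_P$ and setting $g(t)=f(t)^{-1}\cdot d(t)\in\OO^+_P\llbracket t\rrbracket$) and declares case~\ref{thm:decomp-b} analogous; you have simply written out that analogous case directly.
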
 

The proof is merely a straightforward extension of the arguments in
\cite{Andrianov.1977}. 
However, it is in general very hard to decide for which $\alg P$
Hypothesis~\ref{hyp-intro} is satisfied. The main contribution of this paper is
to single out a class of maximal parabolic subgroups in $\alg G$ for which this
hypothesis holds. We make the following important definition:

\begin{defn}[See \S\ref{sec:non-obtuse}] 
Let $\alg P$ be a maximal parabolic subgroup of $\alg G$ with unipotent radical
$\alg U_{\alg P}$. Then $\alg P$ is called \emph{non-obtuse} if any two relative
roots that occur in $\alg U_{\alg P}$ span a non-obtuse angle.
\end{defn} 

Our main result is then:
\begin{thm}[Theorem~\ref{thm:hyp}]\label{intro-thm:hyp} 
Assume that $\alg P$ is a non-obtuse parabolic subgroup of $\alg G$. Then
Hypothesis~\ref{hyp-intro} is satisfied for $\alg P$.
\end{thm}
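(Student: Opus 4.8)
The plan is to reduce the injectivity of $\Theta^B_Z$ on $\OO^-_P = \Hecke_R(K,G).C^-_P$ to a statement about the combinatorics of the relative root system. The Satake-type homomorphism $\Theta^B_Z$ and its restrictions can be computed on basis elements indexed by cocharacters (or by double cosets $K_Z\backslash Z/K_Z$, equivalently $Z/K_Z$), and the failure of injectivity would manifest as a cancellation among leading terms. So first I would set up the standard machinery: choose bases of $\Hecke_R(K,G)$ and $C^-_P$ adapted to the Bruhat–Tits building, with each basis element supported on a single double coset; express $\Theta^B_Z$ of such an element as a sum over $Z/K_Z$ with the ``leading'' (most dominant) term occurring with coefficient a power of $q$, hence a unit in $R$ (this uses that $R$ is a $\Z[1/p]$-algebra). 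The product $\Hecke_R(K,G).C^-_P$ is then spanned by products of such elements, and the image under $\Theta^B_Z$ has a leading term governed by the sum of the respective cocharacters.

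Next I would make precise the partial order in which ``leading term'' is taken. For $\OO^-_P$ the relevant cocharacters live in the cone dual to the roots appearing in $\alg U_{\alg P}$ — more precisely, the antidominant-with-respect-to-$P$ directions for the $C^-_P$ factor, combined with dominant ones for the spherical factor. Injectivity of $\Theta^B_Z$ on $\OO^-_P$ is equivalent to: the map from pairs (spherical cocharacter, $C^-_P$-cocharacter) to their sum in $Z/K_Z$, restricted to the relevant cones, together with the leading-term structure, has no collisions that are not already forced. Reformulating, after passing to the equivalent Hypothesis~\ref{hyp} alluded to on p.~\pageref{hyp}, one wants that a certain monoid homomorphism is injective, which boils down to a statement about when two distinct lattice points in a product of cones can have the same image — and this is where the geometry of the root system enters.

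The key geometric input is the non-obtuse hypothesis. I would argue that when all roots occurring in $\alg U_{\alg P}$ pairwise span non-obtuse angles, the relevant cone (spanned by the fundamental coweight $\varpi$ dual to the distinguished simple root together with the negatives of the roots in $\alg U_{\alg P}$, or some such configuration) is \emph{simplicial}, or at least that the natural spanning set is linearly independent over $\Q$ in the appropriate quotient space $\X_*(\alg Z)_\Q$ modulo the span of the Levi's coroots. Non-obtuseness forces the Gram matrix of the generating vectors to be diagonally dominant with the right signs, hence nonsingular; this gives the needed linear independence, which in turn shows the monoid homomorphism in question is injective on the relevant submonoid — establishing Hypothesis~\ref{hyp-intro}. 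Classifying which maximal parabolics are non-obtuse is a separate finite check over the irreducible root systems (done elsewhere in the paper), but for the proof of this theorem I only need the implication non-obtuse $\Rightarrow$ simplicial cone $\Rightarrow$ injectivity.

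The main obstacle I anticipate is the bookkeeping connecting the abstract injectivity statement to the concrete leading-term computation: one must verify that the leading coefficients that appear are genuinely units (powers of $q$) and that the partial order used to define ``leading'' is compatible with the monoid structure on both factors simultaneously — i.e. that multiplying a spherical basis element by a $C^-_P$ basis element really does produce a term whose support is maximal exactly at the sum of the two cocharacters, with unit coefficient, and no unexpected cancellation from other pairs mapping to the same place. Handling the ``other pairs'' is precisely where the simpliciality of the cone (equivalently, non-obtuseness) is used: in a simplicial cone each lattice point has a unique representation, so there is nothing to cancel. Once that structural statement is in place, the rest is a routine induction on the partial order, mirroring Andrianov's original argument but carried out in the building-theoretic language set up earlier in the paper.
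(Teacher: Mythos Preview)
Your proposal has a genuine gap: the mechanism you describe is not the one that actually proves the theorem, and the argument you sketch cannot be completed as stated.

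First, the equivalent Hypothesis~\ref{hyp} you allude to is \emph{not} the injectivity of a monoid map on cocharacters. It is the concrete statement that $(a_P)_{K_P}$ is a left root of the explicit polynomial $\chi_{a_P}(t) = \sum_i X_i t^i \in \Hecke_R(K,G)[t]$. The paper proves this by showing that each individual summand $(a_P)_{K_P}^i X_i$ lies in $C^+_P$; since $C^+_P \cap \Ker\Theta^P_M = \{0\}$ and the full sum already lies in $\Ker\Theta^P_M$ (Lemma~\ref{lem:chi-ker}), the vanishing follows. No leading-term or triangularity argument on $\OO^-_P$ is attempted, and indeed such an argument would have to confront exactly the cancellation problem you flag but do not resolve.

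Second, the substance of the proof is showing $(a_P)_{K_P}^i X_i \in C^+_P$. Unwinding the definitions, this requires: for every $u\in U_P$ and $z'\in Z$ with $uz' \in KzK$ where $\nu(z) \le \nu(a_P^{-i})$, one has $a_P^i u a_P^{-i} \in K_P$. Equivalently, one needs lower bounds $\varphi_\alpha(u_\alpha) \ge \langle\alpha,\nu(a_P^i)\rangle$ for each root-group component $u_\alpha$ of $u$. This is Theorem~\ref{thm:main}, and it is proved by an iterative algorithm on the Bruhat--Tits building (Algorithm~\ref{algo}) that manipulates the Iwasawa representative $uz'$ within $KzK$ one simple reflection at a time, tracking the valuations $\varphi_{\beta_{k+1}}(u^{(k)}_{\beta_{k+1}})$ at each step. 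Non-obtuseness enters in two distinct ways: via Lemma~\ref{lem:no-cone} (guaranteeing $a_P^i z^{(k)} \in M^+$ at every step, which is what makes the valuation bounds \eqref{eq:u(k)-estimate} useful), and via Corollary~\ref{cor:rootorder} (supplying, for each $\alpha\in\Sigma_{U_P}$ of the right length, a reduced decomposition of $w_0$ that isolates $\alpha$ early). Neither of these has anything to do with simpliciality of a cone or diagonal dominance of a Gram matrix; your proposed reduction does not connect to the actual obstruction.
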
 

The obvious question then is whether there exist non-obtuse parabolics and if
they can be classified. In Proposition~\ref{prop:classification} we achieve a
complete classification of non-obtuse parabolic subgroups and formulate several
equivalent conditions. The maximal parabolic subgroups correspond bijectively to
the vertices in the Dynkin diagram of the relative root system of $\alg G$, and
hence it makes sense to say when a vertex of the Dynkin diagram is non-obtuse.
The classification shows that all vertices in type $A_n$, the terminal vertices
in types $B_n$, $C_n$, and $D_n$, two terminal vertices in type $E_6$, and one
terminal vertex in type $E_7$ are non-obtuse. It also shows that there are no
non-obtuse vertices in types $E_8$, $F_4$, and $G_2$, cf.
Figure~\ref{fig:class} on p.~\pageref{fig:class}. 

The proof of Theorem~\ref{intro-thm:hyp} requires us to investigate
intersections of Cartan and Iwasawa double cosets. This problem is well-known
and arises, for example, in the study of the Satake homomorphism, but here it is
of a different flavor. More precisely, let $z,z'\in Z$ be such that $Uz'K
\cap KzK \neq\emptyset$. It is well-understood how $z'$ and $z$ relate. However,
so far almost nothing is known about the $u\in U$ for which $uz'\in KzK$. 

To state our main technical result in this direction, let $\varphi$ be the point
in the (adjoint) Bruhat--Tits building of $G$ corresponding to $K$. By
assumption, $\varphi$ lies in
the apartment $\apartment$ corresponding to a torus $\alg T\subseteq \alg Z$
which is maximal $\field$-split in $\alg G$. By definition, $\varphi$
defines valuations, denoted $\varphi_\alpha$, on the root groups
$U_\alpha$. There is a canonical homomorphism $\nu\colon Z\to V$ into the
underlying $\R$-vector $V$ space of $\apartment$ containing the coroots
with respect to $\alg T$. Fix a strictly positive element $a\in Z$ so that
$\langle \alpha, \nu(a)\rangle < 0$ for all simple roots $\alpha$
(with respect to $\alg B$). Choose the representative $z$ of $KzK$ such that
$z\cdot (K\cap U)\cdot z^{-1} \subseteq K\cap U$, where $U\coloneqq \alg
U(\field)$. Denote $U_P \coloneqq \alg
U_{\alg P}(\field)$ the group of $\field$-points of the unipotent radical of
$\alg P$. We prove the following technical result, which might be of
independent interest:

\begin{thm}[Theorem~\ref{thm:main}]\label{intro-thm:main} 
Assume that $\alg P$ is non-obtuse and that $-\nu(az)$ is a sum of simple
coroots. Let $z'\in Z$ and $u\in U_P$ with $uz'\in KzK$. Then one has $az'\cdot
(K\cap U_P)\cdot (az')^{-1} \subseteq K\cap U_P$ and $aua^{-1}
\in K$.
\end{thm}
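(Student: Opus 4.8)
The plan is to translate both assertions into numerical inequalities on the valued root datum at $\varphi$, establish them by analysing how the Iwasawa cell $U_P z'K$ can meet the Cartan cell $KzK$ inside the building, and then feed in the two hypotheses to sharpen the resulting estimates to the precise statement. For the reformulation, write $U_{\alpha,r}=\{v\in U_\alpha\mid\varphi_\alpha(v)\ge r\}$ for each root $\alpha$, so that $K\cap U_\alpha=U_{\alpha,0}$ and, for a suitable ordering of the roots $\alpha$ occurring in $\alg U_{\alg P}$, both $K\cap U_P=\prod_\alpha U_{\alpha,0}$ and $u=\prod_\alpha u_\alpha$. Because $\nu$ intertwines conjugation by $t\in Z$ with a shift of the index $r$ by $\pm\langle\alpha,\nu(t)\rangle$, the inclusion $az'(K\cap U_P)(az')^{-1}\subseteq K\cap U_P$ is equivalent to the inequalities $\langle\alpha,\nu(az')\rangle\le 0$ for every root $\alpha$ of $\alg U_{\alg P}$, while $aua^{-1}\in K$ is equivalent to $\varphi_\alpha(u_\alpha)\ge\langle\alpha,\nu(a)\rangle$ for every such $\alpha$ occurring in $u$ (recall $\langle\alpha,\nu(a)\rangle<0$, so this just bounds how large $u_\alpha$ may be). Thus it suffices to extract, from the single hypothesis $uz'\in KzK$, a pointwise bound on $\nu(z')$ along the roots of $\alg U_{\alg P}$ and pointwise lower bounds on the component valuations $\varphi_\alpha(u_\alpha)$.

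Next I would pass to the building $\building$ with base point $\varphi$, so that $K$ fixes $\varphi$ and $Z$ acts on $\apartment$ by translation via $\nu$. The hypothesis $uz'\in KzK$ fixes the relative position of $\varphi$ and $uz'\cdot\varphi$ (namely $\nu(z)$, the Cartan invariant). On the other hand, since $u$ fixes a subsector of the sector-germ attached to $\alg B$ and $z'\cdot\varphi\in\apartment$, the retraction $\rho$ of $\building$ onto $\apartment$ centered at that germ sends $uz'\cdot\varphi$ to $z'\cdot\varphi$, whose displacement from $\varphi$ is the Iwasawa invariant $\nu(z')$. Retracting a geodesic from $\varphi$ to $uz'\cdot\varphi$ then produces a folded (Hecke) path of type $\nu(z)$ from $\varphi$ to $z'\cdot\varphi$ whose folds occur only at walls of roots of $\alg U_{\alg P}$ (reflecting $u\in\alg U_{\alg P}(\field)$) and whose bend amounts are essentially $-\varphi_\alpha(u_\alpha)$ for those $\alpha$ with $\varphi_\alpha(u_\alpha)<0$; equivalently one may argue directly with the product and commutation relations among the $U_{\alpha,r}$ and the Bruhat--Iwahori structure of $KzK$. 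Either way one recovers at once the classical estimate that the dominant conjugate of $\nu(z')$ is $\preceq\nu(z)$, and the bend data of the path carries finer, though not yet decoupled, information about $u$; turning this into the componentwise control demanded by the theorem is exactly where the two hypotheses enter.

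That $-\nu(az)$ is a sum of simple coroots pins $\nu(z)$ down relative to the strictly positive $a$ closely enough that, combined with the restriction of the path's folds to $\Phi(\alg U_{\alg P})$, one obtains $\langle\alpha,\nu(az')\rangle\le 0$ for $\alpha$ a root of $\alg U_{\alg P}$ --- the first conclusion. For the second, non-obtuseness constrains $\alg U_{\alg P}$ very strongly: it is nilpotent of class at most $2$, and commutators of root groups occur only between \emph{orthogonal} roots, so the normal form of $u$ is rigid and its components cannot cancel. Moreover, the pairwise non-obtuseness of the roots of $\alg U_{\alg P}$ is precisely what lets one pass from the single ``global'' bound carried by the path --- a bound on a displacement $\sum_\alpha c_\alpha\alpha^\vee$ with $c_\alpha=\max(0,-\varphi_\alpha(u_\alpha))$ --- to the individual bounds $c_\alpha\le-\langle\alpha,\nu(a)\rangle$: for a pairwise non-obtuse family of vectors one recovers each coefficient of a nonnegative combination by pairing it against a single member, or by a norm estimate. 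Unwinding the reformulation then yields $aua^{-1}\in K$ and $az'(K\cap U_P)(az')^{-1}\subseteq K\cap U_P$.

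The step I expect to be genuinely delicate is this last one: one must prove that non-obtuseness together with the coroot-sum condition really does prevent the coordinates $u_\alpha$ from conspiring, through the central commutators $[U_\alpha,U_\beta]\subseteq U_{\alpha+\beta}$, so as to keep $uz'$ in $KzK$ while forcing some $u_\alpha$ to lie strictly inside $a^{-1}(K\cap U_\alpha)a$. I would handle this by a descending induction on the height of the roots of $\alg U_{\alg P}$: peel off the root group of a root of maximal height (which, by the restriction on commutators, is central in $\alg U_{\alg P}$), reduce modulo it, and at each stage invoke the rank-one computation in the relevant subgroup (a form of $\SL_2$, or of $\SU_3$ when the root is multipliable) together with the geometric inequality supplied by the folded path, so as to propagate the bound all the way down.
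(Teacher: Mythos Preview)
Your reformulation of the two conclusions as valuation inequalities is correct and matches the paper, but the substance of your argument diverges sharply from the paper's, and there are genuine gaps.

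For the first conclusion you invoke ``the restriction of the path's folds to $\Phi(\alg U_{\alg P})$'' and the coroot-sum hypothesis on $-\nu(az)$. This is where you go wrong: the inequality $\langle\alpha,\nu(az')\rangle\le 0$ for $\alpha\in\Sigma^+\setminus\Sigma_M$ already requires the non-obtuse hypothesis, not merely $u\in U_P$. The paper's input is not that $\nu(z)-\nu(z')$ is a sum of $\Phi(\alg U_{\alg P})$-coroots (which is not claimed and likely false), but the much stronger fact that $w\cdot\nu(z')\le\nu(z)$ for \emph{every} $w\in W_0$. Combined with $\nu(z)\le-\nu(a)$, this feeds into a purely combinatorial lemma (Lemma~\ref{lem:no-cone}) whose proof is an induction on $\ell(w)$ and uses non-obtuseness at every step. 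Figure~\ref{fig:non-obtuse} shows that the conclusion fails in $G_2$ even though your hypotheses on the path would still hold there.

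For the second conclusion, the crux of your argument is the passage from a single global bound on a displacement $\sum_\alpha c_\alpha\alpha^\vee$ with $c_\alpha=\max(0,-\varphi_\alpha(u_\alpha))$ to the individual bounds $c_\alpha\le-\langle\alpha,\nu(a)\rangle$. Neither half of this is established: you have not shown that the Hecke path actually encodes the $\varphi_\alpha(u_\alpha)$ linearly in this way (it does not, once commutators intervene), and the pairing argument you sketch would require control of $\langle\alpha,\sum_\beta c_\beta\beta^\vee\rangle$ that the path does not obviously provide. Your proposed induction on height does not address this, and your observation that $\alg U_{\alg P}$ has class $\le 2$ with only orthogonal commutators, while correct, is not what drives the proof.

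The paper's route is entirely different. It introduces an explicit algorithm (Algorithm~\ref{algo}) that, starting from $uz'\in KzK$, produces a sequence $u^{(k)}z^{(k)}\in KzK$ by stripping off one root-group factor $u^{(k)}_{\beta_{k+1}}$ at each step, along a fixed ordering $\beta_1,\beta_2,\dotsc$ of $\Sigma^+$ coming from a reduced word for $w_0$. In each of the three possible cases one reads off directly that $\varphi_{\beta_{k+1}}(u^{(k)}_{\beta_{k+1}})\ge\langle\beta_{k+1},\nu(a)\rangle$, using part~(i) to know $az^{(k)}\in M^+$ throughout. The non-obtuse hypothesis then enters combinatorially via Corollary~\ref{cor:rootorder}: for each $\alpha\in\Sigma_{U_P}$ of the same length as the simple root of $P$, one can choose the reduced word so that the first $l$ roots $\beta_1,\dotsc,\beta_l$ lie in $\Sigma_M$ (hence $u_{\beta_j}=1$) and $\beta_{l+1}=\alpha$; thus $u^{(l)}_{\beta_{l+1}}=u_\alpha$ and the algorithm's bound applies to $u_\alpha$ itself. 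The non-simply-laced cases ($B_n$, $C_n$, and $\Phi$ of type $BC_n$) then need a separate and rather delicate analysis tracking how commutators perturb the relevant component through the first few steps of the algorithm.
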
 

The condition $aua^{-1}\in K$ is the difficult part of the theorem and can be
interpreted as follows: write $u = u_1\dotsm u_r$, for certain $u_i\in
U_{\alpha_i}$. Then $\varphi_{\alpha_i}(u_i) \ge \langle
\alpha_i,\nu(a)\rangle$ for all $i$, that is, we obtain a lower bound for the
valuations of the $u_i$. In order to obtain this bound, we describe an
algorithm, see \S\ref{sec:algo}, which produces a sequence of left cosets
$u_0z_0K,\dotsc, u_rz_rK$ in $KzK$ (with $u_i\in U$, $z_i\in Z$) such that
$u_0z_0 = uz'$ and $u_r=1$ (so that $\nu(z_r)$ lies in the orbit of $\nu(z)$
under the action of the finite Weyl group). As a byproduct, by a careful
analysis, we can estimate the valuations of the root group elements $u_i$.

Finally, it should be mentioned that there is another possible approach to study
intersections of Cartan and Iwasawa double cosets, which we do not follow here.
For $p$-adic Chevalley groups D\k{a}browski describes in \cite{Dabrowski.1994}
the intersections of the form $U\tau I \cap I\sigma I$ in terms of ``good
subexpressions'', where $I$ is an Iwahori subgroup and $\tau,\sigma$ are
elements of the affine Weyl group. By adapting the methods of \cite{Lansky.2001}
it seems plausible that one could in this way explicitly describe the
intersections $Uz'K \cap KzK$. 
\subsection{Structure of the paper} 
In \S\ref{sec:prelim} we fix notations (\S\ref{subsec:notation}) and recall some
notions about reductive groups
(\S\ref{subsec:apartment}--\S\ref{subsec:Iwahori-Weyl}). In
\S\ref{subsec:positive}--\S\ref{subsec:Hecke} we discuss positive elements, the
Cartan and Iwasawa decompositions, and abstract Hecke rings.

In \S\ref{sec:non-obtuse} we study non-obtuse parabolics. In
Proposition~\ref{prop:classification} we classify non-obtuse parabolic subgroups
and provide equivalent characterizations.

In \S\ref{sec:algo} we present the Algorithm~\ref{algo}. Although it will not be
used in the sequel it is worth to mention that it always terminates
(Proposition~\ref{prop:algo}). Our main technical result is
Theorem~\ref{thm:main}.

Finally, in \S\ref{sec:decomp} we develop the theory leading to the
decomposition Theorem~\ref{thm:decomp}. The \S\ref{subsec:parHecke} introduces
parabolic Hecke algebras and defines the unnormalized version of the Satake
homomorphism. In \S\ref{subsec:twisted} we give another presentation of the
twisted action due to Henniart--Vign\'eras \cite{Henniart-Vigneras.2015}. Then
in \S\ref{subsec:Satake-iso} we translate the main theorem of
\cite{Henniart-Vigneras.2015} into our context. In \S\ref{subsec:centralizer} we
recall the commutative algebra $C^+_P$. In \S\ref{subsec:example} we work out an
explicit example of a parabolic Hecke algebra. We provide an explicit
presentation of the parabolic Hecke algebra attached to $\GL_2(\field)$ in terms
of generators and relations. The straightforward proof is given in an appendix.
We also explicitly compute the Satake homomorphism for illustrative purposes. 
Given a strictly positive element $a_P$, we construct in \S\ref{subsec:decomp} a
certain Hecke polynomial $\chi_{a_P}(t)$. With Hypothesis~\ref{hyp} we impose
that $(K_Pa_P)$ is a ``left root'' of $\chi_{a_P}(t)$. This hypothesis is
crucial for proving the decomposition theorem. Theorem~\ref{thm:hyp} shows that
this hypothesis is satisfied provided that the parabolic $\alg P$ is non-obtuse.
Proposition~\ref{prop:hyp-equiv} lists several conditions which are equivalent
to Hypothesis~\ref{hyp}. Proposition~\ref{prop:hyp-cap} shows that, in
principle, it would suffice to verify Hypothesis~\ref{hyp} for \emph{maximal}
parabolics.
\subsection{Acknowledgments} 
This article constitutes a part of my doctoral dissertation \cite{Heyer.2019}
written at the Humboldt University in Berlin. I want to express my deep
gratitude to my advisor Elmar Gro\ss{}e-Kl\"onne for his support. I also thank
Peter Schneider for inviting me to present these results in the Mittagsseminar
at the University of M\"unster. My thanks also goes to the organizers of the
conference ``Representation Theory and $D$-Modules'', held in June 2019 in
Rennes, for giving me the chance to present a poster about this research.
Finally, I thank the anonymous referee for a careful reading of the paper and for
numerous remarks, questions, and suggestions. During
the write-up of this article I was funded by the University of M\"unster and
Germany's Excellence Strategy EXC 2044 390685587, Mathematics M\"unster:
Dynamics--Geometry--Structure.
\section{Preliminaries}\label{sec:prelim} 
\subsection{Notations}\label{subsec:notation} 
We fix a locally compact non-archimedean field $\field$ with residue field
$\F_q$ of characteristic $p$ and normalized valuation $\val_\field\colon
\field\to \Z\cup \{\infty\}$.

If $\alg H$ is an algebraic group defined over $\field$, we denote by the
corresponding lightface letter $H\coloneqq \alg H(\field)$ its group of
$\field$-rational points. The topology on $\field$ makes $H$ into a topological
group.

Let $\alg G$ be a connected reductive group defined over $\field$. We choose a
maximal $\field$-split torus $\alg T$ in $\alg G$ and write $\X^*(T)$ (resp.
$\X_*(T)$) for the group of algebraic $\field$-characters (resp. algebraic
$\field$-cocharacters) of $\alg T$.

We denote by $\alg Z\coloneqq \alg
Z_{\alg G}(\alg T)$ the centralizer and by $\alg N\coloneqq \alg N_{\alg G}(\alg
T)$ the normalizer of $\alg T$ in $\alg G$. We call $W_0\coloneqq N/Z$ the
finite Weyl group of $\alg G$.

The (relative) root system of $(\alg G,\alg T)$ is denoted by $\Phi \subseteq
\X^*(T)$; it need not be reduced if $\alg G$ is non-split. The finite Weyl group
$W_0$ identifies with the Weyl group of the root system $\Phi$. We denote
\[
\Phi_{\red} = \set{\alpha\in\Phi}{\alpha/2 \notin \Phi}
\]
the subroot system of reduced roots. The set of coroots is denoted $\Phi^\vee
\subseteq X_*(T)$.

We consider the
root group $\alg U_\alpha$ attached to $\alpha\in \Phi$. Then $\alg U_{2\alpha}
\subseteq \alg U_\alpha$ whenever $\alpha,2\alpha\in \Phi$.

We fix a minimal parabolic $\field$-subgroup $\alg B$ of $\alg G$ containing
$\alg T$. It then admits a Levi decomposition
\[
\alg B = \alg U\alg Z.
\]
This choice fixes a system of positive roots $\Phi^+$ in $\Phi$ (resp. positive
coroots $(\Phi^\vee)^+$ in $\Phi^\vee$), and the
unipotent radical $\alg U$ of $\alg B$ decomposes as
\[
\alg U = \prod_{\alpha\in \Phi^+_{\red}} \alg U_\alpha,
\]
where $\Phi^+_{\red} = \Phi_\red\cap \Phi^+$ is the set of reduced positive
roots.

All parabolic subgroups are taken to be standard with respect to $\alg B$.
\subsection{The standard apartment}\label{subsec:apartment} 
If $\alg C$ denotes the connected center of $\alg G$, we consider the
finite-dimensional $\R$-vector space
\[
V \coloneqq \R\otimes_{\Z} \bigl(\X_*(T)/\X_*(C)\bigr).
\]
We view the set of coroots $\Phi^\vee$ as a subset of $V$ via the natural map.
Note that $\Phi^\vee$ generates $V$ as an $\R$-vector space. On $V$ there is the
following partial ordering: given $v,w\in V$, we write
\[
v \le w
\]
if $w-v$ is a linear combination of simple coroots with non-negative
coefficients.

The conjugation action of $W_0$ on $\alg T$ induces an action on $V$ such that
the natural pairing
\[
\pairing\colon V^*\times V \longrightarrow \R
\]
is non-degenerate and $W_0$-equivariant. Here, $V^* = \Hom_\R(V,\R)$ denotes
the $\R$-linear dual of $V$. We view $\Phi$ as a generating subset of
$V^*$. We fix a $W_0$-invariant scalar product $\scalar$ on $V$ so that $V$
becomes
a Euclidean vector space. We denote $\lVert\,\cdot\,\rVert$ the norm induced by
$\scalar$. The scalar product $\scalar$ induces a $W_0$-invariant scalar product
on $V^*$ which we again denote $\scalar$.

By \cite[1.1.13]{Bruhat-Tits.1984} the tuple $(Z,(U_\alpha)_{\alpha\in \Phi})$
is a generating root group datum of $G$ in the sense of
\cite[(6.1.1)]{Bruhat-Tits.1972}. In particular, this means that the root groups
$U_\alpha$ satisfy the following condition:
\begin{enumerate}[label=(DR\arabic*),start=2]
\item\label{DR2} For all $\alpha,\beta\in \Phi$, the commutator group
$(U_\alpha,U_\beta)$ is contained in the group generated by the $U_{n\alpha +
m\beta}$, where $n,m\in \Z_{>0}$ are such that $n\alpha+m\beta\in \Phi$.
\end{enumerate}

The standard apartment $\apartment$ in the adjoint building $\building(G)$
of $G$ is an affine space under $V$ consisting of certain valuations
\cite[(6.2.1)]{Bruhat-Tits.1972} of $(Z, (U_\alpha)_{\alpha\in \Phi})$. 
Since valuations will be instrumental
later on, we will recall their definition. For the moment let $L_\alpha$, for
$\alpha\in \Phi$, be the subgroup generated by $U_\alpha$, $Z$, and
$U_{-\alpha}$, and put 
\begin{equation}\label{eq:M_alpha}
M_\alpha \coloneqq \set{x\in L_\alpha}{\text{$xU_\alpha x^{-1} = U_{-\alpha}$
and $x U_{-\alpha} x^{-1} = U_\alpha$}} \subseteq N.
\end{equation}
Then $M_\alpha$ is a left and right coset under $Z$ with image $\{s_\alpha\}$ in
$W_0$. We record the following useful lemma:

\begin{lem}[{\cite[(6.1.2) (2)]{Bruhat-Tits.1972}}]\label{lem:Ua} 
Let $\alpha\in \Phi$ and $u\in U_{\alpha}^*\coloneqq U_{\alpha} \setminus\{1\}$.
Then there exists a unique triple $(u',m(u),u'')\in U_{-\alpha}\times G\times
U_{-\alpha}$ such that $u = u'm(u)u''$, $m(u)U_{-\alpha}m(u)^{-1} = U_\alpha$,
and $m(u)U_\alpha m(u)^{-1} = U_{-\alpha}$. 
Moreover, one has $m(u)\in M_\alpha$ and $u', u''\neq 1$.
\end{lem}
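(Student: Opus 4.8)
The existence of the decomposition is essentially one of the root group datum axioms of \cite[(6.1.1)]{Bruhat-Tits.1972}: for $u\in U_\alpha^*$ there is an element $m\in (U_{-\alpha}\,u\,U_{-\alpha})\cap N$ with $mU_\beta m^{-1}=U_{s_\alpha\beta}$ for every $\beta\in\Phi$; in particular $mU_\alpha m^{-1}=U_{-\alpha}$ and $mU_{-\alpha}m^{-1}=U_\alpha$. Writing $m=aub$ with $a,b\in U_{-\alpha}$ and setting $u'\coloneqq a^{-1}$, $u''\coloneqq b^{-1}$ gives $u=u'mu''$. Since $u,u',u''\in L_\alpha$, so is $m=u'^{-1}uu''^{-1}$, and then the two conjugation relations put $m$ into $M_\alpha$ by \eqref{eq:M_alpha}; in particular $m$ has image $s_\alpha$ in $W_0$. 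So what remains is to show that $u',u''\neq1$ and that the triple is unique; once uniqueness is known we may write $m(u)$ for $m$.

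The rest takes place inside the rank-one group $L_\alpha$, whose standard structure I would quote rather than reprove. Write $B_\alpha^+\coloneqq ZU_\alpha$ and $B_\alpha^-\coloneqq ZU_{-\alpha}$; these are subgroups because $Z$ normalizes $U_{\pm\alpha}$. I will use: (i) the rank-one Bruhat decomposition, in particular that $B_\alpha^+$ and $B_\alpha^+M_\alpha B_\alpha^+$ are disjoint; (ii) that $M_\alpha$ is a single coset of $Z$ (as stated above) and that $M_\alpha^{-1}=M_\alpha$ (immediate from \eqref{eq:M_alpha}); (iii) the elementary identities $U_{\pm\alpha}\cap Z=\{1\}$ and $B_\alpha^+\cap B_\alpha^-=Z$. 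These are the usual facts about a $BN$-pair of rank one and are among the first consequences drawn from \cite[(6.1.1)]{Bruhat-Tits.1972}.

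For non-triviality: if $u'=1$ then $u=mu''$ with $m\in M_\alpha$ and $u''\in U_{-\alpha}$; using $mU_{-\alpha}m^{-1}=U_\alpha$ we get $u=mu''\in mU_{-\alpha}=U_\alpha m\subseteq B_\alpha^+M_\alpha B_\alpha^+$, whereas $u\in U_\alpha\subseteq B_\alpha^+$, contradicting the disjointness in (i). The case $u''=1$ follows from the case $u'=1$ applied to $u^{-1}\in U_\alpha^*$: indeed $u=u'm$ gives $u^{-1}=m^{-1}(u')^{-1}$ with $m^{-1}\in M_\alpha$ by (ii), which is exactly the excluded situation. For uniqueness, suppose $u_1'm_1u_1''=u_2'm_2u_2''$ with $u_i',u_i''\in U_{-\alpha}$ and $m_i\in M_\alpha$. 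Put $p\coloneqq(u_2')^{-1}u_1'\in U_{-\alpha}$ and $r\coloneqq u_1''(u_2'')^{-1}\in U_{-\alpha}$, so that $pm_1r=m_2$, hence $m_2^{-1}pm_1=r^{-1}$. Writing $m_2^{-1}pm_1=(m_2^{-1}pm_2)(m_2^{-1}m_1)$ and noting $m_2^{-1}pm_2\in m_2^{-1}U_{-\alpha}m_2=U_\alpha$ and $m_2^{-1}m_1\in Z$ by (ii), we obtain $r^{-1}\in U_\alpha Z=B_\alpha^+$, hence $r^{-1}\in B_\alpha^+\cap B_\alpha^-=Z$ and then $r^{-1}\in U_{-\alpha}\cap Z=\{1\}$ by (iii); so $r=1$. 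Feeding this back, $m_2^{-1}pm_2=(m_2^{-1}m_1)^{-1}\in U_\alpha\cap Z=\{1\}$, so $p=1$ and $m_1=m_2$, and therefore $u_1'=u_2'$ and $u_1''=u_2''$.

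\textbf{Expected obstacle.} There is no real difficulty here beyond bookkeeping: the one point to be careful about is checking exactly which of the rank-one statements in (i)--(iii) are taken as axioms in \cite[(6.1.1)]{Bruhat-Tits.1972} and which have to be deduced from them (the disjointness of the two Bruhat pieces being the typical example), so as to keep the argument self-contained.
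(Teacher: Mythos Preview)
The paper does not prove this lemma but simply cites \cite[(6.1.2)(2)]{Bruhat-Tits.1972}; your sketch is correct and is essentially the argument found there. Existence is indeed one of the root group datum axioms in \cite[(6.1.1)]{Bruhat-Tits.1972}, and the rank-one facts (i)--(iii) you invoke for uniqueness and non-triviality are precisely what Bruhat--Tits record in (6.1.2)(1) together with the axiom $U_{-\alpha}\cap ZU_\alpha=\{1\}$, so your ``expected obstacle'' is already handled by the cited source.
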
 

\begin{defn}[{\cite[(6.2.1)]{Bruhat-Tits.1972}}] 
A \emph{valuation} on $(Z,(U_\alpha)_{\alpha\in\Phi})$ is a tuple $\psi =
(\psi_\alpha)_{\alpha\in \Phi}$ of functions $\psi_\alpha\colon U_\alpha\to
\R\cup \{\infty\}$ satisfying the following conditions:
\begin{enumerate}[label=(V\arabic*),start=0]
\item\label{V0} The image of $\psi_\alpha$ contains at least three elements;
\item\label{V1} For each $\alpha\in\Phi$ and $r\in\R\cup \{\infty\}$ the set
$U_{\alpha,r} = \psi_\alpha^{-1}([r,\infty])$ is a subgroup of $U_\alpha$, and
$U_{\alpha,\infty} = \{1\}$;
\item\label{V2} For each $\alpha\in \Phi$ and each $m\in M_\alpha$ the function
\[
U_{-\alpha}^* = U_{-\alpha}\setminus\{1\} \longrightarrow \R,\qquad
u\longmapsto \psi_{-\alpha}(u) - \psi_\alpha(mum^{-1})
\]
is constant.
\item\label{V3} Given $\alpha,\beta\in\Phi$ with $\beta\notin \R_{<0}\alpha$,
and $r,s\in \R$, the commutator group $(U_{\alpha,r}, U_{\beta,s})$ is contained
in the group generated by the groups $U_{n\alpha + m\beta, nr + ms}$, for
$n,m\in\Z_{>0}$ such that $n\alpha + m\beta\in \Phi$;

\item\label{V4} If $\alpha,2\alpha\in\Phi$, then $\psi_{2\alpha}$ is the
restriction of $2\psi_\alpha$ to $U_{2\alpha} \subseteq U_\alpha$;

\item\label{V5} Given $\alpha\in \Phi$, $u\in U_\alpha$, and $u',u''\in
U_{-\alpha}$ such that $u'uu''\in M_{\alpha}$, one has $\psi_{-\alpha}(u') =
\psi_{-\alpha}(u'') = -\psi_\alpha(u)$.
\end{enumerate}
\end{defn} 

The space of valuations on $(Z,(U_\alpha)_{\alpha\in\Phi})$ admits the following
two actions \cite[(6.2.5)]{Bruhat-Tits.1972}:
\begin{enumerate}[label=--] 
\item Given a valuation $\psi = (\psi_\alpha)_{\alpha\in\Phi}$ and $v\in V$,
the tuple
\[
\psi+v = \bigl(\psi_\alpha + \langle \alpha,v\rangle\bigr)_{\alpha\in\Phi}
\]
is again a valuation.

\item Let $\psi = (\psi_\alpha)_{\alpha\in\Phi}$ be a valuation and $n\in N$.
Denote $w$ the image of $n$ under the canonical projection $N\to N/Z = W_0$.
We obtain a new valuation $n.\psi = ((n.\psi)_\alpha)_{\alpha\in\Phi}$
defined by
\[
(n.\psi)_\alpha(u) = \psi_{w^{-1}(\alpha)}(n^{-1}un),\qquad \text{for all $u\in
U_\alpha$.}
\]
In this way, the group $N$ acts on the space of valuations.
\end{enumerate} 

By \cite[5.1.20 Theor\`eme and 5.1.23 Proposition]{Bruhat-Tits.1984} there
exists a valuation 
\[
\varphi = (\varphi_\alpha\colon U_\alpha\to \R\cup\{\infty\})_{\alpha\in\Phi}
\quad\text{of $(Z,(U_\alpha)_{\alpha\in \Phi})$}
\]
which is discrete \cite[(6.2.21)]{Bruhat-Tits.1972}, special
\cite[(6.2.13)]{Bruhat-Tits.1972} and compatible with the valuation
$\val_\field$ \cite[4.2.8 D\'efinition]{Bruhat-Tits.1984}. 
The standard apartment $\apartment$ is the Euclidean affine space under $V$
given by
\[
\apartment = \set{\varphi + v}{v\in V}.
\]
The action of $N$ restricts to an action on $\apartment$ by Euclidean affine
automorphisms and the subgroup $Z$ acts by translations
\cite[(6.2.10)]{Bruhat-Tits.1972}. More concretely, it follows from \ref{V2}
that there exists a unique group homomorphism $\nu\colon Z \to V$ such that
$z.\varphi = \varphi + \nu(z)$, that is,
\begin{equation}\label{eq:nu} 
\varphi_{\alpha}(z^{-1}uz) = \varphi_\alpha(u) + \langle \alpha, \nu(z)\rangle,
\qquad \text{for all $u\in U_\alpha$, all $\alpha\in\Phi$.}
\end{equation} 
The fact that $\varphi$ is compatible with $\val_\field$ then expresses the
condition $\langle \chi|_\alg T, \nu(z)\rangle =
-\val_\field\bigl(\chi(z)\bigr)$, for all $\chi\in \X^*(Z)$. 

The affine action of $N$ on $\apartment$ induces a linear action of $W_0 = N/Z$
on $V$, obtained by composing the action map $N\to \Aff(\apartment)$ (where
$\Aff(\apartment)$ denotes the group of affine automorphisms of $\apartment$)
with the canonical projection $\Aff(\apartment) \to \GL_{\R}(V)$. This action
coincides with the natural action of $W_0$ on $V$.

Given $\alpha\in V^*$ and $r\in \R$, we consider the hyperplane
\[
H_{\alpha,r}\coloneqq \set{\varphi + v \in \apartment}{\langle \alpha,v\rangle +
r = 0}
\]
and put
\[
\hyperplanes \coloneqq \set{H_{\alpha,r}}{\text{$\alpha\in \Phi_{\red}$ and
$r\in \varphi_\alpha(U^*_\alpha)$}}.
\]
Then $N$ acts on $\hyperplanes$ via
\[
n.H_{\alpha,r} = H_{w(\alpha), r - \langle w(\alpha), n.\varphi -
\varphi\rangle},\quad \text{for $n\in N$ with image $w\in W_0$.}
\]
The groups $U_{\alpha,r} = \varphi_\alpha^{-1}([r,\infty])$, for $r\in \R$, are
a neighborhood basis of $1\in U_\alpha$ consisting of compact open subgroups,
and we have
\[
nU_{\alpha,r}n^{-1} = U_{w(\alpha), r - \langle w(\alpha), n.\varphi-
\varphi\rangle},\quad \text{for $n\in N$ with image $w\in W_0$.}
\]
\subsection{The associated reduced root system}\label{subsec:reducedroot} 
We denote by $S(\hyperplanes)$ the set of orthogonal reflections $s_H$ through
$H\in \hyperplanes$. Conversely, we denote $H_s$ the hyperplane in $\apartment$
fixed by $s\in S(\hyperplanes)$. This exhibits a canonical bijection
$\hyperplanes\cong S(\hyperplanes)$.

The group $W^\aff$ generated by $S(\hyperplanes)$ (inside the group of
affine automorphisms of $\apartment$) is called the \emph{affine Weyl group} of
$G$. The stabilizer of $\varphi$ in $W^\aff$ identifies with $W_0$, since
$\varphi$ is special. This yields a semidirect product decomposition
\[
W^\aff = (W^\aff\cap V) \rtimes W_0,
\]
and $W^\aff\cap V$ is generated by the translations $r\alpha^\vee$, for
$\alpha\in \Phi_\red$ and $r\in \varphi_\alpha(U^*_\alpha)$
\cite[(6.2.19)]{Bruhat-Tits.1972}. In particular, $W^\aff\cap V$ is a lattice of
rank $\dim_\R V$ in $V$. Now, \cite[Ch.\,VI, \S2, no.\,5,
Prop.\,8]{Bourbaki.1981} shows that there exists a unique reduced root system
\[
\Sigma \subseteq V^*
\]
such that $W^\aff$ is the affine Weyl group of $\Sigma$. This means that
$W^\aff$ coincides with the group generated by the reflections $s_{\alpha,k}$,
for $(\alpha,k)\in \Sigma^\aff \coloneqq \Sigma\times \Z$, defined by
\[
s_{\alpha,k}(x) = x - (\langle\alpha, x-\varphi\rangle + k)\cdot
\alpha^\vee,\qquad \text{for $x\in \apartment$.}
\]
We write simply $s_\alpha$ instead of $s_{\alpha,0}$ and view it as an element
of $W_0$.

By \cite[Lemma\,I.2.10]{Schneider-Stuhler.1997},
\[
\varphi_\alpha(U^*_\alpha) = \epsilon_\alpha^{-1}\Z,\qquad\text{for
$\alpha\in\Phi$,}
\]
is a group, where $\epsilon_\alpha\in \Z_{>0}$ is a natural number which is even
whenever $2\alpha\in\Phi$. We obtain a surjective map
\[
\Phi \longtwoheadrightarrow \Sigma,\quad \alpha\longmapsto
\epsilon_\alpha\alpha
\]
which induces a bijection $\Phi_\red\cong\Sigma$.

Under this bijection, $\Phi^+$ corresponds to a system of positive roots in
$\Sigma$, which we denote $\Sigma^+$.

For each $\alpha = \epsilon_\beta \beta$ in $\Sigma$, where $\beta\in
\Phi_\red$, we put $U_{\alpha}\coloneqq
U_{\beta}$ and
\[
U_{(\alpha,k)}\coloneqq U_{\beta, \epsilon_\beta^{-1}
k},\qquad \text{for all $k\in\Z$.}
\]
This defines a $\Z$-indexed descending filtration on $U_\alpha$ by compact open
subgroups which is separated and exhaustive. 
If $n\in N$ with image $w$ in $W_0$, we have 
\begin{equation}\label{eq:N-rootgroup}
n U_{(\alpha,k)}n^{-1} = U_{(w(\alpha), k - \langle w(\alpha),
n.\varphi-\varphi\rangle)}.
\end{equation}
\subsection{The Iwahori--Weyl group}\label{subsec:Iwahori-Weyl} 
Let $K$ be the special parahoric subgroup of $G$ associated with $\varphi$
\cite[5.2.6]{Bruhat-Tits.1984}. If $X\subseteq G$ is a subgroup, we write
\[
K_X \coloneqq K\cap X.
\]
We note the following properties:
\begin{enumerate}[label=--] 
\item the special point $\varphi$ is fixed by $K$ under the natural action of
$G$ on $\building(G)$; 
\item the group $K\cap N$ contains a set of representatives of $W_0$;
\item for all $\alpha\in\Sigma$ we have $K\cap U_\alpha = U_{(\alpha,0)}$
\cite[(51)]{Vigneras.2016};
\item if $\alg P = \alg U_{\alg P}\alg M$ is a parabolic subgroup of $\alg G$
with Levi $\alg M$ and unipotent radical $\alg U_{\alg P}$,
then $K_M$ is a special parahoric subgroup of $M$
\cite[Lemma\,4.1.1]{Haines-Rostami.2010}. In particular, since $Z$ is
anisotropic, $K_Z$ is the unique parahoric subgroup of $Z$;
\end{enumerate} 

Since $K_Z$ is the unique parahoric subgroup of $Z$, it is normalized by $N$. We
call
\[
W \coloneqq N/K_Z
\]
the \emph{Iwahori--Weyl group}. The subgroup 
\[
\Lambda\coloneqq Z/K_Z \subseteq W
\]
is a finitely generated abelian group with finite torsion and the same rank as
$X_*(T)$ \cite[Theorem\,1.0.1]{Haines-Rostami.2010}. We therefore denote it
additively. When we view $\Lambda$ as a subgroup of $W$, we employ an
exponential notation, that is, we write $e^\lambda\in W$ for $\lambda\in
\Lambda$. The natural exact sequence
\[
0\longrightarrow \Lambda \longrightarrow W \longrightarrow W_0\longrightarrow 1
\]
splits, that is, $W$ decomposes as the semidirect product
\[
W\cong \Lambda \rtimes W_0,
\]
and $W_0$ acts on $\Lambda$ by $e^{w(\lambda)}\coloneqq w e^\lambda w^{-1}$. 
We note that the map $\nu\colon Z\to V$ \eqref{eq:nu} factors through $\Lambda$
and induces a $W_0$-equivariant map
\[
\nu\colon \Lambda \longrightarrow V.
\]
\subsection{The positive monoid}\label{subsec:positive} 
We define
\[
\Lambda^+\coloneqq \Lambda^{+,G} \coloneqq
\set{\lambda\in\Lambda}{\text{$\langle\alpha,\nu(\lambda)\rangle \le 0$ for all
$\alpha\in \Sigma^+$}}
\]
and denote $Z^+$ (or $Z^{+,G}$ if we want to emphasize the dependence on $G$)
the preimage of $\Lambda^+$ under the projection
$Z\twoheadrightarrow \Lambda$. We refer to $Z^+$ as the \emph{positive monoid}.
The \emph{negative monoid} is defined as $Z^-\coloneqq (Z^+)^{-1}$. We also
write $\Lambda^- \coloneqq -\Lambda^+$.

An element $\lambda\in \Lambda^+$ is called \emph{strictly positive} if
$\langle \alpha, \nu(\lambda)\rangle < 0$, for all $\alpha\in\Sigma^+$. Note
that if $\lambda$ is strictly positive, the group $\Lambda$ is generated (as a
monoid) by $\Lambda^+$ and $-\lambda$.
\bigskip

More generally, let $\alg P = \alg U_{\alg P}\alg M$ be a parabolic subgroup of
$\alg G$. We denote
\[
M^+ \coloneqq \set{m\in M}{mK_{U_P}m^{-1} \subseteq K_{U_P}}
\]
the \emph{monoid of $M$-positive elements}. Note that $K_M \subseteq M^+ \cap
(M^+)^{-1}$. We define
\begin{equation}\label{eq:mu_UP}
\mu_{U_P}(m) \coloneqq [K_{U_P} : K_{U_P} \cap m^{-1}K_{U_P}m] \in
q^{\Z_{\ge0}}.
\end{equation}
Clearly, $m\in M$ is $M$-positive if and only if $\mu_{U_P}(m) = 1$. The
integers $\mu_{U_P}(m)$ have been studied in \cite[\S3.4]{Heyer.2020}.

An element $\lambda\in \Lambda$ is called \emph{strictly $M$-positive}
if $\langle \Sigma_M, \nu(\lambda)\rangle = 0$ and $\langle \alpha,
\nu(\lambda)\rangle <0$ for all $\alpha\in \Sigma^+\setminus \Sigma_M$. 
Note that by \eqref{eq:N-rootgroup}, the monoid
\begin{equation}\label{eq:LambdaM+}
\Lambda_{M^+} \coloneqq \set{\lambda\in \Lambda}{\text{$\langle\alpha,
\nu(\lambda)\rangle \le 0$ for all $\alpha\in \Sigma^+\setminus\Sigma_M$}}
\end{equation}
coincides with the image of $Z\cap M^+$ in $\Lambda$.

We call $a\in Z$ \emph{strictly $M$-positive} if $a$ lies in the center of $M$
and $aK_Z\in \Lambda$ is strictly $M$-positive.
We remark that by \cite[(6.14)]{Bushnell-Kutzko.1998} there exist strictly
$M$-positive elements.
\subsection{Double coset decompositions}\label{subsec:Cartan} 
We recall here the Cartan and the Iwasawa decomposition of $G$. In
\S\ref{sec:algo} we will study intersections between Cartan and Iwasawa double
cosets.

\begin{cartan}[{\cite[Theorem~1.0.3]{Haines-Rostami.2010}}] 
\label{cartan}
The inclusion $Z\subseteq G$ induces a bijection
\[
\Lambda/W_0 \cong K\backslash G/K.
\]
\end{cartan} 

\begin{rmk*} 
\begin{enumerate}[label=(\alph*)]
\item The monoids $\Lambda^+$ and $\Lambda^-$ are representatives for the
$W_0$-orbits of $\Lambda$ \cite[6.3 Lemma]{Henniart-Vigneras.2015}. Therefore,
the inclusion $Z\subseteq G$ induces bijections $\Lambda^+ \cong K\backslash
G/K$ and $\Lambda^- \cong K\backslash G/K$. These are also referred to as the
Cartan decomposition.

\item The Cartan decomposition implies that if $KzK = Kz'K$, for some $z,z'\in
Z$, then there exists $w\in W_0$ such that $w(zK_Z) = z'K_Z$ (and hence also
$w.\nu(z) = \nu(z')$).
\end{enumerate}
\end{rmk*} 

\begin{iwasawa}\label{iwasawa} 
The inclusion $Z\subseteq G$ induces a bijection
\[
\Lambda \cong U\backslash G/K.
\]
This decomposition is often written as $G = BK = UZK = ZUK$.
\end{iwasawa} 

\begin{rmk}\label{rmk:Iwasawa} 
It is of general interest to study intersections of Cartan and Iwasawa double
cosets. We recall some well-known results. Let $z\in Z^-$ and $z'\in Z$ such that
$Uz'K \cap KzK \neq \emptyset$. Then:
\begin{enumerate}[label=(\alph*)]
\item\label{iwasawa-a} $\nu(z') \le \nu(z)$, see
\cite[Lemma~10.2.1]{Haines-Rostami.2010} or \cite[6.10
Proposition]{Henniart-Vigneras.2015}. 
\item\label{iwasawa-b} If $\nu(z) = \nu(z')$, then $zK_Z = z'K_Z$, see
\cite[6.10 Proposition]{Henniart-Vigneras.2015}.
\item\label{iwasawa-c} $w.\nu(z') \le \nu(z)$, for all $w\in W_0$. This follows
from properties of the Satake homomorphism as in \cite[Lemma~2.1]{Rapoport.2000}
but using \cite[7.13 Theorem]{Henniart-Vigneras.2015}. This argument is also
spelled out in Remark~\ref{rmk:Satake}.

This inequality is equivalent to saying that $\nu(z')$ lies inside the convex
polytope spanned by the $W_0$-orbit of $\nu(z)$, cf.
\cite[(2.6.2)]{Macdonald.2003}.
\end{enumerate}
In \S\ref{sec:algo} we give an algorithm which yields also information about the
$u\in U$ with $uz'\in KzK$.
\end{rmk} 
\subsection{Abstract Hecke rings}\label{subsec:Hecke} 
We briefly discuss abstract Hecke rings. The references below refer to, and
details can be found in, \cite[Ch.\,3, \S1]{Andrianov.1995}. 

Let $G$ be a topological group and $\Gamma\subseteq G$ a compact open
subgroup. Let $\Gamma\subseteq S\subseteq G$ be a submonoid. The pair $(\Gamma,
S)$ is called a \emph{Hecke pair}. Let
\[
\Z[\Gamma\backslash S] = \bigoplus_{\Gamma s\in \Gamma\backslash S} \Z.(\Gamma
s)
\]
be the free $\Z$-module on the set of right cosets $\Gamma\backslash S$. It
admits a natural right $S$-action by $(\Gamma s)\cdot s' = (\Gamma ss')$, for
$s,s'\in S$. Clearly, $\Z[\Gamma\backslash S]$ is a left module under the ring
\[
\Hecke(\Gamma, S) \coloneqq \End_S(\Z[\Gamma\backslash S]).
\]
We usually make the identification
\begin{align*}
\Hecke(\Gamma, S) &\xrightarrow\cong \Z[\Gamma\backslash S]^\Gamma,\\
T &\longmapsto T((\Gamma)).
\end{align*}
The submodule $\Z[\Gamma\backslash S]^\Gamma$ of
$\Gamma$-invariants is a free $\Z$-module on the set of double cosets
$\Gamma\backslash S/\Gamma$. Concretely, it admits $\set{(s)_\Gamma}{\Gamma
s\Gamma \in \Gamma\backslash S/\Gamma}$ as a basis, where 
\[
(s)_\Gamma \coloneqq \sum_{\Gamma s' \subseteq \Gamma s\Gamma} (\Gamma s').
\]
The sum runs through all right cosets contained in $\Gamma s\Gamma$. Note that
the sum is finite, because $\Gamma$ is compact open, so that the set $(\Gamma\cap
s^{-1}\Gamma s)\backslash \Gamma$ is finite, and the map
\begin{align}\label{eq:mu}
(\Gamma\cap s^{-1}\Gamma s)\backslash \Gamma &\xrightarrow{\cong}
\Gamma\backslash
\Gamma s\Gamma,\\
(\Gamma\cap s^{-1}\Gamma s)\gamma &\longmapsto \Gamma s\gamma \notag
\end{align}
is bijective. The multiplication on $\Z[\Gamma\backslash S]^\Gamma$ is
concretely given by
\[
\Bigl(\sum_i n_i\cdot (\Gamma s_i)\Bigr)\cdot \Bigl(\sum_j m_j\cdot (\Gamma
t_j)\Bigr) = \sum_{i,j} n_im_j\cdot (\Gamma s_it_j).
\]
For an explicit description of the multiplication in terms of double cosets,
see \cite[Lemma~1.5]{Andrianov.1995}.

The following two results are frequently useful:

\begin{prop}[{\cite[Prop.~1.9]{Andrianov.1995}}] 
\label{prop:Hecke-embedding}
Let $(\Gamma, S)$ and $(\Gamma_0,S_0)$ be two Hecke pairs satisfying
\begin{equation}\label{eq:Hecke-embedding}
\Gamma_0 \subseteq \Gamma,\qquad S\subseteq \Gamma S_0,\qquad \text{and}\qquad
\Gamma\cap S_0\cdot S_0^{-1} \subseteq \Gamma_0.
\end{equation}
Then the map
\begin{align*}
\varepsilon\colon \Hecke(\Gamma, S) &\longhookrightarrow
\Hecke(\Gamma_0,S_0),\\
\sum_i n_i\cdot (\Gamma s_i) &\longmapsto \sum_i n_i\cdot (\Gamma_0 s_i),
\end{align*}
where the $s_i$ are chosen in $S_0$, is an injective ring homomorphism.
\end{prop}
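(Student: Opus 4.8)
The plan is to define $\varepsilon$ first on the underlying free $\Z$-modules and then verify the three required properties — that it lands in $\Hecke(\Gamma_0,S_0)$, that it is multiplicative and unital, and that it is injective — in that order.

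First I would construct a $\Z$-linear map $\phi\colon\Z[\Gamma\backslash S]\to\Z[\Gamma_0\backslash S_0]$. The condition $S\subseteq\Gamma S_0$ ensures every right coset $\Gamma s$ (with $s\in S$) has a representative $s_0\in S_0$; the condition $\Gamma\cap S_0S_0^{-1}\subseteq\Gamma_0$ ensures such a representative is unique modulo left multiplication by $\Gamma_0$, since $\Gamma s_0=\Gamma s_0'$ with $s_0,s_0'\in S_0$ gives $s_0(s_0')^{-1}\in\Gamma\cap S_0S_0^{-1}\subseteq\Gamma_0$. Hence $\Gamma s\mapsto\Gamma_0 s_0$ is a well-defined map $\Gamma\backslash S\to\Gamma_0\backslash S_0$; extend it $\Z$-linearly to $\phi$. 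Because $\Gamma_0\subseteq\Gamma$, this map on cosets is moreover injective. Next I would show $\phi$ carries $\Z[\Gamma\backslash S]^\Gamma$ into $\Z[\Gamma_0\backslash S_0]^{\Gamma_0}$: given $T=\sum_i n_i(\Gamma s_i)$ that is $\Gamma$-invariant, written with the cosets $\Gamma s_i$ distinct and $s_i\in S_0$, and given $\gamma_0\in\Gamma_0\subseteq\Gamma$, right multiplication by $\gamma_0$ permutes $\Gamma\backslash S$ (with inverse right multiplication by $\gamma_0^{-1}\in\Gamma\subseteq S$), so $T\cdot\gamma_0=T$ yields a permutation $\sigma$ of the support with $\Gamma s_i\gamma_0=\Gamma s_{\sigma(i)}$ and $n_i=n_{\sigma(i)}$; since $s_i\gamma_0,\,s_{\sigma(i)}\in S_0$ and $s_i\gamma_0(s_{\sigma(i)})^{-1}\in\Gamma\cap S_0S_0^{-1}\subseteq\Gamma_0$, one gets $\Gamma_0 s_i\gamma_0=\Gamma_0 s_{\sigma(i)}$, whence $\phi(T)\cdot\gamma_0=\phi(T)$. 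This defines $\varepsilon$ as the restriction of $\phi$.

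Multiplicativity and unitality then fall out formally: $\varepsilon$ is induced by a map on right cosets, $S_0$ is a monoid, and the product in each Hecke ring is computed by the coset formula $\bigl(\sum_i n_i(\Gamma s_i)\bigr)\bigl(\sum_j m_j(\Gamma t_j)\bigr)=\sum_{i,j}n_im_j(\Gamma s_it_j)$; so for $T,T'\in\Hecke(\Gamma,S)$ written with $s_i,t_j\in S_0$ one has $\varepsilon(TT')=\sum_{i,j}n_im_j(\Gamma_0 s_it_j)=\varepsilon(T)\varepsilon(T')$, and $\varepsilon((\Gamma))=(\Gamma_0)$ because $1\in\Gamma_0\subseteq S_0$. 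For injectivity, if $\varepsilon(T)=0$ for $T=\sum_i n_i(\Gamma s_i)$ with the $\Gamma s_i$ distinct and $s_i\in S_0$, then the injectivity of the coset map keeps the $\Gamma_0 s_i$ distinct, so $\sum_i n_i(\Gamma_0 s_i)=0$ in the free module $\Z[\Gamma_0\backslash S_0]$ forces all $n_i=0$, i.e.\ $T=0$.

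There is no deep obstacle here: the one step that needs genuine care is the construction of $\phi$ together with the $\Gamma_0$-invariance check, which is precisely where all three hypotheses in \eqref{eq:Hecke-embedding} are used and where the bookkeeping of coset representatives chosen inside $S_0$ must be handled correctly. Everything after that is formal manipulation of coset sums.
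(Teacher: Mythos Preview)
Your proof is correct. The paper does not give its own proof of this proposition; it simply cites \cite[Prop.~1.9]{Andrianov.1995}. Your argument is the standard one and is essentially what one finds in Andrianov's book: construct the map on right cosets using $S\subseteq\Gamma S_0$ for existence of an $S_0$-representative and $\Gamma\cap S_0S_0^{-1}\subseteq\Gamma_0$ for uniqueness, then check $\Gamma_0$-invariance, multiplicativity, and injectivity directly from the coset description of the product.
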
 

\begin{prop}[{\cite[Prop.~1.11]{Andrianov.1995}}] 
\label{prop:Hecke-involution}
Let $(\Gamma, S)$ be a Hecke pair. Then $(\Gamma, S^{-1})$ is also a Hecke pair,
and the map
\begin{align}\label{eq:Hecke-involution}
\zeta_S\colon \Hecke(\Gamma, S) &\longrightarrow \Hecke(\Gamma, S^{-1}),\\
(s)_\Gamma &\longmapsto (s^{-1})_\Gamma \notag
\end{align}
is an anti-isomorphism of rings.
\end{prop}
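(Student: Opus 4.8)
The plan is to isolate two ingredients: inversion $g\mapsto g^{-1}$ interchanges left and right cosets of $\Gamma$, and the structure constants of an abstract Hecke ring may be computed from left-coset decompositions just as well as from right-coset ones. Everything else is bookkeeping.

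First, the routine part. The set $S^{-1}$ is a submonoid of $G$: it contains $1$, and $(a^{-1})(b^{-1})=(ba)^{-1}\in S^{-1}$ for $a,b\in S$; and $\Gamma=\Gamma^{-1}\subseteq S^{-1}$. So $(\Gamma,S^{-1})$ is a Hecke pair. Inversion induces a bijection $\Gamma\backslash S/\Gamma\xrightarrow{\cong}\Gamma\backslash S^{-1}/\Gamma$, $\Gamma s\Gamma\mapsto\Gamma s^{-1}\Gamma$, so $\zeta_S$ is a well-defined bijective homomorphism of $\Z$-modules carrying the double-coset basis $\set{(s)_\Gamma}{\Gamma s\Gamma}$ of $\Hecke(\Gamma,S)$ to the double-coset basis of $\Hecke(\Gamma,S^{-1})$. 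Thus the only content is that $\zeta_S$ reverses products, i.e. $\zeta_S\bigl((s)_\Gamma\cdot(t)_\Gamma\bigr)=(t^{-1})_\Gamma\cdot(s^{-1})_\Gamma$ for all $s,t\in S$.

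For this I would use the identification $\Hecke(\Gamma,S)\cong\Z[\Gamma\backslash S]^\Gamma$ and the explicit multiplication formula (\cite[Lemma~1.5]{Andrianov.1995}). Fixing systems $R_s,R_t$ of right-coset representatives of $\Gamma s\Gamma$ and $\Gamma t\Gamma$, the product $(s)_\Gamma\cdot(t)_\Gamma$ corresponds to $\sum_{\alpha\in R_s,\beta\in R_t}(\Gamma\alpha\beta)$; and for each double coset $\Gamma w\Gamma$ and each fixed right coset $\Gamma g\subseteq\Gamma w\Gamma$, the coefficient of $(w)_\Gamma$ is $c(s,t;w)=\#\set{(\alpha,\beta)\in R_s\times R_t}{\alpha\beta\in\Gamma g}$, independent of representatives and of $g$ by the two-sided $\Gamma$-invariance of the product. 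Applying inversion, $\alpha\beta\in\Gamma g\iff\beta^{-1}\alpha^{-1}\in g^{-1}\Gamma$; since $\set{\alpha^{-1}}{\alpha\in R_s}$ and $\set{\beta^{-1}}{\beta\in R_t}$ run over the \emph{left}-coset representatives of $\Gamma s^{-1}\Gamma$ and $\Gamma t^{-1}\Gamma$, and $g^{-1}$ runs over $\Gamma w^{-1}\Gamma$, this identifies $c(s,t;w)$ with the coefficient of $(w^{-1})_\Gamma$ in $(t^{-1})_\Gamma\cdot(s^{-1})_\Gamma$ \emph{computed with left cosets}. The last needed input, which I would extract from the same $\Gamma$-biinvariance underlying \cite[Lemma~1.5]{Andrianov.1995}, is that this coefficient equals the one computed with right cosets, i.e. $\#\set{(\gamma,\delta)\in L_t\times L_s}{\gamma\delta\in g^{-1}\Gamma}=\#\set{(\gamma,\delta)}{\gamma\delta\in\Gamma g^{-1},\ \gamma,\delta\text{ right reps}}$; granting it, $c(s,t;w)=c(t^{-1},s^{-1};w^{-1})$ for all $s,t,w$, which is precisely anti-multiplicativity.

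The main obstacle is not conceptual but this last bookkeeping point: $\deg(\Gamma s\Gamma)$ and $\deg(\Gamma s^{-1}\Gamma)$ differ in general (inversion turns right cosets into left cosets, not into right cosets), so the argument genuinely needs the left/right symmetry of the structure constants rather than a naive coset-by-coset matching. An alternative, perhaps cleaner, packaging of the same idea: inversion gives an isomorphism $\Z[\Gamma\backslash S^{-1}]\cong\Z[S/\Gamma]$ carrying the right $S^{-1}$-module structure to the left $S$-module structure, hence a ring isomorphism $\Hecke(\Gamma,S^{-1})\cong\End_S(\Z[S/\Gamma])$ identifying $(s^{-1})_\Gamma$ with the class of $\Gamma s\Gamma$; and $\End_S$ of the left module $\Z[S/\Gamma]$ is anti-isomorphic to $\Hecke(\Gamma,S)=\End_S(\Z[\Gamma\backslash S])$ compatibly with double-coset bases (again by the left/right symmetry). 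Composing these recovers $\zeta_S$ and exhibits it as an anti-isomorphism.
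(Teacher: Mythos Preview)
The paper does not prove this proposition; it is simply quoted from \cite[Prop.~1.11]{Andrianov.1995} with no argument given. So there is no ``paper's own proof'' to compare against, and your task was effectively to reconstruct Andrianov's argument.

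Your sketch is along the right lines and is essentially the standard proof. The reduction to showing $c(s,t;w)=c(t^{-1},s^{-1};w^{-1})$ for the structure constants is correct, and the observation that inversion converts right-coset decompositions of $\Gamma s\Gamma$ into left-coset decompositions of $\Gamma s^{-1}\Gamma$ is exactly the mechanism. You are also right to flag the left/right symmetry of the structure constants as the crux: this is the one genuinely nontrivial step, and you do not prove it but defer to the reference. That is honest, but it means your write-up is a sketch rather than a proof. The symmetry can be established by counting the set of pairs $(\alpha,\beta)\in\Gamma s\Gamma\times\Gamma t\Gamma$ with $\alpha\beta=w$ modulo the diagonal $\Gamma$-action $(\alpha,\beta)\mapsto(\alpha\gamma,\gamma^{-1}\beta)$; projecting to right cosets gives $c_R$, to left cosets gives $c_L$, and a short computation shows both projections have fibres of the same size. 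Alternatively, in the paper's setting $\Gamma$ is compact open in a locally compact group, and one can realise both Hecke rings as convolution algebras of $\Gamma$-biinvariant compactly supported functions, under which $\zeta_S$ becomes $f\mapsto\bigl(g\mapsto f(g^{-1})\bigr)$ and anti-multiplicativity is immediate from the change of variable $h\mapsto h^{-1}$ in the convolution integral.

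Your alternative packaging via $\End_S(\Z[S/\Gamma])$ is a good instinct, but as written it has the same gap: the claimed anti-isomorphism between $\End_S(\Z[\Gamma\backslash S])$ and $\End_S(\Z[S/\Gamma])$ compatible with double-coset bases \emph{is} the left/right symmetry, not a way around it.
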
 

\begin{lem}[{\cite[Lem.~1.13]{Andrianov.1995}}] 
\label{lem:emb-anti}
Let $(\Gamma, S)$ and $(\Gamma_0,S_0)$ be two Hecke pairs satisfying
\eqref{eq:Hecke-embedding} such that $(\Gamma, S^{-1})$ and $(\Gamma_0,
S_0^{-1})$ also satisfy \eqref{eq:Hecke-embedding}. Then the following diagram
is commutative:
\[
\begin{tikzcd}
\Hecke(\Gamma, S) \ar[r,"\varepsilon"] \ar[d,"\zeta_S"'] & \Hecke(\Gamma_0,S_0)
\ar[d,"\zeta_{S_0}"]\\
\Hecke(\Gamma, S^{-1}) \ar[r,"\varepsilon"'] & \Hecke (\Gamma_0,S_0^{-1}).
\end{tikzcd}
\]
\end{lem}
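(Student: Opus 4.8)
The vertical maps $j$ in the diagram are the anti-isomorphisms $\zeta_S\colon \Hecke(\Gamma,S)\to\Hecke(\Gamma,S^{-1})$ and $\zeta_{S_0}\colon\Hecke(\Gamma_0,S_0)\to\Hecke(\Gamma_0,S_0^{-1})$ of Proposition~\ref{prop:Hecke-involution}. The plan is to evaluate both composites $\zeta_{S_0}\circ\varepsilon$ and $\varepsilon\circ\zeta_S$ on the $\Z$-basis of $\Hecke(\Gamma,S)$ consisting of the double-coset elements $(s)_\Gamma$, $\Gamma s\Gamma\in\Gamma\backslash S/\Gamma$; since all of $\varepsilon$ and the $\zeta$'s are homomorphisms of the underlying abelian groups, this is enough. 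By the second condition in \eqref{eq:Hecke-embedding} we may fix the representative $s\in S_0$.

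The crux is a clean description of $\varepsilon$ on the double-coset basis. \textbf{Claim:} for $s\in S_0$ one has $\varepsilon\bigl((s)_\Gamma\bigr)=\sum_j (g_j)_{\Gamma_0}$, where $g_1,\dots,g_r\in S_0$ is a system of representatives for exactly those $\Gamma_0$-double cosets which are contained in $\Gamma s\Gamma$ and admit a representative in $S_0$. To establish this I would first observe that the natural map $\Gamma_0 t\mapsto\Gamma t$ (for $t\in S_0$) from right $\Gamma_0$-cosets in $S_0$ to right $\Gamma$-cosets in $S$ is injective: if $\Gamma t=\Gamma t'$ with $t,t'\in S_0$, then $t't^{-1}\in\Gamma\cap S_0S_0^{-1}\subseteq\Gamma_0$ by the third condition in \eqref{eq:Hecke-embedding}, whence $\Gamma_0 t=\Gamma_0 t'$. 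Writing $\Gamma s\Gamma=\bigsqcup_i\Gamma s_i$ with $s_i\in S_0$ (possible since $S\subseteq\Gamma S_0$), so that $\varepsilon\bigl((s)_\Gamma\bigr)=\sum_i(\Gamma_0 s_i)$ by definition of $\varepsilon$, the injectivity just noted shows the $\Gamma_0 s_i$ are pairwise distinct; and since $\Gamma_0\subseteq S_0$ and $S_0$ is a monoid, every right $\Gamma_0$-coset lying inside a $\Gamma_0$-double coset $\Gamma_0 g\Gamma_0$ with $g\in S_0$ again has a representative in $S_0$, so the $\Gamma_0 s_i$ group together precisely into the double cosets $(g_j)_{\Gamma_0}$ of the Claim.

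Granting the Claim the computation is formal. On one side, $\zeta_{S_0}\bigl(\varepsilon((s)_\Gamma)\bigr)=\sum_j (g_j^{-1})_{\Gamma_0}$, and the double cosets $\Gamma_0 g_j^{-1}\Gamma_0$ are the inverses of the $\Gamma_0 g_j\Gamma_0$; since inversion exchanges being contained in $\Gamma s\Gamma$ with being contained in $\Gamma s^{-1}\Gamma$, and admitting a representative in $S_0$ with admitting one in $S_0^{-1}$, these are exactly the $\Gamma_0$-double cosets in $\Gamma s^{-1}\Gamma$ with a representative in $S_0^{-1}$. On the other side, $\zeta_S\bigl((s)_\Gamma\bigr)=(s^{-1})_\Gamma$, and applying the Claim to the Hecke pairs $(\Gamma,S^{-1})$ and $(\Gamma_0,S_0^{-1})$ — legitimate because these satisfy \eqref{eq:Hecke-embedding} by hypothesis — yields $\varepsilon\bigl((s^{-1})_\Gamma\bigr)=\sum_k (h_k)_{\Gamma_0}$ with $h_k\in S_0^{-1}$ a system of representatives for the very same collection of double cosets. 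Hence the two composites agree on each $(s)_\Gamma$, which proves the commutativity of the square. The main obstacle is the Claim, and within it the bookkeeping that pins down which right $\Gamma_0$-cosets and $\Gamma_0$-double cosets actually occur; the three conditions of \eqref{eq:Hecke-embedding}, used here for \emph{both} the pairs and their inverses, are exactly what guarantee that passing between $\Gamma$- and $\Gamma_0$-cosets neither loses information nor introduces multiplicities, after which the statement follows from the manifest symmetry of \eqref{eq:Hecke-embedding} under inversion.
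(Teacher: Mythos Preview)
The paper does not supply its own proof of this lemma; it is quoted verbatim from \cite[Lem.~1.13]{Andrianov.1995} and left without argument. Your proof is correct and self-contained: the key Claim that $\varepsilon\bigl((s)_\Gamma\bigr)$ decomposes as a multiplicity-free sum of $\Gamma_0$-double cosets is established cleanly from the three conditions in \eqref{eq:Hecke-embedding}, and the commutativity then follows from the evident bijection (via inversion) between the $\Gamma_0$-double cosets in $\Gamma s\Gamma$ with representative in $S_0$ and those in $\Gamma s^{-1}\Gamma$ with representative in $S_0^{-1}$.
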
 

If $R$ is a commutative ring with 1, we put
\[
\Hecke_R(\Gamma, S) \coloneqq R\otimes_\Z \Hecke(\Gamma, S).
\]
It is clear that Propositions~\ref{prop:Hecke-embedding}
and~\ref{prop:Hecke-involution} and Lemma~\ref{lem:emb-anti} remain valid for
Hecke rings over $R$.
\section{Non-obtuse parabolics}\label{sec:non-obtuse} 
We fix a maximal parabolic subgroup $\alg P = \alg U_{\alg P}\alg M$ of
$\alg G$. Recall from \S\ref{subsec:apartment} the Euclidean vector space
$(V^*,\scalar)$, on which the finite Weyl group $W_0$ acts, and the special
point $\varphi\in \apartment$ from \S\ref{subsec:apartment}. We view $\varphi$
as a valuation on the root group datum $(Z,(U_\alpha)_{\alpha\in\Phi})$. Also
recall from \S\ref{subsec:reducedroot} the reduced root system $\Sigma \subseteq
V^*$. The system $\Sigma^+$ of positive roots determines a unique basis $\Delta$
of $\Sigma$. Since $\alg M$ is reductive, all these objects have an analogue for
$\alg M$, and we denote them by adding the subscript `$M$'. For example, we write
$\Sigma_M$, $W_{0,M}$, $\Delta_M$ etc.

\begin{defn} 
The parabolic $\alg P$ is called \emph{non-obtuse} if
\[
\langle\alpha, \beta^\vee\rangle \ge 0,\qquad \text{for all $\alpha,\beta\in
\Sigma^+\setminus \Sigma_M$.}
\]
\end{defn} 
\begin{rmk*} 
For $\alg P$ to be non-obtuse it is equivalent to say that
\[
(\alpha,\beta) \ge0, \qquad \text{for all $\alpha,\beta\in
\Sigma^+\setminus\Sigma_M$.}
\]

This more geometric definition explains the term ``non-obtuse'':  it means that
any two roots in $\Sigma^+\setminus \Sigma_M$ span a non-obtuse angle.
\end{rmk*} 

During the whole section we assume that $\alg P$ is non-obtuse. 
The main result of this section is a complete classification of the
non-obtuse parabolic subgroups of $\alg G$. First, we prove an important
technical result which explains our interest in non-obtuse parabolics.

\begin{lem}\label{lem:no-cone} 
Let $\lambda,\mu\in \Lambda$ such that $\lambda$ is strictly $M$-positive. Assume
that $\nu(w(\mu)) \le \nu(-\lambda)$ for all $w\in W_0$. Then one has 
\[
\langle \alpha, \nu(\lambda+\mu)\rangle \le 0,\qquad \text{for all $\alpha\in
\Sigma^+\setminus \Sigma_M$.}
\]
\end{lem}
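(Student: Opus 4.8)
The plan is to unwind the definition of $\Lambda^+ = \Lambda^{+,G}$ and the hypothesis on $\mu$, then separate the roots in $\Sigma^+ \setminus \Sigma_M$ into two groups and treat them differently. Fix $\alpha \in \Sigma^+ \setminus \Sigma_M$. Since $\langle\,\cdot\,{,}\,\cdot\,\rangle$ is linear in the second variable, $\langle \alpha, \nu(\lambda+\mu)\rangle = \langle \alpha, \nu(\lambda)\rangle + \langle \alpha, \nu(\mu)\rangle$. The term $\langle \alpha, \nu(\lambda)\rangle$ is strictly negative because $\lambda$ is strictly $M$-positive; so it suffices to control $\langle \alpha, \nu(\mu)\rangle$ from above by $-\langle \alpha, \nu(\lambda)\rangle$, i.e. to show $\langle \alpha, \nu(\lambda + \mu)\rangle \le 0$ will follow once we bound $\langle \alpha, \nu(\mu)\rangle$ appropriately. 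The hypothesis $\nu(w(\mu)) \le \nu(-\lambda)$ for all $w \in W_0$ means that $\nu(-\lambda) - \nu(w(\mu))$ is a non-negative combination of simple coroots for every $w$; equivalently (by Remark~\ref{rmk:Iwasawa}\ref{iwasawa-c}), $\nu(\mu)$ lies in the convex hull of the $W_0$-orbit of $\nu(-\lambda)$.

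The key step is to use the non-obtuseness of $\alg P$ to relate pairings against $\alpha$ with the partial order $\le$. Write $\nu(-\lambda) - \nu(\mu) = \sum_{\gamma \in \Delta} c_\gamma \gamma^\vee$ with $c_\gamma \ge 0$ (taking $w = 1$ in the hypothesis). Then $\langle \alpha, \nu(\lambda + \mu)\rangle = \langle \alpha, \nu(\lambda)\rangle + \langle\alpha, \nu(\mu)\rangle = -\langle\alpha, \sum_{\gamma} c_\gamma \gamma^\vee\rangle = -\sum_{\gamma\in\Delta} c_\gamma \langle \alpha, \gamma^\vee\rangle$. For simple roots $\gamma \in \Delta \setminus \Delta_M$ we have $\gamma \in \Sigma^+ \setminus \Sigma_M$, and since $\alpha \in \Sigma^+\setminus\Sigma_M$ as well, non-obtuseness gives $\langle \alpha, \gamma^\vee\rangle \ge 0$; hence those terms contribute $\le 0$, as desired. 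The remaining terms are those with $\gamma \in \Delta_M$, where $\langle\alpha,\gamma^\vee\rangle$ need not have a sign — this is the main obstacle.

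To handle the $\Delta_M$ terms, the idea is to average over $W_{0,M}$ (the Weyl group of $\Sigma_M$) rather than over all of $W_0$. Since $\alpha \notin \Sigma_M$, for $w \in W_{0,M}$ the root $w(\alpha)$ is still in $\Sigma^+ \setminus \Sigma_M$ (the parabolic $\alg P$ being standard, $W_{0,M}$ permutes $\Sigma^+ \setminus \Sigma_M$; this is the standard fact that $\Sigma^+\setminus\Sigma_M$ is $W_{0,M}$-stable). Moreover $\nu(\lambda)$ is $W_{0,M}$-fixed because $\lambda$ is strictly $M$-positive, so $\langle \Sigma_M, \nu(\lambda)\rangle = 0$ forces $\nu(\lambda)$ to be orthogonal to the coroots of $\Sigma_M$, hence $W_{0,M}$-invariant. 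Therefore $\langle\alpha, \nu(\lambda)\rangle = \langle w^{-1}(\alpha), \nu(\lambda)\rangle$ for all $w \in W_{0,M}$, and applying the hypothesis with $w \in W_{0,M}$ one gets, for each such $w$, a bound $\langle w^{-1}(\alpha), \nu(\mu)\rangle \le$ something controlled. Averaging $\langle w(\alpha), \nu(\mu)\rangle$ over $w \in W_{0,M}$ and using that the $W_{0,M}$-average of $\nu(\mu)$ has vanishing pairing with the coroots $\gamma^\vee$, $\gamma \in \Delta_M$ (those $\gamma^\vee$ lie in the span of $\Sigma_M^\vee$), kills precisely the troublesome terms, and what survives is a non-positive combination of the already-controlled $\langle w(\alpha), \gamma^\vee\rangle$ for $\gamma \in \Delta\setminus\Delta_M$. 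Carefully chasing this averaging argument — checking that the decomposition of $\nu(-\lambda) - \nu(w(\mu))$ into simple coroots behaves well under the average and that all the inequalities point the right way — is the delicate part, but once set up it is a routine computation.
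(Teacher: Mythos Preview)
Your setup is correct up to the point where you isolate the obstacle: writing $\nu(-\lambda)-\nu(\mu)=\sum_{\gamma\in\Delta}c_\gamma\gamma^\vee$ with $c_\gamma\ge0$, the terms with $\gamma\in\Delta\setminus\Delta_M$ are indeed handled by non-obtuseness, and the terms with $\gamma\in\Delta_M$ are the problem. But the averaging argument you sketch does not close the gap. First, averaging $\langle w(\alpha),\nu(\lambda+\mu)\rangle$ over $w\in W_{0,M}$ yields a statement about the \emph{average} of these pairings, whereas you need each one separately; knowing the mean is $\le0$ tells you nothing about an individual term. Second, your claim that the $W_{0,M}$-average of $\nu(\mu)$ ``kills the troublesome terms'' is not right: the average $\overline{\nu(\mu)}$ is $W_{0,M}$-invariant, hence orthogonal to each $\gamma^\vee$ with $\gamma\in\Delta_M$ for the scalar product, but that does \emph{not} force the $\Delta_M$-coefficients of $\nu(-\lambda)-\overline{\nu(\mu)}$ in the simple-coroot basis to vanish (the simple coroots are not an orthogonal basis). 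So neither step of the averaging does what you need, and the final ``routine computation'' cannot be carried out.

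The fix is close to your idea but uses a specific $w_M\in W_{0,M}$ rather than an average: choose $w_M$ so that $\alpha':=w_M(\alpha)$ is $\Sigma_M$-dominant, i.e.\ $\langle\alpha',\gamma^\vee\rangle\ge0$ for all $\gamma\in\Delta_M$. Since $W_{0,M}$ preserves $\Sigma^+\setminus\Sigma_M$, non-obtuseness still gives $\langle\alpha',\gamma^\vee\rangle\ge0$ for $\gamma\in\Delta\setminus\Delta_M$, so now $\langle\alpha',\gamma^\vee\rangle\ge0$ for \emph{all} $\gamma\in\Delta$. Applying the hypothesis with $w=w_M$ gives $\nu(-\lambda)-w_M(\nu(\mu))=\sum_\gamma c_\gamma\gamma^\vee$ with $c_\gamma\ge0$, hence $\langle\alpha',\nu(\lambda)+w_M(\nu(\mu))\rangle\le0$; and since $\nu(\lambda)$ is $W_{0,M}$-fixed this equals $\langle\alpha,\nu(\lambda+\mu)\rangle$. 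This is in fact shorter than the paper's proof, which first treats the special case $\mu=w(-\lambda)$ by induction on $\ell(w)$ and only then passes to general $\mu$ by choosing a $W_0$-dominant conjugate of $\alpha$ (the extra Step~1 is needed there precisely because a $W_0$-conjugation does not fix $\nu(\lambda)$).
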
 

\begin{ex*} 
Before giving the proof, let us look at the two examples in
Figure~\ref{fig:non-obtuse}. Example~\hyperref[fig:non-obtuse-A]
{({\scriptsize A})}
explains why Lemma~\ref{lem:no-cone} should be expected to hold true for
non-obtuse parabolics, while \hyperref[fig:non-obtuse-B]{({\scriptsize B})}
explains why Lemma~\ref{lem:no-cone} fails otherwise.
\begin{figure}[ht] 
\centering
\caption{In both examples we choose $\Sigma_M = \{\pm\alpha\}$, and $\nu(\mu)$
can lie anywhere in the dotted region.}\label{fig:non-obtuse}
\begin{minipage}[t]{0.4\textwidth} 
\centering
\begin{tikzpicture} 
\clip (-2,-2) rectangle (2,2);
\filldraw[fill=gray,opacity=.2] (210:4) -- (0,0) -- (330:4) -- cycle;
\def\radius{1.3}
\def\rlam{1.6}
\foreach \x in {0,60,...,300}
{
\draw[thick] (0,0) -- (\x:\radius);
\fill (\x:\radius) circle (1pt);
}
\foreach \x in {90,210,330}
\draw[dotted] (\x:\rlam) -- (\x+120:\rlam);

\node[right] at (\radius,0) {$\alpha$};
\node[left] at (120:\radius) {$\beta$};
\fill (270:\rlam) circle (.8pt) node[right] {$\nu(\lambda)$};
\fill (90:\rlam) circle (.8pt) node[right] {$-\nu(\lambda)$};
\end{tikzpicture} 
\subcaption{Type $A_2$: The translate of the dotted region by $\nu(\lambda)$
fits into the shaded area. Lemma~\ref{lem:no-cone} holds in this
case.}\label{fig:non-obtuse-A}
\end{minipage} 
\qquad
\begin{minipage}[t]{0.4\textwidth} 
\centering
\begin{tikzpicture} 
\clip (-2,-2) rectangle (2,2);
\filldraw[fill=gray,opacity=.2] (240:4) -- (0,0) -- (300:4) -- cycle;
\def\radius{.7}
\def\rlam{1}
\foreach \x in {30,90,...,330}
{
\draw[thick] (0,0) -- (\x:1.732*\radius);
\fill (\x:1.732*\radius) circle (1pt);
\draw[dotted] (\x:1.732*\rlam) -- (\x+60:1.732*\rlam);
}
\foreach \x in {0,60,...,300}
{
\draw[thick] (0,0) -- (\x:\radius);
\fill (\x:\radius) circle (1pt);
}
\node[right] at (\radius,0) {$\alpha$};
\node[left] at (150:1.732*\radius) {$\beta$};
\fill (270:1.732*\rlam) circle (.8pt) node[right] {$\nu(\lambda)$};
\fill (90:1.732*\rlam) circle (.8pt) node[right] {$-\nu(\lambda)$};
\end{tikzpicture} 
\subcaption{Type $G_2$: The translate of the dotted region by $\nu(\lambda)$
does not fit into the shaded area. Lemma~\ref{lem:no-cone}
fails.}\label{fig:non-obtuse-B} 
\end{minipage}
\end{figure} 
\addtocounter{figure}{-1} 
\end{ex*} 

\begin{proof}[Proof of Lemma~\ref{lem:no-cone}] 
Recall that $\alg P$ is non-obtuse so that $\langle\alpha,\beta^\vee\rangle \ge0$
for all $\alpha,\beta\in \Sigma^+\setminus\Sigma_M$.
We proceed in two steps.

\textit{Step~1:} Assume $\mu = w(-\lambda)$, for some $w\in W_0$. We do an
induction on the length $\ell(w)$ of $w$. If $\ell(w) = 1$, we write $w =
s_\beta$ for some simple root $\beta\in \Delta$. For each $\alpha\in
\Sigma^+\setminus\Sigma_M$ we compute
\begin{align*}
\big\langle \alpha, \nu(\lambda + s_\beta(-\lambda))\big\rangle &= \big\langle
\alpha, \nu(\lambda) - s_\beta(\nu(\lambda))\big\rangle\\
&= \big\langle \alpha, \langle \beta,\nu(\lambda)\rangle\cdot
\beta^\vee\big\rangle\\
&= \langle\beta,\nu(\lambda)\rangle\cdot \langle \alpha,\beta^\vee\rangle\\
&\le 0,
\end{align*}
where in the last step we have used $\langle \beta,\nu(\lambda)\rangle < 0$ and
that $\alg P$ is non-obtuse. Now assume $\ell(w)>1$, and let $\beta\in \Delta$
with $\ell(s_\beta w) <\ell(w)$. We write
\begin{equation}\label{eq:no-cone-1}
\nu\bigl(\lambda + w(-\lambda)\bigr) = \nu\bigl(\lambda +
s_\beta(-\lambda)\bigr) + s_\beta\bigl(\nu(\lambda + s_\beta w(-\lambda))\bigr).
\end{equation}
We distinguish two cases:
\begin{enumerate}[label=--] 
\item If $\beta\in\Delta_M$, then we have $s_\beta(\Sigma^+\setminus \Sigma_M) =
\Sigma^+\setminus\Sigma_M$. For each $\alpha\in\Sigma^+\setminus\Sigma_M$, the
induction hypothesis (applied to $s_\beta w$) yields:
\[
\big\langle\alpha, s_\beta\bigl(\nu(\lambda + s_\beta
w(-\lambda))\bigr)\big\rangle = \big\langle s_\beta(\alpha), \nu\bigl(\lambda +
s_\beta w(-\lambda)\bigr)\big\rangle \le 0.
\]
Together with \eqref{eq:no-cone-1} and the base case the statement follows.

\item If $\beta\in \Delta\setminus\Delta_M$, then we compute for each $\alpha\in
\Sigma^+\setminus\Sigma_M$:
\begin{align*}
\big\langle\alpha,\nu\bigl(\lambda + w(-\lambda)\bigr)\big\rangle &= \big\langle
\alpha, \nu(\lambda + s_\beta(-\lambda))\big\rangle + \big\langle \alpha,
s_\beta\bigl(\nu(\lambda + s_\beta w(-\lambda))\bigr)\big\rangle\\
&= \big\langle \alpha, \langle \beta,\nu(\lambda)\rangle\cdot
\beta^\vee\big\rangle + \big\langle s_\beta(\alpha), \nu(\lambda + s_{\beta}
w(-\lambda))\big\rangle\\
&= \langle \beta,\nu(\lambda)\rangle\cdot \langle \alpha,\beta^\vee\big\rangle +
\big\langle \alpha - \langle\alpha,\beta^\vee\rangle\cdot\beta, \nu(\lambda +
s_\beta w(-\lambda))\big\rangle\\
&= \big\langle \alpha, \nu(\lambda + s_\beta w(-\lambda))\big\rangle -
\langle\alpha,\beta^\vee\rangle\cdot\big\langle \beta, \nu(s_\beta
w(-\lambda))\big\rangle\\
&= \big\langle \alpha, \nu(\lambda + s_\beta w(-\lambda))\big\rangle + \langle
\alpha,\beta^\vee\rangle\cdot \big\langle (s_\beta w)^{-1}(\beta),
\nu(\lambda)\big\rangle\\
&\le 0,
\end{align*}
where the last step uses the induction hypothesis and that $(s_\beta
w)^{-1}(\beta) \in \Sigma^+$, which in turn follows from $\ell((s_\beta
w)^{-1}s_\beta) > \ell((s_\beta w)^{-1})$ (see, \eg
\cite[1.6~Lemma]{Humphreys.1990}).
\end{enumerate} 
This finishes the induction step.\bigskip

\textit{Step~2:} Let $\mu$ be general. Take an arbitrary $\alpha\in
\Sigma^+\setminus\Sigma_M$. Let $\alpha_0 = w(\alpha)$ be a root of maximal
height in the $W_0$-orbit of $\alpha$. (If we write $\alpha_0 =
\sum_{\beta\in\Delta} n_\beta\beta$, the \emph{height} of $\alpha_0$ is
$\sum_{\beta\in\Delta}n_\beta$.) Then we have $\langle \alpha_0,
\beta^\vee\rangle \ge0$, for all $\beta\in\Sigma^+$, since otherwise
$s_\beta(\alpha_0) = \alpha_0 -\langle \alpha_0,\beta^\vee\rangle\cdot \beta$
would have greater height than $\alpha_0$. By the hypothesis, $\nu(-\lambda)
-\nu(w(\mu))$ is a linear combination of simple coroots with non-negative
coefficients. Therefore,
\begin{equation}\label{eq:no-cone-2}
\big\langle\alpha, \nu(w^{-1}(\lambda) + \mu)\big\rangle = \big\langle
w(\alpha), \nu(\lambda + w(\mu))\big\rangle = -\big\langle \alpha_0,
\nu(-\lambda) - \nu(w(\mu))\big\rangle \le 0.
\end{equation}
By Step~1 we have
\begin{equation}\label{eq:no-cone-3}
\big\langle \alpha, \nu(\lambda + w^{-1}(-\lambda))\big\rangle \le 0.
\end{equation}
Finally, \eqref{eq:no-cone-2} and \eqref{eq:no-cone-3} imply
\[
\big\langle \alpha, \nu(\lambda + \mu)\big\rangle = \big\langle \alpha,
\nu(\lambda + w^{-1}(-\lambda))\big\rangle + \big\langle \alpha,
\nu(w^{-1}(\lambda) + \mu)\big\rangle \le 0.\qedhere
\]
\end{proof} 

We now turn to the classification of non-obtuse parabolics. Since $\alg P$ is
assumed to be maximal, the roots in $\Sigma^+\setminus\Sigma_M$ are contained in
an irreducible component of $\Sigma$. Without loss of generality we may
therefore assume that $\Sigma$ is irreducible.

The maximal parabolics of $\alg G$ are in one-to-one correspondence with the
elements of the basis $\Delta$ of $\Sigma$. We write $\Delta =
\{\alpha_1,\dotsc,\alpha_n\}$ and denote $s_r\coloneqq s_{\alpha_r}$ the simple
reflection attached to $\alpha_r$. Let $\alg P_r = \alg U_{\alg P_r}\alg M_r$
be the maximal parabolic subgroup corresponding to $\alpha_r$ so that
$\Delta_{M_r} = \Delta\setminus\{\alpha_r\}$. We put 
\[
\Sigma_{U_{P_r}}\coloneqq \Sigma^+ \setminus \Sigma_{M_r}.
\]
Then $\alpha_r$ is the unique element in $\Sigma_{U_{P_r}} \cap \Delta$. We say
that $\alpha_r$ is \emph{non-obtuse} if $\alg P_r$ is. 

\begin{prop}\label{prop:classification} 
The classification of non-obtuse parabolic subgroups of $\alg G$ is given in
terms of the Dynkin diagram of $\Sigma$ in Figure~\ref{fig:class}.
Moreover, the following conditions are equivalent:
\begin{enumerate}[label=(\roman*)]
\item\label{prop:class-i} $\alpha_r$ is non-obtuse.
\item\label{prop:class-ii} $\langle\alpha_r,\beta^\vee\rangle \ge 0$ for all
$\beta\in \Sigma_{U_{P_r}}$.
\item\label{prop:class-iii} The Weyl group $W_{0,M_r}$ acts transitively on the
roots in $\Sigma_{U_{P_r}}$ of the same length.
\item\label{prop:class-iv} In the notation of \eqref{eq:ci(beta)} below we have
$c_r(\alpha^r_0) = 1$, where $\alpha^r_0$ is the highest root of the same length
as $\alpha_r$.
\end{enumerate}
\begin{figure*}[ht!]
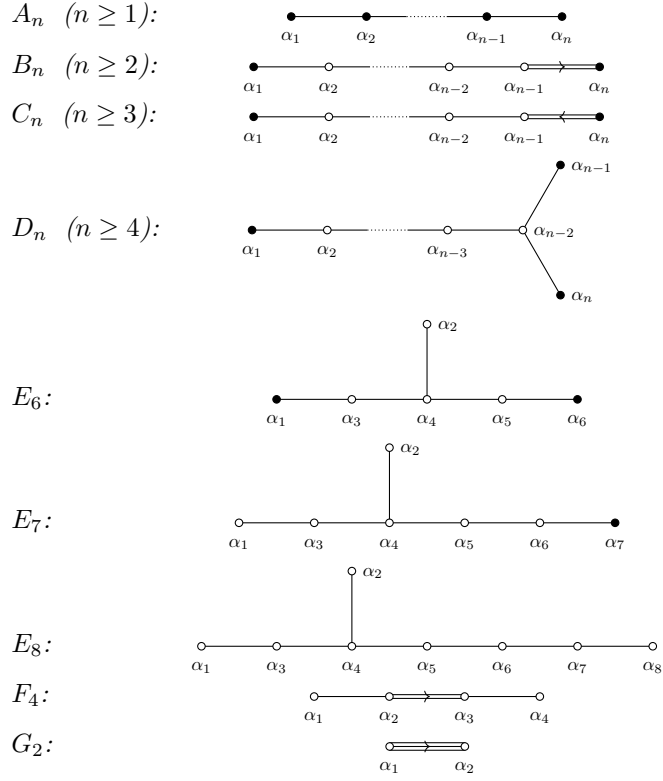
 
\centering
\begin{tabular}{lc} 
$A_n$\; ($n\ge 1$): & \dynkin [%
labels={\alpha_1,\alpha_2,\alpha_{n-1},\alpha_n},
] A{}\\

$B_n$\; ($n\ge2$): & \dynkin [%
labels={\alpha_1,\alpha_2,\alpha_{n-2},\alpha_{n-1},\alpha_n},
] B{*o.oo*}\\

$C_n$\; ($n\ge3$): & \dynkin [%
labels={\alpha_1,\alpha_2,\alpha_{n-2},\alpha_{n-1},\alpha_n},
] C{*o.oo*}\\

$D_n$\; ($n\ge4$): & \dynkin [%
labels={\alpha_1,\alpha_2,\alpha_{n-3},\alpha_{n-2},\alpha_{n-1},\alpha_n},
label directions={,,,right,,},
] D{*o.oo**}\\

$E_6$: & \dynkin [%
labels={\alpha_1,\alpha_2,\alpha_3,\alpha_4,\alpha_5,\alpha_6},
] E{*oooo*}\\

$E_7$: & \dynkin [%
labels={\alpha_1,\alpha_2,\alpha_3,\alpha_4,\alpha_5,\alpha_6,\alpha_7},
] E{oooooo*}\\

$E_8$: & \dynkin [%
labels={\alpha_1,\alpha_2,\alpha_3,\alpha_4,\alpha_5,\alpha_6,\alpha_7,\alpha_8},
] E{oooooooo}\\

$F_4$: & \dynkin [%
labels={\alpha_1,\alpha_2,\alpha_3,\alpha_4},
] F{oooo}\\

$G_2$: & \dynkin [%
labels={\alpha_1,\alpha_2},
] G{oo}
\end{tabular} 

\caption{The black vertices are precisely the non-obtuse simple
roots. Note that in types $E_8$, $F_4$, and $G_2$ there are no non-obtuse
parabolics, while in type $A_n$ all maximal parabolics are
non-obtuse.}\label{fig:class}
\end{figure*} 

\end{prop}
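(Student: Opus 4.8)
The plan is to prove Proposition~\ref{prop:classification} by first establishing the chain of equivalences \ref{prop:class-i}--\ref{prop:class-iv}, and then using the equivalent conditions (most conveniently \ref{prop:class-iv}, or a direct inspection via \ref{prop:class-ii}) to carry out the case-by-case check producing Figure~\ref{fig:class}. I would start by setting up the notation promised in \eqref{eq:ci(beta)}: for each $\beta \in \Sigma_{U_{P_i}}$, writing $\beta = \sum_{j} c_j(\beta)\,\alpha_j$ in the basis $\Delta$, so that $c_i(\beta) \ge 1$ is the coefficient of the distinguished simple root $\alpha_i$; since $\alpha_i$ is the unique element of $\Sigma_{U_{P_i}} \cap \Delta$, the set $\Sigma_{U_{P_i}}$ consists precisely of the positive roots with $c_i \ge 1$.

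For the equivalences: \ref{prop:class-i}$\Leftrightarrow$\ref{prop:class-ii} needs an argument because \ref{prop:class-ii} only asserts non-negativity of $\langle \alpha_i, \beta^\vee\rangle$ for the \emph{one} fixed root $\alpha_i$, whereas non-obtuseness asks it for all pairs $\alpha, \beta \in \Sigma_{U_{P_i}}$. The direction \ref{prop:class-i}$\Rightarrow$\ref{prop:class-ii} is trivial. For the converse, I would argue that any $\alpha \in \Sigma_{U_{P_i}}$ is $W_{0,M_i}$-conjugate to a root whose $\alpha_i$-coefficient is still positive but whose height is maximal in its $W_{0,M_i}$-orbit; such a maximal-height root $\alpha_0$ satisfies $\langle \alpha_0, \beta^\vee\rangle \ge 0$ for all $\beta \in \Delta_{M_i}$ (else some $s_\beta$, $\beta \in \Delta_{M_i}$, would raise the height while staying in $\Sigma_{U_{P_i}}$, as $s_\beta$ preserves $\Sigma_{U_{P_i}}$ for $\beta\in\Delta_{M_i}$), and one checks this forces $\alpha_0 = \alpha_i^0$, the highest root of its length, provided $\langle\alpha_i,\cdot^\vee\rangle\ge 0$ holds; then one propagates non-negativity of the pairing from the pair $(\alpha_i, \cdot)$ to arbitrary pairs using the $W_{0,M_i}$-action and the standard fact that if $w$ is length-minimal in $w W_{0,M_i}$ then $w(\Sigma^+_{M_i}) \subseteq \Sigma^+$. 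The implication \ref{prop:class-ii}$\Leftrightarrow$\ref{prop:class-iii} comes from the reflection formula: $W_{0,M_i}$-transitivity on roots of a fixed length in $\Sigma_{U_{P_i}}$ is equivalent to saying no $s_\beta$ ($\beta \in \Delta_{M_i}$) can decrease the $\alpha_i$-coefficient, which unwinds to the positivity condition. Finally \ref{prop:class-ii}$\Leftrightarrow$\ref{prop:class-iv}: condition \ref{prop:class-iv} says the highest root $\alpha_i^0$ of the length of $\alpha_i$ has $c_i(\alpha_i^0) = 1$; since $\alpha_i^0$ dominates (in the root order) every root of that length in $\Sigma_{U_{P_i}}$ and these are all $W_{0}$-conjugate, and since $c_i(\beta) \ge 1$ throughout $\Sigma_{U_{P_i}}$, I would show $c_i(\alpha_i^0) = 1$ is equivalent to $c_i \equiv 1$ on all roots of that length, which in turn (using that the only obstruction to \ref{prop:class-ii} is a root $\beta$ with $c_i(\beta) \ge 2$ paired negatively against $\alpha_i$) is equivalent to \ref{prop:class-ii}. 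I expect this last equivalence to require the most care, because one must also handle the short roots in the non-reduced / multiply-laced cases ($B_n$, $C_n$, $F_4$, $G_2$) where ``the highest root of the same length as $\alpha_i$'' depends on whether $\alpha_i$ is long or short.

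With the equivalences in hand, the classification reduces to reading off, in each irreducible type, for which vertex $\alpha_i$ the coefficient $c_i(\theta) = 1$ where $\theta$ is the appropriate highest root (highest long root $\tilde\alpha$ if $\alpha_i$ is long, highest short root if $\alpha_i$ is short). For the simply-laced types one only needs the marks of the highest root: in $A_n$ all marks are $1$ (all vertices non-obtuse); in $D_n$ the highest root has mark $1$ exactly at the two ``fork'' end nodes and the opposite end node — matching the three black terminal vertices of Figure~\ref{fig:class}; in $E_6$ the highest root $(1,2,2,3,2,1)$ in Bourbaki numbering has mark $1$ at exactly two end nodes; in $E_7$ the highest root $(2,2,3,4,3,2,1)$ has mark $1$ at exactly one end node; in $E_8$ the highest root $(2,3,4,6,5,4,3,2)$ has \emph{no} mark equal to $1$, so no non-obtuse vertex. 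For $B_n$, $C_n$: at a long simple root one tests $\tilde\alpha$ (the highest long root), at a short one the highest short root; working this out shows only the terminal vertices survive, with the short/long bookkeeping pinning down which terminal vertex. For $F_4$ and $G_2$ an explicit check of both highest roots against each vertex shows nothing is non-obtuse. I would present this as a short table or an enumerated discussion per type, citing \cite[Plate I--IX]{Bourbaki.1981} for the marks of the highest roots, so that the combinatorial verification is transparent rather than a long computation. The only genuine subtlety in this last step is being consistent about ``same length as $\alpha_i$'' in the non-simply-laced cases; everything else is a finite lookup.
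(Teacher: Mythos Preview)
Your plan is viable and genuinely different from the paper's approach: the paper never proves the equivalences \ref{prop:class-i}--\ref{prop:class-iv} abstractly, but instead goes type by type, and for each vertex directly verifies either all four conditions or their simultaneous failure (exhibiting an explicit pair with negative inner product and computing $c_i$ of the relevant highest root). Your strategy --- establish the equivalences first, then read off the classification from the marks of the highest (long or short) root --- is more conceptual and makes the final classification a short table lookup, at the cost of having to get the abstract implications right.

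Your sketch of the implications is mostly on track, but two steps need sharpening. First, your \ref{prop:class-ii}$\Rightarrow$\ref{prop:class-i} argument is right in spirit but the ``propagation via the $W_{0,M_i}$-action'' is not the correct endgame. What actually works is: from \ref{prop:class-ii}, the maximal-height representative $\alpha_0$ in any $W_{0,M_i}$-orbit inside $\Sigma_{U_{P_i}}$ is \emph{dominant} (non-negative against $\Delta_{M_i}$ by maximality, and against $\alpha_i$ because \ref{prop:class-ii} gives $(\alpha_i,\alpha_0)\ge 0$), hence equals the highest root of its own length; this gives \ref{prop:class-iii}. Then \ref{prop:class-iii}$\Rightarrow$\ref{prop:class-i} follows because any $\alpha\in\Sigma_{U_{P_i}}$ is $W_{0,M_i}$-conjugate to a dominant root, and dominant roots pair non-negatively with all positive roots. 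Second, for \ref{prop:class-iv}$\Rightarrow$\ref{prop:class-ii} your worry about other-length roots is unnecessary: if $(\alpha_i,\beta)<0$ then $s_\beta(\alpha_i)=\alpha_i - \langle\alpha_i,\beta^\vee\rangle\,\beta$ is a root \emph{of the same length as $\alpha_i$} with $c_i\ge 1+c_i(\beta)\ge 2$, contradicting $c_i(\alpha_i^0)=1$ together with $\gamma\le\alpha_i^0$ for every positive $\gamma$ of that length. Your statement that ``the only obstruction to \ref{prop:class-ii} is a root $\beta$ with $c_i(\beta)\ge 2$'' is not accurate as written; the obstruction is any $\beta$ with negative pairing, and the point is that such a $\beta$ \emph{produces} a same-length root with $c_i\ge 2$.

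What the paper's approach buys, beyond avoiding these abstract arguments, is explicit data: in each failing case it names a concrete pair $(\alpha_i,\gamma)$ with $(\alpha_i,\gamma)<0$ (often via an explicit $w_r\in W_{0,M_r}$), and in each non-obtuse case it verifies the $W_{0,M_i}$-transitivity constructively. Your approach is cleaner for the classification itself; the paper's is more self-contained and supplies witnesses that may be reused later.
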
 
\begin{proof} 
We consider each type separately. 
The equivalences of \ref{prop:class-i}--\ref{prop:class-iv} are obtained
along the way.
For the concrete description of the roots systems we follow \cite[Planches
I--IX]{Bourbaki.1981}. 

As usual we denote $e_1,\dotsc,e_n$ the standard basis of $\R^n$, endowed with
the canonical scalar product $\scalar$. Given a root $\beta\in \Sigma$, we
write
\begin{equation}\label{eq:ci(beta)}
\beta = \sum_{i=1}^n c_i(\beta)\cdot \alpha_i.
\end{equation}
Observe that $c_i(s_j(\beta)) = c_i(\beta)$, for $i\neq j$. In particular, the
action of $W_{0,M_r}$
does not affect $c_r(\beta)$. Observe that $\Sigma_{U_{P_r}}$ consists of those
$\beta\in \Sigma^+$ with $c_r(\beta)>0$.

\begin{enumerate} 
\item[($A_n$)]\label{class-An} 
Inside $V = \bigl\{(x_1,\dotsc,x_{n+1})\in \R^{n+1}\,\big|\,\sum_{i=1}^{n+1}x_i
= 0\bigr\}$ the root system of type $A_n$ is
\[
\Sigma = \set{\pm (e_i-e_j)}{1\le i< j\le n+1}.
\]
The simple roots are given by $\alpha_i = e_i-e_{i+1}$, for $i=1,\dotsc,n$, and
the roots $e_i-e_j = \sum_{k=i}^{j-1}\alpha_k$, for $1\le i<j\le n+1$, are
positive. The Weyl group $W_0$ is the symmetric group $\frakS_{n+1}$ acting on
$e_1,\dotsc,e_{n+1}$. Fix $1\le r\le n$. Then we have
\[
\Sigma_{U_{P_r}} = \set{e_i-e_j}{1\le i\le r < j\le n+1}.
\]
If $e_i-e_j, e_a-e_b \in \Sigma_{U_{P_r}}$, then we have $a\neq j$ and $i\neq
b$. Hence, $(e_i-e_j,e_a-e_b) = \delta_{ia} + \delta_{jb} \ge 0$ and $\alpha_r$
is non-obtuse. The Weyl group $W_{0,M_r}$ identifies with $\frakS_{r}\times
\frakS_{n+1-r}$ with the first factor acting on $\{e_1,\dotsc,e_r\}$ and the
second on $\{e_{r+1},\dotsc,e_{n+1}\}$. It clearly acts transitively on
$\Sigma_{U_{P_r}}$. 
\item[($B_n$)]\label{class-Bn} 
Inside $V = \R^n$ the root system of type $B_n$ is
\[
\Sigma = \set{\pm e_i}{1\le i\le n} \cup \set{\pm e_i\pm e_j}{1\le i < j\le n}.
\]
A basis is given by $\alpha_i = e_i-e_{i+1}$, for $1\le i < n$, and $\alpha_n =
e_n$. The positive roots are
\[
\begin{cases}
e_i = \sum_{k=i}^n \alpha_k, & \text{for $1\le i\le n$;}\\
e_i - e_j = \sum_{k=i}^{j-1}\alpha_k, & \text{for $1\le i < j\le n$;}\\
e_i + e_j = \sum_{k=i}^{j-1}\alpha_k + 2\sum_{k=j}^n\alpha_k, & \text{for $1\le
i <j\le n$.}
\end{cases}
\]
The Weyl group $W_0$ identifies with $(\Z/2\Z)^n\rtimes \frakS_n$ with
$\frakS_n$ permuting the $e_i$, and $(\Z/2\Z)^n$ acting by changing the signs of
the $e_i$. Consider three cases:
\begin{enumerate}[label=--]
\item Assume $r=1$. Then
\[
\Sigma_{U_{P_1}} = \{e_1\} \cup \set{e_1\pm e_i}{2\le i\le n}.
\]
Given $e_1 + \varepsilon_1e_i, e_1+\varepsilon_2e_j \in \Sigma_{U_{P_1}}$, with
$\varepsilon_1,\varepsilon_2\in \{-1,0,1\}$, we compute
\[
(e_1+\varepsilon_1e_i,e_1+\varepsilon_2e_j) = 1 +
\varepsilon_1\varepsilon_2\delta_{ij} \ge0.
\]
Hence, $\alpha_1$ is non-obtuse. The Weyl group $W_{0,M_1}$ identifies with
$(\Z/2\Z)^{n-1}\rtimes \frakS_{n-1}$ with both groups acting on
$\{\pm e_2,\dotsc,\pm e_n\}$, leaving $e_1$ fixed. It clearly acts transitively
on $\set{e_1\pm e_i}{2\le i\le n}$ (and on $\{e_1\}$).

\item Assume $r=n$. Then
\[
\Sigma_{U_{P_n}} = \set{e_i}{1\le i\le n} \cup \set{e_i+e_j}{1\le i<j\le n}.
\]
It is obvious that $\alpha_n$ is non-obtuse. The Weyl group $W_{0,M_n}$
identifies with $\frakS_n$ acting on $e_1,\dotsc,e_n$. It clearly acts
transitively on both $\{e_1,\dotsc,e_n\}$ and $\set{e_i+e_j}{1\le i <j\le n}$. 

\item Assume $1 < r < n$; in particular, $n\ge3$. Note that both $\alpha_r =
e_r-e_{r+1}$ and $e_{r-1} + e_{r+1}$ lie in $\Sigma_{U_{P_r}}$ and satisfy
$(e_r-e_{r+1}, e_{r-1}+e_{r+1}) = -1$. Hence, $\alpha_r$ is not non-obtuse. The
highest root is $\alpha_0 = e_1+e_2 = \alpha_1 + 2\sum_{k=2}^n\alpha_k$. Notice
that $\alpha_0$ and $\alpha_r$ both lie in $\Sigma_{U_{P_r}}$ and have the same
length. But since $c_r(\alpha_0) = 2\neq 1 = c_r(\alpha_r)$, it follows that
$\alpha_0$ does not lie in the $W_{0,M_r}$-orbit of $\alpha_r$.
\end{enumerate}
\item[($C_n$)]\label{class-Cn} 
Inside $V= \R^n$ the root system of type $C_n$ is
\[
\Sigma = \set{\pm 2e_i}{1\le i\le n} \cup \set{\pm e_i\pm e_j}{1\le i <j\le n}.
\]
A basis is given by $\alpha_i = e_i-e_{i+1}$, for $1\le i < n$, and $\alpha_n =
2e_n$. The positive roots are
\[
\begin{cases}
e_i - e_j = \sum_{k=i}^{j-1}\alpha_k, & \text{for $1\le i<j\le n$;}\\
e_i + e_j = \sum_{k=i}^{j-1}\alpha_k + 2\sum_{k=j}^{n-1}\alpha_k + \alpha_n, &
\text{for $1\le i<j\le n$;}\\
2e_i = 2\sum_{k=i}^{n-1}\alpha_k + \alpha_n, & \text{for $1\le i\le n$.}
\end{cases}
\]
The Weyl group $W_0$ identifies with $(\Z/2\Z)^n\rtimes\frakS_n$ as for type
$B_n$. Consider three cases:
\begin{enumerate}[label=--]
\item Assume $r=1$. Then
\[
\Sigma_{U_{P_1}} = \{2e_1\}\cup \set{e_1\pm e_i}{2\le i\le n}.
\]
Given $e_1+\varepsilon_1e_i, e_1+\varepsilon_2e_j \in \Sigma_{U_{P_1}}$, with
$i,j\neq 1$ and
$\varepsilon_1,\varepsilon_2\in \{\pm1\}$, we compute
\[
(e_1 + \varepsilon_1e_i, e_1+\varepsilon_2e_j) = 1 +
\varepsilon_1\varepsilon_2\delta_{ij}\ge0.
\]
Since also $(2e_1,e_1\pm e_i) =2 \ge 0$ (for $i\neq 1$), the root $\alpha_1$ is
non-obtuse. The
Weyl group $W_{0,M_1}$ identifies with $(\Z/2\Z)^{n-1}\rtimes \frakS_{n-1}$ with
both groups acting on $\{\pm e_2,\dotsc,\pm e_n\}$ leaving $e_1$ fixed. It
clearly acts transitively on $\set{e_1\pm e_i}{2\le i\le n}$ (and on
$\{2e_1\}$). 

\item Assume $r=n$. Then 
\[
\Sigma_{U_{P_n}} = \set{2e_i}{1\le i\le n} \cup \set{e_i+e_j}{1\le i<j\le n}.
\]
It is obvious that $\alpha_n$ is non-obtuse. The Weyl group $W_{0,M_n}$
identifies with $\frakS_n$ acting on $e_1,\dotsc,e_n$. It clearly acts
transitively on $\{2e_1,\dotsc,2e_n\}$ and on $\set{e_i+e_j}{1\le i <j\le n}$.

\item Assume $1<r<n$; in particular, $n\ge3$. Note that both $\alpha_r =
e_r-e_{r+1}$ and $e_{r-1}+e_{r+1}$ lie in $\Sigma_{U_{P_r}}$ and satisfy
$(e_r-e_{r+1},e_{r-1}+e_{r+1}) = -1$. Hence, $\alpha_r$ is not non-obtuse.
Consider the root $\alpha^r_0 = e_1+e_2 = \alpha_1+2\sum_{k=2}^{n-1}\alpha_k +
\alpha_n$. Then $\alpha^r_0$ and $\alpha_r$ both lie in $\Sigma_{U_{P_r}}$ and
have the same length. But since $c_r(\alpha^r_0) = 2 \neq 1 = c_r(\alpha_r)$, it
follows that $\alpha^r_0$ does not lie in the $W_{0,M_r}$-orbit of $\alpha_r$.
\end{enumerate}
\item[($D_n$)]\label{class-Dn} 
Inside $V = \R^n$ the root system of type $D_n$ is 
\[
\Sigma = \set{\pm e_i\pm e_j}{1\le i < j\le n}.
\]
A basis is given by $\alpha_i = e_i-e_{i+1}$, for $1\le i<n$, and $\alpha_n =
e_{n-1} + e_n$. The positive roots are 
\[
\begin{cases}
e_i - e_j = \sum_{k=i}^{j-1}\alpha_k, & \text{for $1\le i<j\le n$;}\\
e_i + e_n = \sum_{k=i}^{n-2}\alpha_k + \alpha_n, & \text{for $1\le i < n$;}\\
e_i+e_j = \sum_{k=i}^{j-1}\alpha_k + 2\sum_{k=j}^{n-2}\alpha_k + \alpha_{n-1} +
\alpha_n, & \text{for $1\le i<j< n$.}
\end{cases}
\]
The Weyl group $W_0$ identifies with $\Gamma\rtimes\frakS_n$, where $\Gamma$ is
the kernel of the map $(\Z/2\Z)^n \to \Z/2\Z$, $(x_i)_i\mapsto
\sum_{i=1}^nx_i$. We distinguish the following cases:
\begin{enumerate}[label=--]
\item Assume $r=1$. Then
\[
\Sigma_{U_{P_1}} = \set{e_1\pm e_i}{2\le i\le n}.
\]
The same computation as in \hyperref[class-Bn]{$(B_n)$} shows that $\alpha_1$ is
non-obtuse.
The Weyl group $W_{0,M_1}$ identifies with $\Gamma_1\rtimes \frakS_{n-1}$, where
$\frakS_{n-1}$ permutes $e_2,\dotsc,e_n$ and $\Gamma_1\subseteq \Gamma$ is the
subgroup of elements $(x_i)_i$ with $x_1 = 0$. It is easy to check that
$W_{0,M_1}$ acts transitively on $\Sigma_{U_{P_1}}$.

\item Assume $r=n-1$ or $r=n$. By the symmetry of the Dynkin diagram it suffices
to consider the case $r=n$. Then
\[
\Sigma_{U_{P_n}} =\set{e_i+e_j}{1\le i<j\le n}.
\]
It is obvious that $\alpha_n$ is non-obtuse. The Weyl group $W_{0,M_n}$
identifies with $\frakS_n$ which acts by permuting the $e_1,\dotsc,e_n$. It
clearly acts transitively on $\Sigma_{U_{P_n}}$.

\item Assume $1<r<n-1$. Both $\alpha_r = e_r-e_{r+1}$ and $e_{r-1}+e_{r+1}$ lie
in $\Sigma_{U_{P_r}}$ and satisfy $(e_r-e_{r+1},e_{r-1}+e_{r+1}) = -1$. Hence,
$\alpha_r$ is not non-obtuse. The highest root is $\alpha_0 = e_1 + e_2 =
\alpha_1 + 2\sum_{k=2}^{n-2}\alpha_k + \alpha_{n-1}+\alpha_n$. Then $\alpha_0$
and $\alpha_r$ both lie in $\Sigma_{U_{P_r}}$ and have the same length. But
since $c_r(\alpha_0)= 2\neq 1 = c_r(\alpha_r)$, it follows that $\alpha_0$ does
not lie in the $W_{0,M_r}$-orbit of $\alpha_r$.
\end{enumerate}
\item[($E_6$)]\label{class-E6} 
Inside $V = \set{(x_1,\dotsc,x_8)\in \R^8}{x_6=x_7=-x_8}$ the root system of
type $E_6$ is
\[
\begin{split}
\Sigma = &\set{\pm e_i\pm e_j}{1\le i<j\le 5}\\
&\cup \set{\pm \frac12 \Bigl(e_8-e_7-e_6 + \sum_{i=1}^5(-1)^{\nu(i)} e_i\Bigr)}
{\nu(i)\in \Z/2\Z,\; \sum_{i=1}^5\nu(i) = 0}.
\end{split}
\]
A basis is given given by $\alpha_1 = \frac12(e_1+e_8) -\frac12 (e_2+e_3+ \dotsb
+ e_7)$, $\alpha_2=e_2+e_1$, and $\alpha_i = e_{i-1}-e_{i-2}$, for $3\le i\le
6$. We distinguish the following cases:
\begin{enumerate}[label=--]
\item Assume $r=1$ or $r=6$. By the symmetry of the Dynkin diagram for $E_6$ it
suffices to consider the case $r=1$. We have
\[
\Sigma_{U_{P_1}} = \set{\frac12 \Bigl(e_8 - e_7-e_6 + \sum_{i=1}^5
(-1)^{\nu(i)}e_i\Bigr)}{\nu(i)\in\Z/2\Z,\; \sum_{i=1}^5\nu(i) = 0}.
\]
To ease the notation we write $\alpha_\nu \coloneqq \frac12(e_8 - e_7-e_6 +
\sum_{i=1}^5(-1)^{\nu(i)}e_i)$, for each $\nu = (\nu(i))_i \in (\Z/2\Z)^5$. Let
$\nu,\mu\in (\Z/2\Z)^5$ such that $\sum_{i=1}^5\nu(i) = \sum_{i=1}^5\mu(i) = 0$.
Since $\sum_{i=1}^5(\nu(i)+\mu(i)) = 0$, we observe that the cardinality of the
set $\set{1\le i\le 5}{\nu(i)\neq \mu(i)}$ is even, hence equals $0$, $2$, or
$4$. But then $\lvert\set{1\le i\le 5}{\nu(i)=\mu(i)}\rvert$ is either $1$, $3$,
or $5$. Thus, we compute
\[
(\alpha_\nu,\alpha_\mu) = \frac14\bigl( 3 + \lvert\set{i}{\nu(i)=\mu(i)}\rvert -
\lvert \set{i}{\nu(i)\neq\mu(i)}\rvert\bigr) \ge 0.
\]
Therefore, $\alpha_1$ is non-obtuse. The Weyl group $W_{0,M_1}$ is the group
$\Gamma\rtimes\frakS_5$ of type $D_5$ described in \hyperref[class-Dn]{$(D_n)$};
it acts on
$e_1,\dotsc,e_5$, leaving $e_6$, $e_7$, and $e_8$ fixed. Given $\nu,\mu\in
(\Z/2\Z)^5$ with $\alpha_\nu,\alpha_\mu\in \Sigma_{U_{P_1}}$, we may view
$\mu-\nu$ as an element of $\Gamma$ which maps $\alpha_\nu$ to $\alpha_\mu$.
Therefore, $W_{0,M_1}$ acts transitively on $\Sigma_{U_{P_1}}$.

\item Assume $1 <r <6$. The roots $\beta_1 = \sum_{k=1}^5 \alpha_k = \frac12
(e_8 -e_7 - e_6 + e_1+e_2-e_3+e_4-e_5)$ and $\beta_2 = \alpha_2 +\alpha_3
+2\alpha_4 +\alpha_5 + \alpha_6 = e_5+e_3$ both lie in $\Sigma_{U_{P_r}}$ and
satisfy $(\beta_1,\beta_2) = -1$. Hence, $\alpha_r$ is not non-obtuse. Notice
that $\beta_1 = w_r(\alpha_r)$, where $w_r\in W_{0,M_r}$ is given by
\begin{align*}
w_2 &= s_5s_1s_3s_4,\\
w_3 &= s_5s_1s_2s_4,\\
w_4 &= s_5s_1s_3s_2,\\
w_5 &= s_1s_2s_3s_4.
\end{align*}
Clearly, $w_r^{-1}(\beta_2) \in \Sigma_{U_{P_r}}$ and $(\alpha_r,
w_r^{-1}(\beta_2)) = (\beta_1,\beta_2) = -1$. 

The highest root is $\alpha_0 = \frac12(e_8-e_7-e_6+e_1+e_2+e_3+e_4+e_5) =
\alpha_1 + 2\alpha_2+2\alpha_3+3\alpha_4+2\alpha_5+\alpha_6$. Then $\alpha_0$
and $\alpha_r$ both lie in $\Sigma_{U_{P_r}}$ and have the same length. But
since $c_r(\alpha_0) > 1 = c_r(\alpha_r)$, it follows that $\alpha_0$ does not
lie in the $W_{0,M_r}$-orbit of $\alpha_r$.
\end{enumerate}
\item[($E_7$)]\label{class-E7} 
Inside $V = \set{(x_1,\dotsc,x_8)\in \R^8}{x_7=-x_8}$ the root system of type
$E_7$ is
\[
\begin{split}
\Sigma =& \set{\pm e_i\pm e_j}{1\le i<j\le 6} \cup \{\pm (e_8-e_7)\}\\
&\cup \set{\pm \frac12 \Bigl(e_8-e_7 +\sum_{i=1}^6(-1)^{\nu(i)}
e_i\Bigr)}{\nu(i)\in \Z/2\Z,\; \sum_{i=1}^6\nu(i)\neq 0}.
\end{split}
\]
A basis is given by $\alpha_1 = \frac12(e_1+e_8) - \frac12(e_2+e_3+\dotsb+e_7)$,
$\alpha_2 = e_2+e_1$, and $\alpha_i = e_{i-1}-e_{i-2}$, for $3\le i\le 7$. We
distinguish the following cases:
\begin{enumerate}[label=--]
\item Assume $r=7$. Then
\[
\begin{split}
\Sigma_{U_{P_7}} = &\set{e_6\pm e_i}{1\le i\le 5} \cup \{e_8-e_7\}\\
&\cup  \set{\frac12\Bigl(e_8-e_7+e_6+ \sum_{i=1}^5(-1)^{\nu(i)} e_i\Bigr)}
{\nu(i)\in \Z/2\Z,\; \sum_{i=1}^5\nu(i)\neq 0}.
\end{split}
\]
(These are all the positive roots not lying in the subroot system of type
$E_6$.) To ease the notation we write $\alpha_\nu\coloneqq \frac12 (e_8-e_7 +
\sum_{i=1}^6(-1)^{\nu(i)}e_i)$, for each $\nu = (\nu(i))_i\in (\Z/2\Z)^6$. For
all $1\le i,j\le 5$ and $\nu\in (\Z/2\Z)^6$ with $\alpha_\nu\in
\Sigma_{U_{P_7}}$ we compute 
\begin{align*}
(e_6\pm e_i, e_6\pm e_j) &= 1\pm \delta_{ij}\ge0,\\
(e_6\pm e_i, e_8-e_7) &= 0,\\
(e_6\pm e_i,\alpha_\nu) &= \frac12 \bigl(1\pm (-1)^{\nu(i)}\bigr) \ge0,\\
(e_8-e_7,\alpha_\nu) &= 1.
\end{align*}
Let now $\nu,\mu\in (\Z/2\Z)^6$ with $\nu(6)= \mu(6) = 0$ and
$\sum_{i=1}^5\nu(i) = \sum_{i=1}^5 \mu(i)\neq 0$. As
$\sum_{i=1}^5(\nu(i)+\mu(i)) = 0$, we observe that the cardinality of the set
$\set{1\le i\le 6}{\nu(i)\neq \mu(i)}$ is even, but not $6$, hence equals $0$,
$2$, or $4$. 
But then $\lvert\set{1\le i\le 6}{\nu(i)=\mu(i)}\rvert$ is either $2$, $4$, or
$6$. Thus, we compute
\[
(\alpha_\nu,\alpha_\mu) = \frac14 \bigl(2 + \lvert\set{i}{\nu(i) = \mu(i)}\rvert
- \lvert \set{i}{\nu(i)\neq \mu(i)}\rvert\bigr) \ge 0.
\]
Therefore, $\alpha_7$ is non-obtuse. The Weyl group $W_{0,M_7}$ is the group
generated by $s_1$ and the group $\Gamma\rtimes\frakS_5$ of type $D_5$ (acting
on $\{\pm e_1,\dotsc,\pm e_5\}$ while leaving $e_6,e_7$, and $e_8$ fixed).
Given $\nu,\mu\in \Sigma_{U_{P_7}}$, we may view $\mu-\nu$ as an element of
$\Gamma$ (by forgetting the last entry) which maps $\alpha_\nu$ to $\alpha_\mu$.
Moreover, $\Gamma\rtimes \frakS_5$ clearly acts transitively on $\set{e_6\pm
e_i}{1\le i\le 5}$. Together with
\begin{align*}
s_1(e_6-e_1) &= e_6-e_1 + \alpha_{(0,1,1,1,1,1)} = \alpha_{(1,1,1,1,1,0)},
\quad\text{and}\\
s_1(\alpha_{(1,0,0,0,0,0)}) &= \alpha_{(1,0,0,0,0,0)} + \alpha_{(0,1,1,1,1,1)} =
e_8-e_7,
\end{align*}
and the fact that $\Gamma$ acts transitively on the set of $\alpha_\nu$ with
$\sum_{i=1}^5\nu(i)\neq 0$ and $\nu(6) = 0$, it follows that $\Sigma_{U_{P_7}}$
is the $W_{0,M_7}$-orbit of $\alpha_7$.
Hence, $W_{0,M_7}$ acts transitively on $\Sigma_{U_{P_7}}$.

\item Assume $1\le r <7$. The roots $\beta_1 = \sum_{k=1}^6\alpha_k =
\alpha_{(0,0,1,1,0,1)}$ and $\beta_2 = \alpha_1+\alpha_2 + 2\alpha_3 + 3\alpha_4
+ 2\alpha_5 +\alpha_6 + \alpha_7 = \alpha_{(1,1,0,0,1,0)}$ both lie in
$\Sigma_{U_{P_r}}$ and satisfy $(\beta_1,\beta_2) = -1$. Hence, $\alpha_r$ is
not non-obtuse. Note that $\beta_1 = w_r(\alpha_r)$, where $w_r\in W_{0,M_r}$ is
given by
\begin{align*}
w_1 &= s_6s_5s_2s_4s_3,\\
w_2 &= s_6s_5s_1s_3s_4,\\
w_3 &= s_6s_5s_2s_4s_1,\\
w_4 &= s_6s_5s_1s_3s_2,\\
w_5 &= s_6s_1s_3s_2s_4,\\
w_6 &= s_1s_2s_3s_4s_5.
\end{align*}
Clearly, $w_r^{-1}(\beta_2)\in \Sigma_{U_{P_r}}$ and $(\alpha_r,
w_r^{-1}(\beta_2)) = (\beta_1,\beta_2) = -1$.

The highest root is $\alpha_0 = e_8-e_7 = 2\alpha_1 + 2\alpha_2 + 3\alpha_3
+4\alpha_4 + 3\alpha_5 + 2\alpha_6 + \alpha_7$. Then $\alpha_0$ and $\alpha_r$
both lie in $\Sigma_{U_{P_r}}$ and have the same length. But since
$c_r(\alpha_0) > 1 = c_r(\alpha_r)$, it follows that $\alpha_0$ does not lie in
the $W_{0,M_r}$-orbit of $\alpha_r$.
\end{enumerate}
\item[($E_8$)]\label{class-E8} 
Inside $V = \R^8$ the root system of type $E_8$ is
\[
\begin{split}
\Sigma = &\set{\pm e_i\pm e_j}{1\le i<j\le 8}\\
&\cup \set{\pm\frac12 \Bigl(e_8 + \sum_{i=1}^7 (-1)^{\nu(i)}e_i\Bigr)}
{\nu(i)\in\Z/2\Z,\; \sum_{i=1}^7\nu(i) = 0}.
\end{split}
\]
A basis is given by $\alpha_1 = \frac12 (e_1+e_8) - \frac12(e_2+e_3+\dotsb +
e_7)$, $\alpha_2 = e_2+e_1$, and $\alpha_i = e_{i-1} - e_{i-2}$, for $3\le i\le
8$. 

Let $1\le r\le 8$. The roots $\beta_1 = \sum_{k=1}^8\alpha_k =
\frac12(e_8+e_7-e_6+e_1+e_2-e_3-e_4-e_5)$ and $\beta_2 = \alpha_1 + 2\alpha_2 +
3\alpha_3 + 5\alpha_4 + 4\alpha_5 + 3\alpha_6 + 2\alpha_7+\alpha_8 = \frac12
(e_8+e_7+e_6-e_1-e_2+e_3+e_4+e_5)$ both lie in $\Sigma_{U_{P_r}}$ and satisfy
$(\beta_1,\beta_2) = -1$. Hence, $\alpha_r$ is not non-obtuse. Note that
$\beta_1 = w_r(\alpha_r)$, where $w_r\in W_{0,M_r}$ is given by
\begin{align*}
w_1 &= s_8s_7s_6s_5s_2s_4s_3,\\
w_2 &= s_8s_7s_6s_5s_1s_3s_4,\\
w_3 &= s_8s_7s_6s_5s_2s_4s_1,\\
w_4 &= s_8s_7s_6s_5s_1s_3s_2,\\
w_5 &= s_8s_7s_6s_2s_1s_3s_4,\\
w_6 &= s_8s_7s_2s_1s_3s_4s_5,\\
w_7 &= s_8s_2s_1s_3s_4s_5s_6,\\
w_8 &= s_2s_1s_3s_4s_5s_6s_7.
\end{align*}
Clearly, $w_r^{-1}(\beta_2)\in \Sigma_{U_{P_r}}$ and $(\alpha_r,
w_r^{-1}(\beta_2)) = (\beta_1,\beta_2) = -1$.

The highest root is $\alpha_0 = e_8+e_7 = 2\alpha_1 + 3\alpha_2 + 4\alpha_3 +
6\alpha_4 + 5\alpha_5 +4\alpha_6 +3\alpha_7 +2\alpha_8$. Then $\alpha_r$ and
$\alpha_0$ both lie in $\Sigma_{U_{P_r}}$ and have the same length. But since
$c_r(\alpha_0) > 1 = c_r(\alpha_r)$, it follows that $\alpha_0$ does not lie in
the $W_{0,M_r}$-orbit of $\alpha_r$.
\item[($F_4$)]\label{class-F4} 
Inside $V = \R^4$ the root system of type $F_4$ is
\[
\Sigma = \set{\pm e_i}{1\le i\le 4} \cup \set{\pm e_i\pm e_j}{1\le i<j\le 4}
\cup \{\tfrac12 (\pm e_1 \pm e_2 \pm e_3 \pm e_4)\}.
\]
A basis is given by $\alpha_1 = e_2-e_3$, $\alpha_2 = e_3-e_4$, $\alpha_3 =
e_4$, and $\alpha_4 = \frac12(e_1-e_2-e_3-e_4)$. We have
\begin{align*}
(\alpha_1,\alpha_1+3\alpha_2+4\alpha_3+2\alpha_4) &= (e_2-e_3,e_1+e_3) = -1,\\
(\alpha_2,\alpha_1+\alpha_2+2\alpha_3+2\alpha_4) &= (e_3-e_4,e_1-e_3) = -1,\\
(\alpha_3, \alpha_1+2\alpha_2+2\alpha_3+2\alpha_4) &= (e_4,e_1-e_4) = -1,\\
(\alpha_4,\alpha_1+2\alpha_2+3\alpha_3+\alpha_4) &= \frac14\cdot
(e_1-e_2-e_3-e_4,e_1+e_2+e_3+e_4) = -\frac12.
\end{align*}
Hence, none of the $\alpha_1,\dotsc,\alpha_4$ is non-obtuse. Consider the
following cases:
\begin{enumerate}[label=--]
\item Assume $r=1$ or $r=2$. The highest root is $\alpha_0 = e_1+e_2 = 2\alpha_1
+ 3\alpha_2 +4\alpha_3+2\alpha_4$. Then both $\alpha_0$ and $\alpha_r$ lie in
$\Sigma_{U_{P_r}}$ and have the same length. But since $c_r(\alpha_0) > 1 =
c_r(\alpha_r)$, it follows that $\alpha_0$ does not lie in the $W_{0,M_r}$-orbit
of $\alpha_r$.

\item Assume $r=3$ or $r=4$. The highest short root is $\alpha_0^r = e_1 =
\alpha_1 + 2\alpha_2+3\alpha_3+2\alpha_4$. Then $\alpha_0^r$ and $\alpha_r$ both
lie in $\Sigma_{U_{P_r}}$ and have the same length. But since $c_r(\alpha_0^r)>1
= c_r(\alpha_r)$, it follows that $\alpha_0^r$ does not lie in the
$W_{0,M_r}$-orbit of $\alpha_r$.
\end{enumerate}
\item[($G_2$)]\label{class-G2} 
Inside $V = \set{(x,y,z)\in \R^3}{x+y+z=0}$ the root system of type $G_2$ is
\[
\Sigma = \pm \{e_1-e_2, e_2-e_3,e_1-e_3, 2e_1-e_2-e_3, 2e_2-e_1-e_3,
2e_3-e_1-e_2\}.
\]
A basis is given by $\alpha_1 = e_1-e_2$ and $\alpha_2 = -2e_1+e_2+e_3$. Since
\begin{alignat}{3}
(\alpha_1,\alpha_1+\alpha_2) &= (e_1-e_2, e_3-e_1) = -1 &\hspace{.5em}
\text{and}\\
(\alpha_2,3\alpha_1+\alpha_2) &= (-2e_1+e_2+e_3,e_1-2e_2+e_3) = -3, 
\end{alignat}
neither $\alpha_1$ nor $\alpha_2$ are non-obtuse.

The highest root is $\alpha_0 = 3\alpha_1+2\alpha_2 = -e_1-e_2+2e_3$. Then both
$\alpha_0$ and $\alpha_2$ lie in $\Sigma_{U_{P_2}}$ and have the same length.
But since $c_2(\alpha_0) = 2\neq 1 = c_2(\alpha_2)$, it follows that $\alpha_0$
does not lie in the $W_{0,M_2}$-orbit of $\alpha_2$.

Similarly, the highest short root is $\alpha_0^1 = 2\alpha_1+\alpha_2 =
e_3-e_2$, and both $\alpha_0^1$ and $\alpha_1$ lie in $\Sigma_{U_{P_1}}$ and
have the same length. But since $c_1(\beta) = 2\neq 1 = c_1(\alpha_1)$, it
follows that $\alpha_0^1$ does not lie in the $W_{0,M_1}$-orbit of $\alpha_1$.
\end{enumerate} 
\end{proof} 

We end this section by applying the previous analysis to prove a result on the
ordering of positive roots that will become useful later. First, we need a
preliminary lemma which also appeared in
\cite[Lemma~(2.1.1)]{Macdonald.1971}. For the convenience of the reader we
supply the simple proof.

We make the following convention: if $\Delta'$ is a basis of $\Sigma$, we denote
by $\Sigma^+_{\Delta'}$ (resp. $\Sigma^-_{\Delta'}$) the system of positive
(resp. negative) roots with respect to $\Delta'$.

Recall, \cite[Ch.\,VI, \S1.6, Cor.\,3 of Prop.\,17]{Bourbaki.1981}, that there
exists a unique longest element $w_0\in W_0$. It satisfies $w_0^2 = 1$ and
$\ell(ww_0) = \ell(w_0) - \ell(w)$, for all $w\in W_0$.

\begin{lem}\label{lem:rootorder} 
Let $w_0 = s_{i_1}\dotsm s_{i_r}$ be a reduced decomposition and put
$\beta_j\coloneqq s_{i_1}\dotsm s_{i_{j-1}}(\alpha_{i_j})$, for $j=1,\dotsc,r$.
Then one has
\[
\Sigma_{s_{i_1}\dotsm s_{i_j}(\Delta)}^+ = \{\beta_{j+1}, \beta_{j+2},\dotsc,
\beta_r, -\beta_1, -\beta_2,\dotsc,-\beta_j\},\qquad \text{for all $0\le j\le
r$.}
\]
\end{lem}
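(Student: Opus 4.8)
The statement is the classical description of the "inversion set" of an initial segment of a reduced word, rephrased in terms of the rotated bases $s_{i_1}\cdots s_{i_j}(\Delta)$. I would prove it by induction on $j$, going downward or upward; upward from $j=0$ is cleanest. The base case $j=0$ says $\Sigma^+_\Delta = \{\beta_1,\dots,\beta_r\}$, which is exactly the standard fact that a reduced word for $w_0$ lists each positive root exactly once via $\beta_j = s_{i_1}\cdots s_{i_{j-1}}(\alpha_{i_j})$, together with $\ell(w_0) = |\Sigma^+|$; this is \cite[Ch.\,VI, \S1.6]{Bourbaki.1981} and I would just cite it.

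For the inductive step, suppose the claim holds for $j$, and set $w = s_{i_1}\cdots s_{i_j}$, so $\Sigma^+_{w(\Delta)} = \{\beta_{j+1},\dots,\beta_r,-\beta_1,\dots,-\beta_j\}$. Applying $s_{i_1}\cdots s_{i_{j+1}}$ to $\Delta$ is the same as applying $s:=s_{\alpha_{i_{j+1}}}$ to the basis $w(\Delta)$ after conjugating, but more directly: $\Sigma^+_{s_{i_1}\cdots s_{i_{j+1}}(\Delta)} = (s_{i_1}\cdots s_{i_{j+1}})(\Sigma^+_\Delta)$, and a set of roots is a positive system iff it is of the form $u(\Sigma^+_\Delta)$ for $u\in W_0$. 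The key observation is the elementary lemma that for a simple root $\alpha_k$ and any element $v\in W_0$, one has $\Sigma^+_{vs_k(\Delta)} = $ (the reflection $s_{\gamma}$ applied to $\Sigma^+_{v(\Delta)}$, where $\gamma = v(\alpha_k)$), and more usefully: $\Sigma^+_{vs_k(\Delta)}$ is obtained from $\Sigma^+_{v(\Delta)}$ by \emph{replacing the single root $v(\alpha_k)$ by $-v(\alpha_k)$} precisely when $v(\alpha_k)\in\Sigma^+_{v(\Delta)}$. So I need: (a) $\beta_{j+1} = w(\alpha_{i_{j+1}})$ lies in $\Sigma^+_{w(\Delta)}$ — it is the element listed first in the inductive hypothesis, so this is immediate; and (b) passing from $w$ to $ws_{i_{j+1}}$ flips exactly $\beta_{j+1}\mapsto -\beta_{j+1}$ and fixes the rest setwise. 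Granting (b), the new positive system is $\{\beta_{j+2},\dots,\beta_r,-\beta_1,\dots,-\beta_{j+1}\}$, which is the claim for $j+1$.

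So the crux is the single-root-flip lemma. I would prove it directly: the map "pass from basis $\Delta'$ to basis $s_k(\Delta')$" where $\alpha_k\in\Delta'$ sends $\Sigma^+_{\Delta'}$ to $\Sigma^+_{s_k(\Delta')}$, and it is classical (\cite[Ch.\,VI, \S1.6, Cor.\,1 of Prop.\,17]{Bourbaki.1981}) that $s_k$ permutes $\Sigma^+_{\Delta'}\setminus\{\alpha_k\}$ and sends $\alpha_k$ to $-\alpha_k$; applying $v$ (which carries $\Sigma^+_{\Delta'}$, $s_k(\Delta')$ to $\Sigma^+_{v(\Delta')}$, $vs_k(\Delta')$ and commutes appropriately after relabelling) transports this to the assertion in (b) with $\Delta' = \Delta$, $v$ such that $v(\Delta) = w(\Delta)$; concretely take $\Delta' = w(\Delta)$ so $\alpha_k$ there is the simple root $\beta_{j+1}$, and the simple reflection through it is $w s_{i_{j+1}} w^{-1}$, whose action post-composed with the translation of bases gives exactly the flip. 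The one point requiring a line of care is that $\beta_{j+1}$ really is a \emph{simple} root of the system $\Sigma^+_{w(\Delta)}$ (equivalently $w(\alpha_{i_{j+1}})\in w(\Delta)$), which is clear since $w(\Delta)$ is by definition the $w$-image of $\Delta$ and $\alpha_{i_{j+1}}\in\Delta$.

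**Main obstacle.** There is no serious obstacle; this is bookkeeping around the standard exchange/inversion-set facts. The only thing to be careful about is matching conventions — making sure the "reflection through the simple root $\beta_{j+1}$ of the rotated basis" is identified correctly with $ws_{i_{j+1}}w^{-1}$ and that multiplying the word on the right by $s_{i_{j+1}}$ corresponds to this reflection on the level of bases — so that the flip is of $\beta_{j+1}$ and not of some other root. Writing out the base case $j=0$ cleanly (it is just the statement that $\ell(w_0)=|\Sigma^+|$ and the $\beta_j$ are distinct and positive) takes a sentence; everything else is the induction above.
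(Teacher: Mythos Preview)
Your proposal is correct. The argument by induction on $j$ using the ``single-root flip'' (that a simple reflection $s_k$ sends $\alpha_k$ to $-\alpha_k$ and permutes the remaining positive roots) works, and the base case is exactly the standard enumeration of $\Sigma^+$ by a reduced word for $w_0$.

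The paper takes a slightly different, non-inductive route. It cites directly the description of the inversion set $\Sigma^+_\Delta \cap w_j\Sigma^-_\Delta = \{\beta_1,\dotsc,\beta_j\}$ from \cite[Ch.\,VI, \S1.6, Cor.\,2 of Prop.\,17]{Bourbaki.1981} applied to $w_j^{-1}$, and then obtains the claim in one line from the set-theoretic decomposition
\[
\Sigma^+_{w_j(\Delta)} = -\bigl(\Sigma^+_\Delta \cap \Sigma^-_{w_j(\Delta)}\bigr) \sqcup \bigl(\Sigma^+_\Delta \setminus \Sigma^-_{w_j(\Delta)}\bigr).
\]
So where you use Corollary~1 of Proposition~17 (the single-reflection permutation property) iteratively, the paper invokes Corollary~2 (the full inversion-set description) once. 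Both are standard and essentially equivalent; the paper's version is a bit shorter, while yours makes the ``circular shift'' structure of the $\Sigma^{(j)}$ visible step by step, which fits well with how the lemma is used later in the algorithm. Your discussion of whether $\beta_{j+1}$ is simple in the rotated basis is slightly more than you need: it suffices to note $\Sigma^+_{w_{j+1}(\Delta)} = w_j\bigl(s_{i_{j+1}}(\Sigma^+_\Delta)\bigr)$ and apply the flip lemma inside the parentheses before pushing $w_j$ through.
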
 
\begin{proof} 
Write $w_j = s_{i_1}\dotsm s_{i_j}$. Applying \cite[Ch.\,VI, \S1.6, Cor.\,2 of
Prop.\,17]{Bourbaki.1981} to $w_j^{-1} = s_{i_j}\dotsm s_{i_1}$ yields
\[
\Sigma^+_{\Delta} \cap \Sigma^-_{w_j(\Delta)} = \Sigma^+_{\Delta} \cap w_j
\Sigma^-_{\Delta} = \{\beta_1,\beta_2,\dotsc,\beta_j\}.
\]
Note that $\Sigma^+_{\Delta} = \Sigma^+_{\Delta} \cap \Sigma^-_{w_0(\Delta)} =
\{\beta_1,\beta_2,\dotsc,\beta_r\}$. Hence, the assertion follows from
\[
\Sigma^+_{w_j(\Delta)} = (\Sigma^-_{\Delta} \cap \Sigma^+_{w_j(\Delta)}) \sqcup
(\Sigma^+_{\Delta} \cap \Sigma^+_{w_j(\Delta)}) = -(\Sigma^+_\Delta \cap
\Sigma^-_{w_j(\Delta)}) \sqcup (\Sigma^+_{\Delta} \setminus
\Sigma^-_{w_j(\Delta)}).\qedhere
\]
\end{proof} 

\begin{ex*} 
It is instructive to visualize an example. The orderings of the positive
roots in $\Sigma^+$ obtained in Lemma~\ref{lem:rootorder} generalize the
``circular orderings'' one has for root systems of rank 2. Assume $\Sigma$ is of
type $G_2$ with basis $\{\alpha_1,\alpha_2\}$ such that $\alpha_2$ is the long
root. The ordering of $\Sigma^+$ corresponding to the reduced decomposition $w_0
= s_1s_2s_1s_2s_1s_2$ is shown in Figure~\ref{fig:circorder}.
\begin{figure*}[ht] 
\centering
\begin{tikzpicture}
\def\radius{1}
\foreach \x in {0,60,...,300}
{
\draw[thick] (0,0) -- (\x:\radius);
\fill (\x:\radius) circle (1pt);
\draw[thick] (0,0) -- (\x+30:1.732*\radius);
\fill (\x+30:1.732*\radius) circle (1pt);
}
\node[right] at (0:\radius) {$\beta_1 = \alpha_1$};
\node[right] at (30:1.732*\radius) {$\beta_2$};
\node[above] at (60:\radius) {$\beta_3$};
\node[right] at (90:1.732*\radius) {$\beta_4$};
\node[above] at (120:\radius) {$\beta_5$};
\node[left] at (150:1.732*\radius) {$\beta_6 = \alpha_2$};

\end{tikzpicture}
\caption{The circular ordering in type $G_2$}\label{fig:circorder}
\end{figure*}
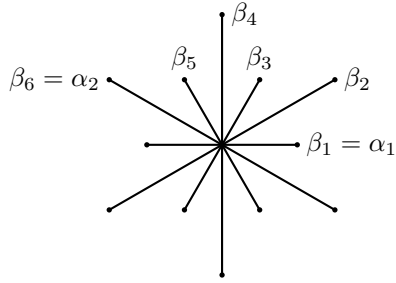 
\end{ex*} 

\begin{cor}\label{cor:rootorder} 
Let $\alpha_i\in \Delta = \{\alpha_1,\dotsc,\alpha_n\}$ be a non-obtuse simple
root, and let $\alpha\in \Sigma_{U_{P_i}}$ such that $\alpha$ and $\alpha_i$
have the same length. There
exists a reduced decomposition $w_0 = s_{i_1}\dotsm s_{i_r}$ such that, if we
put $\beta_j\coloneqq s_{i_1}\dotsm s_{i_{j-1}}(\alpha_{i_j})$, there exists
$0\le l < r$ with $\beta_1,\dotsc,\beta_l\in \Sigma_{M_i}$ and $\beta_{l+1} =
\alpha$. In particular,
\[
\Sigma_{U_{P_i}} \setminus \{\alpha\} 
\subseteq \Sigma^+_{s_{i_1}\dotsm s_{i_{l+1}}(\Delta)} 
= \{\beta_{l+2},\dotsc, \beta_r, -\beta_1,\dotsc,-\beta_l, -\alpha\}.
\]
\end{cor}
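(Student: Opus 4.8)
The plan is to reduce the statement to Lemma~\ref{lem:rootorder} by constructing a suitable reduced decomposition of $w_0$ whose associated ordering of $\Sigma^+$ places the roots of $\Sigma_{M_i}$ first, then $\alpha$, then the remaining roots of $\Sigma_{U_{P_i}}$. The key input is condition~\ref{prop:class-iii} from Proposition~\ref{prop:classification}: since $\alpha_i$ is non-obtuse, the Levi Weyl group $W_{0,M_i}$ acts transitively on the roots of $\Sigma_{U_{P_i}}$ of the same length as $\alpha_i$. Hence there is some $w\in W_{0,M_i}$ with $w(\alpha_i) = \alpha$; pick such a $w$ of minimal length in $W_{0,M_i}$.

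First I would write a reduced decomposition $w = s_{j_1}\dotsm s_{j_l}$ with all $s_{j_k}\in W_{0,M_i}$, and set $\beta_k = s_{j_1}\dotsm s_{j_{k-1}}(\alpha_{j_k})$ for $1\le k\le l$. By the standard fact (\cite[Ch.\,VI, \S1.6]{Bourbaki.1981}) that $\{\beta_1,\dotsc,\beta_l\} = \Sigma^+_\Delta\cap w(\Sigma^-_\Delta)$, and since $w\in W_{0,M_i}$ permutes $\Sigma_{U_{P_i}}$ among themselves while moving only roots of $\Sigma_{M_i}$ into negatives (because $w$ fixes the line spanned by $\alpha_i$ up to the $M_i$-action — more precisely $c_i(\beta_k) = c_i(\alpha_{j_k}) = 0$ since $j_k\ne i$), all the $\beta_k$ lie in $\Sigma_{M_i}$. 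Next I would check that $s_{j_1}\dotsm s_{j_l}s_i$ has length $l+1$, i.e. $w^{-1}$ does not send $\alpha_i$ to a negative root; indeed $w^{-1}(\alpha_i)$ has positive $\alpha_i$-coefficient (it equals a root $\beta$ with $w(\beta)=\alpha_i\in\Sigma_{U_{P_i}}$, and $w\in W_{0,M_i}$ preserves $c_i$), so it is positive. Thus $\beta_{l+1} := s_{j_1}\dotsm s_{j_l}(\alpha_i) = w(\alpha_i) = \alpha$. Then I would extend $s_{j_1}\dotsm s_{j_l}s_i$ to a reduced decomposition $w_0 = s_{i_1}\dotsm s_{i_r}$ of the longest element (possible since any reduced word extends to a reduced word for $w_0$), which gives exactly the $\beta$'s required: $\beta_1,\dotsc,\beta_l\in\Sigma_{M_i}$ and $\beta_{l+1}=\alpha$.

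For the "In particular" clause: apply Lemma~\ref{lem:rootorder} with $j = l+1$ to get $\Sigma^+_{s_{i_1}\dotsm s_{i_{l+1}}(\Delta)} = \{\beta_{l+2},\dotsc,\beta_r,-\beta_1,\dotsc,-\beta_l,-\alpha\}$. It remains to see $\Sigma_{U_{P_i}}\setminus\{\alpha\}\subseteq \Sigma^+_{s_{i_1}\dotsm s_{i_{l+1}}(\Delta)}$. A root $\gamma\in\Sigma_{U_{P_i}}$ has $c_i(\gamma)>0$; since $s_{i_1}\dotsm s_{i_{l+1}}=ws_i$ and $w\in W_{0,M_i}$, the reflection $s_i$ changes the $\alpha_i$-coefficient and $w$ preserves it, so one tracks the sign of $c_i$ under $(ws_i)^{-1}$ to conclude $\gamma\in\Sigma^+_{ws_i(\Delta)}$ unless $\gamma=\alpha$; equivalently, the only root of $\Sigma_{U_{P_i}}$ sent to a negative root by $(ws_i)^{-1}$ is $\alpha$ itself, because $\{\beta_1,\dotsc,\beta_{l+1}\} = \Sigma^+_\Delta\cap ws_i(\Sigma^-_\Delta)$ and among these only $\beta_{l+1}=\alpha$ lies in $\Sigma_{U_{P_i}}$. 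The main obstacle I expect is the bookkeeping in the middle step — verifying cleanly that the reduced word for $w\in W_{0,M_i}$ contributes only roots of $\Sigma_{M_i}$ and that appending $s_i$ stays reduced and produces $\alpha$ — but this is exactly governed by the coefficient-tracking observation $c_i(s_j(\beta))=c_i(\beta)$ for $j\ne i$ noted in the proof of Proposition~\ref{prop:classification}.
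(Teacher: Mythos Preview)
Your argument is correct and follows essentially the same strategy as the paper: use Proposition~\ref{prop:classification}.\ref{prop:class-iii} to find $w\in W_{0,M_i}$ with $w(\alpha_i)=\alpha$, start a reduced word for $w_0$ with a reduced word for $w$ followed by $s_i$, and then invoke Lemma~\ref{lem:rootorder}. One harmless slip: the criterion for $\ell(ws_i)=\ell(w)+1$ is $w(\alpha_i)>0$, not $w^{-1}(\alpha_i)>0$, but this is immediate anyway since $w(\alpha_i)=\alpha\in\Sigma^+$; the paper establishes the same step somewhat less directly, via the length identity $\ell(vw_{0,M_i}w_0)=\ell(v)+\ell(w_{0,M_i}w_0)$ for $v\in W_{0,M_i}$, from which it deduces that any reduced word for $w_{0,M_i}w_0$ must start with $s_i$.
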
 

\begin{rmk*} 
Corollary~\ref{cor:rootorder} says geometrically that, for non-obtuse
$\alpha_i$, the roots of length $\lVert \alpha_i\rVert$ are extremal in the cone
generated by $\Sigma_{U_{P_i}}$.

The statement of Corollary~\ref{cor:rootorder} is generally false if $\alpha_i$
is not non-obtuse, see Figure~\ref{fig:circorder}.
\end{rmk*} 

\begin{proof}[Proof of Corollary~\ref{cor:rootorder}] 
Denote $w_{0,M_i}$ the longest element in $W_{0,M_i}$. Since $\alpha_i$ is
non-obtuse, we find by
Proposition~\ref{prop:classification}.\ref{prop:class-iii} an element $w\in
W_{0,M_i}$ with $w(\alpha_i)= \alpha$. Choose reduced decompositions 
\[
w = s_{i_1}\dotsm s_{i_{l}}\qquad \text{and}\qquad 
w_{0,M_i}w_0 = s_{i_{l+1}}\dotsm s_{i_{r'}}.
\]
For each $v\in W_{0,M_i}$ we compute
\begin{equation}\label{eq:rootorder-1}
\ell(vw_{0,M_i}w_0) = \ell(w_0) - \ell(vw_{0,M_i}) = \ell(w_0) - \ell(w_{0,M_i})
+ \ell(v) = \ell(v) + \ell(w_{0,M_i}w_0).
\end{equation}
In particular, $ww_{0,M_i}w_0 = s_{i_1}\dotsm s_{i_{r'}}$ is a reduced
decomposition. We further observe $s_{i_{l+1}} = s_i$, for otherwise we would
have $s_{i_{l+1}} \in W_{0,M_i}$ and $\ell(s_{i_{l+1}}w_{0,M_i}w_0) <
\ell(w_{0,M_i}w_0)$, contradicting \eqref{eq:rootorder-1} for $v = s_{i_{l+1}}$.
Now, if we pick a reduced decomposition $(ww_{0,M_i}w_0)^{-1}w_0 =
s_{i_{r'+1}}\dotsm s_{i_r}$, then it is clear that we obtain a reduced
decomposition $w_0 = s_{i_1}\dotsm s_{i_r}$. From the construction it is clear
that $\beta_1,\dotsc,\beta_l\in \Sigma_{M_i}$ and $\beta_{l+1} = \alpha$. The
last statement is a consequence of Lemma~\ref{lem:rootorder}.
\end{proof} 
\section{The algorithm}\label{sec:algo} 
Recall the special parahoric subgroup $K$ of $G$ associated with $\varphi$.
Given $z\in Z^-$ and $z'\in Z$, it is of general interest to understand the
intersection of the Iwasawa double coset $Uz'K$ and the Cartan double coset
$KzK$. For example, it is well-known that $\nu(z')\le \nu(z)$ provided $Uz'K
\cap KzK
\neq\emptyset$, see \S\ref{subsec:Cartan}.

There is, however, very little known about the $u\in U$ such that $uz' \in KzK$.
One of the main goals of this article is to study the following question: if
$uz'\in KzK$ and if we write $u = u_{\gamma_1}\dotsm u_{\gamma_r}$, with
$u_{\gamma_i}\in U_{\gamma_i}$,
what can be said about the valuations $\varphi_{\gamma_i}(u_{\gamma_i})$? We
will prove that for each strictly positive element $a\in Z$ with $\nu(z) \le
\nu(a^{-1})$, the valuation $\varphi_{\gamma_i}(u_{\gamma_i})$ is bounded below
by $\langle \gamma_i,\nu(a)\rangle$, see Theorem~\ref{thm:main}.

In this section we present an algorithm that gives information about the
$\varphi_{\gamma_i}(u_{\gamma_i})$. First, we need to set up some notation. We
fix a reduced decomposition $w_0 = s_{i_1}\dotsm s_{i_r}$ of the longest
element $w_0$ of $W_0$.

\begin{notation}\label{nota:cycling} 
Recall the reduced root system $\Sigma$ associated with $\Phi$. Let $\Delta =
\{\alpha_1,\dotsc,\alpha_n\}$ be the fixed basis of $\Sigma$.
\begin{enumerate}[label=(\alph*)]
\item\label{nota:cycling-a} Put $\beta_j\coloneqq
s_{i_1}\dotsm s_{i_{j-1}} (\alpha_{i_j})$ and $\Delta^{(j)} = s_{i_1}\dotsm
s_{i_j}(\Delta)$, for $1\le j\le r$. Thanks to Lemma~\ref{lem:rootorder} we
have
\[
\Sigma^{(j)}\coloneqq \Sigma^+_{\Delta^{(j)}} = \{\beta_{j+1},
\beta_{j+2},\dotsc, \beta_r, -\beta_1, - \beta_2,\dotsc, -\beta_j\},\qquad
\text{for $0\le j\le r$.}
\]

Given $k\ge 0$, we let $j(k)\in \{1,\dotsc,r\}$ be the unique integer with
$k\equiv j(k) \pmod r$. Let
\[
\varepsilon_k \coloneqq \begin{cases}
1, & \text{if $k\equiv j(k) \pmod{2r}$,}\\
-1, & \text{otherwise.}
\end{cases}
\]
Put $\beta_k \coloneqq \varepsilon_k\beta_{j(k)}$ and $\Delta^{(k)} \coloneqq
\varepsilon_k \Delta^{(j(k))}$, and then
\[
\Sigma^{(k)} \coloneqq \varepsilon_k \Sigma^{(j(k))} = \Sigma^+_{\Delta^{(k)}} =
\{\beta_{k+1},\beta_{k+2},\dotsc,\beta_{k+r}\}.
\]
Then $\Sigma^{(0)} = \Sigma$ and the sequence $(\Sigma^{(k)})_{k\in\Z_{\ge0}}$
is $2r$-periodic. See also Figure~\ref{fig:Sigma(k)} below.

\item\label{nota:cycling-b} Let $\alpha\in\Sigma$ and write $\alpha =
\epsilon_\beta\beta$ for the
unique $\beta\in\Phi_{\red}$. Define 
\[
\varphi_\alpha \coloneqq \epsilon_\beta
\varphi_\beta \colon U_{\alpha}^* \to \Z.
\]

\item\label{nota:cycling-c} For each $\alpha\in\Sigma$ we fix a lift $n_\alpha
\in N\cap K$ of $s_\alpha\in W_0$.

\item\label{not:cycling-d} Given a basis $\Delta'$ of $\Sigma$, we denote by
$U_{\Delta'}$ the group generated by $\bigcup_{\alpha\in\Sigma^+_{\Delta'}}
U_\alpha$.
\end{enumerate}
\end{notation} 

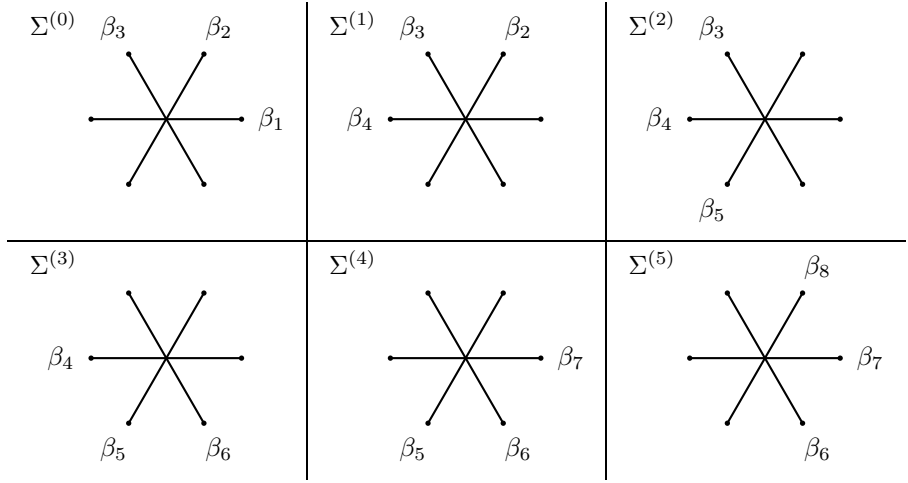
\begin{figure*}[h!] 
\centering
\begin{tabular}{c|c|c}
\begin{tikzpicture} 
\def\radius{1}
\node at (-1.5,1.28) {$\Sigma^{(0)}$};
\foreach \x in {0,60,...,300}
{
\draw[thick] (0,0) -- (\x:\radius);
\fill (\x:\radius) circle (1pt);
}
\foreach \i in {1,2,3}
\node at (\i*60 - 60:1.4*\radius) {$\beta_{\i}$};
\node at (240:1.4*\radius){\phantom{$\beta_5$}};
\end{tikzpicture} 
&
\begin{tikzpicture} 
\def\radius{1}
\node at (-1.5,1.28) {$\Sigma^{(1)}$};
\foreach \x in {0,60,...,300}
{
\draw[thick] (0,0) -- (\x:\radius);
\fill (\x:\radius) circle (1pt);
}
\foreach \i in {2,3,4}
\node at (\i*60 - 60:1.4*\radius) {$\beta_{\i}$};
\node at (240:1.4*\radius){\phantom{$\beta_5$}};
\node at (0:1.4*\radius){\phantom{$\beta_5$}};
\end{tikzpicture} 
&
\begin{tikzpicture} 
\def\radius{1}
\node at (-1.5,1.28) {$\Sigma^{(2)}$};
\foreach \x in {0,60,...,300}
{
\draw[thick] (0,0) -- (\x:\radius);
\fill (\x:\radius) circle (1pt);
}
\foreach \i in {3,4,5}
\node at (\i*60 - 60:1.4*\radius) {$\beta_{\i}$};
\node at (0:1.4*\radius){\phantom{$\beta_5$}};
\end{tikzpicture} 
\\\hline
\begin{tikzpicture} 
\def\radius{1}
\node at (-1.5,1.28) {$\Sigma^{(3)}$};
\foreach \x in {0,60,...,300}
{
\draw[thick] (0,0) -- (\x:\radius);
\fill (\x:\radius) circle (1pt);
}
\foreach \i in {4,5,6}
\node at (\i*60 - 60:1.4*\radius) {$\beta_{\i}$};
\node at (0:1.4*\radius){\phantom{$\beta_5$}};
\end{tikzpicture} 
&
\begin{tikzpicture} 
\def\radius{1}
\node at (-1.5,1.28) {$\Sigma^{(4)}$};
\foreach \x in {0,60,...,300}
{
\draw[thick] (0,0) -- (\x:\radius);
\fill (\x:\radius) circle (1pt);
}
\foreach \i in {5,6,7}
\node at (\i*60 - 60:1.4*\radius) {$\beta_{\i}$};
\node at (180:1.4*\radius) {\phantom{$\beta_5$}};
\end{tikzpicture} 
&
\begin{tikzpicture} 
\def\radius{1}
\node at (-1.5,1.28) {$\Sigma^{(5)}$};
\foreach \x in {0,60,...,300}
{
\draw[thick] (0,0) -- (\x:\radius);
\fill (\x:\radius) circle (1pt);
}
\foreach \i in {6,7,8}
\node at (\i*60 - 60:1.4*\radius) {$\beta_{\i}$};
\node at (180:1.4*\radius) {\phantom{$\beta_5$}};
\end{tikzpicture} 
\end{tabular}
\caption{A visual aid for the sets $\Sigma^{(k)}$ relative to the reduced
decomposition $w_0 = s_1s_2s_1$, where $\alpha_1 = \beta_1$ and $\alpha_2 =
\beta_3$.}\label{fig:Sigma(k)}
\end{figure*} 

Recall from \S\ref{subsec:Iwahori-Weyl} that $K\cap U_\alpha = U_{(\alpha,0)}$,
for all $\alpha\in\Sigma$.

\begin{algo}\label{algo} 
Let $z,z'\in Z$ and $u\in U$ such that $uz' \in KzK$. We define sequences
$(u^{(k)})_{k\ge 0}$ and $(z^{(k)})_{k\ge0}$ with the following properties:
\begin{enumerate}[label=--]
\item $u^{(0)} = u$ and $z^{(0)} = z'$;
\item $u^{(k)} \in U_{\Delta^{(k)}} \cap U_{\Delta^{(k-1)}}$ and $z^{(k)}\in Z$,
for all $k\ge1$;
\item $u^{(k)}z^{(k)} \in KzK$, for all $k\ge0$.
\end{enumerate}
Suppose we have constructed $u^{(k)}$ and $z^{(k)}$ for some $k\ge0$. Write
\[
u^{(k)} = u_{\beta_{k+r}}^{(k)}\cdot u_{\beta_{k+r-1}}^{(k)} \dotsm
u_{\beta_{k+1}}^{(k)} \in \begin{cases}
U_{\Delta}, & \text{if $k=0$,}\\
U_{\Delta^{(k)}} \cap U_{\Delta^{(k-1)}}, & \text{if $k\ge1$,}
\end{cases}
\]
for uniquely determined $u_{\beta_i}^{(k)} \in U_{\beta_i}$.\footnote{Note that
$u_{\beta_{k+r}}^{(k)} = 1$ unless $k=0$.} Depending on
$\varphi_{\beta_{k+1}}\bigl(u_{\beta_{k+1}}^{(k)}\bigr)$ we distinguish three
cases:
\begin{enumerate}[label=(\textbf{Alg-\arabic*})]
\item\label{algo-1} Case $\varphi_{\beta_{k+1}}\bigl(u^{(k)}_{\beta_{k+1}}\bigr)
+ \big\langle \beta_{k+1}, \nu\bigl(z^{(k)}\bigr)\big\rangle \ge0$. By
\eqref{eq:nu} this is equivalent to
\[
x\coloneqq \bigl(z^{(k)}\bigr)^{-1}\cdot u^{(k)}_{\beta_{k+1}}\cdot z^{(k)} \in
U_{(\beta_{k+1},0)} = U_{\beta_{k+1}}\cap K.
\]
We then define 
\begin{align*}
z^{(k+1)} &\coloneqq z^{(k)},\\
u^{(k+1)}&\coloneqq u^{(k)}\cdot \bigl(u^{(k)}_{\beta_{k+1}}\bigr)^{-1} \in
U_{\Delta^{(k+1)}} \cap U_{\Delta^{(k)}}.
\end{align*}
Then $u^{(k+1)}z^{(k+1)} = u^{(k)}z^{(k)} x^{-1} \in KzK$.

\item\label{algo-2} Case $\varphi_{\beta_{k+1}}\bigl(u^{(k)}_{\beta_{k+1}}\bigr)
\ge0$ and not \ref{algo-1}. Then $u^{(k)}_{\beta_{k+1}} \in K$ and we define
\begin{align*}
z^{(k+1)} &\coloneqq z^{(k)},\\
u^{(k+1)} &\coloneqq \bigl(u^{(k)}_{\beta_{k+1}}\bigr)^{-1}\cdot u^{(k)} \in
U_{\Delta^{(k+1)}} \cap U_{\Delta^{(k)}}.
\end{align*}
The fact that $u^{(k+1)} \in U_{\Delta^{(k+1)}}$ follows from \ref{DR2} and
$\Sigma^{(k)} \setminus\{\beta_{k+1}\} = \Sigma^{(k+1)}\cap \Sigma^{(k)}$.

\item\label{algo-3} Case $f_k\coloneqq
\varphi_{\beta_{k+1}}\bigl(u^{(k)}_{\beta_{k+1}}\bigr) <\min\bigl\{0,
-\big\langle \beta_{k+1}, \nu\bigl(z^{(k)}\bigr)\big\rangle\bigr\}$. Note that
$u^{(k)}_{\beta_{k+1}} \neq 1$. By Lemma~\ref{lem:Ua} there exist unique
$u',u''\in U_{-\beta_{k+1}}$ such that
\[
m^{(k)}\coloneqq u'u^{(k)}_{\beta_{k+1}} u'' \in N.
\]
By \cite[Proposition (6.2.10) (ii)]{Bruhat-Tits.1972} the element $m^{(k)}$ acts
on $\apartment$ as the orthogonal reflection $s_{\beta_{k+1},f_k}$ in the
hyperplane $H_{\beta_{k+1},f_k}$. Observe that the element
\[
z^{(k+1)}\coloneqq m^{(k)} z^{(k)}
n_{\beta_{k+1}}
\]
lies in $Z$, because its image in $W_0= N/Z$ is trivial. Considering how
$z^{(k+1)}$ acts on $\varphi\in \apartment$, we deduce
\begin{equation}\label{eq:algo1}
\nu\bigl(z^{(k+1)}\bigr) = s_{\beta_{k+1},f_k}(\nu\bigl(z^{(k)}\bigr)) =
\nu\bigl(z^{(k)}\bigr) - \bigl(\big\langle \beta_{k+1}, \nu\bigl(z^{(k)}\bigr)
\big\rangle + f_k\bigr)\cdot \beta_{k+1}^\vee.
\end{equation}
Applying $\langle \beta_{k+1},-\rangle$ to this equation, and rearranging, we
obtain
\begin{equation}\label{eq:algo2}
\varphi_{\beta_{k+1}}\bigl(u^{(k)}_{\beta_{k+1}}\bigr) = f_k = -\frac12\cdot
\Bigl( \big\langle\beta_{k+1}, \nu\bigl(z^{(k)}\bigr)\big\rangle +
\big\langle \beta_{k+1}, \nu\bigl(z^{(k+1)}\bigr)\big\rangle \Bigr).
\end{equation}
By \ref{V5} and \eqref{eq:nu} we have
\begin{align*}
\varphi_{-\beta_{k+1}}(u') &= -f_k > 0 \qquad \text{and} \\
\varphi_{-\beta_{k+1}}\bigl(\bigl(z^{(k)}\bigr)^{-1}u'' z^{(k)}\bigr) &= -f_k -
\big\langle \beta_{k+1}, \nu\bigl(z^{(k)}\bigr)\big\rangle > 0.
\end{align*}
This entails that $u'$ and $\bigl(z^{(k)}\bigr)^{-1} u'' z^{(k)}$ lie in $K$. We
now define
\[
u^{(k+1)} \coloneqq u'\cdot u^{(k)}\cdot \bigl(u^{(k)}_{\beta_{k+1}}\bigr)^{-1}
\cdot (u')^{-1} \in U_{\Delta^{(k+1)}} \cap U_{\Delta^{(k)}}.
\]
We remark that $u^{(k+1)}_{\beta_{k+2}} = u^{(k)}_{\beta_{k+2}}$ as can be seen
from \ref{V3} and the fact that $\beta_{k+2}$ is extremal in $\Sigma^{(k+1)}$.
Finally, we compute
\begin{align*}
u^{(k+1)} z^{(k+1)} &= u'\cdot u^{(k)}\cdot
\bigl(u^{(k)}_{\beta_{k+1}}\bigr)^{-1} \cdot (u')^{-1}\cdot m^{(k)} \cdot
z^{(k)} \cdot n_{\beta_{k+1}}\\
&= u'\cdot u^{(k)}z^{(k)}\cdot \bigl(z^{(k)}\bigr)^{-1}u'' z^{(k)}\cdot
n_{\beta_{k+1}} \in KzK.
\end{align*}
\end{enumerate}
\end{algo} 

Note that, as a byproduct, at each step the algorithm provides a lower bound for
$\varphi_{\beta_{k+1}}\bigl(u^{(k)}_{\beta_{k+1}}\bigr)$. It is because of this
property that Algorithm~\ref{algo} will be useful for us later. 

The next result will not be used in the sequel.

\begin{prop}\label{prop:algo} 
Algorithm~\ref{algo} terminates, that is, there exists $l\ge0$ such that
$u^{(l)} = 1$. Moreover, $\nu\bigl(z^{(l)}\bigr)$ lies in the $W_0$-orbit of
$\nu(z)$.
\end{prop}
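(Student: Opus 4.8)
The plan is to bound the number of steps of each of the three types, from which the statement follows at once. The common input is that $u^{(k)}z^{(k)}\in KzK$ with $u^{(k)}\in U$, so that $Uz^{(k)}K\cap KzK\neq\emptyset$; writing $KzK=Kz_-K$ with $z_-\in Z^-$ (Cartan decomposition) and invoking Remark~\ref{rmk:Iwasawa}\ref{iwasawa-c} we get $w.\nu(z^{(k)})\le\nu(z_-)$ for all $w\in W_0$, \ie $\nu(z^{(k)})$ never leaves the bounded convex hull $\mathcal C$ of the $W_0$-orbit of $\nu(z)$.

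First I would bound the number of \ref{algo-3} steps. A step \ref{algo-1} or \ref{algo-2} leaves $z^{(k)}$ unchanged, while a step \ref{algo-3} replaces $\varphi+\nu(z^{(k)})$ by $s_{\beta_{k+1},f_k}\bigl(\varphi+\nu(z^{(k)})\bigr)$ (see \eqref{eq:algo1}), and $s_{\beta_{k+1},f_k}\in W^\aff$ since it is the orthogonal reflection in the wall $H_{\beta_{k+1},f_k}\in\hyperplanes$. Hence $\varphi+\nu(z^{(k)})$ stays inside the $W^\aff$-orbit of $\varphi+\nu(z')$; as $W^\aff$ has full-rank translation lattice $W^\aff\cap V$ and finite point stabilizers, this orbit is locally finite in $\apartment$, so its intersection with the bounded set $\varphi+\mathcal C$ is finite and $\{\nu(z^{(k)})\}_{k\ge0}$ is a finite set. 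On the other hand a direct computation from \eqref{eq:algo1} (using that $\langle\beta_{k+1},-\rangle$ is, up to a positive scalar, pairing against $\beta_{k+1}^\vee$) gives
\[
\bigl\lVert\nu(z^{(k+1)})\bigr\rVert^2=\bigl\lVert\nu(z^{(k)})\bigr\rVert^2+\lVert\beta_{k+1}^\vee\rVert^2\cdot f_k\cdot\bigl(\langle\beta_{k+1},\nu(z^{(k)})\rangle+f_k\bigr),
\]
and in case \ref{algo-3} both factors $f_k$ and $\langle\beta_{k+1},\nu(z^{(k)})\rangle+f_k$ are negative, so $\lVert\nu(z^{(k)})\rVert$ strictly increases at each \ref{algo-3} step. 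A strictly increasing sequence of values taken in a finite set has finite length, so only finitely many \ref{algo-3} steps occur.

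Now choose $k_0$ so that every step with $k\ge k_0$ is of type \ref{algo-1} or \ref{algo-2}; then $z^{(k)}=z^{(k_0)}$ for all $k\ge k_0$, and set $k_1=\max(k_0,1)$. On this tail I would run a monovariant on $u^{(k)}$. Write $b^{(k)}_m\in\{0,1\}$ for the indicator of $u^{(k)}_{\beta_{k+m}}\neq1$ (for $1\le m\le r$; note $b^{(k)}_r=0$ once $k\ge1$) and set $I(k)\coloneqq\sum_{m=1}^r 2^{m-1}b^{(k)}_m\in\Z_{\ge0}$. A step \ref{algo-1} merely deletes the $\beta_{k+1}$-factor and alters nothing else, so $I(k+1)=\tfrac12 I(k)$ if $b^{(k)}_1=0$ and $I(k+1)=\tfrac12\bigl(I(k)-1\bigr)$ if $b^{(k)}_1=1$. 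A step \ref{algo-2} replaces $u^{(k)}$ by $(u^{(k)}_{\beta_{k+1}})^{-1}u^{(k)}$, equivalently conjugates the $\beta_{k+1}$-truncation of $u^{(k)}$ by $u^{(k)}_{\beta_{k+1}}\neq1$; by \ref{DR2} and \ref{V3} the new root-group components thereby produced lie at roots $m\beta_{k+1}+n\beta_i$ with $m,n\ge1$ and $u^{(k)}_{\beta_i}\neq1$, and convexity of the reflection ordering $\beta_{k+1},\dots,\beta_{k+r}$ of $\Sigma^{(k)}$ (Notation~\ref{nota:cycling}, Lemma~\ref{lem:rootorder}) forces each such root to be some $\beta_c$ with $k+1<c<i$, whence a short estimate yields $I(k+1)<I(k)$. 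So $I(k+1)<I(k)$ as long as $I(k)>0$, and within $I(k_1)$ further steps we reach $l$ with $I(l)=0$, \ie $u^{(l)}=1$. Finally $u^{(l)}z^{(l)}=z^{(l)}\in KzK$ together with the Cartan decomposition forces $\nu(z^{(l)})$ into the $W_0$-orbit of $\nu(z)$, which is the remaining assertion.

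The delicate point is the \ref{algo-2} step: a priori it \emph{creates} nontrivial root-group components, so the naive count of such components need not drop. What saves the argument is precisely the weighted count $I(k)$ combined with the convexity remark---the new components can only appear at roots lying strictly between $\beta_{k+1}$ and the pre-existing factors in the reflection order, a statement about reflection orders restricted to rank-two subsystems. Everything else (the scalar-product identity, local finiteness of the $W^\aff$-orbit, the Cartan/Iwasawa inputs) is routine.
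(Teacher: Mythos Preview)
Your argument is correct, and for the \ref{algo-3} bound it essentially coincides with the paper's: both observe that the norm $\lVert\nu(z^{(k)})\rVert$ strictly increases at each \ref{algo-3} step while $\nu(z^{(k)})$ is confined to a bounded region meeting only finitely many admissible points (the paper uses that $\nu(Z)$ is a lattice rather than local finiteness of a $W^\aff$-orbit, but this is cosmetic).

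The genuine difference is in the tail. Your monovariant $I(k)$ and the appeal to convexity of reflection orders do work---once one checks that conjugation by $u^{(k)}_{\beta_{k+1}}$ preserves the subgroup generated by $U_{\beta_{k+2}},\dotsc,U_{\beta_{k+j^*}}$ (where $j^*=\max\{j:b^{(k)}_j=1\}$), one gets $I(k+1)\le 2^{j^*-1}-1<2^{j^*-1}+1\le I(k)$. But the paper's route is considerably shorter: once no more \ref{algo-3} steps occur from step $k$ onward, one simply notes that each of \ref{algo-1} and \ref{algo-2} keeps $u^{(k+j)}\in U_{\Delta^{(k)}}$ (both operations are products with elements of $U_{\beta_{k+j+1}}\subseteq U_{\Delta^{(k)}}$ for $1\le j+1\le r$), while by construction $u^{(k+j)}\in U_{\Delta^{(k+j)}}$; since $\Delta^{(k+r)}=-\Delta^{(k)}$ by periodicity, one gets $u^{(k+r)}\in U_{\Delta^{(k)}}\cap U_{-\Delta^{(k)}}=\{1\}$ after exactly $r$ further steps. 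This avoids the convexity lemma and the weighted count entirely. Your approach has the minor advantage of giving a slightly finer termination bound in some cases, but the paper's observation that the ``moving'' positive system returns to its opposite after $r$ steps is the cleaner structural reason.
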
 
\begin{proof} 
Let $u\in U$, $z',z\in Z$ with $uz'\in KzK$. Suppose we are in case~\ref{algo-3}
at the $k$-th step. 
Then $\nu\bigl(z^{(k+1)}\bigr)$ is obtained by
reflecting $\nu\bigl(z^{(k)}\bigr)$ along the hyperplane $H_{\beta_{k+1},f_k}$.
But since $\langle \beta_{k+1},0\rangle + f_k = f_k <0$ and $\big\langle
\beta_{k+1}, \nu\bigl(z^{(k)}\bigr)\big\rangle + f_k < 0$, it follows that $0$
and $\nu\bigl(z^{(k)}\bigr)$ are on the same side of $H_{\beta_{k+1}, f_k}$,
whereas $\nu\bigl(z^{(k+1)}\bigr)$ lies on the other. An elementary argument
in Euclidean geometry now shows $\big\lVert \nu\bigl(z^{(k)}\bigr)\big\rVert <
\big\lVert \nu\bigl(z^{(k+1)}\bigr)\big\rVert$. 

As $\nu(Z)$ is a lattice in $V$, its intersection with the convex polytope $C$
spanned by the $W_0$-orbit of $\nu(z)$ is finite. Since $u^{(k)}z^{(k)} \in
KzK$, we have $\nu\bigl(z^{(k)}\bigr) \in C$, for all $k\ge0$, see
\S\ref{subsec:Cartan}. 
As the $z^{(k)}$ remain unchanged in the cases \ref{algo-1} and \ref{algo-2},
the above discussion shows that there are only finitely many
instances of case~\ref{algo-3}.

Let $k\ge0$ such that $\big\lVert \nu\bigl(z^{(k)}\bigr)\big\rVert$ is maximal.
As only the cases~\ref{algo-1} and \ref{algo-2} occur, it follows that for all
$j\ge0$ we have $z^{(k+j)} = z^{(k)}$ and $u^{(k+j)} \in U_{\Delta^{(k+j)}}
\cap U_{\Delta^{(k)}}$. 
In particular, we have $u^{(k+r)} \in
U_{\Delta^{(k+r)}} \cap U_{\Delta^{(k)}} = U_{-\Delta^{(k)}} \cap
U_{\Delta^{(k)}} = \{1\}$. Hence, Algorithm~\ref{algo} terminates with $l =
k+r$. Moreover, we have $Kz^{(k+r)} K = KzK$ by the construction in
Algorithm~\ref{algo} and the fact that $u^{(k+r)}=1$, and hence the last
assertion follows from the Cartan decomposition~\ref{cartan}.
\end{proof} 

We are now ready to prove our main technical result, which may be of
independent interest.

\begin{thm}\label{thm:main} 
Let $\alg P = \alg U_{\alg P}\alg M$ be a non-obtuse parabolic. Let $a\in Z$ be
strictly $M$-positive. Let $u\in U_P$, $z\in Z^-$, and $z'\in Z$ such that
$\nu(z) \le \nu(a^{-1})$ and $uz'\in KzK$. Then the following assertions hold:
\begin{enumerate}[label=(\roman*)]
\item\label{thm:main-i} $az'\in M^+$;
\item\label{thm:main-ii} $aua^{-1}\in K_P = K\cap P$.
\end{enumerate}
\end{thm}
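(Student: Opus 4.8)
The plan is to deduce assertion \ref{thm:main-i} and a uniform positivity statement from Lemma~\ref{lem:no-cone}, and then to prove \ref{thm:main-ii} by a careful run of Algorithm~\ref{algo}.

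\emph{A positivity invariant.} First I would record that for \emph{any} $\tilde z\in Z$ with $\nu(\tilde z)\in\mathrm{conv}(W_0\nu(z))$ one has $a\tilde z\in M^+$. Indeed, $z\in Z^-$ makes $\nu(z)$ dominant, $a$ strictly $M$-positive makes $-\nu(a)$ dominant, and $\nu(z)\le\nu(a^{-1})$ reads $\nu(z)\le-\nu(a)$; hence $\mathrm{conv}(W_0\nu(z))\subseteq\mathrm{conv}(W_0(-\nu(a)))$ by \cite[(2.6.2)]{Macdonald.2003}, so $w.\nu(\tilde z)\le-\nu(a)$ for every $w\in W_0$, and Lemma~\ref{lem:no-cone} (with $\lambda=aK_Z$ strictly $M$-positive and $\mu=\tilde zK_Z$) gives $\langle\alpha,\nu(a\tilde z)\rangle\le0$ for all $\alpha\in\Sigma^+\setminus\Sigma_M$, i.e.\ $a\tilde z\in M^+$. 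Since $u\in U_P\subseteq U$ gives $uz'\in Uz'K\cap KzK$, Remark~\ref{rmk:Iwasawa}\ref{iwasawa-c} shows $\nu(z')\in\mathrm{conv}(W_0\nu(z))$, whence $az'\in M^+$ — this is \ref{thm:main-i}. Moreover, running Algorithm~\ref{algo} on $(u,z',z)$, the relation $u^{(k)}z^{(k)}\in KzK$ forces $\nu(z^{(k)})\in\mathrm{conv}(W_0\nu(z))$ (as in the proof of Proposition~\ref{prop:algo}), so $az^{(k)}\in M^+$ for every $k\ge0$.

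\emph{Reduction of \ref{thm:main-ii}.} Since $\alg U_{\alg P}\trianglelefteq\alg P$ and $a\in Z$ we have $aua^{-1}\in U_P$ and $aU_\gamma a^{-1}=U_\gamma$; writing $u=\prod_{\gamma\in\Sigma_{U_P}}u_\gamma$ along a convex order, \eqref{eq:nu} gives $\varphi_\gamma(au_\gamma a^{-1})=\varphi_\gamma(u_\gamma)-\langle\gamma,\nu(a)\rangle$, and as $K\cap U_P=\prod_{\gamma\in\Sigma_{U_P}}U_{(\gamma,0)}$, assertion \ref{thm:main-ii} is equivalent to the family of inequalities $\varphi_\gamma(u_\gamma)\ge\langle\gamma,\nu(a)\rangle$ for all $\gamma\in\Sigma_{U_P}$.

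\emph{The per-step estimate.} I would run Algorithm~\ref{algo} with a reduced decomposition $w_0=s_{i_1}\dotsm s_{i_r}$ whose first $l=\ell(w_{0,M})$ letters spell out the longest element of $W_{0,M}$, so that $\beta_1,\dotsc,\beta_l$ enumerate $\Sigma_M^+$ and $\beta_{l+1},\dotsc,\beta_r$ enumerate $\Sigma_{U_P}$. Because $u\in U_P$ forces $u^{(0)}_{\beta_j}=1$ for $\beta_j\in\Sigma_M^+$, the first $l$ steps are trivial of type~\ref{algo-1}, leaving $u^{(l)}=u$ and $z^{(l)}=z'$. The crucial point is then
\[
\varphi_{\beta_{k+1}}\bigl(u^{(k)}_{\beta_{k+1}}\bigr)\ge\langle\beta_{k+1},\nu(a)\rangle\qquad\text{whenever }\beta_{k+1}\in\Sigma_{U_P}.
\]
In case~\ref{algo-1} this follows from the defining inequality $\varphi_{\beta_{k+1}}(u^{(k)}_{\beta_{k+1}})\ge-\langle\beta_{k+1},\nu(z^{(k)})\rangle$ and $\langle\beta_{k+1},\nu(az^{(k)})\rangle\le0$; in case~\ref{algo-2} from $\varphi_{\beta_{k+1}}(u^{(k)}_{\beta_{k+1}})\ge0>\langle\beta_{k+1},\nu(a)\rangle$; and in case~\ref{algo-3} from \eqref{eq:algo2} together with $\langle\beta_{k+1},\nu(az^{(k)})\rangle\le0$ and $\langle\beta_{k+1},\nu(az^{(k+1)})\rangle\le0$, both furnished by the invariant above.

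\emph{Conclusion and the main obstacle.} It remains to convert these per-step bounds into the bounds on the original components $u_\gamma$. I would do a downward induction on $k$, from the terminal index $l'$ with $u^{(l')}=1$ back to $k=0$, expanding $u^{(k)}$ via the factorisations relating it to $u^{(k+1)}$, to $u^{(k)}_{\beta_{k+1)}}$, and (in case~\ref{algo-3}) to a factor in $U_{-\beta_{k+1}}\cap K$, while tracking which root groups support $u^{(k)}$. The non-obtuseness of $\alg P$ is exactly what keeps this support confined to roots for which the estimate above (for roots in $\Sigma_{U_P}$) or triviality (for the $\Sigma_M^+$-roots handled in the first pass) is available, and prevents the commutator corrections arising in cases~\ref{algo-2}/\ref{algo-3} from breaking the bound. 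Making this bookkeeping work — in particular reconciling the reflections of case~\ref{algo-3} with the conjugations that accompany them — is the technical heart of the argument and the step I expect to be hardest; granting it, $\varphi_\gamma(u_\gamma)\ge\langle\gamma,\nu(a)\rangle$ for all $\gamma\in\Sigma_{U_P}$, i.e.\ $aua^{-1}\in K_P$.
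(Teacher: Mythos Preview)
Your argument for \ref{thm:main-i} and the invariant $az^{(k)}\in M^+$ is correct and matches the paper, as does the per-step estimate $\varphi_{\beta_{k+1}}\bigl(u^{(k)}_{\beta_{k+1}}\bigr)\ge\langle\beta_{k+1},\nu(a)\rangle$ for $\beta_{k+1}\in\Sigma_{U_P}$. The gap is exactly where you say it is: converting these per-step bounds, taken along a \emph{single} run of the algorithm, into bounds on the original components $u_\gamma$. Your proposed downward induction would have to control how the $\gamma$-component of $u^{(k)}$ deviates from $u_\gamma$ under the commutator corrections of \ref{algo-2} and, more seriously, under conjugation by $u'\in U_{-\beta_{k+1}}$ in \ref{algo-3}; the latter introduces support in $\Sigma_M^-$ and mixes the $\Sigma_{U_P}$-components in a way that is not obviously governed by the estimates you have available. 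You do not actually carry this out, and I do not see a clean way to do so along these lines.

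The paper sidesteps this bookkeeping entirely by a different idea: it does \emph{not} fix one reduced decomposition, but runs a \emph{fresh} instance of Algorithm~\ref{algo} for each target root $\alpha\in\Sigma_{U_P}$. Using Corollary~\ref{cor:rootorder} (which relies on the non-obtuse classification, Proposition~\ref{prop:classification}\ref{prop:class-iii}), one chooses $w_0=s_{i_1}\dotsm s_{i_r}$ so that $\beta_1,\dotsc,\beta_l\in\Sigma_M$ and $\beta_{l+1}=\alpha$; here $l=\ell(w)$ for some $w\in W_{0,M}$ with $w(\alpha_{i_0})=\alpha$, not $\ell(w_{0,M})$ as you propose. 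The first $l$ steps are then trivially \ref{algo-1} (since $u\in U_P$), so $u^{(l)}=u$ and hence $u^{(l)}_{\beta_{l+1}}=u_\alpha$ on the nose, with no commutator corrections yet. The per-step estimate at step $l$ therefore gives $\varphi_\alpha(u_\alpha)\ge\langle\alpha,\nu(a)\rangle$ directly, and one discards this run and starts over for the next $\alpha$. This handles all $\alpha\in\Sigma_{U_P}$ of the same length as the simple root $\alpha_{i_0}$ (in particular everything in types $A$, $D$, $E$, where in fact $U_P$ is abelian). For the remaining roots in types $B_n$, $C_n$ (and $BC_n$) the paper needs a separate, more delicate analysis tracking commutators across one or two steps; your proposal does not address this either.
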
 
\begin{proof} 
Note that $uz'\in KzK$ implies $w.\nu(z')\le \nu(z)$ for all $w\in W_0$, see
Remark~\ref{rmk:Iwasawa}.\ref{iwasawa-c}.
Let $\lambda$ (resp. $\mu$) be the image of $a$ (resp. $z'$) in $\Lambda$. Then
\ref{thm:main-i} is equivalent to
\begin{equation}
\big\langle \alpha, \nu(\lambda + \mu)\big\rangle \le 0,\qquad \text{for all
$\alpha\in \Sigma^+\setminus\Sigma_M$}
\end{equation}
(cf. \eqref{eq:LambdaM+}).
But this follows from Lemma~\ref{lem:no-cone}, since by assumption $\lambda$ is
strictly $M$-positive and $\nu\bigl(w(\mu)\bigr) \le \nu(z) \le \nu(-\lambda)$
for all $w\in W_0$.
\bigskip

We now prove \ref{thm:main-ii}. As $\alg P$ is maximal parabolic, the roots
appearing in $\alg U_{\alg P}$ are contained in a single irreducible component
$\Phi_1$ of $\Phi$. Since all computations will be done in the subgroup of $G$
generated by $Z$ and $U_\alpha$, for $\alpha\in\Phi_1$, we may assume for
notational convenience that $\Phi$ (and hence $\Sigma$) is irreducible.
As in \S\ref{sec:non-obtuse}
we write $\alpha_1,\dotsc, \alpha_n$ for the simple roots in $\Sigma$ and put
$\Sigma_{U_P} = \Sigma^+ \setminus \Sigma_M$. Let $w_0$ (resp. $w_{0,M}$)
be the longest element in $W_0$ (resp. $W_{0,M}$). Denote $\alpha_0$ the
highest root of $\Sigma$ and write $\alpha_0 = \sum_{i=1}^n c_i(\alpha_0)
\alpha_i$.

Write $u = \prod_{\alpha\in\Sigma_{U_P}}u_\alpha$ for some ordering of the
factors (to be specified later). Since we have $K_P\cap U_P =
\prod_{\alpha\in\Sigma_{U_P}}U_{(\alpha,0)}$, and because
$U_{\beta,0} \cap U_{2\beta} = U_{2\beta,0}$ whenever $\beta,2\beta\in \Phi$ (by
\ref{V4}), it suffices to prove $\varphi_{\alpha}(au_{\alpha}a^{-1}) =
\varphi_\alpha(u_\alpha) - \langle \alpha, \nu(a)\rangle \ge 0$, that is,
\begin{equation}\label{eq:inequ}
\varphi_\alpha(u_\alpha) \ge \langle \alpha, \nu(a)\rangle,\qquad \text{for all
$\alpha\in\Sigma_{U_P}$.}
\end{equation}

The general procedure is as follows: We fix an ordering $o$ of $\Sigma_{U_P}$
with respect to which we write $u =
\prod_{\alpha\in\Sigma_{U_P}}u_{\alpha}$. For each $\alpha
\in \Sigma_{U_P}$ we apply Algorithm~\ref{algo} in order to estimate
$\varphi_\alpha(u_\alpha)$. This necessitates to
temporarily consider a different ordering, and we need to ensure
that in the notation of Algorithm~\ref{algo} we have $\varphi_\alpha(u_\alpha)
= \varphi_\alpha(u_{\beta_{k+1}}^{(k)})$, for the minimal $k\ge0$ for which
$\beta_{k+1} = \alpha$. (In many cases we will even have $u_\alpha =
u_{\beta_{k+1}}^{(k)}$.)
The next step in the algorithm then provides the desired estimate for
$\varphi_{\alpha}(u_\alpha)$. Finally, we go back to the initial ordering $o$
and repeat this procedure with another root of $\Sigma_{U_P}$.

\begin{enumerate}[label=(\alph*)] 
\item\label{thm:main-a} Let $w_0 = s_{i_1}\dotsm s_{i_r}$ be a reduced
decomposition and apply Algorithm~\ref{algo}. At the $k$-th step we have
\[
u^{(k)} = u^{(k)}_{\beta_{k+r}} u^{(k)}_{\beta_{k+r-1}} \dotsm
u^{(k)}_{\beta_{k+1}}.
\]
Note that, by \ref{thm:main-i}, we have $az^{(k)} \in
M^+$, for all $k\ge0$. Assume $\beta_{k+1}\in \Sigma_{U_P}$, so that
$\langle \beta_{k+1}, \nu(az^{(k)})\rangle \le 0$. 
In case~\ref{algo-1} this implies
\[
\varphi_{\beta_{k+1}}\bigl(u^{(k)}_{\beta_{k+1}}\bigr) \ge -\big\langle
\beta_{k+1}, \nu\bigl(z^{(k)}\bigr)\big\rangle \ge \langle \beta_{k+1},
\nu(a)\rangle.
\]
In case~\ref{algo-2} we estimate
$\varphi_{\beta_{k+1}}\bigl(u^{(k)}_{\beta_{k+1}}\bigr) \ge 0 \ge \langle
\beta_{k+1}, \nu(a)\rangle$.
If, however, we are in case~\ref{algo-3}, then \eqref{eq:algo2} implies
\[
\varphi_{\beta_{k+1}}\bigl(u^{(k)}_{\beta_{k+1}}\bigr) = -\frac12\cdot
\Bigl(\big\langle \beta_{k+1}, \nu\bigl(z^{(k)}\bigr)\big\rangle + \big\langle
\beta_{k+1}, \nu\bigl(z^{(k+1)}\bigr)\big\rangle \Bigr) \ge \langle \beta_{k+1},
\nu(a)\rangle.
\]
Thus, whenever $\beta_{k+1}\in \Sigma_{U_P}$, we have
\begin{equation}\label{eq:u(k)-estimate}
\varphi_{\beta_{k+1}}\bigl(u^{(k)}_{\beta_{k+1}}\bigr) \ge \langle \beta_{k+1},
\nu(a)\rangle.
\end{equation}

\item\label{thm:main-b} 

Let $\alpha_{i_0}$ be the unique simple root in
$\Sigma_{U_P}$. Let $\alpha\in \Sigma_{U_P}$ with the same length as
$\alpha_{i_0}$. By Corollary~\ref{cor:rootorder} we find a reduced decomposition
$w_0 = s_{i_1}\dotsm s_{i_r}$ such that for some $0\le l < r$ we have
$\beta_1,\dotsc,\beta_l\in \Sigma_M$ and $\beta_{l+1} = \alpha$. We apply
Algorithm~\ref{algo} to this reduced decomposition. Note that $u^{(0)}_{\beta_1}
= \dotsb = u^{(0)}_{\beta_l} = 1$. As $\alpha = \beta_{l+1}$ is a simple root in
$\Sigma^+_{s_{i_1}\dotsm s_{i_l}(\Delta)}$ (which contains $\Sigma_{U_P}$) it
follows that $\alpha$ cannot be expressed as the sum of two or more roots in
$\Sigma_{U_P}$. Hence, \ref{DR2} implies $u_\alpha = u^{(0)}_{\beta_{l+1}}$. 
Now, case \ref{algo-1} applies for the first
$l$ steps. Consequently, we have $u_{\beta_{l+1}}^{(l)} = u_\alpha$. Hence,
\eqref{eq:u(k)-estimate} shows $\varphi_{\alpha}(u_\alpha)\ge \langle
\alpha,\nu(a)\rangle$.

This proves \eqref{eq:inequ} in the case where $\alpha$ and $\alpha_{i_0}$ have
the same length. When $\Sigma$ is simply-laced, that is, of type ADE, then all
roots have the same length. This proves \ref{thm:main-ii} in this case.
\end{enumerate} 

It remains to study the cases where $\Sigma$ is of type $B_n$ or $C_n$, and where
$\alpha\in \Sigma_{U_P}$ and $\alpha_{i_0}$ have different lengths.

\begin{enumerate}[label=(\alph*),resume] 
\item\label{thm:main-c} Suppose that $\Sigma$ of type $B_n$ and that $\alg P$
corresponds to $\alpha_n = e_n$ in the notation of \hyperref[class-Bn]{$(B_n)$}
in the proof of Proposition~\ref{prop:classification}. (Note that $\Phi$ is not
necessarily reduced.)

We have
\[
\Sigma_{U_P} = \set{e_i}{1\le i\le n} \cup \set{e_i+e_j}{1\le i<j\le n}.
\]
We now choose a specific ordering of the factors as follows: let $o\colon
\Sigma_{U_P}\xrightarrow\cong \{1,2,\dotsc,\lvert\Sigma_{U_P}\rvert\}$ be a
bijection such that, writing $u = \prod_{i=1}^{\lvert
\Sigma_{U_P}\rvert} u_{o^{-1}(i)}$ with $u_{o^{-1}(i)} \in U_{o^{-1}(i)}$, we
have: $\varphi_{e_i}(u_{e_i}) <\varphi_{e_j}(u_{e_j})$ implies $o(e_i) >
o(e_j)$. 
With this choice of ordering we will prove~\eqref{eq:inequ}. For each
$\alpha\in\Sigma_{U_P}$ we will apply Algorithm~\ref{algo} to estimate
$\varphi_{\alpha}(u_{\alpha})$. As the algorithm changes the ordering, we
have to ensure that $\varphi_{\alpha}(u_\alpha) =
\varphi_{\alpha}(u^{(0)}_{\alpha})$. 

Observe that $e_i$ cannot be written as
the sum of two or more roots in $\Sigma_{U_P}$. An application of \ref{DR2}
shows that for any ordering the $e_i$-component of $u$ coincides with
$u_{e_i}$.
Therefore, the estimate for $\varphi_{e_i}(u_{e_i})$ is provided
by~\ref{thm:main-b}.

Note that, given $\gamma_1,\gamma_2\in \Sigma_{U_P}$, we have $e_i+e_j =
\gamma_1 + \gamma_2$ only if $\{e_i, e_j\} = \{\gamma_1, \gamma_2\}$. 
An application of \ref{DR2} shows that the $(e_i+e_j)$-component of $u$ in a
reordering depends only on the relative position of $u_{e_i}$ and $u_{e_j}$.
In order to estimate $\varphi_{e_i+e_j}(u_{e_i+e_j})$, we
thus have to ensure that the reordering needed for applying Algorithm~\ref{algo}
does not change the relative position of $u_{e_i}$ and $u_{e_j}$. 

Observe that every reduced decomposition of $w_{0,M}w_0$ necessarily starts with
$s_n s_{n-1}\dotsm$: indeed, this follows, since $s_1,\dotsc, s_{n-1}\in
W_{0,M}$ and $\ell(ww_{0,M}w_0) = \ell(w) + \ell(w_{0,M}w_0)$, for all $w\in
W_{0,M}$, and $s_ns_i = s_is_n$, for all $1\le i\le n-2$. 
Fix $1\le i,j\le n$ with $o(e_i)>o(e_j)$ and choose $w\in W_{0,M}\cong \frakS_n$
such that $w(e_n)= e_i$ and $w(e_{n-1}) = e_j$. 
As in the proof of Corollary~\ref{cor:rootorder} we find a reduced decomposition
$w_0 = s_{i_1}\dotsm s_{i_r}$ such that $s_{i_1}\dotsm s_{i_l}$ is a reduced
decomposition of $w$ (for some $0\le l\le r-2$) and $s_{i_{l+1}} = s_n$ and
$s_{i_{l+2}} = s_{n-1}$. 
In particular, we have
$\beta_1,\dotsc,\beta_l \in \Sigma_M$ and $\beta_{l+1} = e_i$. Since
$s_n(e_{n-1}-e_n) = e_{n-1}+e_n$, we also deduce $\beta_{l+2} =
e_i + e_j$. Note that $e_j = \beta_{l'}$ for some $l' > l+2$.

We apply Algorithm~\ref{algo} to this reduced decomposition and observe that, by
construction, the relative position of $u_{e_i}$, $u_{e_j}$ and $u^{(0)}_{e_i}$,
$u^{(0)}_{e_j}$
is the same; therefore, we have $u_{e_i+e_j} = u^{(0)}_{e_i+e_j}$. Note that
$u^{(l)} = u^{(0)}$ in $U_{P}$, and hence $u^{(l)}_{\beta_{l+1}} =
u^{(0)}_{e_i} = u_{e_i}$
and $u^{(l)}_{\beta_{l+2}} = u^{(0)}_{\beta_{l+2}} = u_{e_i+e_j}$. 
We now prove
\begin{equation}\label{eq:val_ei+ej}
\varphi_{e_i+e_j}(u_{e_i+e_j}) \ge \langle e_i+e_j, \nu(a)\rangle.
\end{equation}
In cases~\ref{algo-1} and \ref{algo-3} we have $u^{(l+1)}_{\beta_{l+2}} =
u^{(l)}_{\beta_{l+2}} = u_{e_i+e_j}$. 
Therefore, \eqref{eq:val_ei+ej} follows from~\eqref{eq:u(k)-estimate}.
Assume that we are in case~\ref{algo-2}, so that
$\varphi_{e_j}(u_{e_j}) \ge \varphi_{e_i}(u_{e_i}) \ge 0$. Then
\[
u_{e_i}^{-1}u_{e_j} = u_{e_j}u_{e_i}^{-1}\cdot [u_{e_i},u_{e_j}^{-1}]
\]
with $[u_{e_i},u_{e_j}^{-1}] \in U_{(e_i+e_j,0)}$, by \ref{V3}. But this means
$u^{(l+1)}_{\beta_{l+2}} = [u_{e_i},u_{e_j}^{-1}]\cdot u_{e_i+e_j}$. 
Therefore, we have either $\varphi_{e_i+e_j}(u_{e_i+e_j}) \ge 0 \ge \langle
e_i+e_j, \nu(a)\rangle$ or we have
$\varphi_{\beta_{l+2}}\bigl(u^{(l+1)}_{\beta_{l+2}}\bigr) =
\varphi_{e_i+e_j}(u_{e_i+e_j})$. In the latter case, \eqref{eq:val_ei+ej}
follows, again, from \eqref{eq:u(k)-estimate}. This proves \ref{thm:main-ii} in
case $\Sigma$ is of type $B_n$ and $\alg P$ corresponds to $\alpha_n$.

\item\label{thm:main-d} If $\Sigma$ is of type $C_n$ and $\alg P$ corresponds to
$\alpha_n = 2e_n$, then a similar argument as in \ref{thm:main-c} applies. The
argument becomes easier, though, since $U_P$ is commutative (use \ref{DR2} and
the fact that $c_n(\alpha_0)= 1$).

\item\label{thm:main-e} Assume that $\Phi$ is of type $BC_n$ and $\alg P$
corresponds to $\alpha_1 = e_1 - e_2$ in the notation of
\hyperref[class-Bn]{$(B_n)$} in the proof of
Proposition~\ref{prop:classification}. The other cases, where $\Phi$ is of type
$B_n$ or $C_n$ (and where $\alg P$ corresponds to $\alpha_1$) are proved in
essentially the same way. 

Note that $\Sigma$ is of type $B_n$ and we have
\[
\Sigma_{U_P} = \{e_1\}\cup \set{e_1\pm e_i}{2\le i\le n}.
\]

We remark that, again by \ref{DR2}, the $u_{e_1\pm e_i}$ do not depend on the
ordering of the factors. By \ref{thm:main-b} we have $\varphi_{e_1\pm
e_i}(u_{e_1\pm e_i}) \ge \langle e_1\pm e_i, \nu(a)\rangle$. It remains to prove
\begin{equation}\label{eq:val u_e1}
\varphi_{e_1}(u_{e_1}) \ge \langle e_1,\nu(a)\rangle.
\end{equation}
Note that if $2\varphi_{e_1}(u_{e_1}) \ge \varphi_{e_1-e_i}(u_{e_1-e_i}) +
\varphi_{e_1+e_i}(u_{e_1+e_i})$, for some $2\le i\le n$ and some ordering of the
factors, then we easily obtain \eqref{eq:val u_e1}. Therefore, we assume from
now on
\begin{equation}\label{eq:val u_e1-2}
2\varphi_{e_1}(u_{e_1}) <\varphi_{e_1-e_i}(u_{e_1-e_i}) +
\varphi_{e_1+e_i}(u_{e_1+e_i})
\end{equation}
for all $2\le i\le n$ and all orderings of the factors.

Given $v\in V$, we denote $s_{v}$ the orthogonal reflection in the hyperplane
orthogonal to $v$.

\begin{claim}\label{claim:main} 
The decomposition $s_{e_1} = (s_1s_2\dotsm s_{n-1})s_n(s_{n-1}s_{n-2}\dotsm
s_1)$ is reduced.
\end{claim} 
\begin{proof} 
We write this decomposition as $s_{i_1}\dotsm s_{i_{2n-1}}$ and put
$\beta_j \coloneqq s_{i_1}\dotsm s_{i_{j-1}}(\alpha_{i_j})$, for all $1\le j\le
2n-1$. Then we have
\[
\beta_j = \begin{cases}
s_1\dotsm s_{j-1}(e_j-e_{j+1}) = e_1 - e_{j+1}, & \text{for $1\le j\le n-1$;}\\
s_1\dotsm s_{n-1}(e_n) = e_1, & \text{for $j=n$.}
\end{cases}
\]
For $1\le j\le n-1$ we compute
\begin{align*}
\beta_{2n-j} &= s_1s_2\dotsm s_js_{j+1}\dotsm s_n s_{n-1}\dotsm
s_{j+1}(e_j-e_{j+1})\\
&= s_1\dotsm s_j s_{e_{j+1}}(e_j-e_{j+1})\\
&= s_1\dotsm s_j(e_j + e_{j+1})\\
&= e_1 + e_{j+1}.
\end{align*}
Therefore, the elements $\beta_1,\dotsc,\beta_{2n-1}$ are pairwise distinct and
\cite[Ch.\,IV, \S1, no.\,4, Lemma\,2]{Bourbaki.1981} shows that $\ell(s_{e_1})
= 2n-1$.
\end{proof} 

We fix a reduced decomposition $s_{i_1}\dotsm s_{i_r}$ of $w_0$ whose initial
piece is $s_1s_2\dotsm s_n s_{n-1}\dotsm s_1$.
Since the $u_\gamma$, for $\gamma\in \Sigma_{U_P}\setminus\{e_1\}$, are
independent of the chosen ordering, we are free to choose a convenient ordering
in order to estimate $\varphi_{e_1}(u_{e_1})$. We take the ordering given by the
fixed reduced decomposition of $w_0$, so that $u_{e_1}= u^{(0)}_{e_1}$, and
apply Algorithm~\ref{algo}.
We need to study the \emph{support} of $u^{(k)}$, that is, the set
$\bigl\{\gamma\in \Sigma\,\mid\,u^{(k)}_\gamma \neq 1\bigr\}$. 
We define recursively
$\Psi^{(0)}\coloneqq \Sigma_{U_P}$ and then $\Psi^{(k)}$ as the
closed\footnote{A subset $X\subseteq \Sigma^{(k)}$ is called \emph{closed} if
$\gamma,\delta\in X$ with $\gamma + \delta \in \Sigma^{(k)}$ implies $\gamma +
\delta \in X$.} subset of $\Sigma^{(k)}$ generated by $\Psi^{(k-1)}
\setminus\{e_1-e_{k+1}\}$ and $e_{k+1} - e_1$, for $1\le k\le n-1$. Concretely,
we have for all $0\le k\le n-1$:
\begin{align*}
\Psi^{(k)} &= \set{e_1\pm e_i}{k+2\le i\le n} \cup \set{e_i\pm e_1}{2\le
i\le k+1}\\
&\quad \cup \set{e_i\pm e_j}{2\le i\le k+1,\; i<j\le n} 
\cup \set{e_i}{1\le i\le k+1}.
\end{align*}

By construction, the support of $u^{(k)}$ is contained in $\Psi^{(k)}$. 

Under the addition map $\Psi^{(k)} \times \Psi^{(k)} \to \R^n$ the preimage of
$\{e_1,2e_1\}$ is the set of pairs $(e_1\pm e_i, e_1 \mp e_i)$, for $k+2\le i\le
n$. The preimage of $e_1\pm e_i$ is empty for $k+2\le i\le n$. Together with our
assumption~\eqref{eq:val u_e1-2} we show that this implies the following claim:

\begin{claim}\label{claim:main-2} 
For all $0\le k\le n-1$ one has:
\begin{enumerate}[label=\arabic*.]
\item $\varphi_{e_1}\bigl(u^{(k)}_{e_1}\bigr) =
\varphi_{e_1}\bigl(u^{(k-1)}_{e_1}\bigr)$;
\item $u^{(k)}_{e_1\pm e_i} = u^{(k-1)}_{e_1\pm e_i}$, for all $k+2\le i\le n$.
\end{enumerate}
(We put $u^{(-1)}_\gamma \coloneqq u_\gamma$, for $\gamma\in \Sigma$.)
\end{claim} 
\begin{proof} 
We prove the claim by induction on $k$, the case $k=0$ being trivial. Assume the
claim holds for all $0\le j\le k$, for some $0\le k \le n-2$. Recall that
$\beta_{k+1} = e_1 -e_{k+2}$.

Assume we are in case~\ref{algo-1}, so that
$\varphi_{e_1-e_{k+2}}\bigl(u^{(k)}_{e_1-e_{k+2}}\bigr) \ge -\big\langle
\beta_{k+1}, \nu\bigl(z^{(k)}\bigr)\big\rangle$. In this case, we clearly have
$u^{(k+1)}_{\gamma} = u^{(k)}_{\gamma}$, for all $\gamma\in \Psi^{(k)}
\setminus\{e_1 - e_{k+2}\}$, which proves the induction step in this case.

Suppose we are in case~\ref{algo-3}. Then $u^{(k+1)}_{e_1\pm e_i} =
u^{(k)}_{e_1\pm e_i}$, for all $k+3\le i\le n$, and $u^{(k+1)}_{e_1} =
u^{(k)}_{e_1}$. This shows the induction step in this case.

Finally, assume we are in case~\ref{algo-2} so that $\varphi_{e_1-e_{k+2}}
\bigl(u^{(k)}_{e_1-e_{k+2}}\bigr) \ge 0$. We then have $u^{(k+1)}_{e_1\pm e_i} =
u^{(k)}_{e_1\pm e_i}$, for all $k+3\le i\le n$. Moreover, we have
\[
u^{(k+1)}_{e_1} = u^{(k)}_{e_1} \cdot \bigl[u^{(k)}_{e_1-e_{k+2}},
\bigl(u^{(k)}_{e_1+e_{k+2}}\bigr)^{-1}\bigr] \in U_{e_1}.
\]
The induction hypothesis implies $\varphi_{e_1}\bigl(u^{(k)}_{e_1}\bigr) =
\varphi_{e_1}(u_{e_1})$ and $u^{(k)}_{e_1\pm e_{k+2}} = u_{e_1\pm e_{k+2}}$.
Using \ref{V4}, \ref{V3}, and~\eqref{eq:val u_e1-2}, we compute
\begin{align*}
2\varphi_{e_1}\bigl(\bigl[u^{(k)}_{e_1-e_{k+2}}, \bigl(u^{(k)}_{e_1 +
e_{k+2}}\bigr)^{-1}\bigr]\bigr) &= \varphi_{2e_1}\bigl(\bigl[
u^{(k)}_{e_1-e_{k+2}}, \bigl(u^{(k)}_{e_1+e_{k+2}}\bigr)^{-1}\bigr]\bigr)\\
&\ge \varphi_{e_1-e_{k+2}}\bigl(u^{(k)}_{e_1-e_{k+2}}\bigr) +
\varphi_{e_1+e_{k+2}} \bigl(u^{(k)}_{e_1+e_{k+2}}\bigr)\\
&> 2 \varphi_{e_1}\bigl(u^{(k)}_{e_1}\bigr).
\end{align*}
Therefore, we conclude $\varphi_{e_1}\bigl(u^{(k+1)}_{e_1}\bigr) =
\varphi_{e_1}\bigl(u^{(k)}_{e_1}\bigr)$. This proves the induction step in this
case and finishes the proof.
\end{proof} 

Now, Claim~\ref{claim:main-2} and \eqref{eq:u(k)-estimate} show
$\varphi_{e_1}(u_{e_1}) = \varphi_{\beta_n}\bigl(u^{(n-1)}_{\beta_n}\bigr) \ge
\langle e_1, \nu(a)\rangle$. This shows \eqref{eq:val u_e1} and finishes the
proof.
\end{enumerate} 
\end{proof} 
\section{Decomposition of Hecke polynomials}\label{sec:decomp} 
We fix a commutative ring $R$ with $1$. In \S\ref{subsec:twisted}
and \S\ref{subsec:decomp} we will assume that $p$ be invertible in $R$.
\subsection{Parabolic Hecke algebras}\label{subsec:parHecke} 
Parabolic Hecke algebras for the general linear and the symplectic group were
introduced and studied by Andrianov, see \cite{Andrianov.1977},
\cite{Andrianov.1979}, and the book \cite{Andrianov.1995}.
\begin{defn} 
Let $\alg P$ be a parabolic subgroup of $\alg G$. Then
\[
\Hecke_R(K_P, P)
\]
is called a \emph{parabolic Hecke algebra}.
\end{defn} 

\begin{lem}\label{lem:parabolic-embedding} 
Let $\alg P$ and $\alg Q$ be (not necessarily proper) parabolic subgroups of
$\alg G$ with $\alg P\subseteq \alg Q$. Then the map
\begin{align*}
\varepsilon_{P,Q}\colon \Hecke_R(K_Q,Q) &\longhookrightarrow \Hecke_R(K_P,P),\\
\sum_i r_i\cdot (K_Qg_i) &\longmapsto \sum_i r_i\cdot (K_Pg_i),
\end{align*}
where one may choose $g_i\in P$, is a well-defined injective $R$-algebra
homomorphism. Moreover, the following diagram is commutative:
\[
\begin{tikzcd}
\Hecke_R(K,G) \ar[r,hook,"\varepsilon_{Q,G}"] \ar[dr,hook,"\varepsilon_{P,G}"']
& \Hecke_R(K_Q,Q) \ar[d,hook,"\varepsilon_{P,Q}"]\\
& \Hecke_R(K_P,P).
\end{tikzcd}
\]
\end{lem}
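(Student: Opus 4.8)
The plan is to deduce both assertions from Proposition~\ref{prop:Hecke-embedding} (which, as remarked, remains valid over $R$). First I would check that the Hecke pairs $(K_Q,Q)$ and $(K_P,P)$ satisfy the three conditions in~\eqref{eq:Hecke-embedding}. The inclusion $K_P\subseteq K_Q$ is immediate: $K_P=K\cap P\subseteq K\cap Q=K_Q$. For the condition $Q\subseteq K_Q\cdot P$, recall that $G=BK$ (Iwasawa decomposition), equivalently $G=KB$, and that $\alg B\subseteq \alg P$ since all parabolics are standard; hence any $q\in Q\subseteq G=KB\subseteq KP$ can be written $q=kp$ with $k\in K$, $p\in P$, and then $k=qp^{-1}\in Q\cap K=K_Q$, so $q\in K_QP$. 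Finally, $P\cdot P^{-1}=P$ because $P$ is a group, whence $K_Q\cap P\cdot P^{-1}=(K\cap Q)\cap P=K\cap P=K_P$, using $P\subseteq Q$ once more. Proposition~\ref{prop:Hecke-embedding} then yields that $\varepsilon_{P,Q}$ is a well-defined injective ring homomorphism (the third condition being precisely what guarantees that $(K_Pg_i)$ is independent of the choice of representative $g_i\in P$ of the coset $K_Qg_i$), and $R$-linearity is visible from the formula, so $\varepsilon_{P,Q}$ is an $R$-algebra homomorphism.

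For the commutative triangle I would argue directly on a generating set. Taking $\alg Q=\alg G$ in the first part already shows that $\varepsilon_{P,G}$ and $\varepsilon_{Q,G}$ are well-defined $R$-algebra homomorphisms, so it suffices to check $\varepsilon_{P,Q}\circ\varepsilon_{Q,G}=\varepsilon_{P,G}$ on the $R$-module generators $(g)_K$ of $\Hecke_R(K,G)$, for $g\in G$. Write $KgK=\bigsqcup_i Kg_i$ as a finite disjoint union of right cosets; since $G=KB\subseteq KP$, we may take every $g_i\in P$, so that $(g)_K=\sum_i (Kg_i)$ with $g_i\in P\subseteq Q$. Applying $\varepsilon_{Q,G}$ to this expression (the $g_i$ already lying in $Q$) gives $\sum_i (K_Qg_i)$, and then applying $\varepsilon_{P,Q}$ (the $g_i$ already lying in $P$) gives $\sum_i (K_Pg_i)$; on the other hand $\varepsilon_{P,G}\bigl((g)_K\bigr)=\sum_i (K_Pg_i)$ for the same choice of representatives. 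The two composites therefore agree on every generator, which proves the claim.

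The only genuinely non-formal input is the Iwasawa-type decomposition $G=KB$ attached to the choice of $K$: it is what produces $Q=K_QP$ and, via $G=KP$, allows coset representatives to be chosen in $P$ throughout. I do not expect a real obstacle, the statement being essentially a transitivity property of the embeddings of Proposition~\ref{prop:Hecke-embedding}; the one point needing a little care is ensuring that, when passing through $\varepsilon_{Q,G}$ in the triangle, the coset representatives can be taken simultaneously in $P$, which is exactly what $G=KP$ guarantees.
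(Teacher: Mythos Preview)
Your proof is correct and follows essentially the same approach as the paper: verify the three conditions of~\eqref{eq:Hecke-embedding} for the pairs $(K_Q,Q)$ and $(K_P,P)$ via the Iwasawa decomposition $G=KB$ and the inclusion $P\subseteq Q$, then invoke Proposition~\ref{prop:Hecke-embedding}. The paper dismisses the commutativity of the triangle as ``obvious'', whereas you spell out the verification on coset representatives chosen in $P$; this is a welcome elaboration but not a different argument.
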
 
\begin{proof} 
Clearly, we have $K_P \subseteq K_Q$ and $K_Q\cap P = K_P$. The Iwasawa
decomposition~\ref{iwasawa} implies $Q = K_QP$. Therefore, the conditions
\eqref{eq:Hecke-embedding} for $(\Gamma, S) = (K_Q,Q)$ and
$(\Gamma_0,S_0) = (K_P,P)$ are satisfied and the first statement follows from
Proposition~\ref{prop:Hecke-embedding}. The commutativity of the diagram is
obvious.
\end{proof} 

Let $\alg P = \alg U_{\alg P}\alg M$ be a parabolic subgroup of $\alg G$. Let
$\pr_{\alg M}\colon \alg P\to \alg M$ be the canonical projection. Note that
$K_M = K\cap M$ is a special parahoric subgroup of $M$ (see
\S\ref{subsec:Iwahori-Weyl}). The map
\begin{align*}
\Theta^P_M\colon \Hecke_R(K_P,P) &\longrightarrow \Hecke_R(K_M,M),\\
\sum_i r_i\cdot (K_Pg_i) &\longmapsto \sum_i r_i\cdot \bigl(K_M \pr_{\alg
M}(g_i)\bigr)
\end{align*}
is a homomorphism of $R$-algebras.

\begin{defn} 
The composition
\[
\Satake^G_M\colon \Hecke_R(K,G) \xrightarrow{\varepsilon_{P,G}} \Hecke_R(K_P,P)
\xrightarrow{\Theta^P_M} \Hecke_R(K_M,M)
\]
is called the \emph{(partial) Satake homomorphism}.
\end{defn} 

If $\alg P = \alg B$ and $\alg M = \alg Z$, then the subgroup $K_Z$ is normal in
$Z$ and hence $\Hecke_R(K_Z,Z)$ identifies with the commutative group algebra
$R[K_Z\backslash Z] = R[\Lambda]$. In this case, the Satake homomorphism takes
the form
\[
\Satake^G \coloneqq \Satake^G_Z\colon \Hecke_R(K,G)\longrightarrow R[\Lambda].
\]

\begin{lem}\label{lem:partial Satake} 
Let $\alg Q = \alg U_{\alg Q}\alg L$ and $\alg P = \alg U_{\alg P}\alg M$ be
parabolic subgroups of $\alg G$ and assume that $\alg Q\subseteq \alg P$. The
diagram
\[
\begin{tikzcd}
\Hecke_R(K_P,P)\ar[r,"\Theta^P_M"] \ar[d,hook,"\varepsilon_{Q,P}"'] &
\Hecke_R(K_M,M) \ar[d,"\Satake^M_L"]\\
\Hecke_R(K_Q,Q) \ar[r,"\Theta^Q_L"'] & \Hecke_R(K_L,L)
\end{tikzcd}
\]
is commutative. In particular, one has $\Satake^G_L =
\Satake^M_L\circ\Satake^G_M$.
\end{lem}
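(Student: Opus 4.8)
The plan is to reduce the commutativity of the square (and with it the claimed factorisation $\Satake^G_L=\Satake^M_L\circ\Satake^G_M$) to a purely group-theoretic compatibility of the canonical projections attached to the nested parabolics $\alg Q\subseteq\alg P$, and then to a bookkeeping check on right cosets. First I would record the structure of $\alg Q\subseteq\alg P$: one has $\alg U_{\alg P}\subseteq \alg U_{\alg Q}$, the intersection $\alg Q\cap\alg M$ is the standard parabolic subgroup of $\alg M$ with unipotent radical $\alg U_{\alg Q}\cap\alg M$ and Levi $\alg L$, and $\alg U_{\alg Q}=\alg U_{\alg P}\cdot(\alg U_{\alg Q}\cap\alg M)$. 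In particular $\Satake^M_L$ is the partial Satake homomorphism of $\alg M$ with respect to $\alg Q\cap\alg M$, so by definition $\Satake^M_L=\Theta^{Q\cap M}_L\circ\varepsilon_{Q\cap M,M}$. Passing to $\field$-points, $\pr_{\alg M}\colon P\to M$ restricts to a surjection $Q\to Q\cap M$ with kernel $U_P$ (as $U_P\subseteq U_Q\subseteq Q$), and the canonical projection $Q\cap M\to L$ has kernel $U_Q\cap M$; hence the composite $Q\to L$ is a surjection with kernel $U_P\cdot(U_Q\cap M)=U_Q$ which restricts to the identity on $L$, and therefore coincides with the canonical projection $\pr_{\alg L}\colon Q\to L$. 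The resulting identity
\[
\pr_{\alg L}\bigl(\pr_{\alg M}(g)\bigr)=\pr_{\alg L}(g),\qquad\text{for all }g\in Q,
\]
(left side: projection $Q\cap M\to L$; right side: projection $Q\to L$) is the only real content.

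Next I would reduce commutativity of the square to a comparison on the right cosets $K_Pg$ with $g\in Q$. By the Iwasawa decomposition~\ref{iwasawa} one has $P=K_PQ$, so such cosets span $R[K_P\backslash P]$; moreover $K_P\cap Q=K_Q$, so each of the four maps in the square extends to a well-defined $R$-linear map of the ambient free modules $R[K_\bullet\backslash\bullet]$ induced by an explicit map on right cosets: $\varepsilon_{Q,P}$ sends $(K_Pg)\mapsto(K_Qg)$ for $g\in Q$, $\Theta^P_M$ sends $(K_Ph)\mapsto\bigl(K_M\,\pr_{\alg M}(h)\bigr)$, and similarly for $\Theta^Q_L$, $\varepsilon_{Q\cap M,M}$, $\Theta^{Q\cap M}_L$. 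It thus suffices to compare the two composites on each $(K_Pg)$, $g\in Q$. Going down then across gives $(K_Pg)\mapsto(K_Qg)\mapsto\bigl(K_L\,\pr_{\alg L}(g)\bigr)$. Going across then down gives $(K_Pg)\mapsto\bigl(K_M\,\pr_{\alg M}(g)\bigr)\mapsto\bigl(K_{Q\cap M}\,\pr_{\alg M}(g)\bigr)\mapsto\bigl(K_L\,\pr_{\alg L}(\pr_{\alg M}(g))\bigr)$, using $\pr_{\alg M}(g)\in Q\cap M$. The displayed identity shows the two outputs coincide, so the square commutes.

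Finally, the ``in particular'' is formal. Applying Lemma~\ref{lem:parabolic-embedding} to $\alg Q\subseteq\alg P\subseteq\alg G$ yields $\varepsilon_{Q,G}=\varepsilon_{Q,P}\circ\varepsilon_{P,G}$, whence
\[
\Satake^G_L=\Theta^Q_L\circ\varepsilon_{Q,G}=\bigl(\Theta^Q_L\circ\varepsilon_{Q,P}\bigr)\circ\varepsilon_{P,G}=\bigl(\Satake^M_L\circ\Theta^P_M\bigr)\circ\varepsilon_{P,G}=\Satake^M_L\circ\Satake^G_M,
\]
the third equality being the commutativity of the square. The main obstacle is the group-theoretic identity of the first paragraph, i.e.\ correctly locating $\alg Q\cap\alg M$ inside $\alg M$ and checking that the two ways of projecting $\alg Q$ onto $\alg L$ agree; a secondary, minor point is to justify that $\Theta$ and $\varepsilon$ are honestly induced on individual right cosets, so that testing on the cosets $(K_Pg)$ with $g\in Q$ is legitimate.
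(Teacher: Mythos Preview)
Your proof is correct and follows essentially the same approach as the paper: reduce to the identity $\pr_{\alg L}\bigl(\pr_{\alg M}(g)\bigr)=\pr_{\alg L}(g)$ for $g\in Q$, then use the Iwasawa decomposition $P=K_PQ$ to write a generic element of $\Hecke_R(K_P,P)$ as $\sum_i r_i(K_Pb_i)$ with $b_i\in Q$ and compare the two composites on such elements. Your justification of the projection identity via kernels is slightly more detailed than the paper's, which simply asserts it, but the argument is otherwise the same.
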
 
\begin{proof} 
Note that $\alg Q\cap \alg M$ is a parabolic subgroup of $\alg M$ with
Levi $\alg L$. Given
$b\in Q$, we have $\pr_{\alg M}(b) \in Q\cap M$ and $\pr_{\alg L}(\pr_{\alg
M}(b)) = \pr_{\alg L}(b)$. Hence, for all $\sum_i r_i\cdot (K_Pb_i)\in
\Hecke_R(K_P,P)$, where, by the Iwasawa decomposition~\ref{iwasawa}, we may choose
$b_i\in Q$, we compute
\begin{align*}
\Satake^M_L\bigl( \Theta^P_M\Bigl(\sum_i r_i\cdot (K_Pb_i)\Bigr)\bigr) &=
\Satake^M_L\Bigl(\sum_i r_i\cdot \bigl(K_M\pr_{\alg M}(b_i)\bigr)\Bigr)
= \sum_i r_i\cdot \bigl(K_L \pr_{\alg L}\bigl(\pr_{\alg M}(b_i)\bigr)\bigr)\\
&= \sum_i r_i\cdot \bigl(K_L \pr_{\alg L}(b_i)\bigr)
= \Theta^Q_L\Bigl(\sum_i r_i\cdot (K_Qb_i)\Bigr)\\ 
&= \Theta^Q_L\bigl(\varepsilon_{Q,P}\Bigl(\sum_i r_i\cdot (K_Pb_i)\Bigr)\bigr).
\end{align*}
In particular, in view of Lemma~\ref{lem:parabolic-embedding}, we have
\[
\Satake^M_L\circ\Satake^G_M = \Satake^M_L\circ\Theta^P_M\circ\varepsilon_{P,G}
= \Theta^Q_L\circ\varepsilon_{Q,P}\circ\varepsilon_{P,G} = \Theta^Q_L\circ
\varepsilon_{Q,G} = \Satake^G_L. \qedhere
\]
\end{proof} 
\subsection{The twisted action}\label{subsec:twisted} 
Assume that $R$ is a $\Z[1/p]$-algebra. The twisted action of $W_0$ on
$R[\Lambda]$ was defined by Henniart--Vign\'eras,
\cite[7.11, 7.12]{Henniart-Vigneras.2015}, in order to describe the image of
the integral Satake homomorphism. We give a slightly different presentation.

Given $b\in B$, we consider the integers (see \eqref{eq:mu_UP} in
\S\ref{subsec:positive})
\[
\mu_U(b) \coloneqq [K_U : K_U\cap b^{-1}K_Ub].
\]
Observe that $\mu_U$ is constant on $K_Z$-cosets, since $K_Z$ normalizes $K_U$.
Therefore, we obtain an induced map
\[
\mu_U\colon \Lambda \longrightarrow q^{\Z_{\ge0}}.
\]
Note that $\mu_U(\lambda) = 1$ if and only if $\lambda\in \Lambda^+$.

We employ the exponential notation $e^\lambda$ when we view $\lambda\in \Lambda$
as an element of $R[\Lambda]$.

\begin{defn} 
The \emph{twisted action} of $W_0$ on $R[\Lambda]$ is defined by
\[
w\star e^\lambda \coloneqq \frac{\mu_U(w(\lambda))}{\mu_U(\lambda)}\cdot
e^{w(\lambda)}, \qquad \text{for $\lambda\in \Lambda$, $w\in W_0$.}
\]
\end{defn} 

In order to describe the relation with the twisted action in
\cite[7.11]{Henniart-Vigneras.2015}, we recall the modulus character 
\[
\delta\colon B \longrightarrow q^\Z,\qquad \delta(b) \coloneqq [bK_Ub^{-1} : K_U]
= \mu_U(b)/\mu_U(b^{-1}),
\]
where $[bK_Ub^{-1} : K_U] \coloneqq \frac{[bK_Ub^{-1} : bK_Ub^{-1}\cap K_U]}
{[K_U : bK_Ub^{-1}\cap K_U]}$ denotes the generalized index.
Similar to the above, $\delta$ induces a character 
\[
\delta\colon \Lambda \longrightarrow q^\Z.
\]

\begin{lem}\label{lem:delta-mu} 
For all $w\in W_0$ and $\lambda\in \Lambda$, one has
\[
\frac{\delta(w(\lambda))}{\delta(\lambda)} =
\left(\frac{\mu_U(w(\lambda))}{\mu_U(\lambda)}\right)^2 =
\left(\frac{\mu_U(-\lambda)}{\mu_U(-w(\lambda))}\right)^2.
\]
\end{lem}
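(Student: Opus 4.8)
The plan is to reduce the whole statement to the single claim that the function $\lambda\mapsto\mu_U(\lambda)\mu_U(-\lambda)$ is constant on the $W_0$-orbits of $\Lambda$, and then to prove that claim from an explicit product formula for $\mu_U$. For the reduction, note first that the definition $\delta(b)=\mu_U(b)/\mu_U(b^{-1})$ descends to the identity $\delta(\lambda)=\mu_U(\lambda)/\mu_U(-\lambda)$ on $\Lambda$. Granting the orbit-invariance, put $\rho\coloneqq\mu_U(w(\lambda))/\mu_U(\lambda)$; then $\mu_U(w(\lambda))\mu_U(-w(\lambda))=\mu_U(\lambda)\mu_U(-\lambda)$ also gives $\rho=\mu_U(-\lambda)/\mu_U(-w(\lambda))$, so
\[
\frac{\delta(w(\lambda))}{\delta(\lambda)}=\frac{\mu_U(w(\lambda))/\mu_U(-w(\lambda))}{\mu_U(\lambda)/\mu_U(-\lambda)}=\frac{\mu_U(w(\lambda))}{\mu_U(\lambda)}\cdot\frac{\mu_U(-\lambda)}{\mu_U(-w(\lambda))}=\rho^{2},
\]
and $\rho^{2}$ is exactly each of the two squared fractions appearing in the lemma. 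Thus everything comes down to the orbit-invariance of $\mu_U(\,\cdot\,)\mu_U(-\,\cdot\,)$.

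To prove this I would first record the product formula: for $z\in Z$ with image $\lambda\in\Lambda$,
\[
\mu_U(\lambda)=\prod_{\alpha\in\Sigma^+}N_\alpha\bigl(\max(0,\langle\alpha,\nu(\lambda)\rangle)\bigr),\qquad N_\alpha(k)\coloneqq[U_{(\alpha,0)}:U_{(\alpha,k)}]\ \text{ for }k\ge0,
\]
where $\Sigma^+$ is identified with $\Phi^+_{\red}$ as in \S\ref{subsec:reducedroot} and the numbers $\langle\alpha,\nu(\lambda)\rangle$ are integers. This follows from the standard decomposition $K_U=K\cap U=\prod_{\alpha}U_{(\alpha,0)}$ (in a fixed order), together with the fact that conjugation by $z\in Z$ preserves every root group $U_\alpha$ and shifts its filtration according to \eqref{eq:nu}, so that $z^{-1}K_Uz=\prod_{\alpha}U_{(\alpha,\langle\alpha,\nu(\lambda)\rangle)}$ and hence $K_U\cap z^{-1}K_Uz=\prod_{\alpha}U_{(\alpha,\max(0,\langle\alpha,\nu(\lambda)\rangle))}$; computing the index factor by factor yields the formula (and the check $\mu_U(\lambda)=1\iff\lambda\in\Lambda^+$ is the special case $\max(0,\langle\alpha,\nu(\lambda)\rangle)=0$). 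Alternatively this is precisely the bookkeeping carried out for the integers $\mu_{U_P}$ in \cite[\S3.4]{Heyer.2020}, which one may invoke directly.

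The last step deduces orbit-invariance from the formula. The crucial local input is $N_{w(\alpha)}=N_\alpha$ for all $w\in W_0$ and $\alpha\in\Sigma$: a representative $n\in K\cap N$ of $w$ exists and fixes $\varphi$, so \eqref{eq:N-rootgroup} gives $nU_{(\alpha,k)}n^{-1}=U_{(w(\alpha),k)}$ for every $k$, an isomorphism of filtered groups; in particular $N_{-\alpha}=N_\alpha$ (take $w=s_\alpha$). Now fix $w\in W_0$. Sending $\alpha\in\Sigma^+$ to the positive root $\overline{\alpha}$ among $\{w^{-1}(\alpha),-w^{-1}(\alpha)\}$ is a bijection of $\Sigma^+$, and $w^{-1}(\alpha)=\eta_\alpha\overline{\alpha}$ for signs $\eta_\alpha\in\{\pm1\}$ independent of $\lambda$. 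Substituting $\langle\alpha,\nu(w(\lambda))\rangle=\eta_\alpha\langle\overline{\alpha},\nu(\lambda)\rangle$ and $N_\alpha=N_{\overline{\alpha}}$ into the formula for $\mu_U(w(\lambda))$, and likewise for $\mu_U(-w(\lambda))=\mu_U(w(-\lambda))$ (same bijection, same signs, argument negated), the product of the two becomes
\[
\mu_U(w(\lambda))\mu_U(-w(\lambda))=\prod_{\beta\in\Sigma^+}N_\beta\bigl(\max(0,\langle\beta,\nu(\lambda)\rangle)\bigr)\,N_\beta\bigl(\max(0,-\langle\beta,\nu(\lambda)\rangle)\bigr),
\]
since $\{\max(0,\eta c),\max(0,-\eta c)\}=\{\max(0,c),\max(0,-c)\}$ for any sign $\eta$. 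The right-hand side does not involve $w$, so it equals its value at $w=1$, namely $\mu_U(\lambda)\mu_U(-\lambda)$, which is the claim. I expect the only real obstacle to be making the product formula fully rigorous — the compatibility of the ordered product decompositions of $K_U$ and of its $Z$-conjugates, especially when $\Phi$ is non-reduced ($2\alpha\in\Phi$) — but this is routine Bruhat--Tits theory and is already isolated in \cite[\S3.4]{Heyer.2020}; everything else is formal.
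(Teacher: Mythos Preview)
Your reduction to the $W_0$-invariance of $\lambda\mapsto\mu_U(\lambda)\mu_U(-\lambda)$ is exactly what the paper does. The difference lies in how that invariance is established. The paper identifies $\mu_U(\lambda)\mu_U(-\lambda)$ with the quantity $q_\lambda$ attached to $e^\lambda\in W$ (citing \cite[Proposition~3.14.(a)]{Heyer.2020}) and then invokes the known $W_0$-invariance $q_\lambda=q_{w(\lambda)}$ from the theory of the Iwahori--Weyl group (citing \cite[Proposition~5.13]{Vigneras.2016}). Your argument instead unpacks $\mu_U$ as a product over $\Sigma^+$ of filtration indices $N_\alpha$, uses that representatives of $W_0$ in $K\cap N$ conjugate $U_{(\alpha,k)}$ to $U_{(w(\alpha),k)}$ so that $N_{w(\alpha)}=N_\alpha$, and then observes that the sign ambiguity introduced by $w$ is absorbed once one multiplies the contributions at $\lambda$ and $-\lambda$. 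This is correct and entirely self-contained; the paper's route is shorter on the page but trades the computation for two external citations and implicitly imports the connection to the $q_\lambda$'s. Your caveat about the ordered product decomposition of $K_U$ (and its conjugates) in the non-reduced case is the only point requiring care, and as you note this is precisely what is handled in \cite[\S3.4]{Heyer.2020}.
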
 
\begin{proof} 
Note that $\frac{\delta(w(\lambda))}{\delta(\lambda)} =
\frac{\mu_U(w(\lambda))\cdot \mu_U(-\lambda)}{\mu_U(\lambda)\cdot
\mu_U(-w(\lambda))}$. Therefore, for both equalities it suffices to show
\[
\mu_U(\lambda)\cdot \mu_U(-\lambda) = \mu_U\bigl(w(\lambda)\bigr) \cdot
\mu_U\bigl(-w(\lambda)\bigr).
\]
But this follows from $\mu_U(\lambda)\mu_U(-\lambda) = q_\lambda$ (cf.
\cite[Proposition~3.14.(a)]{Heyer.2020}) and $q_\lambda = q_{w(\lambda)}$ (cf.
\cite[Proposition~5.13]{Vigneras.2016}).
\end{proof} 
\subsection{The Satake isomorphism}\label{subsec:Satake-iso} 
Given $\lambda\in \Lambda$, we denote $W_{0,\lambda}$ the stabilizer of
$\lambda$ under the (usual) $W_0$-action on $\Lambda$. Then $W_{0,\lambda}$ is
also the stabilizer of $e^\lambda$ under the twisted action of $W_0$ on
$R[\Lambda]$. 

Note that, if $R = \Z[1/p]$ and $\lambda\in \Lambda^+$, one has
\begin{equation}\label{eq:Slambda}
S_\lambda\coloneqq \sum_{w\in W_0/W_{0,\lambda}} w\star e^{\lambda} \in
\Z[\Lambda].
\end{equation}

With our notations, the main result of \cite{Henniart-Vigneras.2015} is the
following:

\begin{thm}\label{thm:Satake} 
Let $R$ be a commutative ring with $1$ and consider the Satake homomorphism
$\Satake^G\colon \Hecke_R(K,G)\to R[\Lambda]$.
\begin{enumerate}[label=(\roman*)]
\item\label{thm:Satake-i} $\Satake^G$ is injective.
\item\label{thm:Satake-ii} The image of $\Satake^G$ is a free $R$-module with
basis $\set{1\otimes S_\lambda}{\lambda\in \Lambda^+}$.

If $p\in R^\times$, then the image coincides with $R[\Lambda]^{W_0,\star}$, the
algebra of $W_0$-invariants under the twisted action.

\item\label{thm:Satake-iii} Both $R[\Lambda]$ and $\Hecke_R(K,G)$ are
commutative algebras of finite type over $R$.
\end{enumerate}
\end{thm}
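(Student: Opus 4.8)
The plan is to deduce Theorem~\ref{thm:Satake} from the main results of Henniart--Vign\'eras \cite{Henniart-Vigneras.2015} after matching conventions. The first point is that our twisted action agrees with theirs: by Lemma~\ref{lem:delta-mu} the scaling factor $\mu_U(w(\lambda))/\mu_U(\lambda)$ is the positive square root of $\delta(w(\lambda))/\delta(\lambda)$, which is precisely the normalization used in \cite[7.11]{Henniart-Vigneras.2015}. Consequently $\star$ is a genuine $W_0$-action on $R[\Lambda]$ and the elements $S_\lambda$ of \eqref{eq:Slambda} are the twisted $W_0$-orbit sums of loc.\ cit. With this dictionary in place, \ref{thm:Satake-i} and the module-basis half of \ref{thm:Satake-ii} are the Henniart--Vign\'eras structure theorem for the integral unnormalized Satake homomorphism, and the description of the image as $R[\Lambda]^{W_0,\star}$ when $p\in R^\times$ is their invariance theorem; the remaining task is to record that their hypotheses hold for $(G,K,\alg B)$ in our generality, which is guaranteed by the properties of the special point $\varphi$ and its parahoric $K$ collected in \S\ref{subsec:Iwahori-Weyl}.

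For the formal consequences I would argue over $R=\Z$ first. By the Cartan decomposition~\ref{cartan}, $\Hecke_\Z(K,G)$ is free on $\{(z_\lambda)_K\}_{\lambda\in\Lambda^+}$, with $z_\lambda\in Z$ a representative of the associated double coset. Applying $\varepsilon_{B,G}$ and $\Theta^B_Z$ and collecting Iwasawa cosets gives $\Satake^G\bigl((z_\lambda)_K\bigr)=\sum_{\mu\in\Lambda}c_{\lambda\mu}\,e^\mu$ with $c_{\lambda\mu}\in\Z_{\ge0}$ counting the cosets $uz_\mu K\subseteq Kz_\lambda K$. Representing the double coset also by an element of $Z^-$ and invoking Remark~\ref{rmk:Iwasawa}.\ref{iwasawa-a},\ref{iwasawa-b},\ref{iwasawa-c}, one sees that $c_{\lambda\mu}\neq 0$ forces $\nu(\mu)$ into the convex hull of the $W_0$-orbit of $\nu(z_\lambda)$, and that the only $\mu$ with $\nu(\mu)=\nu(\lambda)$ occurring is $\mu=\lambda$, with $c_{\lambda\lambda}=1$. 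Hence $\Satake^G\bigl((z_\lambda)_K\bigr)=e^\lambda+\sum_{\mu}c_{\lambda\mu}e^\mu$ is triangular, with coefficient of $e^\lambda$ equal to $1$, for the preorder on $\Lambda$ induced by $\le$ on $V$; the same holds for $S_\lambda$ since $\mu_U(\lambda)=1$ for $\lambda\in\Lambda^+$. Such triangular families stay $R$-linearly independent under any base change, so $\Satake^G=\Satake^G_\Z\otimes_\Z R$ is injective, proving \ref{thm:Satake-i}; and, granting the Henniart--Vign\'eras computation that $\Satake^G_\Z\bigl((z_\lambda)_K\bigr)-S_\lambda\in\bigoplus_\mu\Z S_\mu$ (so that $\image\Satake^G_\Z=\bigoplus_{\lambda\in\Lambda^+}\Z S_\lambda$), the fact that $\{S_\lambda\}_{\lambda\in\Lambda^+}\cup\{e^\mu\}_{\mu\in\Lambda\setminus\Lambda^+}$ is a $\Z$-basis of $\Z[\Lambda]$ makes this image a direct summand with free cokernel, so the image description survives $\otimes_\Z R$ and the first part of \ref{thm:Satake-ii} holds for arbitrary $R$.

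Now assume $p\in R^\times$, so that every $\mu_U(w(\lambda))/\mu_U(\lambda)\in q^{\Z}$ is a unit. Each $S_\lambda$ is $\star$-invariant, being a complete twisted $W_0$-orbit sum. Conversely $\star$ permutes the lines $Re^\mu$ along the $W_0$-orbits in $\Lambda$ with invertible scaling factors, so the $\star$-invariants supported on a single orbit $W_0\lambda$ are exactly $R\,S_\lambda$; summing over orbits yields $R[\Lambda]^{W_0,\star}=\bigoplus_{\lambda\in\Lambda^+}RS_\lambda=\image\Satake^G$, which is the remaining assertion of \ref{thm:Satake-ii}. For \ref{thm:Satake-iii}: $R[\Lambda]$ is the group algebra of the finitely generated abelian group $\Lambda$ (\S\ref{subsec:Iwahori-Weyl}), hence a commutative $R$-algebra of finite type, and by \ref{thm:Satake-i}--\ref{thm:Satake-ii} the injection $\Satake^G$ realizes $\Hecke_R(K,G)$ as a subalgebra of it, which is therefore commutative; finite generation follows because $\Lambda^+$ is a finitely generated monoid (Gordan's lemma) and $S_{\lambda_1}\cdots S_{\lambda_k}=S_{\lambda_1+\dots+\lambda_k}+(\text{lower }S_\mu)$ by triangularity, so an induction on the preorder shows $\Hecke_R(K,G)$ is generated over $R$ by the $S_\lambda$ with $\lambda$ in a monoid generating set of $\Lambda^+$.

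The main obstacle is precisely the input I have allowed myself to import from \cite{Henniart-Vigneras.2015}, namely the identity $\image\Satake^G_\Z=\bigoplus_{\lambda\in\Lambda^+}\Z S_\lambda$: that after subtracting the twisted orbit sum of the top $W_0$-orbit, the remainder is again an integral combination of twisted orbit sums. This also subsumes the two points that are not visible from the definitions — that $S_\lambda$ actually lies in $\Z[\Lambda]$ for $\lambda\in\Lambda^+$, and that the coefficient $c_{\lambda\lambda}$ is exactly $1$ (which rests on Remark~\ref{rmk:Iwasawa}.\ref{iwasawa-b}).
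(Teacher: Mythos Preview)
Your strategy is the same as the paper's: translate conventions and import the results of Henniart--Vign\'eras. Two points deserve tightening. First, the paper does not merely match the twisted actions; it sets up anti-isomorphisms $\rho_G\colon C^\infty_c(K\backslash G/K,R)\to \Hecke_R(K,G)$ and $\rho_Z$ and verifies the square
\[
\begin{tikzcd}
C^\infty_c(K\backslash G/K,R) \ar[r,"\Satake'"] \ar[d,"\rho_G"'] & C^\infty_c(Z/K_Z,R) \ar[d,"\rho_Z"]\\
\Hecke_R(K,G) \ar[r,"\Satake^G"'] & R[\Lambda]
\end{tikzcd}
\]
commutes, together with $\rho_Z(w\circ e^\lambda)=w\star e^{-\lambda}$. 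Without this identification you cannot literally ``grant the Henniart--Vign\'eras computation'' about $\image\Satake^G_\Z$, since their theorem is stated for $\Satake'$, not for your $\Satake^G$; the sign flip $\lambda\mapsto -\lambda$ (interchanging $\Lambda^+$ and $\Lambda^-$) is part of the dictionary and should be recorded. Second, drop the appeal to Remark~\ref{rmk:Iwasawa}.\ref{iwasawa-c}: that statement is deduced from Theorem~\ref{thm:Satake} (see Remark~\ref{rmk:Satake}), so citing it here is circular. Your triangular argument only needs \ref{iwasawa-a} and \ref{iwasawa-b}, which are proved independently in \cite{Henniart-Vigneras.2015}.
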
 
\begin{proof} 
We briefly explain how our notations relate to the notations in
\cite{Henniart-Vigneras.2015}. Consider the space $C^\infty_c(K\backslash G/K,
R)$ of compactly supported $K$-biinvariant functions $G\to R$ with product given
by convolution:
\[
(f_1*f_2)(g) = \sum_{h\in G/K}f_1(h)\cdot f_2(h^{-1}g),\qquad \text{for
$f_1,f_2\in C^\infty_c(K\backslash G/K,R)$ and $g\in G$,}
\]
where ``$h\in G/K$'' means that $h$ runs through a set of representatives for
the left cosets in $G/K$.
The map
\begin{align*}
\rho_G\colon C^\infty_c(K\backslash G/K,R) &\longrightarrow \Hecke_R(K,G),\\
f &\longmapsto \sum_{g\in K\backslash G} f(g^{-1})\cdot (Kg)
\end{align*}
is an anti-isomorphism of $R$-algebras.\footnote{As $C^\infty_c(K\backslash
G/K,R)$ turns out to be commutative, $\rho_G$ is in fact a homomorphism.}
Following \cite{Herzig.2011a}, the Satake homomorphism
in~\cite{Henniart-Vigneras.2015} is defined as
\begin{align*}
\Satake'\colon C^\infty_c(K\backslash G/K,R) &\longrightarrow C^\infty_c(Z/K_Z,
R) \cong R[\Lambda],\\
f &\longmapsto \Bigl[z\mapsto \sum_{u\in U/K_U} f(zu)\Bigr],
\end{align*}
where $K_U = K\cap U$. Now, the diagram
\[
\begin{tikzcd}
C^\infty_c(K\backslash G/K,R) \ar[d,"\rho_G"] \ar[r,"\Satake'"] &
C^\infty_c(Z/K_Z,R) \ar[d,"\rho_Z"]\\
\Hecke_R(K,G) \ar[r,"\Satake^G"'] & \Hecke_R(K_Z,Z)
\end{tikzcd}
\]
commutes: fix a representing system $\Gamma\subseteq Z$ for the coset
space $K_Z\backslash Z$, so that $K_U\backslash U\times \Gamma \cong K\backslash
G$ via $(K_Uu,z)\mapsto Kuz$. Then for each $f\in C^\infty_c(K\backslash G/K,R)$
we compute
\begin{align*}
\rho_Z\bigl(\Satake'(f)\bigr) &= \sum_{z\in \Gamma} \Satake'(f)(z^{-1})\cdot
(K_Zz)
= \sum_{z\in \Gamma} \sum_{u\in U/K_U} f(z^{-1}u)\cdot (K_Zz)\\
&= \sum_{z\in \Gamma}\sum_{u\in K_U\backslash U} f\bigl((uz)^{-1}\bigr) \cdot
(K_Zz)
= \sum_{uz\in K\backslash G} f\bigl((uz)^{-1}\bigr)\cdot (K_Zz)\\
&= \Satake^G\Bigl(\sum_{uz\in K\backslash G} f\bigl((uz)^{-1}\bigr)\cdot (K
uz)\Bigr)
= \Satake^G\bigl(\rho_G(f)\bigr).
\end{align*}
If $p\in R^\times$, then the twisted action of $W_0$ on $C^\infty_c(Z/K_Z,R)$
is defined by
\[
w\circ e^\lambda \coloneqq \delta^{1/2}(\lambda - w(\lambda)\bigr) \cdot
e^{w(\lambda)},
\]
where $\delta^{1/2}$ is a square root of $\delta$. This is indeed defined over
$R$, since Lemma~\ref{lem:delta-mu} shows that
$\delta^{1/2}(\lambda - w(\lambda))$ actually lies in $q^{\Z}$. The same lemma
also shows that $\rho_Z(w\circ e^\lambda) = w\star e^{-\lambda}$.

Therefore, we have $\rho_Z\bigl(\sum_{w\in W_0/W_{0,\lambda}} w\circ
e^\lambda\bigr) = S_{-\lambda}$ in $R[\Lambda]$, for all $\lambda\in
\Lambda^-$.

Now, \ref{thm:Satake-i} and \ref{thm:Satake-ii} are \cite[7.15 Thm. and 
7.13 Cor.]{Henniart-Vigneras.2015}, and \ref{thm:Satake-iii} is
\cite[7.16]{Henniart-Vigneras.2015}.
\end{proof} 

\begin{rmk}\label{rmk:Satake} 
Let $z\in Z^-$ and put $\lambda = zK_Z \in \Lambda$. It follows from
Remark~\ref{rmk:Iwasawa}.\ref{iwasawa-a} and \ref{iwasawa-b} that 
\begin{equation}\label{eq:Satake-explicit}
\Satake^G\bigl((z)_K\bigr) = e^\lambda + \sum_{\substack{\mu\in
\Lambda\text{ s.t.}\\\nu(\mu)< \nu(\lambda)}} a_{\mu}\cdot e^\mu \in
\Z[1/p][\Lambda].
\end{equation}
By Theorem~\ref{thm:Satake}, $\Satake^G\bigl((z)_K\bigr)$ is invariant under the
twisted action of $W_0$. Therefore, $\frac{a_\mu}{\mu_U(\mu)} =
\frac{a_{w(\mu)}}{\mu_U(w(\mu))}$, for all $w\in W_0$ and all $\mu\in \Lambda$.
In particular, $a_\mu\neq 0$ if and only if $a_{w(\mu)}\neq 0$ for all $w\in
W_0$. Now, \eqref{eq:Satake-explicit} shows $w(\mu) \le \lambda$, for all $w\in
W_0$ and all $\mu\in \Lambda$ with $\mu\le \lambda$. This explains
Remark~\ref{rmk:Iwasawa}.\ref{iwasawa-c}.
\end{rmk} 
\subsection{Centralizers in parabolic Hecke algebras} 
\label{subsec:centralizer}
Let $\alg P = \alg U_{\alg P}\alg M$ be a parabolic subgroup of $\alg G$. We
choose a strictly $M$-positive element $a_P \in Z$, see \S\ref{subsec:positive}.
This means that $a_P$ lies in the center of $M$ and satisfies
\[
\langle \alpha, \nu(a_P)\rangle <0,\qquad \text{for all $\alpha\in
\Sigma^+\setminus \Sigma_M$.}
\]
Note that $K_Pa_PK_P = K_Pa_P$ and hence $(a_P)_{K_P} = (K_Pa_P)$ in
$\Hecke_R(K_P,P)$. We consider the centralizer algebra
\[
C^+_P \coloneqq \set{X\in \Hecke_R(K_P,P)}{X\cdot (a_P)_{K_P} = (a_P)_{K_P}\cdot
X}.
\]
The algebra $C^+_P$ was originally studied by Andrianov when $\alg P$ is the
``Siegel parabolic'' of a symplectic group, see
\cite{Andrianov.1977, Andrianov.1979}.

\begin{lem}\label{lem:C^+_P} 
The following statements hold true:
\begin{enumerate}[label=(\roman*)]
\item\label{lem:C^+_P-i} $C^+_P = \set{X\in \Hecke_R(K_P,P)}{\text{$X = \sum_i
r_i\cdot (K_Pm_i)$ with $m_i\in M$ and $r_i\in R$}}$.

\item\label{lem:C^+_P-ii} For all $X\in \Hecke_R(K_P,P)$, there exists $n>0$ such
that $(a_P)_{K_P}^nX \in C^+_P$.

\item\label{lem:C^+_P-iii} The map $\Theta^P_M$ induces by restriction an
isomorphism $C^+_P\cong \Hecke_R(K_M,M^+)$. In particular, $C^+_P$ is
commutative.
\end{enumerate}
\end{lem}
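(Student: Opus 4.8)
The plan is to reduce all three assertions to the structure of $K_P$ and of the double cosets $K_P m K_P$ with $m\in M^+$, treating (i) as the crux. I will use the Iwahori factorization $K_P = K_{U_P}\rtimes K_M$ (a standard consequence of $K$ being special, via the factorization of $K$ along $\alg P$ and the triviality of $P\cap U_P^-$) together with the elementary bookkeeping fact it yields: for $g\in P$, with $g=u_gm_g$ its factorization through $P=U_PM$, the right coset $K_Pg$ equals $K_Pm$ for some $m\in M$ (equivalently, meets $M$) if and only if $u_g\in K_{U_P}$. Write $V_M$ for the $R$-submodule of $\Hecke_R(K_P,P)$ spanned by the cosets $K_Pm$, $m\in M$. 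The heart of the matter is the identity $C^+_P=V_M\cap\Hecke_R(K_P,P)$, i.e.\ assertion (i).

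First I would record two consequences of $M$-positivity. For $m\in M^+$ one has $K_{U_P}mK_{U_P}=K_{U_P}m$ (since $m^{-1}K_{U_P}m\supseteq K_{U_P}$), hence $K_PmK_P=\bigsqcup_i K_Pm_i$, where $K_MmK_M=\bigsqcup_iK_Mm_i$ is the right-coset decomposition inside $M$ and each $m_i\in M^+$. From this: (a) an element of $V_M\cap\Hecke_R(K_P,P)$ is automatically supported on cosets $K_Pm$ with $m\in M^+$, because a coset $K_Pm$ with $m\notin M^+$ lies in a double coset $K_PmK_P$ that, by the bookkeeping fact, contains cosets not meeting $M$; and (b) via the displayed decomposition, $\Theta^P_M$ carries $V_M\cap\Hecke_R(K_P,P)$ bijectively onto $\Hecke_R(K_M,M^+)$, with explicit inverse $\sum_js_j\,(m_j)_{K_M}\mapsto\sum_js_j\,(m_j)_{K_P}$; since $\Theta^P_M$ is a ring homomorphism this is an isomorphism of $R$-algebras. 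Finally, $\Hecke_R(K_M,M^+)$ is a subalgebra of the spherical Hecke algebra $\Hecke_R(K_M,M)$ (the multiplication formula restricts), which is commutative by Theorem~\ref{thm:Satake}.\ref{thm:Satake-iii} applied to $M$. Granting (i), this yields all of (iii).

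For (i), the inclusion $V_M\cap\Hecke_R(K_P,P)\subseteq C^+_P$ is immediate: $(a_P)_{K_P}=(K_Pa_P)$ with $a_P$ central in $M$, so for $X=\sum_mr_m(K_Pm)$ with $m\in M$ both $X(a_P)_{K_P}$ and $(a_P)_{K_P}X$ equal $\sum_mr_m(K_Pma_P)$. The opposite inclusion is the real work, and the key input is that $a_P$ is \emph{strictly} $M$-positive, so that $u\mapsto a_P^nua_P^{-n}$ contracts $U_P$: for any finite subset of $U_P$ there is $N$ with $a_P^nua_P^{-n}\in K_{U_P}$ for all $n\ge N$. Given $X=\sum_gr_g(K_Pg)$ and such an $N$ for the finitely many components $u_g$, one gets $K_Pa_P^ng=K_P(a_P^nu_ga_P^{-n})(a_P^nm_g)=K_Pa_P^nm_g$ for $n\ge N$, so $(a_P)_{K_P}^nX$ is supported on $M$-cosets and hence lies in $V_M\cap\Hecke_R(K_P,P)$; by the easy inclusion this already proves (ii). Now if $X\in C^+_P$ then $X$ commutes with $(a_P)_{K_P}^n$, so $X(a_P)_{K_P}^n=(a_P)_{K_P}^nX\in V_M$ for $n\ge N$. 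But right translation by $a_P^n$ is a \emph{bijection} on right $K_P$-cosets, so $X(a_P)_{K_P}^n=\sum_gr_g(K_Pga_P^n)$ with no collapsing; therefore every $K_Pga_P^n$ with $g$ in the support of $X$ is an $M$-coset, and since $a_P^n\in M$ the bookkeeping fact forces $u_g\in K_{U_P}$, i.e.\ $K_Pg=K_Pm_g$ is an $M$-coset. Hence $X\in V_M$, completing (i).

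The main obstacle is precisely this last step: extracting $M$-supportedness of a centralizing $X$ with nothing available but commutation with $(a_P)_{K_P}$. The device is to pass to high powers of $(a_P)_{K_P}$ — strict $M$-positivity contracts the $U_P$-directions when multiplying on one side, while right translation by $a_P^n$ never collapses cosets, so no cancellation can conceal the support; this left/right asymmetry is what pins the support down to $M$. A minor point I would verify along the way is that the condition "$a_P^nu_ga_P^{-n}\in K_{U_P}$" does not depend on the chosen representative $g$ of the coset $K_Pg$: changing the representative alters $u_g$ only by left multiplication by an element of $K_{U_P}$ and conjugation by an element of $K_M$, and both operations commute suitably with $a_P$ and preserve $K_{U_P}$, because $a_P$ lies in the center of $M$ and is $M$-positive.
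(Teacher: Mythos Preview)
Your proof is correct. The paper itself does not prove this lemma but defers to \cite[Lemma~4 and Corollary~5]{Heyer.2021}, only noting that the commutativity in \ref{lem:C^+_P-iii} follows from Theorem~\ref{thm:Satake} applied to $M$.

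Your argument is a clean, self-contained version of what lies behind that citation. The key device---multiplying a centralizing $X$ on both sides by $(a_P)_{K_P}^n$ and exploiting the asymmetry that left multiplication contracts the $U_P$-part into $K_{U_P}$ while right multiplication by $a_P^n$ is an injection on right cosets---is exactly the right idea, and your execution is sound. The auxiliary observations (that $K_P m K_P = \bigsqcup_i K_P m_i$ for $m\in M^+$ with the $m_i$ running over a $K_M$-coset decomposition of $K_M m K_M$, and that an element of $V_M\cap\Hecke_R(K_P,P)$ is automatically supported on $M^+$-cosets) are correctly justified. One small cosmetic point: the phrase ``triviality of $P\cap U_P^-$'' is not actually needed for the factorization $K_P = K_{U_P}K_M$; the latter is immediate from the product decomposition of $K$ with respect to root subgroups for a special parahoric, together with $K_M$ normalizing $K_{U_P}$.
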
 
\begin{proof} 
See \cite[Lemma~4 and Corollary~5]{Heyer.2021}. The last assertion in
\ref{lem:C^+_P-iii} follows from the fact that $\Hecke_R(K_M,M)$ is commutative
by Theorem~\ref{thm:Satake} applied to $M$.
\end{proof} 

Note that Lemma~\ref{lem:C^+_P}.\ref{lem:C^+_P-i} shows that $C^+_P$ is
independent of the choice of $a_P$.

Recall the anti-involution $\zeta_P$ on $\Hecke_R(K_P,P)$,
cf.~\eqref{eq:Hecke-involution}, which is given by $\zeta_P\bigl((g)_{K_P}\bigr)
= (g^{-1})_{K_P}$.

\begin{rmk*}\label{rmk*:C^-_P} 
Let $a_P\in Z$ be a strictly $M$-positive element. Then $a_P^{-1}$ is strictly
$M$-negative and
\[
\zeta_P(C^+_P) = C^-_P \coloneqq \set{X\in \Hecke_R(K_P,P)}{X\cdot
(a_P^{-1})_{K_P} = (a_P^{-1})_{K_P}\cdot X}.
\]
The analog of Lemma~\ref{lem:C^+_P} for $C^-_P$ also holds.\footnote{However,
note that it is not $\Theta^P_M$ which induces an isomorphism $C^-_P \cong
\Hecke_R(K_M,M^-)$, but rather $\zeta_M\circ\Theta^P_M\circ\zeta_P$.}
\end{rmk*} 

\begin{lem}\label{lem:zeta-Ker} 
One has
\[
\zeta_P\bigl(\Ker \Theta^P_M\bigr) = \Ker \Theta^P_M.
\]
\end{lem}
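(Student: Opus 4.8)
The plan is to describe $\Ker\Theta^P_M$ intrinsically as a ``torsion'' ideal for the element $(a_P)_{K_P}$ and then transport that description through the anti‑automorphism $\zeta_P$. Since $R$ is a $\Z[1/p]$‑algebra, every $\mu_{U_P}(m)\in q^{\Z_{\ge0}}$ is a unit in $R$; and since $a_P$ is central in $M$, the element $(a_P)_{K_M}$ is a unit of $\Hecke_R(K_M,M)$ with inverse $(a_P^{-1})_{K_M}$. I would begin by recording two elementary facts: because $K_Pa_PK_P = K_Pa_P$, one has $\Theta^P_M\bigl((a_P)_{K_P}\bigr) = (a_P)_{K_M}$; and writing $(a_P^{-1})_{K_P}$ as a sum of right cosets $(K_Ph_j)$ and noting $a_Ph_j\in K_P$ yields $(a_P)_{K_P}\cdot(a_P^{-1})_{K_P} = \mu_{U_P}(a_P^{-1})\cdot\id$ in $\Hecke_R(K_P,P)$, whence (applying $\Theta^P_M$) $\Theta^P_M\bigl((a_P^{-1})_{K_P}\bigr) = \mu_{U_P}(a_P^{-1})\cdot(a_P^{-1})_{K_M}$ is again a unit of $\Hecke_R(K_M,M)$.

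Next I would prove
\[
\Ker\Theta^P_M = \bigl\{X\in\Hecke_R(K_P,P) : (a_P)^n_{K_P}\cdot X = 0 \text{ for some } n\ge 1\bigr\}.
\]
The inclusion ``$\supseteq$'' is clear, since $\Theta^P_M\bigl((a_P)^n_{K_P}X\bigr) = (a_P)^n_{K_M}\,\Theta^P_M(X)$ and $(a_P)_{K_M}$ is invertible. For ``$\subseteq$'', note that $\Ker\Theta^P_M$ is a two‑sided ideal; given $X\in\Ker\Theta^P_M$, Lemma~\ref{lem:C^+_P}\ref{lem:C^+_P-ii} furnishes $n$ with $(a_P)^n_{K_P}X\in C^+_P$, so $(a_P)^n_{K_P}X\in C^+_P\cap\Ker\Theta^P_M$, and this intersection vanishes because $\Theta^P_M$ is injective on $C^+_P$ by Lemma~\ref{lem:C^+_P}\ref{lem:C^+_P-iii}.

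Finally I would conclude. Applying the anti‑automorphism $\zeta_P$ — recall $\zeta_P^2 = \id$ and $\zeta_P\bigl((a_P)_{K_P}\bigr) = (a_P^{-1})_{K_P}$ — to the identity just established gives
\[
\zeta_P\bigl(\Ker\Theta^P_M\bigr) = \bigl\{Y\in\Hecke_R(K_P,P) : Y\cdot(a_P^{-1})^n_{K_P} = 0 \text{ for some } n\ge 1\bigr\}.
\]
But if $Y$ lies in this set, then $\Theta^P_M(Y)\cdot\Theta^P_M\bigl((a_P^{-1})^n_{K_P}\bigr) = 0$, and since $\Theta^P_M\bigl((a_P^{-1})_{K_P}\bigr)$ is a unit we get $\Theta^P_M(Y) = 0$, i.e. $Y\in\Ker\Theta^P_M$; thus $\zeta_P(\Ker\Theta^P_M)\subseteq\Ker\Theta^P_M$, and applying $\zeta_P$ once more yields the reverse inclusion, hence equality.

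I expect the step in the second paragraph to be the main point: it is where all three parts of Lemma~\ref{lem:C^+_P} enter, and it is what packages $\Ker\Theta^P_M$ into a form stable under the formal manipulations with $\zeta_P$. The only genuine computations are the two identities of the first paragraph; the single place where invertibility of $p$ in $R$ is really used is the assertion that $\Theta^P_M\bigl((a_P^{-1})_{K_P}\bigr)$ — which carries the factor $\mu_{U_P}(a_P^{-1})\in q^{\Z_{\ge0}}$ — is a unit. (A more computational alternative is to compute $\Theta^P_M\bigl((g)_{K_P}\bigr) = c_g\cdot(\pr_{\alg M}(g))_{K_M}$ for a suitable power of $q$ and to relate $c_{g^{-1}}$ to $c_g$ through the modulus character of $\alg P$, but that route is messier.)
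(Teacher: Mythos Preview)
Your proof is correct, but it follows a genuinely different route from the paper's. You characterise $\Ker\Theta^P_M$ intrinsically as the left $(a_P)_{K_P}$-torsion of $\Hecke_R(K_P,P)$ via Lemma~\ref{lem:C^+_P}, then transport this through $\zeta_P$ to get a right $(a_P^{-1})_{K_P}$-torsion description, which lands back in the kernel because $\Theta^P_M\bigl((a_P^{-1})_{K_P}\bigr)$ is a unit. The paper instead argues directly on generators: it invokes an explicit formula $\Theta^P_M\bigl((g)_{K_P}\bigr) = \frac{\mu(g)}{\mu_M(\pr_{\alg M}(g))}\,(\pr_{\alg M}(g))_{K_M}$ (from \cite[Proposition~4.3]{Heyer.2020}) to write down module generators of $\Ker\Theta^P_M$, and then checks by hand that $\zeta_P$ of each generator lies in $\Ker\Theta^P_M$, the key identity being $\delta(g) = \delta(\pr_{\alg M}(g))$ for the modulus character of $P$. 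In other words, the paper takes precisely the ``more computational alternative'' you sketch and dismiss in your final parenthetical.

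Your argument is cleaner and more conceptual; it also foreshadows the torsion characterisation that reappears in Proposition~\ref{prop:hyp-equiv}. The trade-off is generality: the paper's computation goes through for an arbitrary coefficient ring $R$, whereas your proof genuinely needs $p\in R^\times$ so that $\mu_{U_P}(a_P^{-1})\in q^{\Z_{\ge0}}$ is a unit. Since \S\ref{subsec:centralizer} does not yet impose that hypothesis (it is only assumed in \S\ref{subsec:twisted} and \S\ref{subsec:decomp}), the paper's version of the lemma is formally stronger---though in practice the lemma is only applied in \S\ref{subsec:decomp}, where your assumption is in force anyway.
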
 
\begin{proof} 
Note that, for each $g\in P$, the index $\mu(g)\coloneqq [K_P : K_P\cap
g^{-1}K_Pg]$ counts the number of right
$K_P$-cosets in $K_PgK_P$, by \eqref{eq:mu}. Similarly, $\mu_M(m) \coloneqq [K_M
: K_M\cap m^{-1}K_Mm]$ counts the number of right $K_M$-cosets in $K_MmK_M$.
Moreover, 
\[
\delta\colon P\to q^\Z,\qquad \delta(g)\coloneqq [gK_Pg^{-1} : K_P] =
\frac{\mu(g)}{\mu(g^{-1})}
\]
is the modulus character of $P$. 
As every element in $U_P$ is contained in a compact group, we have
$\delta\big|_{U_P} = 1$, by~\cite[Ch.\,I, 2.7]{Vigneras.1996}. Therefore,
\begin{equation}\label{eq:delta} 
\delta(g) = \delta\bigl(\pr_{\alg M}(g)\bigr),\qquad \text{for all $g\in P$.}
\end{equation}

Note that $\Theta^P_M\bigl((g)_{K_P}\bigr) = \frac{\mu(g)}
{\mu_M(\pr_{\alg M}(g))}\cdot \bigl(\pr_{\alg M}(g)\bigr)_{K_M}$ by
\cite[Proposition~4.3]{Heyer.2020}, and hence
$\Ker \Theta^P_M$ is generated by elements of the form $(g)_{K_P}
-\frac{\mu(g)}{\mu(\pr_{\alg M}(g))}\cdot (\pr_{\alg M}(g))_{K_P}$. We compute
\begin{align*}
\Theta^P_M\bigl(\zeta_P\Bigl((g)_{K_P} &- \frac{\mu(g)}{\mu(\pr_{\alg
M}(g))}\cdot \bigl(\pr_{\alg M}(g)\bigr)_{K_P}\Bigr)\bigr)\\
&= \frac{\mu(g^{-1})}{\mu_M(\pr_{\alg M}(g^{-1}))}\cdot \bigl(\pr_{\alg
M}(g^{-1})\bigr)_{K_M} - \frac{\mu(g)\cdot \mu(\pr_{\alg
M}(g^{-1}))}{\mu(\pr_{\alg M}(g))\cdot \mu_M(\pr_{\alg M}(g^{-1}))}\cdot
\bigl(\pr_{\alg M}(g^{-1})\bigr)_{K_M}\\
&= 0, \qquad \text{by \eqref{eq:delta}.}
\end{align*}
This shows $\zeta_P(\Ker\Theta^P_M) \subseteq \Ker\Theta^P_M$. The assertion
follows from $\zeta_P^2 = \id$.
\end{proof} 

\begin{rmk*} 
One has $\Theta^P_M\circ \zeta_P \neq \zeta_M\circ \Theta^P_M$ (apply both maps
to $(a_P)_{K_P}$ for a strictly $M$-positive $a_P$). 
\end{rmk*} 
\subsection{Example of a parabolic Hecke algebra}\label{subsec:example} 
The purpose of this section is to work out an example of the setup so far.

Let $\O_{\field}$ be the valuation ring of $\field$ and fix a uniformizer
$\pi\in \O_\field$. Consider the group $G = \GL_2(\field)$ and the maximal
compact subgroup $K = \GL_2(\O_{\field})$. Let $B \subseteq G$ be the subgroup
of upper triangular matrices and $Z \subseteq B$ the subgroup of diagonal
matrices. Fix a coefficient ring $R$.
A variant of the parabolic Hecke algebra
$\Hecke_R(K_B,B)$ is briefly discussed in Vienney's thesis
\cite[p.~102]{Vienney.2012}. We prove the following structure result of
$\Hecke_R(K_B,B)$ in Appendix~\ref{appendix:H(B)}.
\begin{thm}\label{thm:example} 
The $R$-algebra $\Hecke_R(K_B,B)$ is generated by the elements
\begin{align*}
X_+&\coloneqq (\begin{pmatrix}\pi&0\\0&1\end{pmatrix})_{K_B}, & X_- &\coloneqq
(\begin{pmatrix}\pi^{-1} & 0\\0 & 1\end{pmatrix})_{K_B}, & Y &\coloneqq (\pi
E_2)_{K_B}, & Y^{-1} &\coloneqq (\pi^{-1}E_2)_{K_B},
\end{align*}
where $E_2$ is the $2\times 2$ identity matrix, subject only to the following
relations:
\begin{align}\label{eq:ex-H(B)}
\begin{split}
YY^{-1} &= Y^{-1}Y = 1,\\
YX_+ &= X_+Y,\\
YX_- &= X_-Y,\\
X_+X_- &= q\cdot 1.
\end{split}
\end{align}
In particular, $\Hecke_R(K_B,B)$ is non-commutative.
\end{thm}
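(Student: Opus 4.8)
The plan is to identify $\Hecke_R(K_B,B)$ with the abstract $R$-algebra $A$ presented by generators $X_+,X_-,Y,Y^{-1}$ and the relations~\eqref{eq:ex-H(B)}. There is at most one $R$-algebra homomorphism $\Phi\colon A\to\Hecke_R(K_B,B)$ sending the generators to the indicated double cosets, and it exists once the relations are checked to hold in $\Hecke_R(K_B,B)$. This I would do directly from the product rule $(K_Bs)(K_Bt)=(K_Bst)$: since $\pi E_2$ is central in $G$, the element $Y$ is a single right coset commuting with everything and $YY^{-1}=Y^{-1}Y=1$; and $X_+$ is the single coset $K_B\diag(\pi,1)$ while $X_-=\sum_{c\in\O_\field/\pi\O_\field}\bigl(K_B\triang{\pi^{-1}}{\pi^{-1}c}{1}\bigr)$ has $q$ cosets, whence $X_+X_-=\sum_c\bigl(K_B\triang{1}{c}{1}\bigr)=q\cdot(K_B)=q\cdot 1$.

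I would then pin down the standard basis of $\Hecke_R(K_B,B)$ indexed by $K_B\backslash B/K_B$. Reducing over $\O_\field$ by row and column operations (only unit rescalings and $\O_\field$-shifts are available, the Weyl element not lying in $B$) shows every double coset has a unique representative $\triang{\pi^m}{c}{\pi^n}$ with $(m,n)\in\Z^2$ and either $c=0$ or $\val_\field(c)<\min(m,n)$; write $t^{(v)}_{m,n}$ for the corresponding basis element, $v\in\{\infty\}\cup\Z_{<\min(m,n)}$. Via $\pr\colon B\to Z$ the algebra $\Hecke_R(K_B,B)$ becomes $\Z^2$-graded with $\Hecke_{(m,n)}=\bigoplus_v R\,t^{(v)}_{m,n}$, the generators being homogeneous of degrees $(1,0),(-1,0),(1,1),(-1,-1)$. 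The computational heart is the identity
\[
X_-^aX_+^b=\sum_{e\in\O_\field/\pi^a\O_\field}\bigl(K_B\triang{\pi^{b-a}}{\pi^{-a}e}{1}\bigr),
\]
whose $q^a$ right cosets are pairwise distinct; grouping them by $\val_\field(\pi^{-a}e)$ and computing the indices $[K_B:\{\gamma\in K_B\mid g\gamma g^{-1}\in K_B\}]$ at the relevant $g$, one sees that the cosets of a given $c$-valuation form exactly one double coset, each with multiplicity $1$. This yields, for $b\ge a$, the triangular formula $X_-^aX_+^b=t^{(\infty)}_{b-a,0}+\sum_{v=-a}^{-1}t^{(v)}_{b-a,0}$, an analogous one for $b<a$, and — together with $Y^kt^{(v)}_{m,n}=t^{(v+k)}_{m+k,n+k}$ and the fact that $t^{(\infty)}_{d,0}$ equals $X_+^d$ if $d\ge0$ and $X_-^{-d}$ if $d<0$ — it computes every product of generators.

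Surjectivity and injectivity of $\Phi$ then follow formally. For surjectivity, twisting by a power of $Y$ reduces to realizing each $t^{(v)}_{d,0}$ in the image, and the triangular formula lets one solve for it by descending induction on $v$ (e.g. $t^{(-1)}_{d,0}=X_-X_+^{d+1}-X_+^d$ for $d\ge0$). For injectivity, the relations~\eqref{eq:ex-H(B)} rewrite any word in the generators — push all $Y^{\pm1}$ to the left, then repeatedly replace $X_+X_-$ by $q$ — into an $R$-multiple of a monomial $Y^kX_-^aX_+^b$ $(k\in\Z,\ a,b\ge0)$, so these span $A$ over $R$; their $\Phi$-images sit in the single graded piece $\Hecke_{(k+b-a,\,k)}$, and in a fixed degree $(m,n)$ the images of $M_a\coloneqq Y^nX_-^aX_+^{a+m-n}$ satisfy $M_a-M_{a-1}=t^{(v_a)}_{m,n}$, hence are unitriangular against $\{t^{(v)}_{m,n}\}$ and so $R$-linearly independent. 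A spanning set mapping to a linearly independent family maps to a basis, so $\Phi$ is an isomorphism; and $\Hecke_R(K_B,B)$ is non-commutative because $X_-X_+=1+t^{(-1)}_{0,0}\neq q\cdot 1=X_+X_-$ (for $R\neq0$).

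I expect the main obstacle to be exactly the product computation for $X_-^aX_+^b$, hand in hand with the classification of $K_B\backslash B/K_B$: nothing here is deep, but getting the coset indices right — hence the multiplicity-one statement and the precise triangular shape of the formulas, on which both halves of the argument rest — needs some care, after which the proof is bookkeeping.
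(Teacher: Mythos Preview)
Your proposal is correct and follows essentially the same route as the paper: both verify the relations to get a well-defined map from the abstract algebra, classify $K_B\backslash B/K_B$ by elementary reduction, compute $X_-^aX_+^b$ (equivalently $X_-^nX_+^mY^k$) as a triangular sum of double-coset basis elements, and conclude by a triangularity argument. The paper packages the last step by exhibiting the explicit basis $\{X_+^mY^k,\ X_-^nY^k,\ X_-^nX_+^mY^k-X_-^{n-1}X_+^{m-1}Y^k\}$ of the abstract algebra that maps bijectively onto the double-coset basis, whereas you phrase the same content as ``span plus unitriangular independence'' organized by the $\Z^2$-grading; these are the same argument in different clothing.
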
 

\begin{rmk*} 
It follows from Theorem~\ref{thm:example} that $X_+$ is a left zero-divisor
(resp. $X_-$ is a right zero-divisor), because
\[
X_+\cdot (X_-X_+ - q\cdot 1) = (X_-X_+ - q\cdot 1)\cdot X_- = 0.
\]
If $q$ is invertible in $R$, then $X_+$ is right invertible (resp. $X_-$ is
left invertible), since
\[
X_+\cdot q^{-1}X_- = q^{-1}X_+\cdot X_- = 1.
\]
\end{rmk*} 

Since $\Lambda \cong \Z^2$, the Hecke algebra $\Hecke_R(K_Z,Z)$ identifies with
$R[x^{\pm1},y^{\pm}]$ via $(\begin{psmallmatrix}\pi & 0\\0 &
1\end{psmallmatrix})_{K_Z} \mapsto x$ and $(\begin{psmallmatrix}1 & 0\\0 &
\pi\end{psmallmatrix})_{K_Z} \mapsto y$. Then the map $\Theta^B_Z$ is given by
\begin{align*}
\Theta^B_Z\colon \Hecke_R(K_B,B) &\longrightarrow \Hecke_R(K_Z,Z),\\
X_+ &\longmapsto x,\\
X_- &\longmapsto qx^{-1},\\
Y &\longmapsto xy.
\end{align*}
The kernel of $\Theta^B_Z$ is the two-sided ideal generated by $X_-X_+ - q\cdot
1$.

Assume $q\in R^\times$. The twisted action of $W_0 = \{1,w_0=
\begin{psmallmatrix}0 & 1\\1 &
0\end{psmallmatrix}\}$ on $R[x^{\pm 1},y^{\pm 1}]$ is given by 
\[
w_0\star x = qy\qquad \text{and}\qquad w_0\star y = q^{-1}x.
\]
For all $a,b,c\in \Z$ with $a>b$, the elements
\begin{alignat}{2}
S_{a,b} &\coloneqq x^ay^b + q^{a-b}x^by^a, &\hspace{3em}
S_{c,c} &\coloneqq (xy)^c
\end{alignat}
are $W_0$-invariant with respect to the twisted action; in fact they constitute
an $R$-basis of $R[\Lambda]^{W_0,\star}$. Moreover, the relations
\begin{alignat}{4}
S_{c,c} &= S_{1,1}^c, && \text{for $c\in \Z$,}\\
S_{a,b} &= S_{b,b}\cdot S_{a-b,0} = S_{a-b,0}\cdot S_{b,b}, &\hspace{2em}&
\text{for $a,b\in \Z$ with $a>b$,}\\
S_{n,0}\cdot S_{1,0} &= S_{n+1,0} + q\cdot S_{1,1}\cdot S_{n-1,0}, && \text{for
$n\in\Z_{\ge1}$}
\end{alignat}
are immediate. This easily implies $R[\Lambda]^{W_0,\star} = R[S_{1,0},
S_{1,1}^{\pm 1}] = R[x+qy, (xy)^{\pm1}]$.

We identify $\Lambda^+$ with $\set{(a,b)\in \Z^2}{a\ge b}$. By the Cartan
decomposition~\ref{cartan}, every double coset in $G$ is uniquely of the form
$K\begin{pmatrix}\pi^a&0\\0 & \pi^b\end{pmatrix}K$ with $(a,b)\in \Lambda^+$.
Let us compute the Satake homomorphism $\Satake^G\colon \Hecke_R(K,G) \to
R[\Lambda]$. Let $A\subseteq \O_{\field}$ be a complete system of
representatives for the residue class field of $\field$ such that $0\in A$. Note
that $\lvert A\rvert = q$. For each $(a,b)\in \Lambda^+$ we have
\[
\begin{split}
K\triang{\pi^a}{0}{\pi^b}K = K\triang{\pi^a}{0}{\pi^b} &\sqcup
\bigsqcup_{c=1}^{a-b-1} \bigsqcup_{\substack{\beta_b,\dotsc,\beta_{b+c-1}\in A\\
\beta_b\neq 0}} K\triang{\pi^{a-c}}{\sum_{i=b}^{b+c-1}\beta_i\pi^i}
{\pi^{b+c}}\\
&\sqcup \bigsqcup_{\beta_b,\dotsc,\beta_{a-1}\in A} K\triang{\pi^b}
{\sum_{i=b}^{a-1}\beta_i\pi^i}{\pi^a}.
\end{split}
\]
Let $\gamma\coloneqq \lfloor\frac{a-b}{2}\rfloor$ be the largest
integer $\le \frac{a-b}{2}$. We deduce
\begin{align*}
\Satake^G\bigl((\triang{\pi^a}{0}{\pi^b})_K\bigr) &= x^ay^b + \sum_{c=1}^{a-b-1}
(q-1)q^{c-1} x^{a-c}y^{b+c} + q^{a-b} x^by^a\\
&= S_{a,b} + (q-1)\cdot \sum_{c=1}^{\gamma} q^{c-1}
S_{a-c,b+c} +\epsilon\cdot (q-1)q^{\gamma-1}S_{\gamma,\gamma},
\end{align*}
and where $\epsilon=1$ if $a-b$ is even and non-zero, and $\epsilon=0$
otherwise. Consider on $\Lambda$ the partial ordering defined by $(c,d) \le
(a,b)$ if $c\le a$ and $c+d = a+b$. As usual we write $(c,d) < (a,b)$ if
$(c,d)\le (a,b)$ and $(c,d)\neq (a,b)$. Note that for each $(a,b)\in \Lambda^+$
there are only finitely many elements $(c,d)$ in $\Lambda^+$ satisfying $(c,d)
< (a,b)$. Then we have shown
\[
\Satake^G\bigl((\triang{\pi^a}{0}{\pi^b})_K\bigr) \in  S_{a,b} +
\sum_{\substack{(c,d)\in \Lambda^+,\\ (c,d) < (a,b)}} \Z.S_{c,d}.
\]
By a ``triangular argument'' it follows that $\Satake^G$ is injective with image
$R[\Lambda]^{W_0,\star}$. In particular, $\Hecke_R(K,G)$ is commutative.
Moreover,
\begin{align*}
\Satake^G\bigl((\triang{\pi}{0}{1})_K\bigr) &= S_{1,0} = x + qy,\\
\Satake^G\bigl((\pi E_2)_K\bigr) &= S_{1,1} = xy,
\end{align*}
which shows that $\Hecke_R(K,G)$ identifies with the polynomial ring generated
by $(\triang{\pi}{0}{1})_K$ and $(\pi E_2)_K$, with $(\pi E_2)_K$ invertible. We
have verified Theorem~\ref{thm:Satake} in this specific example.

We can also view $\Hecke_R(K,G)$ as a subalgebra of $\Hecke_R(K_B,B)$ via the
embedding
\begin{align*}
\varepsilon_{B,G}\colon \Hecke_R(K,G) &\longhookrightarrow \Hecke_R(K_B,B),\\
(\begin{pmatrix}\pi &0\\0&1\end{pmatrix})_K &\longmapsto X_+ + X_-Y,\\
(\pi E_2)_K &\longmapsto Y.
\end{align*}

Note that $C^+_B$ is the centralizer of $X_+$. Explicitly, $C^+_B$ is the
polynomial algebra
\[
C^+_B = R[X_+,Y^{\pm1}] \subseteq  \Hecke_R(K_B,B).
\]
The anti-involution $\zeta_B$ on $\Hecke_R(K_B,B)$ is determined by
$\zeta_B(X_+) = X_-$ and $\zeta_B(Y) = Y^{-1}$.

Consider the polynomial
\[
Q(t) = 1 - (\begin{pmatrix}\pi&0\\0&1\end{pmatrix})_K\cdot t + q\cdot
(\pi\cdot E_2)_K\cdot t^2 \in \Hecke_R(K,G)[t].
\]
Applying $\Satake^G$ to the coefficients of $Q$, the resulting polynomial
$Q^{\Satake^G}(t)$ decomposes as follows:
\[
Q^{\Satake^G}(t) = 1 - (x+qy)\cdot t + qxy\cdot t^2 = (1 - xt)\cdot (1-qyt) \in
R[x^{\pm1},y^{\pm1}][t].
\]
One may ask whether this decomposition can be lifted to a decomposition of
$Q(t)$ in $\Hecke_R(K,G)[t]$. Unfortunately, this is false. But it turns out
that one can find a decomposition of $Q(t)$ over the parabolic Hecke algebra
$\Hecke_R(K_B,B)$: applying $\varepsilon_{B,G}$ to the coefficients of $Q(t)$,
we obtain
\[
Q^{\varepsilon_{B,G}}(t) = 1 - (X_++X_-Y)\cdot t + qYt^2 = (1-X_+t)\cdot
(1-X_-Yt) \in \Hecke_R(K_B,B)[t].
\]
Here, the free variable $t$ in $\Hecke_R(K_B,B)[t]$ commutes with the elements
in $\Hecke_R(K_B,B)$.
Note that the order of the factors is important, because $\Hecke_R(K_B,B)$ is
non-com\-mu\-ta\-tive.

In the next subsection we prove a general decomposition theorem following the
ideas of Andrianov~\cite{Andrianov.1977,Andrianov.1995}.
\subsection{The decomposition theorem}\label{subsec:decomp} 
From now on we assume that $R$ is a $\Z[1/p]$-algebra. Let $\alg P = \alg
U_{\alg P}\alg M$ be a parabolic subgroup of $\alg G$. We view the embeddings
$\varepsilon_{P,G}\colon \Hecke_R(K,G)\hookrightarrow \Hecke_R(K_P,P)$ and
$\varepsilon_{B,P}\colon \Hecke_R(K_P,P)\hookrightarrow \Hecke_R(K_B,B)$ as
inclusions.

Let $a_P\in Z$ be a strictly $M$-positive element and denote its image in
$\Lambda$ by $\lambda_P$. Choose a representing system $W^M_0$ of
$W_0/W_{0,M}$ in $W_0$. Consider the polynomial
\[
\wt \chi_{a_P}(t)\coloneqq \prod_{w\in W^M_0}\bigl(1 -w\star
e^{-\lambda_P}\cdot t\bigr) \in 1 + tR[\Lambda][t].
\]
This definition does not depend on the choice of $W^M_0$, because $W_{0,M}$
fixes $e^{-\lambda_P} \in R[\Lambda]$ with respect to the twisted action. Note
that, by construction, we have $\wt \chi_{a_P}(e^{\lambda_P}) = 0$, and the
coefficients of $\wt \chi_{a_P}(t)$ are $W_0$-invariant for the twisted action.
By Theorem~\ref{thm:Satake}.\ref{thm:Satake-ii} the coefficients of $\wt
\chi_{a_P}(t)$ lie in the image of the Satake map $\Satake^G$. Since, by
Theorem~\ref{thm:Satake}.\ref{thm:Satake-i}, $\Satake^G$ is injective, there
exists a unique polynomial
\begin{equation}\label{eq:chi}
\chi_{a_P}(t) = \sum_{i=0}^{\lvert W^M_0\rvert} X_i\cdot t^i \in 1 +
t\Hecke_R(K,G)[t]
\end{equation}
with $\sum_{i} \Satake^G(X_i)\cdot t^i = \wt\chi_{a_P}(t)$. (Note that $X_0 =
1$.) Explicitly, we have, for all $0\le i\le \lvert W^M_0\rvert$,
\begin{equation}\label{eq:S(Xi)}
\Satake^G(X_i) = (-1)^i \mu_{U}(-\lambda_P)^{-i} \cdot \sum_{\substack{J
\subseteq W^M_0\\ \lvert J\rvert = i}} \prod_{w\in J}
\mu_U\bigl(w(-\lambda_P)\bigr) \cdot e^{\sum_{w\in J}w(-\lambda_P)} \in
R[\Lambda].
\end{equation}

\begin{lem}\label{lem:chi-ker} 
One has
\[
\chi_{a_P}\bigl((a_P)_{K_P}\bigr) \coloneqq \sum_{i=0}^{\lvert W^M_0\rvert}
(a_P)^i_{K_P}\cdot X_i \in \Ker \Theta^P_M.
\]
\end{lem}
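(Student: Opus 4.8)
The plan is to compute $\Theta^P_M$ on $\chi_{a_P}\bigl((a_P)_{K_P}\bigr)$ and show the outcome is $0$ in $\Hecke_R(K_M,M)$; since the Satake homomorphism $\Satake^M_Z\colon\Hecke_R(K_M,M)\to R[\Lambda]$ of the Levi $M$ is injective by Theorem~\ref{thm:Satake}.\ref{thm:Satake-i} (applied to $M$, which is again a connected reductive group with $K_M$ a special parahoric subgroup), it suffices to prove that $\Satake^M_Z\circ\Theta^P_M$ annihilates $\chi_{a_P}\bigl((a_P)_{K_P}\bigr)$. I would identify this value with $\wt\chi_{a_P}\bigl(e^{\lambda_P}\bigr)$, where $\lambda_P = a_PK_Z$; since the factor of $\wt\chi_{a_P}(t) = \prod_{w\in W^M_0}\bigl(1 - w\star e^{-\lambda_P}\cdot t\bigr)$ indexed by $w = 1$ vanishes at $t = e^{\lambda_P}$, this value is $0$.

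I would carry this out in two steps. First, since $a_P$ lies in the centre of $M$ (so conjugation by $a_P$ fixes $K_M$) and is $M$-positive (so $a_PK_{U_P}a_P^{-1}\subseteq K_{U_P}$), conjugation by $a_P$ sends $K_P$ into itself; consequently $K_Pa_PK_P = K_Pa_P$ and $K_Ma_PK_M = K_Ma_P$ are single cosets, and the indices $[K_P : K_P\cap a_P^{-1}K_Pa_P]$ and $[K_M : K_M\cap a_P^{-1}K_Ma_P]$ are both $1$. By the formula for $\Theta^P_M$ recalled in the proof of Lemma~\ref{lem:zeta-Ker}, this gives $\Theta^P_M\bigl((a_P)_{K_P}\bigr) = (a_P)_{K_M}$. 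Next, because the $X_i\in\Hecke_R(K,G)$ sit inside $\Hecke_R(K_P,P)$ via $\varepsilon_{P,G}$, the definition of the partial Satake homomorphism gives $\Theta^P_M(X_i) = \Satake^G_M(X_i)$. Since $\Theta^P_M$ is an $R$-algebra homomorphism, combining these yields $\Theta^P_M\bigl(\chi_{a_P}\bigl((a_P)_{K_P}\bigr)\bigr) = \sum_i(a_P)^i_{K_M}\cdot\Satake^G_M(X_i)$.

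For the second step I would apply $\Satake^M_Z$. The same normalization check shows $\Satake^M_Z\bigl((a_P)_{K_M}\bigr) = e^{\lambda_P}$ (alternatively, this follows from the analogue of Remark~\ref{rmk:Satake} for $M$, using that $W_{0,M}$ fixes $\lambda_P$). By Lemma~\ref{lem:partial Satake}, applied with $\alg Q = \alg B$ and $\alg L = \alg Z$, one has $\Satake^M_Z\circ\Satake^G_M = \Satake^G$. Feeding these into the previous formula, and using that $R[\Lambda]$ is commutative together with the defining relation $\sum_i\Satake^G(X_i)t^i = \wt\chi_{a_P}(t)$ of \eqref{eq:chi}, I get
\[
\Satake^M_Z\Bigl(\Theta^P_M\bigl(\chi_{a_P}\bigl((a_P)_{K_P}\bigr)\bigr)\Bigr) = \sum_i\Satake^G(X_i)\cdot\bigl(e^{\lambda_P}\bigr)^i = \wt\chi_{a_P}\bigl(e^{\lambda_P}\bigr) = 0,
\]
and injectivity of $\Satake^M_Z$ then gives $\chi_{a_P}\bigl((a_P)_{K_P}\bigr)\in\Ker\Theta^P_M$.

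The delicate point, and the one I would be most careful with, is the normalization: since $\wt\chi_{a_P}\bigl(c\cdot e^{\lambda_P}\bigr)$ vanishes only for $c = 1$, I genuinely need $\Theta^P_M\bigl((a_P)_{K_P}\bigr)$ to equal $(a_P)_{K_M}$ and $\Satake^M_Z\bigl((a_P)_{K_M}\bigr)$ to equal $e^{\lambda_P}$ \emph{on the nose}, not merely up to a power of $q$. Both assertions rest on the single observation that an element of $Z$ which is central in $M$ and $M$-positive conjugates $K_P$ (resp.\ $K_M$) into itself, so that all the coset indices appearing in the defining formulas for $\Theta^P_M$ and $\Satake^M_Z$ are trivial.
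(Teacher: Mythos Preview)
Your proof is correct and follows essentially the same approach as the paper: compute $\Satake^M\circ\Theta^P_M$ on $\chi_{a_P}\bigl((a_P)_{K_P}\bigr)$, identify the result with $\wt\chi_{a_P}(e^{\lambda_P})=0$, and conclude by injectivity of $\Satake^M$. The paper is slightly more streamlined in that it invokes Lemma~\ref{lem:partial Satake} directly to get $\Satake^M\bigl(\Theta^P_M\bigl((a_P)_{K_P}\bigr)\bigr)=\Theta^B_Z\bigl((a_P)_{K_B}\bigr)=e^{\lambda_P}$ in one step, rather than separately computing $\Theta^P_M\bigl((a_P)_{K_P}\bigr)=(a_P)_{K_M}$ and then $\Satake^M_Z\bigl((a_P)_{K_M}\bigr)=e^{\lambda_P}$; your extra care about the normalization is not wrong, just more explicit than necessary.
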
 
\begin{proof} 
By Lemma~\ref{lem:partial Satake} we have
$\Satake^M\bigl(\Theta^P_M\bigl((a_P)_{K_P}\bigr)\bigr) =
\Theta^B_Z\bigl((a_P)_{K_B}\bigr) = e^{\lambda_P}$, and the restriction
of $\Satake^M\circ \Theta^P_M$ to $\Hecke_R(K,G)$ coincides with $\Satake^G$. We
compute
\[
(\Satake^M\circ \Theta^P_M)\bigl(\chi_{a_P}\bigl((a_P)_{K_P}\bigr)\bigr) =
\sum_i e^{i\lambda_P}\cdot \Satake^G(X_i) = \wt\chi_{a_P}(e^{\lambda_P}) = 0.
\]
Since by Theorem~\ref{thm:Satake}.\ref{thm:Satake-i} the map $\Satake^M$ is
injective, the assertion follows.
\end{proof} 

In order for the theory to work, one needs to assume the following strengthening
of Lemma~\ref{lem:chi-ker}:

\begin{hyp}\label{hyp} 
The element $(a_P)_{K_P}$ is a left root of $\chi_{a_P}(t)$, meaning that
\[
\chi_{a_P}\bigl((a_P)_{K_P}\bigr) = 0\qquad \text{in $\Hecke_R(K_P,P)$.}
\]
\end{hyp} 

This hypothesis has been verified in many cases: Andrianov \cite{Andrianov.1977}
essentially proved it for $G = \Sp_{2n}(\field)$ with $P$ being the ``Siegel
parabolic'', \ie the subgroup of matrices whose lower left quadrant is zero.
Gritsenko then adopted the methods of Andrianov to prove it for $G =
\GL_n(\field)$ and all parabolics \cite{Gritsenko.1988,Gritsenko.1992}. Finally,
Gritsenko verified Hypothesis~\ref{hyp} for the classical groups
$\Sp_{2n}(\field)$, $\SU_n(\field)$, and $\SO_n(\field)$, for the parabolics 
fixing a line in the standard representation, see \cite{Gritsenko.1990}.

The main contribution of this article is the verification of
Hypothesis~\ref{hyp} for general connected reductive groups and
\emph{non-obtuse} parabolics, cf.~\S\ref{sec:non-obtuse}. This covers, 
in particular, all the cases mentioned above.

\begin{thm}\label{thm:hyp} 
Assume that $\alg P$ is non-obtuse. Then Hypothesis~\ref{hyp} holds true.
\end{thm}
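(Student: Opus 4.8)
The plan is to prove Hypothesis~\ref{hyp} directly, i.e.\ that $\chi_{a_P}\bigl((a_P)_{K_P}\bigr) = \sum_{i=0}^{\lvert W^M_0\rvert}(a_P)^i_{K_P}X_i$ vanishes in $\Hecke_R(K_P,P)$. By Lemma~\ref{lem:chi-ker} this element already lies in $\Ker\Theta^P_M$, and by Lemma~\ref{lem:C^+_P}.\ref{lem:C^+_P-iii} the restriction of $\Theta^P_M$ to the commutative algebra $C^+_P$ is injective, so $C^+_P\cap\Ker\Theta^P_M = 0$. Hence it suffices to prove $\chi_{a_P}\bigl((a_P)_{K_P}\bigr)\in C^+_P$, and by Lemma~\ref{lem:C^+_P}.\ref{lem:C^+_P-i} this amounts to showing that when $\chi_{a_P}\bigl((a_P)_{K_P}\bigr)$ is expanded as an $R$-linear combination of right $K_P$-cosets $(K_Pg)$, every occurring $g$ may be chosen in $M$.

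First I would determine which Cartan double cosets contribute to each coefficient $X_i\in\Hecke_R(K,G)$. Writing $X_i = \sum_{z\in Z^-}c_{i,z}(z)_K$ in the Cartan basis, the unitriangularity of $\Satake^G$ with respect to the partial ordering of $\nu$-values (Remark~\ref{rmk:Satake}), together with the explicit formula~\eqref{eq:S(Xi)} for $\Satake^G(X_i)$, reduces the claim $\nu(z)\le\nu(a_P^{-i})$ (for every $z$ occurring) to the estimate $\nu(\mu_J)\le\nu(a_P^{-i})$ for all $J\subseteq W^M_0$ with $\lvert J\rvert = i$, where $\mu_J\coloneqq\sum_{w\in J}w(-\lambda_P)$. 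Putting $v\coloneqq-\nu(\lambda_P)$, the vector $v$ is dominant because $\lambda_P$ is strictly $M$-positive, so $wv\le v$ for all $w\in W_0$; since $\nu$ is a $W_0$-equivariant homomorphism, this yields
\[
\nu(\mu_J) = \sum_{w\in J}wv \;\le\; \sum_{w\in J}v = \lvert J\rvert\,v = -i\,\nu(\lambda_P) = \nu(a_P^{-i}).
\]

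Next I would bring Theorem~\ref{thm:main} to bear on an individual coset. Fix such a $z$ and a right coset $(K_Ph)$ with $h\in P$ and $Kh\subseteq KzK$; since $(a_P)^i_{K_P}=(K_Pa_P^i)$, it contributes $(K_Pa_P^ih)$ to $(a_P)^i_{K_P}X_i$. Write $h = um$ with $u\in U_P$, $m\in M$, and use the Cartan decomposition of $M$ to write $m = k_1z_Mk_2$ with $k_1,k_2\in K_M$, $z_M\in Z$; then $(k_1^{-1}uk_1)z_M\in KzK$ with $k_1^{-1}uk_1\in U_P$. For $i\ge1$ the element $a_P^i$ is strictly $M$-positive, so Theorem~\ref{thm:main} (using that $\alg P$ is non-obtuse and $\nu(z)\le\nu(a_P^{-i})$) gives $a_P^i(k_1^{-1}uk_1)a_P^{-i}\in K_P$; conjugating back by $k_1\in K_M$, which commutes with $a_P$ (central in $M$) and normalizes $K_{U_P}$, yields $a_P^iua_P^{-i}\in K_{U_P}$. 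As $a_P$ is central in $M$ we have $a_P^ih = (a_P^iua_P^{-i})(ma_P^i)$, so $(K_Pa_P^ih) = (K_P\,ma_P^i)$ with $ma_P^i\in M$; and the $i=0$ term is just $(K_P)$. Summing over $i$, $z$, and $h$ shows $\chi_{a_P}\bigl((a_P)_{K_P}\bigr)$ is a combination of cosets $(K_Pm')$ with $m'\in M$, hence lies in $C^+_P$, completing the argument.

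I expect the genuine obstacle to be concentrated in the second step: pinning down exactly which double cosets can enter the Cartan expansion of $X_i$ and verifying $\nu(z)\le\nu(a_P^{-i})$ precisely, so that Theorem~\ref{thm:main} is applicable; the rest is bookkeeping with double cosets. Of course Theorem~\ref{thm:main} itself — resting on Algorithm~\ref{algo} and the classification of non-obtuse parabolics — is the real input.
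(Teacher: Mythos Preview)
Your proposal is correct and follows essentially the same route as the paper: reduce to showing $(a_P)^i_{K_P}X_i\in C^+_P$ via Lemma~\ref{lem:chi-ker} and Lemma~\ref{lem:C^+_P}.\ref{lem:C^+_P-iii}, bound the Cartan support of $X_i$ by $\nu(z)\le\nu(a_P^{-i})$ using \eqref{eq:S(Xi)} and the unitriangularity \eqref{eq:Satake-explicit}, and then apply Theorem~\ref{thm:main} to each contributing coset. The only cosmetic difference is that the paper writes the right cosets directly in the form $Ku_{jl}z'_{jl}k_{jl}$ via the decomposition $G=KU_PZ^{+,M}K_M$, whereas you first write $h=um$ and then invoke the Cartan decomposition of $M$; after your conjugation by $k_1$ the two arguments are literally the same.
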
 
\begin{proof} 
Lemma~\ref{lem:C^+_P}.\ref{lem:C^+_P-iii} shows $C^+_P\cap \Ker\Theta^P_M =
\{0\}$. Hence, in view of Lemma~\ref{lem:chi-ker}, it suffices to prove
$(a_P)_{K_P}^i X_i \in C^+_P$, for all $0\le i\le \lvert W^M_0\rvert$.

Let $i$ be arbitrary but fixed.
Recall the explicit description~\eqref{eq:S(Xi)}. Note that $\sum_{w\in
J}\nu\bigl(w(-\lambda_P)\bigr) \le \nu(-i\lambda_P)$, where $J\subseteq W^M_0$
is such that $\lvert J\rvert = i$, see \cite[Ch.\,VI, \S1, no.\,6,
Prop.\,18]{Bourbaki.1981}. If we write
\[
X_i = \sum_{j=1}^{n} c_{j}\cdot (z_{j})_K\qquad \text{in $\Hecke_R(K,G)$,}
\]
then this and~\eqref{eq:Satake-explicit} imply $\nu(z_{j}) \le \nu(a_P^{-i})$,
for all $1\le j\le n$. In view
of the Cartan decomposition~\ref{cartan} we may choose $z_j\in Z^-$.

By the Iwasawa decomposition~\ref{iwasawa} for $G$ and the Cartan
decomposition~\ref{cartan} for $M$, we have
\[
G = KP = KU_PM = KU_PK_MZ^{+,M}K_M = KU_PZ^{+,M}K_M,
\]
because $K_M$ normalizes $U_P$. Therefore, we can write, for every $j$,
\[
(z_{j})_K = \sum_{l=1}^{n_{j}} c_{jl}\cdot (Ku_{jl}z'_{jl}k_{jl}),
\]
with $u_{jl}\in U_P$, $z'_{jl}\in Z^{+,M}$, and $k_{jl}\in K_M$. Note that
$u_{jl}z'_{jl} \in Kz_jK$, for all $j$, $l$.

Since $\alg P$ is non-obtuse, Theorem~\ref{thm:main} implies $a_P^i
u_{jl}a_P^{-i} \in K_P$, for all $j$, $l$. Now, since $a_P^iz'_{jl}k_{jl} \in
M$, Lemma~\ref{lem:C^+_P}.\ref{lem:C^+_P-i} implies
\[
(a_P)_{K_P}^i\cdot X_i = \sum_{j=1}^n\sum_{l=1}^{n_j}c_jc_{jl}\cdot (K_P
a_P^iz'_{jl}k_{jl}) \in C^+_P. \qedhere
\]
\end{proof} 

Consider the submodules
\begin{align*}
\OO^+_P &\coloneqq C^+_P. \Hecke_R(K,G)\coloneqq \set{\sum_i Y_iZ_i \in
\Hecke_R(K_P,P)}{Y_i\in C^+_P,\, Z_i\in \Hecke_R(K,G)},\\
\OO^-_P &\coloneqq \zeta_P(\OO^+_P) = \Hecke_R(K,G).C^-_P
\end{align*}
of $\Hecke_R(K_P,P)$. (Note that $\zeta_P$ preserves $\Hecke_R(K,G)$ by
Lemma~\ref{lem:emb-anti}.)

\begin{defn} 
Assume that Hypothesis~\ref{hyp} is satisfied. For every $n\in\Z_{\ge1}$ we
define recursively the ``negative powers'' of $(a_P)_{K_P}$ as
\[
(a_P)_{K_P}^{-n}\coloneqq - \sum_{i=1}^{\lvert W^M_0\rvert}
(a_P)_{K_P}^{i-n}\cdot X_i \in \OO^+_P.
\]
It should be noted that $(a_P)_{K_P}$ is not invertible in $\Hecke_R(K_P,P)$. In
fact, for $n>1$, one even has $\bigl((a_P)_{K_P}^{-1}\bigr)^n \neq
(a_P)_{K_P}^{-n}$. However, by a simple induction on $d$ we have
\[
(a_P)_{K_P}^n\cdot (a_P)_{K_P}^d = (a_P)_{K_P}^{n+d},\qquad \text{for all $n\in
\Z_{\ge0}$ and $d\in \Z$.}
\]
\end{defn} 

\begin{ex}\label{ex:neg-pow} 
Let us compute ``negative powers'' for $G = \GL_2(\field)$. The notations are
the same as in \S\ref{subsec:example}. Choose the strictly positive element $a_B
= \begin{psmallmatrix}\pi & 0\\0 & 1\end{psmallmatrix}$, so that $(a_B)_{K_B} =
X_+$ in $\Hecke_R(K_B,B)$. The polynomial 
\[
\wt\chi_{a_B}(t) = (1-x^{-1}t)\cdot \bigl(1-(qy)^{-1}t\bigr) = 1 -\bigl(x^{-1} +
(qy)^{-1}\bigr)\cdot t + (qxy)^{-1}\cdot t^2 \in R[x^{\pm1},y^{\pm1}][t]
\]
annihilates $\Theta^B_Z\bigl((a_B)_{K_B}\bigr) = x$. We compute
\begin{align*}
\Satake^G\Bigl(q^{-1}(\pi E_2)_K^{-1}\cdot (\begin{pmatrix}\pi & 0\\0 &
1\end{pmatrix})_K\Bigr) &= q^{-1}\cdot (xy)^{-1}\cdot (x+qy) = x^{-1} +
(qy)^{-1}\\
\Satake^G\bigl(q^{-1}(\pi E_2)_K^{-1}\bigr) &= (qxy)^{-1},\\
\intertext{and}
\varepsilon_{B,G}\Bigl(q^{-1}\cdot (\pi E_2)_K^{-1}\cdot (\begin{pmatrix}\pi &
0\\0 &1 \end{pmatrix})_K\Bigr) &= q^{-1}Y^{-1}\cdot (X_++X_-Y) = q^{-1}\cdot
(X_+Y^{-1} + X_-),\\
\varepsilon_{B,G}\bigl(q^{-1}\cdot  (\pi E_2)_K^{-1}\bigr) &= q^{-1}Y^{-1}.
\end{align*}
Therefore,
\[
\chi_{a_B}(t) = 1 - q^{-1}\cdot (X_+Y^{-1} + X_-)\cdot t + q^{-1}Y^{-1}\cdot
t^2.
\]
It is clear that $X_+$ is a left root of $\chi_{a_B}(t)$. We compute
\begin{align*}
X_+^{-1} &= q^{-1}\cdot (X_+Y^{-1} + X_-) - q^{-1}X_+Y^{-1}\\ 
&= q^{-1}X_-,\\
X_+^{-2} &= q^{-1} X_+^{-1}\cdot (X_+Y^{-1}+X_-) - q^{-1}Y^{-1}\\
&= q^{-2}X_-^{2} + q^{-2}Y^{-1} (X_-X_+ - q\cdot 1)\\
X_+^{-3} &= q^{-1} X_+^{-2}(X_+Y^{-1}+X_-) - q^{-1}X_+^{-1}Y^{-1}\\
&= q^{-3}X_-^3 + q^{-3}\cdot (X_-X_+-q\cdot 1)X_+Y^{-2} +
q^{-3}X_-(X_- X_+-q\cdot 1)Y^{-1}.
\end{align*}
\end{ex} 

Recall from Lemma~\ref{lem:C^+_P}.\ref{lem:C^+_P-ii} that for every $X\in
\OO^+_P$ there exists $n>0$ such that $(a_P)_{K_P}^nX \in C^+_P$. Using
``negative powers'' it is possible to reconstruct $X$ from $(a_P)_{K_P}^nX$.

\begin{lem}\label{lem:negpow} 
Assume that Hypothesis~\ref{hyp} is satisfied (for $\alg P$).
\begin{enumerate}[label=(\roman*)]
\item\label{lem:negpow-i} For every $X\in\Hecke_R(K,G)$ and every $n\ge0$ with
$(a_P)_{K_P}^n X \in C^+_P$ we have
\[
(a_P)_{K_P}^n \cdot X\cdot (a_P)_{K_P}^d = (a_P)_{K_P}^{n+d}\cdot X,\qquad
\text{for all $d\in\Z$.}
\]
\item\label{lem:negpow-ii} Let $\alg Q$ be a parabolic contained in $\alg P$.
For every $X\in \OO^+_Q$ we have, inside $\Hecke_R(K_Q,Q)$,
\[
(a_P)_{K_P}^n\cdot X\cdot (a_P)_{K_P}^{-n} = X,\qquad \text{for all $n\gg0$.}
\]
\end{enumerate}
\end{lem}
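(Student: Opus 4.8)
The plan is to prove (i) directly from the defining recursion for the ``negative powers'' together with Hypothesis~\ref{hyp}, and then to derive (ii) from (i) and Lemma~\ref{lem:C^+_P}. Write $N\coloneqq\lvert W^M_0\rvert$. For (i), fix $X\in\Hecke_R(K,G)$ and $n\ge0$ with $Y\coloneqq(a_P)_{K_P}^nX\in C^+_P$; then $Y$ commutes with $(a_P)_{K_P}$, hence with every genuine power $(a_P)_{K_P}^d$, $d\ge0$, which settles the case $d\ge0$:
\[
(a_P)_{K_P}^nX\,(a_P)_{K_P}^d = Y(a_P)_{K_P}^d = (a_P)_{K_P}^dY = (a_P)_{K_P}^{n+d}X .
\]
For the case $d=-m<0$ I would first record the auxiliary identity
\[
(a_P)_{K_P}^{j} = -\sum_{i=1}^{N}(a_P)_{K_P}^{i+j}X_i\qquad\text{for all }j\in\Z ,
\]
which for $j<0$ is precisely the definition of $(a_P)_{K_P}^{j}$ and for $j\ge0$ follows by multiplying $\chi_{a_P}\bigl((a_P)_{K_P}\bigr)=0$ on the left by $(a_P)_{K_P}^{j}$ (using $X_0=1$). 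Then I would induct on $m\ge1$: expanding $(a_P)_{K_P}^{-m}$ by its definition, each inner factor $(a_P)_{K_P}^{i-m}$, $1\le i\le N$, may be moved past $X$ using the case $d\ge0$ when $i\ge m$ and the inductive hypothesis when $i<m$ (so that $1\le m-i\le m-1$), and then
\[
(a_P)_{K_P}^nX\,(a_P)_{K_P}^{-m} = -\sum_{i=1}^{N}(a_P)_{K_P}^{n+i-m}XX_i = -\sum_{i=1}^{N}(a_P)_{K_P}^{(n-m)+i}X_iX = (a_P)_{K_P}^{n-m}X ,
\]
the second equality being commutativity of $\Hecke_R(K,G)$ and the last the auxiliary identity with $j=n-m$.

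For (ii) I would proceed in three steps. First, the image of $(a_P)_{K_P}$ in $\Hecke_R(K_Q,Q)$ lies in $C^+_Q$, and hence, $C^+_Q$ being commutative by Lemma~\ref{lem:C^+_P}.\ref{lem:C^+_P-iii} (applied to $\alg Q$), commutes with all of $C^+_Q$: indeed $a_P\in Z$ is central in $\alg M$, so it centralizes $K_L=K\cap L$, and it conjugates $K\cap U_Q$ into itself (because $\langle\alpha,\nu(a_P)\rangle\le0$ for every $\alpha$ occurring in $U_Q$, with equality exactly for $\alpha\in\Sigma_M$), so that $K_Qa_PK_Q=K_Qa_P$ and $(a_P)_{K_P}$ maps to the single right coset $(K_Qa_P)$ with representative $a_P\in Z\subseteq L$; Lemma~\ref{lem:C^+_P}.\ref{lem:C^+_P-i} for $\alg Q$ then puts it in $C^+_Q$. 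Second, for $X\in\Hecke_R(K,G)$ one has $(a_P)_{K_P}^nX(a_P)_{K_P}^{-n}=X$ for all $n\gg0$: by Lemma~\ref{lem:C^+_P}.\ref{lem:C^+_P-ii} there is $n_0$ with $(a_P)_{K_P}^nX\in C^+_P$ for $n\ge n_0$, and then (i) with $d=-n$ gives $(a_P)_{K_P}^nX(a_P)_{K_P}^{-n}=(a_P)_{K_P}^{0}X=X$; applying the ring homomorphism $\varepsilon_{Q,P}$ — which transports the relation $\chi_{a_P}\bigl((a_P)_{K_P}\bigr)=0$, hence the recursion for the negative powers, into $\Hecke_R(K_Q,Q)$ — yields the same identity there. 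Third, for general $X\in\OO^+_Q$ I would write $X=\sum_kY_kZ_k$ with $Y_k\in C^+_Q$ and $Z_k\in\Hecke_R(K,G)$, pick $n$ larger than all the finitely many thresholds $n_0(Z_k)$, and compute, using that $(a_P)_{K_P}$ commutes with each $Y_k$,
\[
(a_P)_{K_P}^nX(a_P)_{K_P}^{-n} = \sum_kY_k\,(a_P)_{K_P}^nZ_k(a_P)_{K_P}^{-n} = \sum_kY_kZ_k = X .
\]

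I expect the main obstacle to be the bookkeeping in the induction for (i) — repeatedly collapsing the telescoping sums while keeping genuine powers apart from the formally defined negative ones — and, in (ii), the routine but non-trivial verification that $a_P$ is $L$-positive so that it genuinely represents an element of $C^+_Q$; after that, the argument in (ii) is entirely formal.
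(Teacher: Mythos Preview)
Your proposal is correct and follows essentially the same route as the paper's proof: the descending induction on $d$ in (i) using the recursion for negative powers and commutativity of $\Hecke_R(K,G)$, and in (ii) the decomposition $X=\sum Y_jZ_j$ together with the observation that $(a_P)_{K_P}$ lies in $C^+_Q$ (hence centralizes it). Your write-up is more explicit than the paper's in recording the auxiliary identity $(a_P)_{K_P}^{j}=-\sum_i(a_P)_{K_P}^{i+j}X_i$ for all $j\in\Z$ and in noting that $\varepsilon_{Q,P}$ transports the recursion for negative powers into $\Hecke_R(K_Q,Q)$, but these are exactly the points the paper uses implicitly.
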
 
\begin{proof} 
\begin{itemize}
\item[\ref{lem:negpow-i}] Let $X\in \Hecke_R(K,G)$ and let $n\ge 0$ such that
$(a_P)_{K_P}^nX \in C^+_P$. We prove the assertion by descending induction on
$d$. If $d\ge0$, this follows from the assumption that $(a_P)_{K_P}^nX$
centralizes $(a_P)_{K_P}$. Now assume $d<0$. Since $\Hecke_R(K,G)$ is
commutative by Theorem~\ref{thm:Satake}, we compute
\begin{align*}
(a_P)_{K_P}^n\cdot X\cdot (a_P)_{K_P}^d &= - \sum_{i=1}^{\lvert W^M_0\rvert}
(a_P)_{K_P}^n\cdot X\cdot (a_P)^{i+d}_{K_P}\cdot X_i\\
&= -\sum_{i=1}^{\lvert W_0^M\rvert} (a_P)^{n+i+d}_{K_P}\cdot XX_i 
&& \text{(induction hypothesis)}\\
&= \biggl(-\sum_{i=1}^{\lvert W^M_0\rvert} (a_P)_{K_P}^{n+i+d}X_i\biggr)\cdot X
&& \text{($\Hecke_R(K,G)$ is commutative)}\\
&= (a_P)_{K_P}^{n+d}\cdot X.
\end{align*}
This finishes the induction step.

\item[\ref{lem:negpow-ii}] Write $X = \sum_j Y_jZ_j$ with $Y_j\in C^+_Q$ and
$Z_j\in \Hecke_R(K,G)$. Choose $n\in \Z_{>0}$ such that $(a_P)_{K_P}^n Z_j \in
C^+_P$ for all $j$. From Lemma~\ref{lem:C^+_P}.\ref{lem:C^+_P-i} applied to $\alg
Q$, it follows easily that $(a_P)_{K_P}$ centralizes $C^+_Q$. Using
\ref{lem:negpow-i}, we compute
\[
(a_P)^n_{K_P}\cdot X\cdot (a_P)_{K_P}^{-n} = \sum_j Y_j\cdot (a_P)_{K_P}^n Z_j
(a_P)_{K_P}^{-n} = \sum_j Y_jZ_j = X.
\]
\end{itemize}
\end{proof} 

We are now able to prove the following characterization of
Hypothesis~\ref{hyp}:
\begin{prop}\label{prop:hyp-equiv} 
The following assertions are equivalent:
\begin{enumerate}[label=(\roman*)]
\item\label{prop:hyp-i} Hypothesis~\ref{hyp} is satisfied.
\item\label{prop:hyp-ii} If $X\in \OO^+_P$ and $n\ge0$ are such that
$(a_P)_{K_P}^nX = 0$, then $X = 0$.
\item\label{prop:hyp-iii} $\OO^+_P\cap \Ker\Theta^P_M = \{0\}$.
\item\label{prop:hyp-iv} If $X\in \OO^-_P$ and $n\ge0$ are such that
$X\cdot (a_P^{-1})_{K_P}^n = 0$, then $X = 0$.
\item\label{prop:hyp-v} $\OO^-_P\cap \Ker\Theta^P_M = \{0\}$.
\end{enumerate}
\end{prop}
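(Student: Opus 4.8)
The plan is to prove the cycle \ref{prop:hyp-ii}$\Rightarrow$\ref{prop:hyp-iii}$\Rightarrow$\ref{prop:hyp-i}$\Rightarrow$\ref{prop:hyp-ii}, which yields \ref{prop:hyp-i}$\Leftrightarrow$\ref{prop:hyp-ii}$\Leftrightarrow$\ref{prop:hyp-iii}, and then to deduce \ref{prop:hyp-iii}$\Leftrightarrow$\ref{prop:hyp-v} and \ref{prop:hyp-ii}$\Leftrightarrow$\ref{prop:hyp-iv} by transporting everything through the anti-involution $\zeta_P$. The ingredients I would use are: $\Ker\Theta^P_M$ is a two-sided ideal (as $\Theta^P_M$ is an algebra homomorphism); $C^+_P\cap\Ker\Theta^P_M=\{0\}$ and, for every $X\in\Hecke_R(K_P,P)$, $(a_P)^n_{K_P}X\in C^+_P$ for some $n>0$ (Lemma~\ref{lem:C^+_P}); $(a_P^i)_{K_P}\in C^+_P$ for all $i\ge0$, since $a_P\in Z\subseteq M$ (Lemma~\ref{lem:C^+_P}.\ref{lem:C^+_P-i}); $\chi_{a_P}\bigl((a_P)_{K_P}\bigr)\in\Ker\Theta^P_M$ (Lemma~\ref{lem:chi-ker}); and, finally, that $\zeta_P$ is a bijective anti-homomorphism interchanging $\OO^+_P$ and $\OO^-_P$, fixing $\Ker\Theta^P_M$ (Lemma~\ref{lem:zeta-Ker}), and sending $(a_P)_{K_P}$ to $(a_P^{-1})_{K_P}$.

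For \ref{prop:hyp-ii}$\Rightarrow$\ref{prop:hyp-iii}: given $X\in\OO^+_P\cap\Ker\Theta^P_M$, I would pick $n$ with $(a_P)^n_{K_P}X\in C^+_P$; since $\Ker\Theta^P_M$ is a two-sided ideal, $(a_P)^n_{K_P}X\in C^+_P\cap\Ker\Theta^P_M=\{0\}$, and then \ref{prop:hyp-ii} forces $X=0$. For \ref{prop:hyp-iii}$\Rightarrow$\ref{prop:hyp-i}: writing $\chi_{a_P}\bigl((a_P)_{K_P}\bigr)=\sum_i(a_P)^i_{K_P}X_i$, each summand lies in $C^+_P\cdot\Hecke_R(K,G)=\OO^+_P$ because $(a_P^i)_{K_P}\in C^+_P$ and $X_i\in\Hecke_R(K,G)$; by Lemma~\ref{lem:chi-ker} the whole element lies in $\Ker\Theta^P_M$, hence in $\OO^+_P\cap\Ker\Theta^P_M=\{0\}$, which is exactly Hypothesis~\ref{hyp}. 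For \ref{prop:hyp-i}$\Rightarrow$\ref{prop:hyp-ii}: assuming Hypothesis~\ref{hyp}, the negative powers $(a_P)^{-m}_{K_P}$ are available and Lemma~\ref{lem:negpow}.\ref{lem:negpow-ii} with $\alg Q=\alg P$ gives $(a_P)^m_{K_P}X\cdot(a_P)^{-m}_{K_P}=X$ for every $X\in\OO^+_P$ and all $m\gg0$; if in addition $(a_P)^n_{K_P}X=0$, then choosing $m\ge n$ large enough we get $(a_P)^m_{K_P}X=(a_P)^{m-n}_{K_P}\cdot(a_P)^n_{K_P}X=0$ (a product of honest nonnegative powers), whence $X=0$.

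The remaining equivalences are bookkeeping with $\zeta_P$. Since $\zeta_P$ is a bijection carrying $\OO^-_P$ onto $\OO^+_P$ and $\Ker\Theta^P_M$ onto itself, the intersection $\OO^-_P\cap\Ker\Theta^P_M$ is zero iff $\OO^+_P\cap\Ker\Theta^P_M$ is, giving \ref{prop:hyp-iii}$\Leftrightarrow$\ref{prop:hyp-v}. For \ref{prop:hyp-ii}$\Leftrightarrow$\ref{prop:hyp-iv}, I would use that $\zeta_P\bigl(X\cdot(a_P^{-1})^n_{K_P}\bigr)=(a_P)^n_{K_P}\cdot\zeta_P(X)$ with $\zeta_P(X)\in\OO^+_P$ for $X\in\OO^-_P$: thus the hypothesis of \ref{prop:hyp-iv} on $X$ is the hypothesis of \ref{prop:hyp-ii} on $\zeta_P(X)$, and the conclusions correspond too. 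I expect the only point needing genuine care to be \ref{prop:hyp-i}$\Rightarrow$\ref{prop:hyp-ii}: one must ensure the exponent $m$ is simultaneously past the threshold of Lemma~\ref{lem:negpow}.\ref{lem:negpow-ii} and at least $n$, and keep the cancellation $(a_P)^{m-n}_{K_P}\cdot(a_P)^n_{K_P}=(a_P)^m_{K_P}$ among genuine (nonnegative) powers rather than the formal negative powers — everything else is a direct application of the quoted lemmas.
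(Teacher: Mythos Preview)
Your proposal is correct and follows essentially the same route as the paper: the same cycle \ref{prop:hyp-ii}$\Rightarrow$\ref{prop:hyp-iii}$\Rightarrow$\ref{prop:hyp-i}$\Rightarrow$\ref{prop:hyp-ii} using Lemmas~\ref{lem:C^+_P}, \ref{lem:chi-ker}, and \ref{lem:negpow}.\ref{lem:negpow-ii}, and the same transport via $\zeta_P$ (Lemma~\ref{lem:zeta-Ker} together with $\zeta_P(\OO^+_P)=\OO^-_P$) for the $\OO^-_P$-versions. Your extra care in \ref{prop:hyp-i}$\Rightarrow$\ref{prop:hyp-ii} about choosing $m\ge n$ and keeping the factorization $(a_P)^m_{K_P}=(a_P)^{m-n}_{K_P}(a_P)^n_{K_P}$ among genuine nonnegative powers is exactly the point the paper handles by the phrase ``after possibly enlarging $n$''.
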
 
\begin{proof} 
The equivalences \ref{prop:hyp-ii}$\iff$\ref{prop:hyp-iv} and
\ref{prop:hyp-iii}$\iff$\ref{prop:hyp-v} follow from
Lemma~\ref{lem:zeta-Ker} and $\zeta_P(\OO^+_P) = \OO^-_P$.

Since $\chi_{a_P}\bigl((a_P)_{K_P}\bigr) \in \OO^+_P \cap \Ker\Theta^P_M$ by
Lemma~\ref{lem:chi-ker}, it follows that \ref{prop:hyp-iii} implies
\ref{prop:hyp-i}.

Assume that \ref{prop:hyp-i} holds. Let $X\in \OO^+_P$ and $n\ge0$ such that
$(a_P)_{K_P}^nX = 0$. After possibly enlarging $n$, it follows from
Lemma~\ref{lem:negpow}.\ref{lem:negpow-ii} that $X =
(a_P)_{K_P}^nX(a_P)_{K_P}^{-n} = 0$, whence \ref{prop:hyp-ii}.

Assume that \ref{prop:hyp-ii} holds. Let $X\in \OO^+_P$ such that $\Theta^P_M(X)
= 0$. By Lemma~\ref{lem:C^+_P}.\ref{lem:C^+_P-ii} there exists $n\ge0$ such that
$(a_P)_{K_P}^nX \in C^+_P$. But we also have $(a_P)_{K_P}^nX \in
\Ker\Theta^P_M$, hence Lemma~\ref{lem:C^+_P}.\ref{lem:C^+_P-iii} implies
$(a_P)_{K_P}^nX = 0$. By the assumption we deduce $X = 0$. This proves
\ref{prop:hyp-iii}.
\end{proof} 

\begin{cor}\label{cor:hyp} 
Assume that Hypothesis~\ref{hyp} is satisfied. Then
\[
\Hecke_R(K_P,P) = \OO^+_P\oplus \Ker\Theta^P_M = \OO^-_P\oplus \Ker\Theta^P_M.
\]
\end{cor}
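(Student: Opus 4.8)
The plan is to separate the two assertions hidden in the statement. The ``intersection'' halves, $\OO^+_P\cap\Ker\Theta^P_M=\{0\}$ and $\OO^-_P\cap\Ker\Theta^P_M=\{0\}$, are precisely Proposition~\ref{prop:hyp-equiv}.\ref{prop:hyp-iii} and \ref{prop:hyp-v}, which are available since Hypothesis~\ref{hyp} is assumed. For the ``sum'' halves it suffices to prove $\Hecke_R(K_P,P)=\OO^+_P+\Ker\Theta^P_M$; applying the $R$-linear involution $\zeta_P$ and using $\zeta_P(\OO^+_P)=\OO^-_P$ together with Lemma~\ref{lem:zeta-Ker} (which gives $\zeta_P(\Ker\Theta^P_M)=\Ker\Theta^P_M$) then yields $\Hecke_R(K_P,P)=\OO^-_P+\Ker\Theta^P_M$ as well. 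Thus the only thing to show is that every $X\in\Hecke_R(K_P,P)$ satisfies $\Theta^P_M(X)\in\Theta^P_M(\OO^+_P)$: picking $X'\in\OO^+_P$ with $\Theta^P_M(X')=\Theta^P_M(X)$ then gives $X=X'+(X-X')\in\OO^+_P+\Ker\Theta^P_M$.

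To prove $\Theta^P_M(X)\in\Theta^P_M(\OO^+_P)$ I would argue in three steps. \emph{Step 1.} By Lemma~\ref{lem:C^+_P}.\ref{lem:C^+_P-iii}, $\Theta^P_M(\OO^+_P)$ consists of all finite sums of products $a\cdot b$ with $a\in\Theta^P_M(C^+_P)=\Hecke_R(K_M,M^+)$ and $b\in\Satake^G_M(\Hecke_R(K,G))$; since $\Hecke_R(K_M,M)$ is commutative (Theorem~\ref{thm:Satake} applied to $M$), a product of two such sums is again of this shape, so $\Theta^P_M(\OO^+_P)$ is a subalgebra of $\Hecke_R(K_M,M)$. \emph{Step 2.} Put $\theta\coloneqq\Theta^P_M\bigl((a_P)_{K_P}\bigr)\in\Theta^P_M(C^+_P)$ and apply $\Theta^P_M$ to the relation $\chi_{a_P}\bigl((a_P)_{K_P}\bigr)\in\Ker\Theta^P_M$ of Lemma~\ref{lem:chi-ker}; since $\Theta^P_M$ restricts on $\Hecke_R(K,G)$ to $\Satake^G_M$, this gives $\sum_{i=0}^{\lvert W^M_0\rvert}\theta^i\Satake^G_M(X_i)=0$ in $\Hecke_R(K_M,M)$. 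As $X_0=1$, this rearranges to $\theta\cdot\bigl(-\sum_{i=1}^{\lvert W^M_0\rvert}\theta^{i-1}\Satake^G_M(X_i)\bigr)=1$, so $\theta$ is a unit of the commutative ring $\Hecke_R(K_M,M)$ whose inverse $-\sum_{i=1}^{\lvert W^M_0\rvert}\theta^{i-1}\Satake^G_M(X_i)$ lies in the subalgebra $\Theta^P_M(\OO^+_P)$ by Step~1; hence $\theta^{-n}=(\theta^{-1})^n\in\Theta^P_M(\OO^+_P)$ for all $n\ge0$. \emph{Step 3.} Given $X$, Lemma~\ref{lem:C^+_P}.\ref{lem:C^+_P-ii} yields $n\ge0$ with $(a_P)_{K_P}^nX\in C^+_P\subseteq\OO^+_P$, so $\theta^n\Theta^P_M(X)=\Theta^P_M\bigl((a_P)_{K_P}^nX\bigr)\in\Theta^P_M(\OO^+_P)$, and multiplying by $\theta^{-n}$ gives $\Theta^P_M(X)=\theta^{-n}\Theta^P_M\bigl((a_P)_{K_P}^nX\bigr)\in\Theta^P_M(\OO^+_P)$, as required.

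The step I expect to be the real (if mild) obstacle is exactly this passage through $\Theta^P_M$: one cannot perform the ``division by $(a_P)_{K_P}^n$'' inside $\Hecke_R(K_P,P)$ directly, because $\OO^+_P$ need not be a subalgebra there and $(a_P)_{K_P}$ is genuinely non-invertible. It is only after mapping to the commutative ring $\Hecke_R(K_M,M)$ that $\Theta^P_M(\OO^+_P)$ becomes a subalgebra and $\theta$ becomes an honest unit --- the inverse being produced by the ``annihilating polynomial'' relation of Lemma~\ref{lem:chi-ker} --- which is what makes the argument close. Everything else is routine bookkeeping with the already-established facts $\Theta^P_M(C^+_P)=\Hecke_R(K_M,M^+)$ and Lemma~\ref{lem:C^+_P}.\ref{lem:C^+_P-ii}.
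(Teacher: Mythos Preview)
Your proof is correct. Both arguments rest on the same fact --- that $\theta\coloneqq\Theta^P_M\bigl((a_P)_{K_P}\bigr)$ is a unit in $\Hecke_R(K_M,M)$, with inverse supplied by the $\chi_{a_P}$-relation of Lemma~\ref{lem:chi-ker} --- and differ only in how that inverse is packaged. You pass to the commutative image and argue that $\Theta^P_M(\OO^+_P)$ is a $\theta^{-1}$-stable subalgebra, then pull back; the paper instead lifts $\theta^{-1}$ to $\Hecke_R(K_P,P)$ via the ``negative powers'' $(a_P)_{K_P}^{-n}\in\OO^+_P$ it has already defined, and directly exhibits $X'\coloneqq(a_P)_{K_P}^nX\cdot(a_P)_{K_P}^{-n}\in\OO^+_P$ with $X-X'\in\Ker\Theta^P_M$. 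So your expectation that the division ``cannot be performed inside $\Hecke_R(K_P,P)$ directly'' is too pessimistic: the negative-power formalism does precisely that, giving the paper a one-line proof. Your route, on the other hand, avoids that formalism entirely and makes transparent that the \emph{sum} decomposition $\Hecke_R(K_P,P)=\OO^+_P+\Ker\Theta^P_M$ needs only Lemma~\ref{lem:chi-ker}; Hypothesis~\ref{hyp} enters solely through Proposition~\ref{prop:hyp-equiv} to kill the intersection.
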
 
\begin{proof} 
In view of Lemma~\ref{lem:zeta-Ker} it suffices to prove the first equality. By
Proposition~\ref{prop:hyp-equiv} it remains to prove $\Hecke_R(K_P,P) \subseteq
\OO^+_P + \Ker\Theta^P_M$. So let $X\in \Hecke_R(K_P,P)$ and choose $n\in
\Z_{>0}$ such that $(a_P)_{K_P}^nX \in C^+_P$. Then
$(a_P)_{K_P}^nX(a_P)_{K_P}^{-n} \in \OO^+_P$ and $X -
(a_P)_{K_P}^nX(a_P)_{K_P}^{-n} \in \Ker\Theta^P_M$. 
\end{proof} 

The next result implies that it suffices to verify Hypothesis~\ref{hyp} for
\emph{maximal} parabolics only.

\begin{prop}\label{prop:hyp-cap} 
Let $\alg Q$ be another parabolic subgroup of $\alg G$ with Levi $\alg L$.
Assume that Hypothesis~\ref{hyp} is satisfied for $\alg P$ and $\alg Q$. Then
Hypothesis~\ref{hyp} is satisfied for $\alg P\cap \alg Q$.
\end{prop}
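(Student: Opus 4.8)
The plan is to set $\alg R:=\alg P\cap\alg Q$, which is again a standard parabolic, with Levi $\alg M_R:=\alg M\cap\alg L$ and $\Delta_{M_R}=\Delta_M\cap\Delta_L$, and to verify the criterion of Proposition~\ref{prop:hyp-equiv}, \ie $\OO^-_R\cap\Ker\Theta^R_{M_R}=\{0\}$. (For later use, note that if $a_P$ is strictly $M$-positive and $a_Q$ is strictly $L$-positive, then $a_R:=a_Pa_Q$ is strictly $M_R$-positive: it centralizes $M_R$, and for $\alpha\in\Sigma^+\setminus\Sigma_{M_R}=(\Sigma^+\setminus\Sigma_M)\cup(\Sigma^+\setminus\Sigma_L)$ the number $\langle\alpha,\nu(a_R)\rangle=\langle\alpha,\nu(a_P)\rangle+\langle\alpha,\nu(a_Q)\rangle$ is a sum of a negative and a nonpositive term.) First I would recast the whole question through the single homomorphism $\Theta^B_Z$: regarding every $\Hecke_R(K_X,X)$ as a subalgebra of $\Hecke_R(K_B,B)$ via the embeddings of Lemma~\ref{lem:parabolic-embedding}, Lemma~\ref{lem:partial Satake} (applied to $\alg B\subseteq\alg X$) gives $\Theta^B_Z|_{\Hecke_R(K_X,X)}=\Satake^{M_X}\circ\Theta^X_{M_X}$, and since $\Satake^{M_X}$ is injective by Theorem~\ref{thm:Satake}.\ref{thm:Satake-i} this yields $\Ker\Theta^X_{M_X}=\Hecke_R(K_X,X)\cap\Ker\Theta^B_Z$. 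Hence Hypothesis~\ref{hyp} for $\alg X$ is equivalent to the injectivity of $\Theta^B_Z$ on $\OO^-_X$, and it remains to show: if $\Theta^B_Z$ is injective on $\OO^-_P$ and on $\OO^-_Q$, then it is injective on $\OO^-_R$.

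To bring both hypotheses to bear I would introduce the partial projections attached to the chains $\alg B\subseteq\alg R\subseteq\alg P$ and $\alg B\subseteq\alg R\subseteq\alg Q$. One has $\alg R=\alg U_{\alg P}\rtimes\alg P_M$, where $\alg P_M:=\alg R\cap\alg M=\alg Q\cap\alg M$ is a standard parabolic of the reductive group $\alg M$ with Levi $\alg M_R$; killing $\alg U_{\alg P}$ produces an $R$-algebra homomorphism $\Psi_P\colon\Hecke_R(K_R,R)\to\Hecke_R(K_{P_M},P_M)$ with $\Theta^{P_M}_{M_R}\circ\Psi_P=\Theta^R_{M_R}$, which carries $\Hecke_R(K,G)$ into $\Hecke_R(K_M,M)$ and $C^-_R$ into $C^-_{P_M}$ (a generator $(m)_{K_R}$ with $m\in M_R$ going to a unit multiple of $(m)_{K_{P_M}}$, the relevant index being a power of $q$ and hence invertible in $R$). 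Consequently $\Psi_P(\OO^-_R)\subseteq\Hecke_R(K_M,M)\cdot C^-_{P_M}$, the analogue inside $\alg M$ of $\OO^-$ for $\alg P_M$. Symmetrically, $\alg R=\alg U_{\alg Q}\rtimes\alg Q_L$ with $\alg Q_L:=\alg P\cap\alg L$ gives $\Psi_Q\colon\Hecke_R(K_R,R)\to\Hecke_R(K_{Q_L},Q_L)$ with $\Theta^{Q_L}_{M_R}\circ\Psi_Q=\Theta^R_{M_R}$ and $\Psi_Q(\OO^-_R)$ contained in the corresponding module for $\alg Q_L$ inside $\alg L$.

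I would then finish by induction on $\dim\alg G$. The argument needs, as an auxiliary input, that Hypothesis~\ref{hyp} localizes to Levis — if $\alg Q$ satisfies it in $\alg G$ then $\alg Q\cap\alg M$ satisfies it in $\alg M$, and likewise for $\alg P$ and $\alg L$ — which I expect to follow from the Satake-compatibility of the first paragraph applied through further embeddings $\varepsilon$. Granting this, let $X\in\OO^-_R$ with $\Theta^R_{M_R}(X)=0$. Then $\Psi_P(X)$ belongs to $\Hecke_R(K_M,M)\cdot C^-_{P_M}$ and to $\Ker\Theta^{P_M}_{M_R}$, so $\Psi_P(X)=0$ by the inductive hypothesis applied inside $\alg M$ (whose dimension is smaller, $\alg P$ being a proper parabolic); symmetrically $\Psi_Q(X)=0$. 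Since the root groups occurring in $U_R$ are precisely those occurring in $U_P$ together with those occurring in $U_Q$, an element of $\OO^-_R$ annihilated by both $\Psi_P$ and $\Psi_Q$ must vanish, and hence $X=0$.

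The main obstacle will be the two soft inputs just flagged. The localization of Hypothesis~\ref{hyp} to Levis should be routine but fiddly, coming down to the compatibility of the Satake and projection maps through the embeddings $\varepsilon$. The other — that $\OO^-_R\cap\Ker\Psi_P\cap\Ker\Psi_Q=\{0\}$, \ie that $\Psi_P$ and $\Psi_Q$ are jointly faithful on $\OO^-_R$ — is the genuinely technical heart; it is a statement dual to Corollary~\ref{cor:hyp} and must be proved by hand, using the additive structure of $U_R$ and the definition of $\OO^-_R$ in terms of $\Hecke_R(K,G)$ and $C^-_R$. Everything else (the two factorisations of $\Theta^R_{M_R}$, the containments of $\Psi_P(\OO^-_R)$ and $\Psi_Q(\OO^-_R)$, and the multiplicity bookkeeping, harmless since $R$ is a $\Z[1/p]$-algebra) should present no difficulty.
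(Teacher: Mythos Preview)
Your strategy has two admitted gaps, and neither is harmless. The localization claim---that Hypothesis~\ref{hyp} for $\alg Q$ in $\alg G$ forces it for $\alg Q\cap\alg M$ in $\alg M$---requires controlling $\Hecke_R(K_M,M)\cdot C^-_{P_M}$, and $\Hecke_R(K_M,M)$ is strictly larger than the image of $\Hecke_R(K,G)$ under $\Satake^G_M$; nothing in the compatibility diagrams of Lemma~\ref{lem:partial Satake} lets you transfer injectivity in that direction. The joint-faithfulness claim $\OO^-_R\cap\Ker\Psi_P\cap\Ker\Psi_Q=\{0\}$ is also not a formal consequence of the root-group decomposition of $U_R$: the kernels of $\Psi_P$ and $\Psi_Q$ are defined by vanishing after projection, not by support conditions on $U_P$ or $U_Q$, so ``killed by both projections'' does not obviously mean zero. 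Your invocation of an inductive hypothesis is also unclear: inside $\alg M$ there is only the single parabolic $P_M$, so you are not applying the proposition itself but rather the localized Hypothesis~\ref{hyp}, which brings you back to the first gap.

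The paper bypasses all of this with a three-line computation using criterion~\ref{prop:hyp-ii} of Proposition~\ref{prop:hyp-equiv}. Set $a_R=a_Pa_Q$; then $(a_R)_{K_R}=(a_P)_{K_P}(a_Q)_{K_Q}$ inside $\Hecke_R(K_R,R)$ and the two factors commute. Given $X\in\OO^+_R$ with $(a_R)_{K_R}^nX=0$, one right-multiplies by the \emph{negative powers} $(a_Q)_{K_Q}^{-n}(a_P)_{K_P}^{-n}$ (each available because Hypothesis~\ref{hyp} holds for $\alg Q$ and for $\alg P$) and applies Lemma~\ref{lem:negpow}\ref{lem:negpow-ii} twice, first with $\alg R\subseteq\alg Q$ and then with $\alg R\subseteq\alg P$, to obtain
\[
0=(a_P)_{K_P}^n(a_Q)_{K_Q}^nX\cdot(a_Q)_{K_Q}^{-n}(a_P)_{K_P}^{-n}=(a_P)_{K_P}^nX(a_P)_{K_P}^{-n}=X.
\]
No localization, no induction, no joint-faithfulness statement is needed; the negative-power formalism already encodes exactly the cancellation your $\Psi_P$, $\Psi_Q$ were meant to perform.
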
 
\begin{proof} 
Let $a_Q \in Z$ be a strictly $L$-positive element. Then $a_{P\cap Q}\coloneqq
a_Pa_Q$ is strictly $L\cap M$-positive. The elements $(a_P)_{K_P}$ and
$(a_Q)_{K_Q}$ commute inside $\Hecke_R(K_{P\cap Q},P\cap Q)$ and we have
$(a_{P\cap Q})_{K_{P\cap Q}} = (a_P)_{K_P}(a_Q)_{K_Q}$ in $\Hecke_R(K_{P\cap
Q}, P\cap Q)$.
Let $X\in \OO^+_{P\cap Q}$ and let $n\ge0$ such that
$(a_{P\cap Q})_{K_{P\cap Q}}^n X = 0$. 

After enlarging $n$ if necessary, Lemma~\ref{lem:negpow}.\ref{lem:negpow-ii}
shows
\begin{align*}
0 &= (a_{P\cap Q})^n_{K_{P\cap Q}} X\cdot (a_Q)_{K_Q}^{-n}(a_P)_{K_P}^{-n}\\
&= (a_P)_{K_P}^n(a_Q)_{K_Q}^nX\cdot (a_Q)_{K_Q}^{-n}(a_P)_{K_P}^{-n}\\
&= (a_P)_{K_P}^nX\cdot (a_P)_{K_P}^{-n}\\
&= X.
\end{align*}
The assertion now follows from ``\ref{prop:hyp-ii}$\implies$\ref{prop:hyp-i}''
in Proposition~\ref{prop:hyp-equiv}.
\end{proof} 

\begin{notation*} 
If $\psi\colon A\to B$ is a homomorphism of $R$-algebras and $f(t) = \sum_i
a_it^i \in A[t]$ is a polynomial, we denote $f^\psi(t)\coloneqq \sum_i
\psi(a_i)t^i \in B[t]$ the polynomial obtained by applying $\psi$ to the
coefficients of $f(t)$.
\end{notation*} 

We now prove the decomposition theorem for Hecke polynomials. Recall
the following commutative diagram:
\[
\begin{tikzcd}
\Hecke_R(K_B,B) \ar[rrd,hook,bend left,"\Theta^B_Z"]\\
\Hecke_R(K_P,P) \ar[u,hook,"\varepsilon_{B,P}"] \ar[r,"\Theta^P_M"] &
\Hecke_R(K_M,M) \ar[r,hook,"\Satake^M"] & R[\Lambda]\\
\Hecke_R(K,G) \ar[u,hook,"\varepsilon_{P,G}"] \ar[rru,hook,bend right,"\Satake^G"]
\end{tikzcd}
\]

\begin{thm}[Decomposition Theorem]\label{thm:decomp} 
Assume that Hypothesis~\ref{hyp} holds true.
Let $d(t) \in \Hecke_R(K,G)[t]$ be a polynomial and assume that there is a
decomposition
\[
d^{\Satake^G}(t)= \wt f(t)\cdot \wt g(t)\qquad \text{in $R[\Lambda][t]$,}
\]
such that one of the following properties is satisfied:
\begin{enumerate}[label=(\alph*)]
\item\label{thm:decomp-a} $\wt f(t)$ has coefficients in
$(\Satake^M\circ\Theta^P_M)(C^+_P)$ with constant term $1$, or 
\item\label{thm:decomp-b} $\wt g(t)$ has coefficients in
$(\Satake^M\circ\Theta^P_M)(C^-_P)$ with
constant term $1$.
\end{enumerate}
Then there exist polynomials $f(t), g(t)\in \Hecke_R(K_P,P)[t]$ such that
\begin{alignat}{4}
\deg f(t) &= \deg \wt f(t), &\hspace{3em} f^{\Satake^M\circ\Theta^P_M}(t) &= \wt
f(t),\\
\deg g(t) &= \deg \wt g(t), & g^{\Satake^M\circ\Theta^P_M}(t) &= \wt g(t),
\end{alignat}
and
\[
d(t) = f(t)\cdot g(t)\quad \text{in $\Hecke_R(K_P,P)[t]$.}
\]
\end{thm}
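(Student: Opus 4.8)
The strategy is to lift the factorization from $R[\Lambda][t]$ back along $\Satake^M\circ\Theta^P_M$ by an iterative "division with remainder" argument, exploiting that under Hypothesis~\ref{hyp} the ring $\Hecke_R(K_P,P)$ splits as $\OO^+_P\oplus\Ker\Theta^P_M$ (and likewise $\OO^-_P\oplus\Ker\Theta^P_M$), per Corollary~\ref{cor:hyp}. By the remark after the statement it suffices to treat case~\ref{thm:decomp-a}; case~\ref{thm:decomp-b} follows by applying the anti-involution $\zeta_P$ together with Lemma~\ref{lem:zeta-Ker} and the fact that $\zeta_P$ swaps $\OO^+_P$ and $\OO^-_P$, while $\zeta_G$ preserves $\Hecke_R(K,G)$ and is compatible with the reversal of a product of polynomials. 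So assume $\wt f(t)$ has coefficients in $\Psi(C^+_P)$ with constant term $1$, where I write $\Psi\coloneqq\Satake^M\circ\Theta^P_M$ for brevity.

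First I would lift $\wt f(t)$ to a candidate $f(t)\in C^+_P[t]$ with constant term $1$: since $\Psi$ restricts to the isomorphism $C^+_P\cong\Hecke_R(K_M,M^+)$ of Lemma~\ref{lem:C^+_P}.\ref{lem:C^+_P-iii} (composed with the injective $\Satake^M$), each coefficient of $\wt f(t)$ has a unique preimage in $C^+_P$, and I set $f(t)$ to be the polynomial assembled from these preimages, so that $\deg f=\deg\wt f$ and $f^\Psi=\wt f$. The next step is to produce $g(t)$. Because $f(t)$ has unit constant term and its coefficients lie in the commutative algebra $C^+_P$, the power series $f(t)^{-1}\in C^+_P\llbracket t\rrbracket$ exists; form $h(t)\coloneqq f(t)^{-1}\cdot d(t)\in\Hecke_R(K_P,P)\llbracket t\rrbracket$ — here one must be slightly careful about left/right multiplication since $\Hecke_R(K_P,P)$ is noncommutative, but $d(t)$ has coefficients in the commutative subalgebra $\Hecke_R(K,G)$ and $f(t)^{-1}$ has coefficients in $C^+_P$, and these two subalgebras need not commute, so I would instead define $g(t)$ directly. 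Apply $\Theta^P_M$ (equivalently $\Psi$, using injectivity of $\Satake^M$) to the desired identity $d(t)=f(t)g(t)$: one needs $g^\Psi(t)=f^\Psi(t)^{-1}d^\Psi(t)=\wt f(t)^{-1}\wt f(t)\wt g(t)=\wt g(t)$, which is a polynomial of degree $\deg\wt g=\deg d-\deg\wt f$. Choose $g(t)\in\Hecke_R(K_P,P)[t]$ to be any lift of $\wt g(t)$ along $\Psi$ of this degree with the correct constant term, using that $\Psi$ is surjective onto $\Psi(\Hecke_R(K_P,P))\supseteq\image\wt g$; concretely one can take the coefficients of $g$ inside $\OO^+_P$ using the decomposition in Corollary~\ref{cor:hyp}.

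It then remains to show $d(t)=f(t)g(t)$ exactly, not just modulo $\Ker\Theta^P_M$. Set $e(t)\coloneqq d(t)-f(t)g(t)\in\Hecke_R(K_P,P)[t]$; by construction $e^\Psi(t)=0$, so every coefficient of $e(t)$ lies in $\Ker\Theta^P_M$. I want to conclude $e(t)=0$. This is where the "negative powers" machinery of \S\ref{subsec:decomp} and Hypothesis~\ref{hyp} enter: the key is that $f(t)g(t)$, suitably interpreted, lies in $\OO^+_P[t]$ (as $f\in C^+_P[t]$ and $g\in\OO^+_P[t]$, and $C^+_P\cdot\OO^+_P\subseteq\OO^+_P$), while $d(t)\in\Hecke_R(K,G)[t]\subseteq\OO^+_P[t]$, so $e(t)\in\OO^+_P[t]\cap(\Ker\Theta^P_M)[t]$, and Proposition~\ref{prop:hyp-equiv}.\ref{prop:hyp-iii} (equivalence of Hypothesis~\ref{hyp} with $\OO^+_P\cap\Ker\Theta^P_M=\{0\}$) forces each coefficient of $e(t)$ to vanish. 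I expect the \textbf{main obstacle} to be the bookkeeping around noncommutativity: one must fix conventions (left versus right factors, left versus right roots) so that the manipulations $f(t)^{-1}d(t)$, the passage through $\Theta^P_M$, and the containment $f(t)g(t)\in\OO^+_P[t]$ are all simultaneously valid — this is essentially the point where Andrianov's argument in \cite{Andrianov.1977} must be reorganized to respect that $\Hecke_R(K,G)$ and $C^+_P$ commute with $t$ but not with each other, and that $(a_P)_{K_P}$ is only a \emph{left} root of $\chi_{a_P}(t)$. Once the decomposition $\Hecke_R(K_P,P)=\OO^+_P\oplus\Ker\Theta^P_M$ is in hand the verification $e(t)=0$ is immediate, so the real content is choosing $g(t)$ inside $\OO^+_P[t]$ with the right image and checking $d(t)-f(t)g(t)\in\OO^+_P[t]$.
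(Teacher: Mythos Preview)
Your plan is correct and close in spirit to the paper's, but the paper actually takes the route you considered first and then talked yourself out of. It sets $g(t)\coloneqq h(t)\cdot d(t)\in\OO^+_P\llbracket t\rrbracket$ directly, where $h(t)=f(t)^{-1}\in C^+_P\llbracket t\rrbracket$. Your noncommutativity worry does not materialize: one never needs to swap $h$ past $d$. Since $C^+_P$ is commutative, $h(t)$ is a \emph{two-sided} inverse of $f(t)$ in $C^+_P\llbracket t\rrbracket$, so $f(t)g(t)=(f(t)h(t))\cdot d(t)=d(t)$ by associativity of the power-series ring. The coefficients of $g(t)$ lie in $C^+_P\cdot\Hecke_R(K,G)=\OO^+_P$, and then injectivity of $\Theta^B_Z$ on $\OO^+_P$ (Proposition~\ref{prop:hyp-equiv}.\ref{prop:hyp-iii} plus injectivity of $\Satake^M$) forces $g(t)$ to be a polynomial of the correct degree, since $g^{\Theta^B_Z}(t)=\wt g(t)$.

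Your alternative---choose an arbitrary lift $g(t)\in\OO^+_P[t]$ of $\wt g(t)$ via Corollary~\ref{cor:hyp}, then kill $e(t)=d(t)-f(t)g(t)$ using $\OO^+_P\cap\Ker\Theta^P_M=\{0\}$---also works, and is really the same injectivity argument unpacked. The paper's version is slightly cleaner because the lift is canonical (it \emph{is} $h(t)d(t)$), so one never has to verify separately that the coefficients of $\wt g(t)$ land in $\Psi(\OO^+_P)$ or that the particular choice of lift makes $e(t)$ vanish.
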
 
\begin{proof} 
Note that $\Theta^B_Z\big|_{\Hecke_R(K_P,P)} = \Satake^M\circ\Theta^P_M$ and
$\Theta^B_Z\big|_{\Hecke_R(K,G)} = \Satake^G$.

Assume that \ref{thm:decomp-a} is satisfied. The case \ref{thm:decomp-b} is
completely analogous. By Proposition~\ref{prop:hyp-equiv}
the restriction of $\Theta^P_M$ to $\OO^+_P$ is injective. As $\Satake^M$ is
injective by Theorem~\ref{thm:Satake}, it follows that the restriction of
$\Theta^B_Z$ to $\OO^+_P$ is injective. By the assumption there exists a unique
polynomial $f(t)\in C^+_P[t]$ satisfying $f^{\Theta^B_Z}(t) = \wt f(t)$. Its
constant term is necessarily $1$, and hence there exists a power series $h(t) =
\sum_{i=0}^\infty h_it^i \in C^+_P\llbracket t\rrbracket$ with $h(t)\cdot f(t) =
f(t)\cdot h(t) = 1$. Now, set
\[
g(t) \coloneqq h(t)\cdot d(t) \in \OO^+_P\llbracket t\rrbracket.
\]
Then
$g^{\Theta^B_Z}(t) = h^{\Theta^B_Z}(t)\cdot d^{\Theta^B_Z}(t) =
h^{\Theta^B_Z}(t)\cdot \wt f(t)\cdot \wt g(t) = \wt g(t)$.
As the restriction of $\Theta^B_Z$ to $\OO^+_P$ is injective, it follows that
$g(t)$ is a polynomial of degree $\deg \wt g(t)$. Moreover, we have
$f(t)\cdot g(t) = f(t)\cdot h(t)\cdot d(t) = d(t)$ in $\Hecke_R(K_P,P)[t]$.
\end{proof} 

\begin{rmk*} 
Suppose we are in the situation of Theorem~\ref{thm:decomp}. 
\begin{enumerate}[label=(\roman*)]
\item The polynomial $d^{\Satake^G_M}(t)$ decomposes in $\Hecke_R(K_M,M)[t]$.
\item In case~\ref{thm:decomp-a} the proof shows that one can choose $f(t)\in
C^+_P[t]$ and $g(t)\in \OO^+_P[t]$. Since, by Proposition~\ref{prop:hyp-equiv},
the restriction of $\Theta^P_M$ to $\OO^+_P$ is injective, it follows that
$f(t)$ and $g(t)$ are unique with these properties.
\item Similarly, in case~\ref{thm:decomp-b} one can choose $f(t)\in \OO^-_P[t]$
and $g(t)\in C^-_P[t]$, and both polynomials are unique with these properties.
\end{enumerate}
\end{rmk*} 

We draw some consequences of Theorem~\ref{thm:decomp}, cf. also \cite[Thm.~6.2
and Cor.]{Andrianov.1977}.

\begin{cor}\label{cor:decomp} 
Assume that Hypothesis~\ref{hyp} holds true.
\begin{enumerate}[label=(\roman*)]
\item\label{cor:decomp-i} Let $f(t)\in C^+_P[t]$ be a polynomial with constant
term $1$. Then there exists a polynomial $g(t)\in \Hecke_R(K_P,P)[t]$ with
constant term $1$ and $\deg g(t) \le \deg f(t)\cdot \bigl(\lvert W^M_0\rvert -
1\bigr)$ such that
\[
f(t)\cdot g(t) \in \Hecke_R(K,G)[t].
\]
\item\label{cor:decomp-ii} Let $X\in C^+_P$. Then there exists a monic
polynomial $d(t) = \sum_i d_it^i \in \Hecke_R(K,G)[t]$ such that $\deg d(t) \le
\lvert W^M_0\rvert$ and
\[
\sum_i X^id_i = 0.
\]
\end{enumerate}

\begin{enumerate}[label=(\roman*')]
\item\label{cor:decomp-i'} Let $g(t) \in C^-_P[t]$ be a polynomial with constant
term $1$. Then there exists a polynomial $f(t)\in \Hecke_R(K_P,P)[t]$ with
constant term $1$ and $\deg f(t) \le \deg g(t)\cdot \bigl(\lvert W^M_0\rvert -
1\bigr)$ such that
\[
f(t)\cdot g(t) \in \Hecke_R(K,G)[t].
\]
\item\label{cor:decomp-ii'} Let $X\in C^-_P$. Then there exists a monic
polynomial $d(t) = \sum_i d_it^i \in \Hecke_R(K,G)[t]$ such that $\deg d(t) \le
\lvert W^M_0\rvert$ and
\[
\sum_i d_iX^i = 0.
\]
\end{enumerate}
\end{cor}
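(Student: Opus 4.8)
The plan is to derive all four assertions from the Satake isomorphism (Theorem~\ref{thm:Satake}), the Decomposition Theorem~\ref{thm:decomp}, and the equivalent reformulations of Hypothesis~\ref{hyp} in Proposition~\ref{prop:hyp-equiv}. Throughout I will use that $\Theta^B_Z$ restricts to $\Satake^M\circ\Theta^P_M$ on $\Hecke_R(K_P,P)$ and to $\Satake^G$ on $\Hecke_R(K,G)$, that it is injective on $C^+_P$ (Lemma~\ref{lem:C^+_P}\ref{lem:C^+_P-iii} together with injectivity of $\Satake^M$) and on $\OO^+_P$ (Proposition~\ref{prop:hyp-equiv}\ref{prop:hyp-iii}), and that the twisted action of $W_0$ on $R[\Lambda]$ is by $R$-algebra automorphisms. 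Since $R$ is a $\Z[1/p]$-algebra, Theorem~\ref{thm:Satake}\ref{thm:Satake-ii} identifies the image of $\Satake^G$ with $R[\Lambda]^{W_0,\star}$. It suffices to prove \ref{cor:decomp-i} and \ref{cor:decomp-ii}: statements \ref{cor:decomp-i'} and \ref{cor:decomp-ii'} then follow by applying the anti-isomorphism $\zeta_P$ of~\eqref{eq:Hecke-involution} coefficientwise, using $\zeta_P(C^+_P) = C^-_P$, $\zeta_P(\OO^+_P) = \OO^-_P$, $\zeta_P(\Hecke_R(K,G)) = \Hecke_R(K,G)$ (Lemma~\ref{lem:emb-anti}) and $\zeta_P(1) = 1$, which convert a left factorisation/root relation into the corresponding right one.

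For \ref{cor:decomp-i} I would put $\wt f(t) := f^{\Theta^B_Z}(t)$; its coefficients lie in $(\Satake^M\circ\Theta^P_M)(C^+_P)$, hence are invariant under the twisted action of $W_{0,M}$ (Theorem~\ref{thm:Satake}\ref{thm:Satake-ii} applied to $\alg M$), its constant term is $1$, and $\deg\wt f = \deg f$. Choosing coset representatives $W^M_0$ for $W_0/W_{0,M}$ with $1\in W^M_0$, set $\wt g(t) := \prod_{w\in W^M_0,\ w\ne 1}(\wt f)^{w\star}(t)$, where $(\cdot)^{w\star}$ means applying the twisted action of $w$ to the coefficients; by the $W_{0,M}$-invariance this is independent of the chosen representatives, has constant term $1$, and $\deg\wt g = (\lvert W^M_0\rvert - 1)\deg f$. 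Then $\wt f(t)\,\wt g(t) = \prod_{w\in W^M_0}(\wt f)^{w\star}(t)$ has $W_0$-invariant coefficients, so by Theorem~\ref{thm:Satake} there is a unique $d(t)\in\Hecke_R(K,G)[t]$ with $d^{\Satake^G}(t) = \wt f(t)\wt g(t)$, and $d(t)$ has constant term $1$. Applying Theorem~\ref{thm:decomp}\ref{thm:decomp-a} to $d(t)$ with this decomposition yields $f^\ast(t)\in C^+_P[t]$ (by the remark following Theorem~\ref{thm:decomp}) and $g(t)\in\OO^+_P[t]$ with $(f^\ast)^{\Theta^B_Z}(t) = \wt f(t)$, $g^{\Theta^B_Z}(t) = \wt g(t)$, $\deg g = \deg\wt g$, and $d(t) = f^\ast(t)g(t)$; injectivity of $\Theta^B_Z$ on $C^+_P$ forces $f^\ast(t) = f(t)$, comparing constant terms gives $g(0) = 1$, and $\deg g = (\lvert W^M_0\rvert - 1)\deg f$. (One can also bypass Theorem~\ref{thm:decomp}: as $\wt f$ has invertible constant term, $f(t)$ is a unit of $C^+_P\llbracket t\rrbracket$, and $g(t) := f(t)^{-1}d(t)\in\OO^+_P\llbracket t\rrbracket$ has $\Theta^B_Z$-image $\wt g(t)$, so by injectivity of $\Theta^B_Z$ on $\OO^+_P$ it is a polynomial of degree $(\lvert W^M_0\rvert - 1)\deg f$.)

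For \ref{cor:decomp-ii} I would \emph{not} pass through $f(t) = 1 - Xt$ — the polynomial $d(t)$ produced by \ref{cor:decomp-i} is then not monic, and the relation that drops out is not the left relation $\sum_i X^id_i = 0$ — but instead build the ``characteristic polynomial'' of $X$ directly. Put $x := \Theta^B_Z(X)\in R[\Lambda]$, which is $W_{0,M}$-invariant for the twisted action, and set $\wt d(t) := \prod_{w\in W^M_0}(t - w\star x)\in R[\Lambda][t]$, a monic polynomial of degree $\lvert W^M_0\rvert$ whose coefficients are $W_0$-invariant. Lift $\wt d(t)$ along $\Satake^G$ to a monic $d(t) = \sum_i d_it^i\in\Hecke_R(K,G)[t]$ of degree $\lvert W^M_0\rvert$. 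Since $x = 1\star x$ is among the roots, $\wt d(x) = 0$, i.e.\ $\sum_i x^i\wt d_i = 0$ in $R[\Lambda]$; applying the ring homomorphism $\Theta^B_Z$ and using $\Theta^B_Z(d_i) = \Satake^G(d_i) = \wt d_i$ gives $\Theta^B_Z\bigl(\sum_i X^id_i\bigr) = \sum_i x^i\wt d_i = 0$, so $\sum_i X^id_i\in\Ker\Theta^P_M$. On the other hand $X^i\in C^+_P$ and $d_i\in\Hecke_R(K,G)$, so $\sum_i X^id_i\in\OO^+_P$; since Hypothesis~\ref{hyp} holds, Proposition~\ref{prop:hyp-equiv}\ref{prop:hyp-iii} gives $\OO^+_P\cap\Ker\Theta^P_M = \{0\}$, whence $\sum_i X^id_i = 0$.

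I do not expect a deep obstacle: once Theorems~\ref{thm:Satake} and~\ref{thm:decomp} are available, everything is careful bookkeeping over the three rings $\Hecke_R(K,G)\subseteq\Hecke_R(K_P,P)$ and $R[\Lambda]$. The two points needing care are, first, that the twisted-conjugate product $\prod_{w}(\wt f)^{w\star}$ is genuinely well-defined and $W_0$-invariant — this uses both the $W_{0,M}$-invariance of the coefficients of $\wt f$ and the fact that the twisted action is by algebra automorphisms — and, second, for \ref{cor:decomp-ii}/\ref{cor:decomp-ii'}, that the passage from the identity $\wt d(\Theta^B_Z(X)) = 0$ in the commutative ring $R[\Lambda]$ back to the relation $\sum_i X^id_i = 0$ inside the \emph{non}-commutative algebra $\Hecke_R(K_P,P)$ rests precisely on $\OO^+_P\cap\Ker\Theta^P_M = \{0\}$, i.e.\ on Hypothesis~\ref{hyp}; this is also why these two statements genuinely require the hypothesis and not merely Lemma~\ref{lem:chi-ker}.
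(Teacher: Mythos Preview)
Your argument is correct. For \ref{cor:decomp-i} and the reduction of \ref{cor:decomp-i'}, \ref{cor:decomp-ii'} to the unprimed versions via $\zeta_P$, your approach coincides with the paper's.

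For \ref{cor:decomp-ii} you take a genuinely different route from the paper, and your dismissal of the $f(t)=1-Xt$ approach is unfounded. The paper \emph{does} go through $f(t)=1-Xt$: applying \ref{cor:decomp-i} gives $g(t)=\sum_{i=0}^{r-1}g_it^i$ with $g_0=1$ and $r\le\lvert W^M_0\rvert$, and $f(t)g(t)\in\Hecke_R(K,G)[t]$ has constant term $1$. The trick is to \emph{reverse the coefficients}: define $d_i$ by $f(t)g(t)=\sum_{i=0}^r d_{r-i}t^i$, so $d_r=1$ and $d(t)=\sum_i d_it^i$ is monic with coefficients in $\Hecke_R(K,G)$. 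Comparing coefficients, $d_0=-Xg_{r-1}$ and $d_i=g_{r-i}-Xg_{r-(i+1)}$ for $1\le i\le r-1$, whence
\[
\sum_{i=0}^r X^id_i=-Xg_{r-1}+\sum_{i=1}^{r-1}\bigl(X^ig_{r-i}-X^{i+1}g_{r-(i+1)}\bigr)+X^r=0
\]
by telescoping. So the ``left relation'' really does drop out; the noncommutativity causes no trouble because the multiplication by $X$ is consistently on the left throughout.

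Your alternative for \ref{cor:decomp-ii} --- form the characteristic polynomial $\wt d(t)=\prod_{w\in W^M_0}(t-w\star x)$, lift, and kill $\sum_i X^id_i$ using $\OO^+_P\cap\Ker\Theta^P_M=\{0\}$ --- is equally valid and arguably cleaner, mirroring the construction of $\chi_{a_P}(t)$. The paper's route has the virtue of deriving \ref{cor:decomp-ii} purely as a formal consequence of \ref{cor:decomp-i}, without invoking Proposition~\ref{prop:hyp-equiv} a second time; yours makes the reliance on Hypothesis~\ref{hyp} more transparent.
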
 
\begin{proof} 
Note that, using the anti-automorphism $\zeta_P$ on $\Hecke_R(K_P,P)$, which on
$\Hecke_R(K,G)$ restricts to $\zeta_G$ by Lemma~\ref{lem:emb-anti}, one can
easily deduce \ref{cor:decomp-i'} and \ref{cor:decomp-ii'} from
\ref{cor:decomp-i} and \ref{cor:decomp-ii}, respectively. \bigskip

Let us prove \ref{cor:decomp-i}. So let $f(t)$ be a polynomial with coefficients
in $C^+_P$ and constant term $1$. Write $f^{\Theta^B_Z}(t) \eqqcolon \wt f(t) =
\sum_i f_it^i \in R[\Lambda][t]$. Since $\Satake^M\circ\Theta^P_M =
\Theta^B_Z\circ \varepsilon_{B,P}$ by Lemma~\ref{lem:partial Satake}, the
coefficients of $\wt f(t)$ lie in $\Satake^M\bigl(\Hecke_R(K_M,M)\bigr)$. Hence,
the coefficients of $\wt f(t)$ are invariant under the twisted action of
$W_{0,M}$. Given $w\in W_0$, write $\wt f^w(t) = \sum_i (w\star f_i)\cdot t^i$.
The polynomial $\wt d(t) \coloneqq \prod_{w\in W^M_0} \wt f^w(t)$ is then
$W_0$-invariant with respect to the twisted action and hence has coefficients in
$\Satake^G\bigl(\Hecke_R(K,G)\bigr)$. Moreover, it factors as $\wt d(t) = \wt
f(t)\cdot \wt g(t)$ for some $\wt g(t) \in R[\Lambda][t]$ with constant term $1$
and $\wt g(t) \le \deg f(t)\cdot \bigl(\lvert W^M_0\rvert - 1\bigr)$.
The existence of the polynomial $g(t)$ with the desired properties now follows
from Theorem~\ref{thm:decomp}.

We now prove \ref{cor:decomp-ii}. Let $X\in C^+_P$. Applying \ref{cor:decomp-i}
to the polynomial $f(t) \coloneqq 1 - Xt$, we find a polynomial $g(t) =
\sum_{i=0}^{r-1} g_it^i \in \Hecke_R(K_P,P)[t]$ with $g_0 = 1$ and $r \le \lvert
W^M_0\rvert$ and such that
\[
f(t)\cdot g(t) \eqqcolon \sum_{i=0}^r d_{r-i}t^i \in \Hecke_R(K,G)[t].
\]
Since $f(t)\cdot g(t) = 1 + \sum_{i=1}^{r-1}(g_i-Xg_{i-1})t^i - Xg_{r-1}t^r$, a
comparison of coefficients shows that $d_0 = -Xg_{r-1}$, $d_i = g_{r-i} -
Xg_{r-(i+1)}$ for $1\le i\le r-1$, and $d_r = g_0 = 1$. Therefore, the
polynomial $\sum_{i=0}^r d_it^i \in \Hecke_R(K,G)[t]$ is monic, of degree $r\le
\lvert W^M_0\rvert$, and satisfies
\[
\sum_{i=0}^r X^id_i = -Xg_{r-1} + \sum_{i=1}^{r-1} \bigl(X^ig_{r-i} -
X^{i+1}g_{r-(i+1)}\bigr) + X^r = 0. \qedhere
\]
\end{proof} 

\begin{ex} 
We continue Example~\ref{ex:neg-pow} and apply Theorem~\ref{thm:decomp} to the
polynomial
\[
\chi_{a_B}(t) = 1 -q^{-1}\cdot (X_+Y^{-1} + X_-)\cdot t + q^{-1}Y^{-1}t^2 \in
\Hecke_R(K,G)[t].
\]
Then $\chi_{a_B}^{\Satake^G}(t) = 1 - \bigl((qy)^{-1} + x^{-1}\bigr)\cdot t +
(qxy)^{-1}t^2$ decomposes in $R[x^{\pm1},y^{\pm1}][t]$ as
$\wt f(t)\cdot \wt g(t)$ with $\wt f(t) = 1- (qy)^{-1}t$ and $\wt g(t) = 1 -
x^{-1}t$. Then $f(t) \coloneqq 1 - q^{-1}X_+Y^{-1}t \in C^+_P[t]$ is the unique
polynomial with $f^{\Theta^B_Z}(t) = \wt f(t)$. Let $h(t) \coloneqq
\sum_{i=0}^\infty (q^{-1}X_+Y^{-1})^it^i \in C^+_B\llbracket t\rrbracket$ be the
inverse power series. Then
\begin{align*}
g(t)&\coloneqq h(t)\cdot \chi_{a_B}(t)\\
&= \underbrace{h(t) - h(t)\cdot q^{-1}X_+Y^{-1}t}_{=1} - h(t) q^{-1}X_-t +
h(t)\cdot q^{-1}Y^{-1}t^2\\
&= 1 - q^{-1}X_-t - \sum_{i=1}^\infty (q^{-1}X_+Y^{-1})^{i-1}\cdot
q^{-1}Y^{-1}t^{i+1} +
\sum_{i=0}^\infty (q^{-1}X_+Y^{-1})^i\cdot q^{-1}Y^{-1} t^{i+2}\\
&= 1 - q^{-1}X_-t.
\end{align*}
Hence, $\chi_{a_B}(t)$ decomposes in $\Hecke_R(K_B,B)[t]$ as
\[
\chi_{a_B}(t) = (1- q^{-1}X_+Y^{-1}t)\cdot (1 - q^{-1}X_-t).
\]
\end{ex} 

\appendix
\counterwithin{prop}{section}
\section{Proof of Theorem~\ref{thm:example}}\label{appendix:H(B)} 
Let $B\subseteq \GL_2(\field)$ be the subgroup of upper triangular matrices and
let $K_B$ be the subgroup of $B$ with entries in $\O_{\field}$. Fix a
uniformizer $\pi\in \O_{\field}$ and recall that $q$ denotes the cardinality of
the residue field of $\field$. Let $R$ be a coefficient ring. 

We describe the $R$-algebra $\Hecke_R(K_B,B) = R\otimes_\Z \Hecke(K_B,B)$ in
terms of generators and relations. As $R\otimes_\Z-$ is right exact, we may
reduce to the case $R = \Z$.

First, we need to understand the double cosets of $B$ with respect to $K_B$. Let
$A$ be a complete system of representatives for $\O_{\field}/(\pi)$ with $0\in
A$. Then $A_B \coloneqq \set{\sum_{i=1}^n a_i\pi^{-i}}{n\in \Z_{>0}, a_i\in A}$
is a complete system of representatives for $\field/\O_{\field}$.

\begin{lem}\label{lem:B-double-coset} 
As a set $B$ decomposes as
\[
B = \bigsqcup_{\substack{a,b,c\in\Z,\\ b\le \min\{a,c\}} } K_B
\triang{\pi^a}{\pi^b}{\pi^c} K_B,
\]
and for all $a,b,c\in \Z$ with $b\le \min\{a,c\}$ one has the decomposition
\[
K_B \triang{\pi^a}{\pi^b}{\pi^c} K_B = \begin{cases}
\bigsqcup_{\substack{\beta\in A_B\pi^c,\\ \val_{\field}(\beta) = b}} 
K_B \triang{\pi^a}{\beta}{\pi^c}, & \text{if $b<\min\{a,c\}$;}\\
K_B \triang{\pi^a}{0}{\pi^c}K_B = \bigsqcup_{\substack{\beta\in A_B\pi^c,\\
\val_{\field}(\beta)\ge a}} K_B \triang{\pi^a}{\beta}{\pi^c}, & \text{if $b =
\min\{a,c\}$.}
\end{cases}
\]
\end{lem}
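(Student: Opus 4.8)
The plan is to prove everything by a direct computation with upper-triangular $2\times2$ matrices over $\field$; none of the general machinery of the paper is needed. Write a typical element of $B$ as $\triang{x}{y}{z}$ (so $x,z\in\field^\times$, $y\in\field$) and a typical element of $K_B$ as $\triang{u}{v}{w}$ (so $u,w\in\O_\field^\times$, $v\in\O_\field$). First I would record the effect of two-sided multiplication by $K_B$,
\[
\triang{u_1}{v_1}{w_1}\triang{x}{y}{z}\triang{u_2}{v_2}{w_2} = \triang{u_1xu_2}{u_1xv_2+u_1yw_2+v_1zw_2}{w_1zw_2},
\]
from which I read off two facts: the integers $a\coloneqq\val_\field(x)$ and $c\coloneqq\val_\field(z)$ are invariants of the double coset $K_B\triang{x}{y}{z}K_B$, and the upper-right entry of any member of that double coset has valuation at least $\min\{a,\val_\field(y),c\}$.

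Next I would establish the first displayed decomposition. Multiplying on the left by a suitable diagonal element of $K_B$, one may assume $x=\pi^a$ and $z=\pi^c$. If $\val_\field(y)\ge\min\{a,c\}$, then $y\pi^{-a}\in\O_\field$ or $y\pi^{-c}\in\O_\field$, and right multiplication by $\triang{1}{-y\pi^{-a}}{1}$ in the first case, or left multiplication by $\triang{1}{-y\pi^{-c}}{1}$ in the second, annihilates the upper-right entry; a one-line check then shows that $\triang{\pi^a}{0}{\pi^c}$ and $\triang{\pi^a}{\pi^{\min\{a,c\}}}{\pi^c}$ lie in the same double coset. If instead $\val_\field(y)=b<\min\{a,c\}$, write $y=\mu\pi^b$ with $\mu\in\O_\field^\times$; then $\triang{\mu^{-1}}{0}{1}\triang{\pi^a}{y}{\pi^c}\triang{\mu}{0}{1}=\triang{\pi^a}{\pi^b}{\pi^c}$. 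Hence every element of $B$ lies in one of the listed double cosets. For their pairwise disjointness I would invoke the invariants $a,c$ and the entry formula once more: when $b<\min\{a,c\}$ the term $u_1\pi^bw_2$ strictly dominates, so the upper-right entry of \emph{every} member of $K_B\triang{\pi^a}{\pi^b}{\pi^c}K_B$ has valuation exactly $b$; in particular $b$ is an invariant and this double coset contains no diagonal matrix, whereas $K_B\triang{\pi^a}{0}{\pi^c}K_B$ does. Together with the range of $a,c$ this separates all the double cosets in the list.

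Finally, for the right-coset decomposition of a fixed double coset I would compute $K_Bg\triang{u}{v}{w}$, $\triang{u}{v}{w}\in K_B$, for the chosen representative $g$. Multiplying on the left by $\triang{u^{-1}}{0}{w^{-1}}$ gives $K_Bg\triang{u}{v}{w}=K_B\triang{\pi^a}{\beta}{\pi^c}$, where, as $v$ ranges over $\O_\field$ and $u,w$ over $\O_\field^\times$, the element $\beta$ ranges over $\{\pi^at+\pi^bs\mid t\in\O_\field,\ s\in\O_\field^\times\}$ in the case $g=\triang{\pi^a}{\pi^b}{\pi^c}$ with $b<\min\{a,c\}$, and over $\pi^a\O_\field$ in the case $g=\triang{\pi^a}{0}{\pi^c}$. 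Two such left cosets coincide precisely when the corresponding $\beta$'s are congruent modulo $\pi^c\O_\field$, and $A_B\pi^c$ is a set of representatives for $\field/\pi^c\O_\field$. It then remains to check that the first set above equals $\{\beta\in\field\mid\val_\field(\beta)=b\}$ and the second equals $\pi^a\O_\field$, that the conditions ``$\val_\field(\beta)=b$'' (legitimate since $b<c$) and ``$\val_\field(\beta)\ge a$'' (legitimate since $a\le c$) descend to $\field/\pi^c\O_\field$, and that they cut out exactly the classes that occur; when $c<a$ the set $\pi^a\O_\field$ reduces to the single class of $0\in A_B\pi^c$. I expect this last bookkeeping — matching the sets of admissible $\beta$ with subsets of $A_B\pi^c$ and checking that the valuation conditions are well defined modulo $\pi^c\O_\field$ — to be the only delicate point; everything else is routine $2\times2$ matrix algebra over $\field$.
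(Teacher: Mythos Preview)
Your proof is correct and follows essentially the same approach as the paper: a direct computation with upper-triangular $2\times 2$ matrices, using the valuations of the diagonal entries and of the upper-right entry as invariants. The only difference is organizational---the paper first writes down all left $K_B$-cosets in $B$ (indexed by $(a,c,\beta)$ with $\beta\in A_B$) and then groups them into double cosets, whereas you first separate the double cosets via invariants and then decompose each into left cosets; neither ordering offers a real advantage over the other.
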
 
\begin{proof} 
Let $\triang{\alpha}{\beta}{\gamma} \in B$. Write $\alpha = \alpha_0\pi^a$ and 
$\gamma = \gamma_0\pi^c$, where $\alpha_0,\gamma_0\in \O_{\field}^\times$,
$a,c\in \Z$, and write $\frac{\beta}{\alpha_0\pi^c} = \beta' + x$, with
$\beta'\in A_B$ and $x\in \O_{\field}$. Then $\beta' = 0$ or
$\val_{\field}(\beta) = \val_{\field}(\beta'\pi^c)$. Moreover,
\[
\triang{\alpha}{\beta}{\gamma} = \triang{\alpha_0}{\alpha_0x}{\gamma_0}\cdot
\triang{\pi^a}{\beta'\pi^c}{\pi^c} \in K_B \triang{\pi^a}{\beta'\pi^c}{\pi^c}.
\]
Let now $a,a',c,c'\in \Z$ and $\beta,\beta'\in A_B$ with
\[
\triang{\pi^a}{\beta\pi^c}{\pi^c}\cdot
\triang{\pi^{a'}}{\beta'\pi^{c'}}{\pi^{c'}}^{-1} = \triang{\pi^{a-a'}}{\beta
\pi^{c-c'} - \beta'\pi^{a-a'}}{\pi^{c-c'}} \in K_B.
\]
Then $a=a'$ and $c=c'$, and then $\beta -\beta'\in \O_{\field}$, that is, $\beta
= \beta'$. This shows that $B$ is the disjoint union of the right cosets
$K_B\triang{\pi^a}{\beta\pi^c}{\pi^c}$, where $a,c\in \Z$ and $\beta\in A_B$.

Let now $a,c\in \Z$. Take any $0\neq \beta = \beta_0 \pi^{\val_{\field}(\beta)}
\in A_B\pi^c$ with $\beta_0\in \O_{\field}^\times$. If $\val_{\field}(\beta)
<\min\{a,c\}$, then
\[
\triang{\pi^a}{\beta}{\pi^c} = \triang{1}{0}{\beta_0^{-1}} \cdot
\triang{\pi^a}{\pi^{\val_{\field}(\beta)}}{\pi^c} \cdot \triang{1}{0}{\beta_0}
\in K_B \triang{\pi^a}{\pi^{\val_{\field}(\beta)}}{\pi^c} K_B.
\]
If $\val_{\field}(\beta) \ge \min\{a,c\}$, then $\val_{\field}(\beta) \ge a$,
because $\val_{\field}(\beta) <c$ always holds true. Hence,
\[
\triang{\pi^a}{\beta}{\pi^c} = \triang{\pi^a}{0}{\pi^c}\cdot
\triang{1}{\beta\pi^{-a}}{1} \in K_B \triang{\pi^a}{0}{\pi^c}K_B.
\]
The lemma follows.
\end{proof} 

We now prove Theorem~\ref{thm:example}. Let $\Hecke$ be the $\Z$-algebra
generated by the variables $X_+$, $X_-$, $Y$, and $Y^{-1}$, subject to the
relations~\eqref{eq:ex-H(B)}. This means that $Y$ is central and invertible and
we have $X_+X_- = q\cdot 1$ in $\Hecke$. We show that
\begin{align*}
\rho\colon \Hecke &\longrightarrow \Hecke(K_B,B),\\
X_+ &\longmapsto (\triang{\pi}{0}{1})_{K_B},\\
X_- &\longmapsto (\triang{\pi^{-1}}{0}{1})_{K_B},\\
Y &\longmapsto (\pi E_2)_{K_B}
\end{align*}
gives a well-defined isomorphism of algebras. 

It is clear that $(K_B)$ is the unit in $\Hecke(K_B,B)$, and that
$(\pi E_2)_{K_B}$ is central and invertible with inverse $(\pi^{-1}E_2)_{K_B}$.
In addition, using Lemma~\ref{lem:B-double-coset}, we compute
\begin{align*}
(\triang{\pi}{0}{1})_{K_B}\cdot (\triang{\pi^{-1}}{0}{1})_{K_B} &= (K_B
\triang{\pi}{0}{1})\cdot \sum_{\beta\in A}
(K_B\triang{\pi^{-1}}{\beta\pi^{-1}}{1})\\
&= \sum_{\beta\in A} (K_B\triang{1}{\beta}{1}) = q\cdot (K_B).
\end{align*}
Therefore, $\rho$ is a well-defined algebra homomorphism.

Observe that $\rho(X_+^m) = (\triang{\pi^m}{0}{1})_{K_B}$ and $\rho(Y^k) =
(\pi^kE_2)_{K_B}$, for all $m\in\Z_{\ge0}$ and $k\in \Z$. For each
$n\in\Z_{\ge0}$ we compute
\begin{align*}
\rho(X_-^n) &= \rho(X_-)^n = \left(\sum_{\beta\in A}
(K_B\triang{\pi^{-1}}{\beta\pi^{-1}}{1})\right)^n\\
&= \sum_{\beta_1,\dotsc,\beta_n\in A} (K_B\triang{\pi^{-n}}{\sum_{i=1}^n
\beta_i\pi^{-i}}{1}) = (\triang{\pi^{-n}}{0}{1})_{K_B}.
\end{align*}

Given $m,n\in \Z_{\ge0}$ and $k\in \Z$, this shows
\begin{align*}
\rho(X_-^nX_+^mY^k) &= (\triang{\pi^{-n}}{0}{1})_{K_B}\cdot
(\triang{\pi^m}{0}{1})_{K_B}\cdot (\pi^kE_2)_{K_B}\\
&= \sum_{\beta_1,\dotsc,\beta_n\in A} (K_B\triang{\pi^{m+k-n}}
{\pi^k\sum_{i=1}^n \beta_i\pi^{-i}}{\pi^k})\\
&= \sum_{b=k-n}^{\min\{k,m+k-n\}} (\triang{\pi^{m+k-n}}{\pi^b}{\pi^k})_{K_B}.
\end{align*}
For $m,n\in\Z_{\ge1}$ and $k\in \Z$ this implies
\[
\rho\bigl(X_-^nX_+^mY^k - X_-^{n-1}X_+^{m-1}Y^k\bigr) =
(\triang{\pi^{m+k-n}}{\pi^{k-n}}{\pi^k})_{K_B}.
\]
Now notice that
\begin{equation}\label{eq:rho-basis}
\begin{split}
\set{X_+^mY^k}{m\in\Z_{\ge0},k\in\Z} &\cup \set{X_-^nY^k}{n\in\Z_{\ge1}, k\in
\Z}\\
&\cup \set{X_-^nX_+^mY^k - X_-^{n-1}X_+^{m-1}Y^k}{m,n\in\Z_{\ge1}, k\in\Z}
\end{split}
\end{equation}
generates $\Hecke$ as a $\Z$-module and identifies via $\rho$ with the
double coset basis of $\Hecke(K_B,B)$. It follows that~\eqref{eq:rho-basis}
is in fact $\Z$-linearly independent, hence a $\Z$-basis of $\Hecke$.
Consequently, $\rho$ is an isomorphism.

\bibliographystyle{alphaurl}
\bibliography{../references}{}

\begin{thebibliography}{{Gri}84}

\bibitem[And77]{Andrianov.1977}
Anatoli~N. Andrianov.
\newblock {On factorization of Hecke polynomials for the symplectic groups of
  genus $n$}.
\newblock {\em Mathematics of the USSR-Sbornik}, 33(3):343--373, 1977.
\newblock \href {https://doi.org/10.1070/SM1977v033n03ABEH002428}
  {\path{doi:10.1070/SM1977v033n03ABEH002428}}.

\bibitem[And79]{Andrianov.1979}
Anatoli~N. Andrianov.
\newblock {The multiplicative arithmetic or Siegel modular forms}.
\newblock {\em Russian Mathematical Surveys}, 34(1):75--148, 1979.
\newblock \href {https://doi.org/10.1070/RM1979v034n01ABEH002872}
  {\path{doi:10.1070/RM1979v034n01ABEH002872}}.

\bibitem[AZ95]{Andrianov.1995}
Anatoli~N. Andrianov and Vladimir~G. Zhuravlev.
\newblock {\em {Modular Forms and Hecke Operators}}.
\newblock American Mathematical Society, 1995.

\bibitem[BK98]{Bushnell-Kutzko.1998}
Colin~J. Bushnell and Philip~C. Kutzko.
\newblock Smooth representations of reductive $p$-adic groups: structure theory
  via types.
\newblock {\em Proceedings of the London Mathematical Society},
  77(03):582--634, 1998.
\newblock \href {https://doi.org/10.1112/S0024611598000574}
  {\path{doi:10.1112/S0024611598000574}}.

\bibitem[Bou81]{Bourbaki.1981}
Nicolas Bourbaki.
\newblock {\em Groupes et algèbres de Lie: Chapitres 4, 5 et 6}.
\newblock Éléments de Mathématiques. Masson, 1981.

\bibitem[BT72]{Bruhat-Tits.1972}
François Bruhat and Jacques Tits.
\newblock Groupes réductifs sur un corps local : I. données radicielles
  valuées.
\newblock {\em Publications Mathématiques de l'IHÉS}, 41:5--251, 1972.
\newblock URL: \url{http://eudml.org/doc/103918}.

\bibitem[BT84]{Bruhat-Tits.1984}
François Bruhat and Jacques Tits.
\newblock {Groupes réductifs sur un corps local : II. Schémas en groupes.
  Existence d'une donnée radicielle valuée}.
\newblock {\em Publications Mathématiques de l'IHÉS}, 60:5--184, 1984.
\newblock URL: \url{http://eudml.org/doc/104001}.

\bibitem[D{\c{a}}b94]{Dabrowski.1994}
Romuald D{\c{a}}browski.
\newblock {Comparison of the Bruhat and the Iwahori Decompositions of a
  $p$-Adic Chevalley Group}.
\newblock {\em Journal of Algebra}, 167(3):704--723, 1994.
\newblock \href {https://doi.org/10.1006/jabr.1994.1208}
  {\path{doi:10.1006/jabr.1994.1208}}.

\bibitem[{Gri}84]{Gritsenko.1984}
{Gritsenko, Valeri A.}
\newblock {The action of modular operators on the Fourier--Jacobi coefficients
  of modular forms}.
\newblock {\em Mathematics of the {USSR}-Sbornik}, 47(1):237--268, feb 1984.
\newblock \href {https://doi.org/10.1070/sm1984v047n01abeh002640}
  {\path{doi:10.1070/sm1984v047n01abeh002640}}.

\bibitem[Gri88]{Gritsenko.1988}
Valeri~A. Gritsenko.
\newblock {Parabolic extensions of a Hecke ring of the general linear group}.
\newblock {\em Journal of Soviet Mathematics}, 43(4):2533--2540, Nov 1988.
\newblock \href {https://doi.org/10.1007/BF01374983}
  {\path{doi:10.1007/BF01374983}}.

\bibitem[Gri90]{Gritsenko.1990}
Valeri~A. Gritsenko.
\newblock {Expansion of Hecke polynomials of classical groups}.
\newblock {\em Mathematics of the USSR-Sbornik}, 65(2):333--356, 1990.
\newblock \href {https://doi.org/10.1070/SM1990v065n02ABEH001145}
  {\path{doi:10.1070/SM1990v065n02ABEH001145}}.

\bibitem[Gri92]{Gritsenko.1992}
Valeri~A. Gritsenko.
\newblock {Parabolic extensions of the Hecke ring of the general linear group.
  II}.
\newblock {\em Journal of Soviet Mathematics}, 62(4):2869--2882, Dec 1992.
\newblock \href {https://doi.org/10.1007/BF01098922}
  {\path{doi:10.1007/BF01098922}}.

\bibitem[Her11]{Herzig.2011a}
Florian Herzig.
\newblock {A Satake isomorphism in characteristic p}.
\newblock {\em Compositio Mathematica}, 147(1):263–283, 2011.
\newblock \href {https://doi.org/10.1112/S0010437X10004951}
  {\path{doi:10.1112/S0010437X10004951}}.

\bibitem[Hey19]{Heyer.2019}
Claudius Heyer.
\newblock {\em {Applications of parabolic Hecke algebras: parabolic induction
  and Hecke polynomials}}.
\newblock PhD thesis, Humboldt-Universität zu Berlin,
  Mathematisch-Naturwissenschaftliche Fakultät, 2019.
\newblock \href {https://doi.org/http://dx.doi.org/10.18452/20137}
  {\path{doi:http://dx.doi.org/10.18452/20137}}.

\bibitem[Hey21a]{Heyer.2021}
Claudius Heyer.
\newblock {Localization of the Parabolic Hecke Algebra at a Strictly Positive
  Element}, 2021.
\newblock \href {http://arxiv.org/abs/2103.16949} {\path{arXiv:2103.16949}}.

\bibitem[Hey21b]{Heyer.2020}
Claudius Heyer.
\newblock Parabolic induction via the parabolic pro-{$p$} {I}wahori--{H}ecke
  algebra.
\newblock {\em Represent. Theory}, 25:807--843, 2021.
\newblock \href {https://doi.org/10.1090/ert/585} {\path{doi:10.1090/ert/585}}.

\bibitem[HR10]{Haines-Rostami.2010}
Thomas Haines and Sean Rostami.
\newblock {The Satake isomorphism for special maximal parahoric Hecke
  algebras}.
\newblock {\em Representation Theory of the American Mathematical Society},
  14(7):264--284, 2010.
\newblock \href {https://doi.org/10.1090/S1088-4165-10-00370-5}
  {\path{doi:10.1090/S1088-4165-10-00370-5}}.

\bibitem[Hum90]{Humphreys.1990}
James~E. Humphreys.
\newblock {\em Reflection Groups and Coxeter Groups}.
\newblock Cambridge Studies in Advanced Mathematics. Cambridge University
  Press, 1990.
\newblock \href {https://doi.org/10.1017/CBO9780511623646}
  {\path{doi:10.1017/CBO9780511623646}}.

\bibitem[HV15]{Henniart-Vigneras.2015}
Guy Henniart and Marie-France Vign\'eras.
\newblock {A Satake isomorphism for representations modulo $p$ of reductive
  groups over local fields}.
\newblock {\em Journal f\"ur die reine und angewandte Mathematik}, 701:33--75,
  2015.
\newblock \href {https://doi.org/10.1515/crelle-2013-0021}
  {\path{doi:10.1515/crelle-2013-0021}}.

\bibitem[Jan14]{Januszewski.2014}
Fabian Januszewski.
\newblock {On $p$-adic $L$-functions for $\GL(n)\times \GL(n-1)$ Over Totally
  Real Fields}.
\newblock {\em International Mathematics Research Notices},
  2015(17):7884--7949, 2014.
\newblock \href {https://doi.org/10.1093/imrn/rnu181}
  {\path{doi:10.1093/imrn/rnu181}}.

\bibitem[Lan01]{Lansky.2001}
Joshua~M. Lansky.
\newblock {Decomposition of double cosets in $p$-adic groups}.
\newblock {\em Pacific Journal of Mathematics}, 197(1):97--117, 2001.
\newblock \href {https://doi.org/10.2140/pjm.2001.197.97}
  {\path{doi:10.2140/pjm.2001.197.97}}.

\bibitem[Mac71]{Macdonald.1971}
Ian~G. Macdonald.
\newblock {\em Spherical functions on a group of $p$-adic type}.
\newblock Ramanujan Institute, University of Madras, Madras 5, India, nov 1971.

\bibitem[Mac03]{Macdonald.2003}
I.~G. Macdonald.
\newblock {\em {Affine Hecke Algebras and Orthogonal Polynomials}}.
\newblock Cambridge Tracts in Mathematics. Cambridge University Press, 2003.
\newblock \href {https://doi.org/10.1017/CBO9780511542824}
  {\path{doi:10.1017/CBO9780511542824}}.

\bibitem[Rap00]{Rapoport.2000}
Michael Rapoport.
\newblock {A positivity property of the Satake isomorphism}.
\newblock {\em manuscripta mathematica}, 101(2):153--166, 2000.
\newblock \href {https://doi.org/10.1007/s002290050010}
  {\path{doi:10.1007/s002290050010}}.

\bibitem[SS97]{Schneider-Stuhler.1997}
Peter Schneider and Ulrich Stuhler.
\newblock {Representation theory and sheaves on the Bruhat-Tits building}.
\newblock {\em Publications Math{\'e}matiques de l'Institut des Hautes
  {\'E}tudes Scientifiques}, 85(1):97--191, 1997.

\bibitem[Vie12]{Vienney.2012}
Mathieu Vienney.
\newblock {\em Construction de $(\varphi,\Gamma)$-modules en caract\'eristique
  $p$}.
\newblock PhD thesis, Ecole normale sup\'erieure de Lyon -- ENS LYON, 2012.

\bibitem[Vig96]{Vigneras.1996}
Marie-France Vign{\'e}ras.
\newblock {Repr\'esentations $l$-modulaires d'un groupe r\'eductif $p$-adique
  avec $l\neq p$}.
\newblock {\em Progress in Mathematics}, 137, Birkh\"auser: Boston, 1996.

\bibitem[Vig16]{Vigneras.2016}
Marie-France Vign{\'e}ras.
\newblock {The pro-$p$ Iwahori Hecke algebra of a reductive $p$-adic group I}.
\newblock {\em Compositio Mathematica}, 152(4):693–753, 2016.
\newblock \href {https://doi.org/10.1112/S0010437X15007666}
  {\path{doi:10.1112/S0010437X15007666}}.

\end{thebibliography}
\end{document}